\documentclass[11pt]{article}
\usepackage{thmtools}
\usepackage{thm-restate}
\usepackage{bm}
\usepackage{mathrsfs}           %
\usepackage{tikz}
\usetikzlibrary{positioning,chains,fit,shapes,calc}
\usepackage{algorithm}
\usepackage{algorithmic}
\usepackage[linesnumbered,ruled,vlined,algo2e]{algorithm2e}

\usepackage{subcaption}
\usepackage{enumerate}

\usepackage{amsmath,amssymb, amsthm}    %
\usepackage{verbatim}           %
\usepackage{xspace}             %
\usepackage{graphicx,float}     %
\usepackage{ifthen,calc}        %
\usepackage{textcomp}           %
\usepackage{fancybox}           %
\usepackage{hhline}             %
\usepackage{float}              %

\usepackage[vflt]{floatflt}     %
\usepackage[small,compact]{titlesec}
\usepackage{setspace}

\usepackage{enumitem}
\setenumerate[1]{label={(\arabic*)}}

\usepackage{color}
\definecolor{ForestGreen}{rgb}{0.1333,0.5451,0.1333}
\usepackage{thumbpdf}
\usepackage[letterpaper,
colorlinks,linkcolor=ForestGreen,citecolor=ForestGreen,
backref,
bookmarks,bookmarksopen,bookmarksnumbered]
{hyperref}
\usepackage{cleveref}

\usepackage[margin = 1in]{geometry}

\newcommand{\showccc}[0]{0}
\newcommand{\ccc}[2][nothing]{%
	\ifthenelse{\showccc=0}{}{
		\ensuremath{^{\Lsh\Rsh}}\marginpar{\raggedright\tiny\textsf{%
				\ifthenelse{\equal{#1}{nothing}}{}{\textbf{#1}\\}#2}}}}
\newtheorem{theorem}{Theorem}

\newtheorem{proposition}{Proposition}
\newtheorem{corollary}{Corollary}
\newtheorem{definition}{Definition}

\newtheorem{lemma}{Lemma}
\newtheorem{fact}{Fact}

\newtheorem{assumption}{Assumption}

\usepackage{float}
\floatstyle{plain}\newfloat{myfig}{t}{figs}[section]
\floatname{myfig}{\textsc{Figure}}
\floatstyle{plain}\newfloat{myalg}{H}{algs}[section]
\floatname{myalg}{}
\setlength{\fboxrule}{0.8pt}    %

\newcommand{\defeq}{:=}
\newcommand{\norm}[1]{\left\lVert#1\right\rVert}
\newcommand{\inprod}[2]{\left\langle#1, #2\right\rangle}
\newcommand{\eps}{\epsilon}
\newcommand{\lam}{\lambda}
\newcommand{\argmax}{\textup{argmax}}
\newcommand{\argmin}{\textup{argmin}} 
\newcommand{\R}{\mathbb{R}}

\newcommand{\N}{\mathbb{N}}

\newcommand{\half}{\frac{1}{2}}

\newcommand{\1}{\mathbf{1}}
\newcommand{\E}{\mathbb{E}}

\newcommand{\xset}{\mathcal{X}}
\newcommand{\yset}{\mathcal{Y}}
\newcommand{\zset}{\mathcal{Z}}
\newcommand{\ma}{\mathbf{A}}

\newcommand{\id}{\mathbf{I}}
\newcommand{\tO}{\widetilde{O}}

\newcommand{\Par}[1]{\left(#1\right)}
\newcommand{\Brack}[1]{\left[#1\right]}
\newcommand{\Brace}[1]{\left\{#1\right\}}
\newcommand{\Abs}[1]{\left|#1\right|}
\newcommand{\Prox}{\textup{Prox}}
\newcommand{\normx}[1]{\norm{#1}_{\xset}}
\newcommand{\normy}[1]{\norm{#1}_{\yset}}

\newcommand{\normxd}[1]{\norm{#1}_{\xset, *}}
\newcommand{\normyd}[1]{\norm{#1}_{\yset, *}}
\newcommand{\omx}{\omega\x}
\newcommand{\omy}{\omega\y}
\newcommand{\Lam}{\Lambda}
\newcommand{\x}{^\mathsf{x}}
\newcommand{\y}{^\mathsf{y}}
\newcommand{\xx}{^\mathsf{xx}}
\newcommand{\yy}{^\mathsf{yy}}
\newcommand{\xy}{^\mathsf{xy}}

\newcommand{\gop}{\Phi}
\newcommand{\gopt}{\Phi}
\newcommand{\goptilde}{\widetilde{\Phi}}
\newcommand{\gcross}{\gop^{\textup{bilin}}}
\newcommand{\gcrosst}{\gopt^{\textup{bilin}}}
\newcommand{\gsept}{\gopt^{\textup{sep}}}
\newcommand{\gsep}{\gop^{\textup{sep}}}

\newcommand{\bz}{\bar{z}}
\newcommand{\bx}{\bar{x}}

\newcommand{\bw}{\bar{w}}

\newcommand*\circled[1]{\tikz[baseline=(char.base)]{
            \node[shape=circle,draw,inner sep=2pt] (char) {#1};}}
\newcommand{\Fmm}{F_{\textup{mm}}}
\newcommand{\Ffs}{F_{\textup{fs}}}
\newcommand{\Fmmpd}{F_{\textup{mm-pd}}}
\newcommand{\Ffspd}{F_{\textup{fs-pd}}}
\newcommand{\Ffsreg}{F_{\textup{fs-reg}}}

\newcommand{\yj}{^\mathsf{y_j}}

\newcommand{\Freg}{F_{\textup{mm-reg}}}
\newcommand{\Fregpd}{F_{\textup{mm-pd}}}
\newcommand{\Fmmfs}{F_{\textup{mmfs}}}
\newcommand{\Fmmfsreg}{F_{\textup{mmfs-reg}}}
\newcommand{\Fmmfspd}{F_{\textup{mmfs-pd}}}
\newcommand{\ssin}{\sum_{i \in [n]}}
\newcommand{\nsin}{\frac 1 n \sum_{i \in [n]}}
\newcommand{\bin}[1]{\Brace{#1}_{i \in [n]}}

\newcommand{\gtot}{g_{\textup{tot}}}
\newcommand{\Ltot}{\Lam^{\textup{tot}}}

\newcommand{\xsup}{^\mathsf{x}}

\newcommand{\xpssup}[1]{^{\mathsf{f}_\mathsf{#1}}}
\newcommand{\xdssup}[1]{^{\mathsf{f}^*_\mathsf{#1}}}
\newcommand{\ypssup}[1]{^{\mathsf{g}_\mathsf{#1}}}
\newcommand{\ydssup}[1]{^{\mathsf{g}^*_\mathsf{#1}}}
\newcommand{\xpsup}{^{\mathsf{f}}}
\newcommand{\xpisup}{^{\mathsf{f}_\mathsf{i}}}
\newcommand{\xdsup}{^{\mathsf{f}^*}}
\newcommand{\xdisup}{^{\mathsf{f}^*_\mathsf{i}}}
\newcommand{\xdjsup}{^{\mathsf{f}^*_\mathsf{j}}}
\newcommand{\xdi}{\xdisup}
\newcommand{\xpi}{\xpisup}
\newcommand{\xdj}{\xdjsup}
\newcommand{\ysup}{^\mathsf{y}}
\newcommand{\ypsup}{^{\mathsf{g}}}
\newcommand{\ypisup}{^{\mathsf{g}_\mathsf{i}}}
\newcommand{\ydsup}{^{\mathsf{g}^*}}
\newcommand{\ydisup}{^{\mathsf{g}^*_\mathsf{i}}}
\newcommand{\ydksup}{^{\mathsf{g}^*_\mathsf{k}}}
\newcommand{\ydi}{\ydisup}
\newcommand{\ypi}{\ypisup}
\newcommand{\ydk}{\ydksup}
\newcommand{\varfull}[1]{#1\xsup,#1\ysup,\left\{#1\xdisup\right\}_{i\in[n]},\left\{#1\ydisup\right\}_{i\in[n]}}
\newcommand{\varfulls}[1]{#1\xsup,#1\ysup,\{#1\xdisup\}_{i\in[n]},\{#1\ydisup\}_{i\in[n]}}

\newcommand{\goptot}{\gop^{\textup{mmfs-pd}}}

\newcommand{\xp}{\xpsup}
\newcommand{\xd}{\xdsup}
\newcommand{\yp}{\ypsup}
\newcommand{\yd}{\ydsup}

\newcommand{\xdual}{x^*}
\newcommand{\ydual}{y^*}

\newcommand{\xfun}{f}
\newcommand{\yfun}{g}
\newcommand{\xyfun}{h}
\newcommand{\zhat}{\hat{z}}

\newcommand{\dual}{^{\mathsf{f}^*\mathsf{g}^*}}
\newcommand{\mdpt}{_{\mathsf{aux}}}
\newcommand{\Hno}{^{fg}}
\newcommand{\Eps}{\epsilon}

\newcommand{\gap}{\textup{Gap}}

\newcommand{\mm}{\textsc{Minimax}}
\newcommand{\fs}{\textsc{Finitesum}}

\newcommand{\codeStyle}[1]{{\bfseries #1} }
\newcommand{\codeInput}{\codeStyle{Input:}}	
	
\newcommand{\codeReturn}{\codeStyle{Return:}}	
\newcommand{\codeParameter}{\codeStyle{Parameter(s):}}

\SetCommentSty{mycommfont}
\SetKwInput{KwInput}{Input} 
\SetKwInput{KwParameter}{Parameters}                %
\SetKwInput{KwOutput}{Output}              %
\SetKwInput{KwReturn}{Return}              %
\SetKwComment{Comment}{$\triangleright$\ }{}
\SetCommentSty{mycommfont}

  \usepackage{nth}
  \usepackage{intcalc}

\usepackage{url}

\begin{document}

	\begin{titlepage}
		\def\thepage{}
		\thispagestyle{empty}
		
		\title{Sharper Rates for Separable Minimax and Finite Sum Optimization \\via Primal-Dual Extragradient Methods} 
		
		\date{}
		\author{
			Yujia Jin\thanks{Stanford University, {\tt yujiajin@stanford.edu}}
			\and
			Aaron Sidford\thanks{Stanford University, {\tt sidford@stanford.edu}}
			\and
			Kevin Tian\thanks{Stanford University, {\tt kjtian@stanford.edu}}
		}
		
		\maketitle

\abstract{
	
We design accelerated algorithms with improved rates for several fundamental classes of optimization problems. Our algorithms all build upon  techniques related to the analysis of primal-dual extragradient methods via relative Lipschitzness proposed recently by \cite{CohenST21}.
\begin{enumerate}
	\item \textbf{Separable minimax optimization.} We study separable minimax optimization problems $\min_x \max_y f(x) - g(y) + h(x, y)$, where $f$ and $g$ have smoothness and strong convexity parameters $(L\x, \mu\x)$, $(L\y, \mu\y)$, and $h$ is convex-concave with a $(\Lam\xx, \Lam\xy, \Lam\yy)$-blockwise operator norm bounded Hessian. We provide an algorithm with gradient query complexity
	\[\tO\Par{\sqrt{\frac{L\x}{\mu\x}} + \sqrt{\frac{L\y}{\mu\y}} + \frac{\Lam\xx}{\mu\x} + \frac{\Lam\xy}{\sqrt{\mu\x\mu\y}} + \frac{\Lam\yy}{\mu\y}}.\]
	Notably, for convex-concave minimax problems with bilinear coupling (e.g.\ quadratics), where $\Lam\xx = \Lam\yy = 0$, our rate matches a lower bound of \cite{ZhangHZ19}.
	\item \textbf{Finite sum optimization.} We study finite sum optimization problems $\min_x \nsin f_i(x)$, where each $f_i$ is $L_i$-smooth and the overall problem is $\mu$-strongly convex. We provide an algorithm with gradient query complexity
	\[\tO\Par{n + \ssin  \sqrt{\frac{L_i}{n\mu}} }\,. \]%
	Notably, when the smoothness bounds $\bin{L_i}$ are non-uniform, our rate improves upon accelerated SVRG \cite{LinMH15, FrostigGKS15} and Katyusha \cite{Allen-Zhu17} by up to a $\sqrt{n}$ factor.
	\item \textbf{Minimax finite sums.} We generalize our algorithms for minimax and finite sum optimization to solve a natural family of minimax finite sum optimization problems at an accelerated rate, encapsulating both above results up to a logarithmic factor.
\end{enumerate}
}
 		
	\end{titlepage}

	\pagenumbering{gobble}
	\setcounter{tocdepth}{2}
	{
		\hypersetup{linkcolor=black}
		\tableofcontents
	}
	\newpage
	\pagenumbering{arabic}

\section{Introduction}
\label{sec:intro}

We study several fundamental families of optimization problems, namely (separable) minimax optimization, finite sum optimization, and minimax finite sum optimization (which generalizes both). These families have received widespread recent attention from the optimization community due to their prevalence in modeling tasks arising in modern data science. For example, minimax optimization has been used in both convex-concave settings and beyond to model robustness to (possibly adversarial) noise in many training tasks \cite{MadryMSTV18, RahimianM19, GoodfellowPMXWO20}. Moreover, finite sum optimization serves as a fundamental subroutine in many of the empirical risk minimization tasks of machine learning today \cite{BottouCN18}. Nonetheless, and perhaps surprisingly, there remain gaps in our understanding of the optimal rates for these problems. Toward closing these gaps, we provide new accelerated algorithms improving upon the state-of-the-art for each family of problems. 

Our results build upon recent advances in using primal-dual extragradient methods to recover accelerated rates for smooth, convex optimization in \cite{CohenST21}, which considered the problem\footnote{Throughout, $\xset, \yset$ are unconstrained, Euclidean spaces and $\norm{\cdot}$ denotes the Euclidean norm (see \Cref{sec:prelims}).}
\begin{equation}\label{eq:regintro}
	\min_{x \in \xset} f(x) + \frac \mu 2 \norm{x}^2~\text{for}~L\text{-smooth and convex}~f,
\end{equation}
and its equivalent primal-dual formulation as an appropriate ``Fenchel game''
\begin{equation}\label{eq:regintropd}
\min_{x \in \xset} \max_{\xdual \in \xset^*} \frac \mu 2 \norm{x}^2 + \inprod{\xdual}{x} - f^*(\xdual),~\text{where}~f^*\text{ is the convex conjugate of }f\,.
\end{equation}
In particular, \cite{CohenST21} showed that applying extragradient methods \cite{Nemirovski04, Nesterov07} and analyzing them through a condition the paper refers to as \emph{relative Lipschitzness} recovers an accelerated gradient query complexity for computing \eqref{eq:regintro}, which was known to be optimal \cite{Nesterov03}.

Both the Fenchel game \cite{AbernethyLLW18, WangA18} and the relative Lipschitzness property (independently proposed in \cite{StonyakinaTGDADPAP20}) have a longer history, discussed in Section~\ref{ssec:prev}. This work is particularly motivated by their synthesis in \cite{CohenST21}, which used these tools to provide a general recipe for designing accelerated methods. This recipe consists of the following ingredients.
\begin{enumerate}
\item Choose a primal-dual formulation of an optimization problem and a regularizer, $r$.
\item Bound iteration costs, i.e.\ the cost of implementing mirror steps with respect to $r$.
\item Bound the relative Lipschitzness of the gradient operator of the problem with respect to $r$.
\end{enumerate}
In \cite{CohenST21}, this recipe was applied with \eqref{eq:regintropd} as the primal-dual formulation and $r(x, x^*) \defeq \frac \mu 2 \norm{x}^2 + f^*(x^*)$. Further, it was shown that each iteration could be implemented (implicitly) with $O(1)$ gradient queries and that the gradient operator $\gop$ of the objective~\eqref{eq:regintropd} is $O(\sqrt{L/\mu})$-relatively Lipschitz with respect to $r$. Combining these ingredients gave the accelerated rate for \eqref{eq:regintropd}; we note that additional tools were further developed in \cite{CohenST21} for other settings including accelerated coordinate-smooth optimization (see \Cref{ssec:fsintro}). %

In this paper, we broaden the primal-dual extragradient approach of \cite{CohenST21} and add new recipes to the optimization cookbook. As a result, we obtain methods with improved rates for minimax optimization, finite sum optimization, and minimax finite sum optimization. We follow a similar recipe as \cite{CohenST21} but change the ingredients with different primal-dual formulations, regularizers, extragradient methods, and analyses. In the following Sections~\ref{ssec:mmintro},~\ref{ssec:fsintro}, and~\ref{ssec:mmfsintro}, we discuss each problem family, our results and approach, and situate them in the relevant literature. 

\subsection{Minimax optimization}\label{ssec:mmintro}

In Section~\ref{sec:minimax}, we study separable convex-concave minimax optimization problems of the form\footnote{Our results in Section~\ref{sec:minimax} apply generally to non-twice differentiable, gradient Lipschitz $h$, but we use these assumptions for simplicity in the introduction. All norms are Euclidean (see Section~\ref{sec:prelims} for relevant definitions).}
\begin{gather}
\min_{x \in \xset} \max_{y \in \yset} \Fmm(x, y) \defeq f(x) + h(x, y) - g(y),\label{eq:smmintro}
\end{gather}
where $f$ is $L\x$-smooth and $\mu\x$-strongly convex, $g$ is $L\y$-smooth and $\mu\y$-strongly convex, and $h$ is convex-concave and twice-differentiable with $\norm{\nabla^2_{xx} h} \le \Lam\xx$, $\norm{\nabla^2_{xy} h} \le \Lam\xy$, and $\norm{\nabla^2_{yy} h} \le \Lam\yy$. Our goal is to compute a pair of points $(x, y)$ with bounded duality gap with respect to $\Fmm$: $\gap_{\Fmm}(x, y) \le \eps$ (see Section~\ref{sec:prelims} for definitions).

The problem family \eqref{eq:smmintro} contains as a special case the following family of convex-concave minimax optimization problems with bilinear coupling (with $\Lam\xx = \Lam\yy = 0$ and $\Lam\xy = \norm{A}$):
\begin{equation}\label{eq:bmmintro}
\min_{x \in \xset} \max_{y \in \yset} f(x) + \Par{y^\top \ma x - \inprod{b}{y} + \inprod{c}{x}} - g(y).
\end{equation}
Problem \eqref{eq:bmmintro} has been widely studied in the optimization literature, dating at least to the classic work of \cite{ChambolleP11}, which used \eqref{eq:bmmintro} to relax convex optimization with affine constraints related to imaging inverse problems. Problem \eqref{eq:bmmintro} also encapsulates convex-concave quadratics and has been used to model problems in reinforcement learning \cite{DuCLXZ17} and decentralized optimization \cite{KovalevSR20}.

\paragraph{Our results.} We give the following result on solving \eqref{eq:smmintro}.

\begin{theorem}[informal, cf.\ Theorem~\ref{thm:main}]\label{thm:mmintro}
There is an algorithm that, given $(x_0, y_0) \in \xset \times \yset$ satisfying $\gap_{\Fmm}(x_0, y_0) \le \Eps_0$, returns $(x, y)$ with $\gap_{\Fmm}(x, y) \le \eps$ using $T$ gradient evaluations to $f$, $h$, and $g$ for
\[T = O\Par{\kappa_{\textup{mm}} \log \Par{\frac{\kappa_{\textup{mm}}\Eps_0}{\eps}}},\text{ for } \kappa_{\textup{mm}} \defeq \sqrt{\frac{L\x}{\mu\x}} + \sqrt{\frac{L\y}{\mu\y}} + \frac{\Lam\xx}{\mu\x} + \frac{\Lam\xy}{\sqrt{\mu\x\mu\y}} + \frac{\Lam\yy}{\mu\y}.\]
\end{theorem}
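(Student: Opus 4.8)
The plan is to instantiate the primal-dual extragradient recipe of \cite{CohenST21}: lift \eqref{eq:smmintro} to a convex-concave saddle problem over an enlarged set of variables, run a mirror-prox-type extragradient method against a carefully chosen regularizer $r$, and establish the two recipe ingredients — an $O(1)$-gradient-per-iteration cost and an $O(\kappa_{\textup{mm}})$ bound on the relative Lipschitzness of the gradient operator with respect to $r$ — along with a relative strong monotonicity property that upgrades convergence from sublinear to linear.

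\emph{Step 1 (lifting).} Peel off the strongly convex quadratics by writing $f = \tfrac{\mu\x}{2}\norm{\cdot}^2 + \tf$ and $g = \tfrac{\mu\y}{2}\norm{\cdot}^2 + \tg$, where $\tf$ is convex and $(L\x - \mu\x)$-smooth and $\tg$ is convex and $(L\y - \mu\y)$-smooth, and dualize $\tf, \tg$ through their conjugates. This yields the saddle problem
\[\min_{x, \ydual}\ \max_{\xdual, y}\ \tfrac{\mu\x}{2}\norm{x}^2 + \inprod{\xdual}{x} - \tf^*(\xdual) + h(x, y) - \tfrac{\mu\y}{2}\norm{y}^2 - \inprod{\ydual}{y} + \tg^*(\ydual),\]
whose sign-adjusted gradient operator I call $G$, acting on $z := (x, \ydual, \xdual, y)$; its unique saddle point $z^\star$ projects onto the unique saddle point of $\Fmm$ (which exists by strong convexity-concavity of $\Fmm$). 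Choose the regularizer $r(z) := \tfrac{\mu\x}{2}\norm{x}^2 + \tfrac{\mu\y}{2}\norm{y}^2 + \tf^*(\xdual) + \tg^*(\ydual)$, and write $G = \nabla r + Q$, where $Q$ gathers the bilinear couplings $\inprod{\xdual}{x}$, $\inprod{\ydual}{y}$ and the gradients $\nabla_x h$, $\nabla_y h$; the operator $Q$ is monotone — its $h$-block by convex-concavity of $h$, its bilinear blocks because they are antisymmetric.

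\emph{Steps 2 and 3 (iteration cost; relative Lipschitzness).} A mirror step for $r$ splits across blocks. In the $x$- and $y$-blocks it is a proximal step against a Euclidean quadratic, available in closed form; in the $\xdual$-block a proximal step against the Bregman divergence of $\tf^*$ has optimality condition $\nabla\tf^*(\xdual_{\mathrm{new}}) = \nabla\tf^*(\xdual_{\mathrm{old}}) - (\text{linear shift})$, so by maintaining the primal-space point $\nabla\tf^*(\xdual_{\mathrm{old}})$ one implements it with a single gradient of $\tf$ (hence of $f$); symmetrically for $\ydual$ and $g$. Since each evaluation of $Q$ costs $O(1)$ gradients of $h$, each extragradient iteration uses $O(1)$ gradient queries to $f$, $g$, $h$. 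For relative Lipschitzness, the three-point identity gives $\inprod{\nabla r(w) - \nabla r(u)}{w - z} = V_r(z, w) + V_r(w, u) - V_r(z, u) \le V_r(z, w) + V_r(w, u)$, so $\nabla r$ is $1$-relatively Lipschitz; for $Q$, bound each of its inner-product terms by Cauchy--Schwarz and AM--GM, converting Euclidean norms into Bregman divergences via $V_r \ge \tfrac{\mu\x}{2}\norm{\cdot}^2$ on the $x$-block, $V_r \ge \tfrac{1}{2(L\x - \mu\x)}\norm{\cdot}^2$ on the $\xdual$-block, and so on. The $\inprod{\xdual}{x}$-coupling then contributes coefficient $\sqrt{(L\x - \mu\x)/\mu\x}$, the $\inprod{\ydual}{y}$-coupling $\sqrt{(L\y - \mu\y)/\mu\y}$, and the three blockwise gradient-Lipschitz bounds on $h$ contribute $\Lam\xx/\mu\x$, $\Lam\xy/\sqrt{\mu\x\mu\y}$, $\Lam\yy/\mu\y$ (when $h$ is not twice differentiable these blockwise bounds are taken as the hypothesis; otherwise they follow from the Hessian bounds by integration, using symmetry of $\nabla^2_{xy} h$ for the cross term). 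Summing, $G = \nabla r + Q$ is $O(\kappa_{\textup{mm}})$-relatively Lipschitz with respect to $r$; setting $z = u$ above and using monotonicity of $Q$ gives $\inprod{G(w) - G(u)}{w - u} \ge V_r(w, u) + V_r(u, w)$, i.e.\ $G$ is $1$-relatively strongly monotone with respect to $r$.

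\emph{Step 4 (convergence and the duality-gap translation).} Feeding these two properties into the standard linearly convergent extragradient / mirror-prox analysis with step size $\Theta(1/\kappa_{\textup{mm}})$ yields $V_r(z^\star, z_{k+1}) \le (1 - \Omega(1/\kappa_{\textup{mm}})) V_r(z^\star, z_k)$. Initializing the dual variables at $\nabla\tf(x_0)$ and $\nabla\tg(y_0)$ (one extra gradient each) and using Bregman conjugacy identities, $V_r(z^\star, z_0) \le \mathrm{poly}(\kappa_{\textup{mm}}) \cdot \Eps_0$; in the other direction, strong convexity/smoothness of the primal and dual envelopes bound $\gap_{\Fmm}(x_k, y_k) \le \mathrm{poly}(\kappa_{\textup{mm}}) \cdot V_r(z^\star, z_k)$ since $(x_k, y_k)$ is the $(x, y)$-projection of $z_k$. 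Running until $V_r(z^\star, z_k) \le \eps / \mathrm{poly}(\kappa_{\textup{mm}})$ therefore requires $O(\kappa_{\textup{mm}} \log(\kappa_{\textup{mm}} \Eps_0 / \eps))$ iterations of $O(1)$ gradients each, which is the claimed bound. The main obstacle is Step 3: the regularizer must be calibrated so that the relative-Lipschitz constant is \emph{exactly} the sum defining $\kappa_{\textup{mm}}$ — this forces the quadratic part to have coefficients $\mu\x, \mu\y$ and the dual part to be $\tf^*, \tg^*$ with no rescaling — and the cross term must be controlled by the correct blockwise Lipschitz constant of $h$ so that it enters as $\Lam\xy/\sqrt{\mu\x\mu\y}$ rather than a weaker product; keeping only logarithmic losses in the gap-to-divergence conversions of Step 4 is the other delicate point.
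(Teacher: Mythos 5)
Your proposal is correct and follows essentially the same route as the paper: the same conjugate lifting of the peeled-off $f,g$, the same regularizer $r$, the same decomposition of the gradient operator into $\nabla r$ plus the monotone bilinear/$h$ parts, the same implicit implementation of the dual mirror steps by maintaining primal preimages, the same relative-Lipschitzness constants obtained via Cauchy--Schwarz/Young's with the $\sqrt{\mu\x/\mu\y}$ balancing, and the same $1$-strong monotonicity plus initialization/termination conversions feeding into strongly monotone mirror prox. The only point the paper is slightly more careful about is restricting the dual blocks to the image of $\nabla f$, $\nabla g$ (its $\zset_{\textup{alg}}$) when invoking strong convexity of the conjugates, which your argument implicitly also satisfies since the iterates are maintained as gradients.
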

In the special case of \eqref{eq:bmmintro}, Theorem~\ref{thm:mmintro} matches a lower bound of \cite{ZhangHZ19}, which applies to the family of quadratic minimax problems obeying our smoothness and strong convexity bounds. More generally, Theorem~\ref{thm:mmintro} matches the lower bound whenever $\Lam\xx$ and $\Lam\yy$ are sufficiently small compared to the remaining parameters, improving prior state-of-the-art rates \cite{WangL20} in this regime. 

By applying reductions based on explicit regularization used in \cite{LinJJ20}, Theorem~\ref{thm:mmintro} also yields analogous accelerated rates depending polynomially on the desired accuracy when we either $f$, $g$, or both are not strongly convex. For conciseness, in this paper we focus on the strongly convex-concave regime discussed previously in this section.

\paragraph{Our approach.} Our algorithm for solving~\eqref{eq:smmintro} is based on the simple observation that minimax problems with the separable structure can be effectively ``decoupled'' by using convex conjugation on the components $f$ and $g$. In particular, following a similar recipe as the one in \cite{CohenST21} for smooth convex optimization, we rewrite (an appropriate regularized formulation of) the problem \eqref{eq:smmintro} using convex conjugates as follows:
\[\min_{x \in \xset, \ydual \in \yset^*} \max_{y \in \yset, \xdual \in \xset^*} \frac{\mu\x}{2} \norm{x}^2 - \frac{\mu\y}{2} \norm{y}^2 + \inprod{\xdual}{x} - \inprod{\ydual}{y} + h(x, y) - f^*(\xdual) + g^*(\ydual).\]
Further, we define the regularizer $r(x,y,\xdual,\ydual) \defeq \frac{\mu\x}{2} \norm{x}^2 + \frac{\mu\y}{2} \norm{y}^2 + f^*(\xdual)+ g^*(\ydual)$. Finally, we apply an extragradient method for strongly monotone operators to our problem, using this regularizer. As in \cite{CohenST21} we demonstrate efficient implementability, and analyze the relative Lipschitzness of the problem's gradient operator with respect to $r$, yielding Theorem~\ref{thm:mmintro}. In the final gradient oracle complexity, our method obtains the accelerated trade-off between primal and dual blocks for $\frac{\mu\x}{2} \norm{x}^2 + \inprod{\xdual}{x} - f^*(\xdual)$ and its $\yset$ analog, for the separable parts $f$ and $g$ respectively. It also obtains an unaccelerated rate for the $h$ component, by bounding the relative Lipschitzness corresponding to $h$ via our assumptions.

\paragraph{Prior work.} Many recent works obtaining improved rates for minimax optimization under smoothness and strong convexity restrictions concentrate on a more general family of problems of the form:
\begin{equation}\label{eq:mmintro}
\min_{x \in \xset} \max_{y \in \yset} F(x, y).
\end{equation}
Typically, these works assume (for simplicity, assuming $F$ is twice-differentiable), $\nabla^2_{xx} F$ is bounded between $\mu\x\id$ and $\Lam\xx\id$ everywhere, $\nabla^2_{yy} F$ is bounded between $\mu\y\id$ and $\Lam\yy\id$ everywhere, and $\nabla^2_{xy} F$ is operator norm bounded by $\Lam\xy$. It is straightforward to see that \eqref{eq:mmintro} contains \eqref{eq:smmintro} as a special case, by setting $f \gets \frac {\mu\x} 2 \norm{\cdot}^2$, $g \gets \frac {\mu\y} 2 \norm{\cdot}^2$, and $h \gets F - f + g$. 

For \eqref{eq:mmintro}, under gradient access to $F$, the works \cite{LinJJ20, WangL20, CohenST21} presented different approaches yielding a variety of query complexities. Letting $\Lam^{\max} \defeq \max\Par{\Lam\xx, \Lam\xy, \Lam\yy}$, these complexities scaled respectively as\footnote{$\tO$ hides logarithmic factors throughout, see Section~\ref{sec:prelims}.}
\[\tO\Par{\sqrt{\frac{\max\Par{\Lam\xx, \Lam\xy, \Lam\yy}^2}{\mu\x\mu\y}}},\; \tO\Par{\sqrt{\frac{\Lam\xx}{\mu\x}} + \sqrt{\frac{\Lam\yy}{\mu\y}} + \sqrt{\frac{\Lam\xy\Lam^{\max}}{\mu\x\mu\y}} },\; \tO\Par{\frac{\Lam\xx}{\mu\x} + \frac{\Lam\yy}{\mu\y} + \frac{\Lam\xy}{\sqrt{\mu\x\mu\y}}}.\]
The state-of-the-art rate (ignoring logarithmic factors) is due to \cite{WangL20}, which obtained the middle gradient query complexity above.
Theorem~\ref{thm:mmintro} matches the rate obtained by \cite{CohenST21} and improves upon \cite{LinJJ20} in some regimes.
Notably, Theorem~\ref{thm:mmintro} never improves upon \cite{WangL20} in the general regime, up to logarithmic factors. On the other hand, the method in Theorem~\ref{thm:mmintro} uses only a single loop, as opposed to the multi-loop methods in \cite{LinJJ20, WangL20} which lose logarithmic factors. 

Up to logarithmic factors, there is a gap between \cite{WangL20} and the lower bound of \cite{ZhangHZ19} only when $\Lam\xy \ll \Lam^{\max}$. We close this gap for minimax problems admitting the separable structure \eqref{eq:bmmintro}. In the special case of quadratic problems,  prior work, \cite{WangL20}, proposed a recursive approach which obtained a rate comparable to that of Theorem~\ref{thm:mmintro}, allbeit larger by subpolynomial factors.

\paragraph{Concurrent work.} A pair of independent and concurrent works \cite{KovalevGR21, ThekumparampilHO22} obtained variants of Theorem~\ref{thm:mmintro}. Their results were stated under the restricted setting of bilinear coupling \eqref{eq:bmmintro}, but they each provided alternative results under (different) weakenings of our strong convexity assumptions. The algorithm of \cite{ThekumparampilHO22} is closer to the one developed in this paper (also going through a primal-dual lifting), although the ultimate methods and analyses are somewhat different. Though our results were obtained independently, our presentation was informed by a reading of \cite{KovalevGR21, ThekumparampilHO22} for a comparison. 

\subsection{Finite sum optimization}\label{ssec:fsintro}

In Section~\ref{sec:finitesum}, we study finite sum optimization problems of the form
\begin{equation}\label{eq:fsintro}\min_{x \in \xset}\Ffs(x) \defeq \nsin f_i(x),\end{equation}
where $f_i$ is $L_i$-smooth for each $i \in [n]$, and $\nsin f_i$ is $\mu$-strongly convex. We focus on the strongly convex regime; through generic reductions \cite{ZhuH16}, our results yield accelerated rates depending polynomially on the desired accuracy, without the strong convexity assumption.

Methods for solving \eqref{eq:fsintro} have garnered substantial interest because of their widespread applicability to empirical risk minimization problems over a dataset of $n$ points, which encapsulate a variety of (generalized) regression problems in machine learning (see \cite{BottouCN18} and references therein). 

\paragraph{Our results.} We give the following result on solving \eqref{eq:fsintro}.
\begin{theorem}[informal, cf.\ Theorem~\ref{thm:mainfs}, Corollary~\ref{cor:mainfsunreg}]\label{thm:fsintro}
 There is an algorithm that, given $x_0 \in \xset$ satisfying $\Ffs(x_0) - \Ffs(x_\star) \le \eps_0$ where $x_\star$ minimizes $\Ffs$, returns $x\in \xset$ with $\E \Ffs(x) - \Ffs(x_\star) \le \eps$ using $T$ gradient evaluations (each to some $f_i$) for
	\[T = O\Par{\kappa_{\textup{fs}} \log \Par{\frac{\kappa_{\textup{fs}}\Eps_0}{\eps}}},\text{ for } \kappa_{\textup{fs}} \defeq n + \ssin \frac{\sqrt{L_i}}{\sqrt{n\mu}}.\]
\end{theorem}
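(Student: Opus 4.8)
The plan is to follow the three-step primal-dual extragradient recipe recalled in the introduction, instantiated with a lifting of \eqref{eq:fsintro} that exposes the per-component structure. First I would dualize each $f_i$: since each $f_i$ is closed convex, $f_i = f_i^{**}$, so
\[\min_{x \in \xset} \Ffs(x) \;=\; \min_{x \in \xset}\ \max_{\{u_i\}_{i \in [n]}}\ \frac 1 n \sum_{i \in [n]} \Par{\inprod{u_i}{x} - f_i^*(u_i)}.\]
The gradient operator of this saddle problem is $\gop(x, \{u_i\}) = \Par{\frac 1 n \sum_i u_i;\; \bigl\{\tfrac 1 n (\nabla f_i^*(u_i) - x)\bigr\}_i}$, and a one-line computation shows the $x$--$u$ cross terms cancel, so $\gop$ is monotone and, since $f_i^*$ is $\tfrac 1 {L_i}$-strongly convex, it is strongly monotone when restricted to each dual block $u_i$ (modulus $\tfrac 1 {n L_i}$), but only linear --- hence not strongly monotone --- in the shared primal block $x$. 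For the regularizer I would take $r(x, \{u_i\}) = \frac \mu 2 \norm{x}^2 + \frac 1 n \sum_{i \in [n]} w_i f_i^*(u_i)$ for weights $w_i$ to be tuned; although $\frac \mu 2 \norm{x}^2$ does not appear in the saddle objective itself, using it as the primal mirror map is what lets the convergence analysis import $\mu$-strong convexity of $\Ffs$.

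Second, I would run a \emph{randomized} block extragradient method rather than a full one, in the spirit of the accelerated coordinate-smooth tools of \cite{CohenST21}: each iteration samples an index $i$ with probability $p_i$ and takes a mirror-prox step updating the primal block $x$ together with the single sampled dual block $u_i$, freezing the other $u_j$. Such a step costs $O(1)$ gradient evaluations, because the $\nabla f_i^*$ in the update is evaluated implicitly through one gradient query to $f_i$ exactly as in \cite{CohenST21}, and because the only globally coupled quantity, the running average $\frac 1 n \sum_i u_i$, changes in a single coordinate per step and is maintained in $O(1)$ amortized time (with an occasional $O(n)$ refresh, amortized away after an $O(n)$ initialization). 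This is precisely where the primal-dual lifting plays the role of SVRG-style variance reduction: the primal update uses a low-variance estimate of $\nabla \Ffs$ built from the current dual iterates. The remaining ingredient is a relative-Lipschitzness bound for the block-restricted operator with respect to $r$; with $w_i$ chosen appropriately I expect this to reduce, block by block, to a bound of order $\sqrt{L_i/(n\mu)}$ --- which fixes the $i$-th-block stepsize --- mirroring the $\sqrt{L/\mu}$ bound for the Fenchel game \eqref{eq:regintropd} in \cite{CohenST21}, since $(x, u_i)$ is itself a reweighted Fenchel game with smoothness $\sim L_i$ and curvature $\sim \mu$.

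Third, I would combine these. Choosing sampling probabilities $p_i \propto \frac 1 n + \sqrt{L_i/(n\mu)}$ (normalized) balances the requirement that every dual block be revisited a logarithmic number of times --- which alone forces the additive $n$ term, since even a perfectly conditioned block must be ``seen'' for its dual variable to converge --- against the accelerated per-block contributions $\sqrt{L_i/(n\mu)}$. I would then track a potential $\Phi_t = \Ffs(x_t) - \Ffs(x_\star)$ plus a weighted sum of dual Bregman divergences of $r$, and show $\E[\Phi_{t+1}\mid\Phi_t] \le (1 - \Omega(1/\kappa_{\textup{fs}}))\Phi_t$: the extragradient step yields the usual negative inner-product and Bregman-telescoping terms, controlled by monotonicity of $\gop$ and the relative-Lipschitzness bound at the chosen stepsizes, while the leftover $\norm{x_t - x_\star}^2$-type term is absorbed via $\Ffs(x_t) - \Ffs(x_\star) \ge \frac \mu 2 \norm{x_t - x_\star}^2$. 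Running $T = \tO(\kappa_{\textup{fs}})$ steps, with the logarithm's argument $\kappa_{\textup{fs}}\eps_0/\eps$ coming from converting the warm-start gap $\eps_0$ into an initial-potential bound (bounding the initial dual error by smoothness costs the extra $\kappa_{\textup{fs}}$ factor), gives Theorem~\ref{thm:fsintro}; the non-strongly-convex variant follows from the standard regularization reduction of \cite{ZhuH16}.

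The main obstacle, I expect, is twofold and intertwined. The first half is obtaining $n + \sum_i \sqrt{L_i/(n\mu)}$ with these two pieces \emph{added} rather than multiplied: this is exactly where non-uniform smoothness must be exploited through the joint choice of $w_i$ and $p_i$, and any crude, smoothness-oblivious choice would recover only the $n + \sqrt{n\bar L/\mu}$ rate of accelerated SVRG and Katyusha (with $\bar L = \frac 1 n \sum_i L_i$; note Cauchy--Schwarz gives $\frac 1 {\sqrt{n\mu}}\sum_i \sqrt{L_i} \le \sqrt{n\bar L/\mu}$, with a $\sqrt n$ gap when the $L_i$ are spread out). The second half is establishing linear convergence at all, given that $\gop$ is strongly monotone only on the dual blocks: the potential argument must borrow curvature from $\mu$-strong convexity of the \emph{sum} $\Ffs$ --- which need not be a sum of strongly convex pieces --- while simultaneously absorbing the variance injected by updating one dual coordinate per step, all with constants consistent with the relative-Lipschitzness stepsizes. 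Making these line up is the crux of the calculation.
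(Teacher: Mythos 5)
Your lifting, regularizer, and sampling distribution essentially match the paper's: Algorithm~\ref{alg:fsop} works with the primal-dual form \eqref{eq:pdfs}, the regularizer \eqref{eq:rfsdef} (with uniform weights, no tuning of $w_i$ needed), sampling probabilities $p_i = \frac{\sqrt{L_i}}{2\sum_{j}\sqrt{L_j}} + \frac{1}{2n}$ as in \eqref{eq:pdef}, and an expected relative Lipschitzness bound $\lam = 2n + 2\sum_{j}\sqrt{L_j}/\sqrt{n\mu}$ (Lemma~\ref{lem:exrl}), which is exactly where the additive form $n + \ssin \sqrt{L_i/(n\mu)}$ comes from. However, there are two concrete gaps in how you plan to convert this into linear convergence.

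First, strong monotonicity. You keep $\frac{\mu}{2}\norm{x}^2$ only as the primal mirror map and plan to ``borrow'' curvature from strong convexity of the sum inside the potential argument; you yourself flag this as the unresolved crux. The paper sidesteps it: it solves the regularized formulation \eqref{eq:regfs}, so $\frac{\mu}{2}\norm{x}^2$ sits \emph{in the saddle objective} \eqref{eq:pdfs} (after reparameterizing $f_i \gets f_i - \frac{\mu}{2}\norm{\cdot}^2$ when the summands are individually strongly convex), making $\gop$ $1$-strongly monotone with respect to $r$ (Lemma~\ref{lem:smfs}) with no borrowing needed. When only the sum is strongly convex, the paper instead wraps the solver in an inexact proximal-point outer loop (Algorithm~\ref{alg:redx-outer}, Corollary~\ref{cor:mainfsunreg}) that adds $\frac{\mu}{4}V_{x_k}(\cdot)$ to each subproblem.

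Second --- and this is the genuinely missing idea rather than a flagged difficulty --- your plan to prove a per-iteration contraction $\E[\Phi_{t+1}\mid\Phi_t] \le (1-\Omega(1/\kappa_{\textup{fs}}))\Phi_t$ faces a real obstruction: the unbiasedness condition for the randomized estimator (first line of \eqref{eq:giprops}) holds at the ``aggregate point'' $\bw_s$ of \eqref{eq:bzdef}, whose $j$-th dual block is what the iterate \emph{would have been} had $j$ been sampled, not at the actual iterate $w_{s+1}$. So the strongly monotone per-step contraction of Proposition~\ref{prop:smmp} is not directly available for the randomized method; as the paper notes, Proposition~\ref{prop:rmp} yields only an averaged regret bound. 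The paper's workaround (Lemma~\ref{lem:fsophalfdiv}) is a phase-restart scheme: run $S = 2\lam$ steps of non-strongly-monotone randomized mirror prox, output $\bw_\sigma$ for a uniformly random $\sigma$, apply strong monotonicity once to convert expected regret into $\E V^r_{\bw_\sigma}(z_\star) \le \half V^r_{w_0}(z_\star)$, and recurse $O(\log(\kappa_{\textup{fs}}\Eps_0/\eps))$ times. You would need either this subsample-and-restart structure or a new per-iterate argument that handles the aggregate-point mismatch.
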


\paragraph{Our approach.} Our algorithm for solving~\eqref{eq:fsintro} builds upon an accelerated coordinate descent developed in \cite{CohenST21}, which developed an analysis of a randomized extragradient method to do so. We consider an equivalent primal-dual formulation of (a regularized variant of) \eqref{eq:fsintro}, inspired by analogous developments in the ERM literature \cite{Shalev-Shwartz013, Shalev-Shwartz016}:
\[\min_{x \in \xset} \max_{\{x^*_i\}_{i \in [n]} \subset \xset^*} \frac{\mu} 2 \norm{x}^2 + \frac 1 n \ssin\Par{\inprod{x^*_i}{x} - f^*_i(x^*_i)}.\]
Our algorithm then solves this regularized primal-dual game to high precision.  

A key building block of our method is a randomized extragradient method which is compatible with strongly monotone problems. To this end, we extend the way the randomized extragradient method is applied in \cite{CohenST21}, which does not directly yield a high-precision guarantee. We proceed as follows: for roughly $\kappa_{\textup{fs}}$ iterations (defined in Theorem~\ref{thm:fsintro}) of our method, we run the non-strongly monotone randomized mirror prox method of \cite{CohenST21} to obtain a regret bound. We then subsample a random iterate, which we show halves an appropriate potential in expectation via our regret bound and strong monotonicity. We then recurse on this procedure to obtain a high-precision solver.

\paragraph{Prior work.} Developing accelerated algorithms for \eqref{eq:fsintro} under our regularity assumptions has been the subject of a substantial amount of research effort in the community, see e.g.\ \cite{LinMH15, FrostigGKS15, Shalev-Shwartz016, Allen-Zhu17} and references therein. Previously, the state-of-the-art gradient query complexities (up to logarithmic factors) for \eqref{eq:fsintro} were obtained by \cite{LinMH15, FrostigGKS15, Allen-Zhu17},\footnote{There have been a variety of additional works which have also attained accelerated rates for either the problem \eqref{eq:fsintro} or its ERM specialization, see e.g.\ \cite{Defazio16, ZhangX17, LanLZ19, ZhouDSCLL19}. However, to the best of our knowledge these do not improve upon the state-of-the-art rate of \cite{Allen-Zhu17} in our setting.} and scaled as
\begin{equation}\label{eq:prevsgd}\tO\Par{n + \sqrt{\frac{\ssin L_i}{\mu}}}.\end{equation}
Rates such as \eqref{eq:prevsgd}, which scale as functions of $\ssin \frac{L_i} \mu$, arise in known \emph{variance reduction}-based approaches \cite{Johnson013, DefazioBL14, SchmidtRB17, Allen-Zhu17} due to their applications of a ``dual strong convexity'' lemma (e.g.\ Theorem 1, \cite{Johnson013} or Lemma 2.4, \cite{Allen-Zhu17}) of the form
\begin{align*}
	\norm{\nabla f_i(x) - \nabla f_i(\bx)}^2 \le 2L_i \Par{f_i(\bx) - f_i(x) - \inprod{\nabla f_i(x)}{\bx - x}}.
\end{align*}
The analyses of e.g.\ \cite{Johnson013, Allen-Zhu17} sample $i \in [n]$ proportional to $L_i$, allowing them to bound the variance of a resulting gradient estimator by a quantity related to the divergence in $\Ffs$.

The rate in \eqref{eq:prevsgd} is known to be optimal in the uniform smoothness regime \cite{WoodworthS16}, but in a more general setting its optimality is unclear. Theorem~\ref{thm:fsintro} shows that the rate can be improved for sufficiently non-uniform $L_i$. In particular, Cauchy-Schwarz shows that the quantity $\kappa_{\textup{fs}}$ is never worse than \eqref{eq:prevsgd}, and improves upon it by a factor asymptotically between $1$ and $\sqrt{n}$ when the $\bin{L_i}$ are non-uniform. Moreover, even in the uniform smoothness case, Theorem~\ref{thm:fsintro} matches the tightest rate in \cite{Allen-Zhu17} up to an additive $\log \kappa_{\text{fs}}$ term, as opposed to an additional multiplicative logarithmic overhead incurred by the reduction-based approaches of \cite{LinMH15, FrostigGKS15}.  

Our rate's improvement over \eqref{eq:prevsgd} is comparable to a similar improvement that was achieved previously in the literature on coordinate descent methods. In particular, \cite{LeeS13} first obtained a (generalized) partial derivative query complexity comparable to \eqref{eq:prevsgd} under coordinate smoothness bounds, which was later improved to a query complexity comparable to Theorem~\ref{thm:fsintro} by \cite{ZhuQRY16, NesterovS17}. Due to connections between coordinate-smooth optimization and empirical risk minimization (ERM) previously noted in the literature \cite{Shalev-Shwartz013, Shalev-Shwartz016}, it is natural to conjecture that the rate in Theorem~\ref{thm:fsintro} is achieveable for finite sums \eqref{eq:fsintro} as well. However, prior to our work (to our knowledge) this rate was not known, except in special cases e.g.\ linear regression \cite{AgarwalKKLNS20}. 

Our method is based on using a primal-dual formulation of \eqref{eq:fsintro} to design our gradient estimators. It attains the rate of Theorem~\ref{thm:mainfs} by sampling summands proportional to $\sqrt{L_i}$, trading off primal and dual variances through a careful coupling. It can be viewed as a modified dual formulation to the coordinate descent algorithm in \cite{CohenST21}, which used primal-dual couplings inspired by the fine-grained accelerated algorithms of \cite{ZhuQRY16, NesterovS17}. We believe our result sheds further light on the duality between coordinate-smooth and finite sum optimization, and gives an interesting new approach for algorithmically leveraging primal-dual formulations of finite sum problems.

\subsection{Minimax finite sum optimization}\label{ssec:mmfsintro}

In Section~\ref{sec:mmfs}, we study a family of minimax finite sum optimization problems of the form
\begin{gather}\min_{x \in \xset} \max_{y \in \yset} \Fmmfs(x, y) \defeq \nsin \Par{f_i(x) + h_i(x, y) - g_i(y)}. \label{eq:mmfsintro}
\end{gather}
We assume $f_i$ is $L\x_i$-smooth, $g_i$ is $L\y$-smooth, and $h_i$ is convex-concave and twice-differentiable with blockwise operator norm bounds $\Lam\xx_i$, $\Lam\xy_i$, and $\Lam\yy_i$ for each $i \in [n]$. We also assume the whole problem is $\mu\x$-strongly convex and $\mu\y$-strongly concave.

We propose the family \eqref{eq:mmfsintro} because it encapsulates \eqref{eq:mmintro} and \eqref{eq:fsintro}, and is amenable to techniques from solving both. Moreover, \eqref{eq:mmfsintro} is a natural description of instances of \eqref{eq:mmintro} which arise from primal-dual formulations of empirical risk minimization problems, e.g.\ \cite{ZhangX17, WangX17}. It also generalizes natural minimax finite sum problems previously considered in e.g.\ \cite{CarmonJST19}.
	
\paragraph{Our results.} We give the following result on solving \eqref{eq:mmfsintro}.

\begin{theorem}[informal, cf.\ Theorem~\ref{thm:mmfs}]\label{thm:mmfsintro} There is an algorithm that, given $(x_0, y_0) \in \xset \times \yset$ satisfying $\gap_{\Fmmfs}(x_0, y_0) \le \eps_0$, returns $(x, y)$ with $\E \gap_{\Fmmfs}(x, y) \le \eps$ using $T$ gradient evaluations, each to some $f_i$, $g_i$, or $h_i$, where
\begin{gather*}T = O\Par{\kappa_{\textup{mmfs}} \log\Par{\kappa_{\textup{mmfs}}}\log\Par{\frac{\kappa_{\textup{mmfs}} \Eps_0}{\eps}}},\\
\text{for } \kappa_{\textup{mmfs}} \defeq n + \frac{1}{\sqrt{n}} \sum_{i \in [n]} \Par{\sqrt{\frac{L\x_i}{\mu\x}} + \sqrt{\frac{L\y_i}{\mu\y}} + \frac{\Lam\xx_i}{\mu\x} + \frac{\Lam\xy_i}{\sqrt{\mu\x\mu\y}} + \frac{\Lam\yy_i}{\mu\y}}.\end{gather*}
\end{theorem}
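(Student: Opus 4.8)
The plan is to instantiate the same three-part recipe (primal-dual lifting, iteration cost, relative Lipschitzness) used for the minimax result, Theorem~\ref{thm:mmintro}, while randomizing over the $n$ summands as in the finite sum result, Theorem~\ref{thm:fsintro}. First I would dualize the separable components: using convex conjugation on each $f_i$ and each $g_i$ (after extracting quadratic regularizers from the strong convexity), rewrite a regularized form of \eqref{eq:mmfsintro} as the lifted saddle-point problem
\[
\min_{x \in \xset,\, \{y^*_i\}} \max_{y \in \yset,\, \{x^*_i\}}\ \frac{\mu\x}{2}\norm{x}^2 - \frac{\mu\y}{2}\norm{y}^2 + \frac 1 n \ssin \Par{ \inprod{x^*_i}{x} - f^*_i(x^*_i) - \inprod{y^*_i}{y} + g^*_i(y^*_i) + h_i(x,y) },
\]
equipped with the regularizer $r \defeq \frac{\mu\x}{2}\norm{x}^2 + \frac{\mu\y}{2}\norm{y}^2 + \frac 1 n \ssin \Par{ f^*_i(x^*_i) + g^*_i(y^*_i) }$. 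The associated gradient operator $\gop$ is monotone and strongly monotone relative to $r$ (each curved piece of the objective — the quadratics and the conjugates — is matched in $r$), and exactly as in the minimax case a bound on $\gap_{\Fmmfs}(x,y)$ at the recovered $(x,y)$-block follows from a bound on the $r$-Bregman divergence to the unique saddle point of the lifted problem.

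Second, I would run a randomized mirror-prox / extragradient method on $\gop$ with respect to $r$, organized into epochs as in Section~\ref{sec:finitesum}. Within an epoch I expect to run the non-strongly-monotone randomized mirror prox of \cite{CohenST21} for $\Theta(\kappa_{\textup{mmfs}}\log\kappa_{\textup{mmfs}})$ steps: at each step sample an index $i \in [n]$ with probability $p_i$ proportional to the $i$-th summand of $\kappa_{\textup{mmfs}}$, take an extragradient step in the shared variables $(x,y)$ using the estimator built from $\nabla h_i$ and the sampled dual coordinates (reweighted by $1/(np_i)$), and update only the block $(x^*_i, y^*_i)$. Combining the resulting regret bound with the strong monotonicity of $r$ should show that a uniformly random inner iterate halves the $r$-divergence potential to the saddle point in expectation; recursing this halving yields the high-precision guarantee, contributing the $\log(\kappa_{\textup{mmfs}}\Eps_0/\eps)$ factor (the extra $\kappa_{\textup{mmfs}}$ inside the log coming from warm-starting the dual blocks), while the $\Theta(\log\kappa_{\textup{mmfs}})$ inner iterations needed per epoch to contract the potential in the randomized regime account for the second logarithm.

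Third, two lemmas finish the argument. \emph{(i) Iteration cost.} A mirror step with respect to $r$ decomposes across blocks; the prox operation against $f^*_i$ (resp.\ $g^*_i$) is implemented implicitly with a single gradient query to $f_i$ (resp.\ $g_i$) as in \cite{CohenST21}, and only one $\nabla h_i$ is queried per step, so each iteration uses $O(1)$ gradient evaluations. \emph{(ii) Relative Lipschitzness.} With the sampling distribution above, the randomized operator is $O(\kappa_{\textup{mmfs}})$-relatively Lipschitz with respect to $r$: the separable blocks $f_i,g_i$ contribute the accelerated $\sqrt{L\x_i/\mu\x}+\sqrt{L\y_i/\mu\y}$ terms through the same primal-dual variance trade-off as in Theorems~\ref{thm:mmintro} and~\ref{thm:fsintro}, the coupling terms $h_i$ contribute the unaccelerated $\Lam\xx_i/\mu\x + \Lam\xy_i/\sqrt{\mu\x\mu\y} + \Lam\yy_i/\mu\y$ terms via the blockwise Hessian bounds, the $\frac1n\ssin$ averaging together with importance sampling converts $\ssin(\cdot)$ into $\frac{1}{\sqrt n}\ssin(\cdot)$, and the additive $n$ reflects the cost of a full sweep over the summands per epoch (as in variance reduction).

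The step I expect to be the main obstacle is the randomized relative-Lipschitzness bound (ii): one must control, in a single $r$-norm, the variance of one estimator that simultaneously (a) realizes the accelerated primal-dual trade-off for the separable blocks, (b) absorbs the $x$–$y$ cross-coupling from $\nabla h_i$ without the $\Lam\xy_i$ contribution degrading to a $\Lam^{\max}$-type term, and (c) remains compatible with the epoch/subsample/recurse reduction so that the extragradient regret still telescopes. Choosing the weights $p_i$ so that all three effects balance at exactly $\kappa_{\textup{mmfs}}$, and verifying that the strongly-monotone reduction loses only the two stated logarithms, is where essentially all the work lies; the lifting and the $O(1)$ iteration-cost bound are routine given the analogous arguments for \eqref{eq:smmintro} and \eqref{eq:fsintro}.
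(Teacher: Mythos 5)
Your lifting, regularizer, strong monotonicity, and $O(1)$ iteration-cost claims all match the paper, and the separable $f_i,g_i$ blocks are indeed handled exactly as you describe. The genuine gap is in the step you yourself flag as the main obstacle: the single-index estimator for the coupling terms $\bin{h_i}$ does not satisfy the hypotheses of randomized mirror prox, and no choice of sampling weights $p_i$ fixes this. The problem is bias, not just variance: if the half-step $(x,y)$-iterate $w_{s+1/2}(i)$ is produced using $\frac{1}{np_i}\nabla h_i$, then both the reweighted operator and its evaluation point depend on the sampled index, so $\E_i\big[\langle \tfrac{1}{np_i}\gop^{h_i}(w_{s+1/2}(i)), w_{s+1/2}(i)-u\rangle\big]$ is not of the form $\langle \gop^h(\bw), \bw - u\rangle$ for any aggregate point $\bw$ (unlike the separable blocks, where the shared primal half-step is deterministic given $w_s$ and only the sampled dual block moves). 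This is precisely the "no longer unbiased" obstruction the paper identifies, and it is why an expected relative Lipschitzness bound of $O(\kappa_{\textup{mmfs}})$ for the $h$-component cannot be established for your estimator: the $h$-operator acts entirely on the shared $(x,y)$ block, whose divergence in $r$ carries no $\tfrac1n$ scaling to absorb the $\tfrac{1}{np_i}$ reweighting in the way the dual blocks do for $f_i,g_i$.

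The paper's proof circumvents this with machinery your proposal omits: (i) four independently sampled indices per step ($j\sim p$ for $f$, $k\sim q$ for $g$, and $\ell,\ell'\sim r$ for the gradient and extragradient evaluations of $h$ respectively), with a variance-reduced anchor-point estimator $\gop^h(w_0) + \frac{1}{nr_\ell}(\nabla h_\ell(w)-\nabla h_\ell(w_0))$ so that resampling $\ell'$ restores unbiasedness conditioned on $w_{s+1/2}$; (ii) a new "partial variance" analysis of randomized mirror prox (Proposition~\ref{prop:newrmp}) tolerating an additive variance term $\rho\lam_1(V^r_{w_0}(w_\star)+V^r_{\bw_s}(w_\star))$, which yields a per-phase rate of roughly $n + \frac{1}{\sqrt n}\ssin(\sqrt{L_i\x/\mu\x}+\sqrt{L_i\y/\mu\y}) + (\lam^\xyfun)^2$ rather than the target; and (iii) an outer proximal-point loop on $\gamma$-regularized subproblems with $\gamma = \lam^\xyfun/\sqrt n$, which trades $n$ against $(\lam^\xyfun)^2/\gamma^2$ to recover the $\sqrt n\,\lam^\xyfun$ dependence. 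Your accounting of the two logarithms is a symptom of the missing outer loop: the extra $\log\kappa_{\textup{mmfs}}$ in the paper is the number of potential-halvings needed to solve each proximal subproblem accurately enough for the outer analysis (Proposition~\ref{prop:multiphase}), not an inner-contraction overhead of the randomized regime. Without (i)--(iii) the argument either produces a biased method or, at best, a rate containing $(\lam^\xyfun)^2$, which exceeds $\sqrt n\,\lam^\xyfun$ whenever $\lam^\xyfun > \sqrt n$.
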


The rate in Theorem~\ref{thm:mmfsintro} captures (up to a logarithmic factor) both of the rates in Theorems~\ref{thm:mmintro} and~\ref{thm:fsintro}, when \eqref{eq:mmfsintro} is appropriately specialized. It can be more generally motivated as follows. When $n$ is not the dominant term in Theorem~\ref{thm:fsintro}'s bound, the remaining term is $\sqrt{n}$ times the average rate attained by Nesterov's accelerated gradient method \cite{Nesterov83} on each summand in \eqref{eq:fsintro}. This improves upon the factor of $n$ overhead which one might naively expect from computing full gradients. In similar fashion, Theorem~\ref{thm:mmfsintro} attains a rate (up to an additive $n$, and logarithmic factors) which is $\sqrt{n}$ times the average rate attained by Theorem~\ref{thm:mmintro} on each summand in \eqref{eq:mmfsintro}.

\paragraph{Our approach.}

Our algorithm for solving~\eqref{eq:mmfsintro} is a natural synthesis of the algorithms suggested in \Cref{ssec:mmintro,ssec:fsintro}. However, to obtain our results we apply additional techniques to bypass complications which arise from the interplay between the minimax method and the finite sum method, inspired by \cite{CarmonJST19}. In particular, to obtain our tightest rate we would like to subsample the components in our gradient operator corresponding to $\bin{f_i}, \bin{g_i}, \bin{h_i}$ all at different frequencies when applying the randomized extragradient method. These different sampling distributions introduce dependencies between iterates, and make our randomized estimators no longer ``unbiased'' for the true gradient operator to directly incur the randomized extragradient analysis.

To circumvent this difficulty, we obtain our result via a partial decoupling, treating components corresponding to $\bin{f_i}$, $\bin{g_i}$ and those corresponding to $\bin{h_i}$ separately. For the first two aforementioned components, which are separable and hence do not interact, we pattern an expected relative Lipschitzness analysis for each block, similar to the finite sum optimization. For the remaining component $\{h_i\}_{i\in[n]}$, we develop a variance-reduced stochastic method which yields a relative variance bound. We put these pieces together in Proposition~\ref{prop:newrmp}, a new randomized extragradient method analysis, to give a method with a convergence rate of roughly
\begin{align*}n + \frac{1}{\sqrt{n}} \ssin\Par{\sqrt{\frac{L_i\x}{\mu\x}} + \sqrt{\frac{L_i\y}{\mu\y}}} + (\kappa_{\textup{mmfs}}^h)^2,
\text{ where } \kappa_{\textup{mmfs}}^h \defeq \nsin \Par{\frac{\Lam\xx_i}{\mu\x} + \frac{\Lam\xy_i}{\sqrt{\mu\x\mu\y}} + \frac{\Lam\yy_i}{\mu\y}}. \end{align*}
The dependence on all pieces above is the same as in Theorem~\ref{thm:mmfsintro}, except for the term corresponding to the $\bin{h_i}$. To improve this dependence, we wrap our solver in an ``outer loop'' proximal point method which solves a sequence of $\gamma$-regularized variants of~\eqref{eq:mmfsintro}. We obtain our final claim by trading off the terms $n$ and $(\kappa_{\textup{mmfs}}^h)^2$ through our choice of $\gamma$, which yields the accelerated convergence rate of Theorem~\ref{thm:mmfsintro}.

\paragraph{Prior work.} To our knowledge, there have been relatively few results for solving \eqref{eq:mmfsintro} under our fine-grained assumptions on problem regularity, although various stochastic minimax algorithms have been developed in natural settings \cite{JuditskyNT11, PalaniappanB16, HsiehIMM19, CarmonJST19, ChavdarovaGFL19, CarmonJST20, AlacaogluM21}. For the general problem of solving $\min_{x\in\xset}\max_{y\in\yset}\frac{1}{n}\sum_{i\in[n]}F_i(x,y)$ where $F_i$ is $L_i$-smooth and convex-concave, and the whole problem is $\mu\x$-strongly convex and $\mu\y$-strongly concave, perhaps the most direct comparisons are Section 5.4 of \cite{CarmonJST19} and Theorem 15 of \cite{TomininTBKGD21}. In particular, \cite{CarmonJST19} provided a high-precision solver using roughly
\[\tO\Par{n + \frac 1 {\sqrt{n}} \ssin \frac{L_i}{\mu}}\]
gradient queries, when $\mu\x = \mu\y = \mu$. This is recovered by Theorem~\ref{thm:mmfs} (possibly up to logarithmic factors) in the special setting of $f_i = g_i \gets 0$, $\mu\x = \mu\y \gets \mu$, and $\Lam\xx_i = \Lam\xy_i = \Lam\yy_i \gets L_i$. More generally, \cite{CarmonJST19} gave a result depending polynomially on the desired accuracy without the strongly convex-concave assumption, which follows from a variant of Theorem~\ref{thm:mmfs} after applying the explicit regularization in \cite{LinJJ20} that reduces to the strongly convex-concave case. 

Moreover, Theorem 15 of \cite{TomininTBKGD21} provided a high-precision solver using roughly
\[\tO\Par{n + \frac{1}{\sqrt{n}} \ssin \frac{L_i}{\sqrt{\mu\x\mu\y}}}\]
gradient queries. Our work recovers (and sharpens dependences in) this result for minimax finite sum problems where each summand has the bilinear coupling \eqref{eq:bmmintro}. In the more general setting where each summand only has a uniform smoothness bound, the \cite{TomininTBKGD21} result can be thought of as the accelerated finite sum analog of the main claim in \cite{LinJJ20}, which is incomparable to our Theorem~\ref{thm:mmintro}. In a similar way, the rate of \cite{TomininTBKGD21} is incomparable to Theorem~\ref{thm:mmfsintro}, and each improves upon the other in different parameter regimes. We believe designing a single algorithm which obtains the best of both worlds for \eqref{eq:mmfsintro} is an interesting future direction.

\subsection{Additional related work}\label{ssec:prev}

We give a brief discussion of several lines of work which our results build upon, and their connection with the techniques used in this paper.

\paragraph{Acceleration via primal-dual extragradient methods.} Our algorithms are based on \emph{extragradient methods}, a framework originally proposed by \cite{Korpelevich76} which was later shown to obtain optimal rates for solving Lipschitz variational inequalities in \cite{Nemirovski04, Nesterov07}. There have been various implementations of extragradient methods including mirror prox \cite{Nemirovski04} and dual extrapolation \cite{Nesterov07}; we focus on adapting the former in this work. Variations of extragradient methods have been studied in the context of primal-dual formulations of smooth convex optimization \cite{AbernethyLLW18, WangA18, CohenST21}, and are known to obtain optimal (accelerated) rates in this setting. In particular, the relative Lipschitzness analysis of acceleration in \cite{CohenST21} is motivated by developments in the bilinear setting, namely the area convexity framework of \cite{Sherman17}. We build upon these works by using primal-dual formulations to design accelerated algorithms in various settings beyond smooth convex optimization, namely \eqref{eq:mmintro}, \eqref{eq:fsintro}, and \eqref{eq:mmfsintro}.

\paragraph{Acceleration under relative regularity assumptions.} Our analysis builds upon a framework for analyzing extragradient methods known as \emph{relative Lipschitzness}, proposed independently by \cite{StonyakinaTGDADPAP20, CohenST21}. We demonstrate that this framework (and randomized variants thereof) obtains improved rates for primal-dual formulations beyond those studied in prior works. 

Curiously, our applications of the relative Lipschitzness framework reveal that the regularity conditions our algorithms require are weaker than standard assumptions of smoothness in a norm. In particular, several technical requirements of specific components of our algorithms are satisfied by setups with regularity assumptions generalizing and strengthening the \emph{relative smoothness} assumption of \cite{BauschkeBT17, LuFN18}. This raises interesting potential implications in terms of the necessary regularity assumptions for non-Euclidean acceleration, because relative smoothness is known to be alone insufficient for obtaining accelerated rates in general \cite{DragomirTAB19}. Notably, \cite{HanzelyRX18} also developed an acceleration framework under a strengthened relative smoothness assumption, which requires strengthened bounds on divergences between three points. We further elaborate on these points in Section~\ref{ssec:mmconv}, when deriving relative Lipschitzness bounds through weaker assumptions in Lemma~\ref{lem:smoothness_implications}. We focus on the Euclidean setup in this paper, but we believe an analogous study of non-Euclidean setups is interesting and merits future exploration.

\section{Preliminaries}
\label{sec:prelims}

\paragraph{General notation.} We use $\tO$ to hide logarithmic factors in problem regularity parameters, initial radius bounds, and target accuracies when clear from context. We denote $[n] \defeq \{i \in \N \mid i \le n\}$. Throughout the paper, $\xset$ (and $\yset$, when relevant) represent Euclidean spaces, and $\norm{\cdot}$ will mean the Euclidean norm in appropriate dimension when applied to a vector. For a variable on a product space, e.g.\ $z \in \xset \times \yset$, we refer to its blocks as $(z\x, z\y)$ when clear from context. For a bilinear operator $\ma: \xset \to \yset^*$, $\norm{\cdot}$ will mean the (Euclidean) operator norm, i.e.\
\[\norm{\ma} \defeq \sup_{\norm{x} = 1} \norm{\ma x} = \sup_{\norm{x} = 1} \sup_{\norm{y}  = 1} y^\top \ma x.\]

\paragraph{Complexity model.} Throughout the paper, we evaluate the complexity of methods by their gradient oracle complexity, and do not discuss the cost of vector operations (which typically are subsumed by the cost of the oracle). In Section~\ref{sec:minimax}, the gradient oracle returns $\nabla f$, $\nabla g$, or $\nabla h$ at any point; in Section~\ref{sec:finitesum} (respectively, Section~\ref{sec:mmfs}), the oracle returns $\nabla f_i$ at a point for some $i \in [n]$ (respectively, $\nabla f_i$, $\nabla g_i$, or $\nabla h_i$ at a point for some $i \in [n]$). 

\paragraph{Divergences.} The Bregman divergence induced by differentiable, convex $r$ is $V^r_x(x') \defeq r(x') - r(x) - \inprod{\nabla r(x)}{x' - x}$, for any $x,x'\in\xset$. For all $x$, $V^r_x$ is nonnegative and convex. Whenever we use no superscript $r$, we assume $r = \half \norm{\cdot}^2$ so that $V_x(x') = \half \norm{x - x'}^2$. Bregman divergences satisfy the equality
\begin{equation}\label{eq:threept}
\inprod{\nabla r(w) - \nabla r(z)}{w - u} = V^r_z(w) + V^r_w(u) - V^r_z(u). 
\end{equation}
We define the proximal operation in $r$ by
\[\Prox^r_x(\gop) \defeq \argmin_{x'\in\xset}\Brace{\inprod{\gop}{x'} + V^r_x(x')}.\]

\paragraph{Functions and operators.} We say $h: \xset \times \yset \to \R$ is convex-concave if its restrictions $h(\cdot, y)$ and $h(x, \cdot)$ are respectively convex and concave, for any $x \in \xset$ and $y \in \yset$. The duality gap of a pair $(x, y)$ is $\gap_h(x, y) \defeq \max_{y' \in \yset} h(x, y') - \min_{x' \in \xset} h(x', y)$; a saddle point is a pair $(x_\star, y_\star) \in \xset \times \yset$ with zero duality gap. 

We call operator $\gop: \zset \to \zset^*$ monotone if $\inprod{\gop(z) - \gop(z')}{z - z'} \ge 0$ for all $z, z' \in \zset$. We say $z_\star$ solves the variational inequality (VI) in $\gop$ if $\inprod{\gop(z_\star)}{z_\star - z} \le 0$ for all $z \in \zset$. We equip differentiable convex-concave $h$ with the ``gradient operator'' $\gop(x, y) \defeq (\nabla_x h(x, y), -\nabla_y h(x, y))$. The gradient of convex $f$ and the gradient operator of convex-concave $h$ are both monotone (see Appendix~\ref{apdx:sm}). Their VIs are respectively solved by any minimizers of $f$ and saddle points of $h$.

\paragraph{Regularity.} We say function $f: \xset \to \R$ is $L$-smooth if $\norm{\nabla f(x) - \nabla f(x')} \le L\norm{x - x'}$ for all $x, x' \in \xset$; if $f$ is twice-differentiable, this is equivalent to $(x'-x)^\top \nabla^2 f(x) (x'-x) \le L\norm{x'-x}^2$ for all $x, x' \in \xset$. We say differentiable function $f: \xset \to \R$ is $\mu$-strongly convex if $V^f_x(x') \ge \frac \mu 2 \norm{x - x'}^2$ for all $x, x' \in \xset$; if $f$ is twice-differentiable, this is equivalent to $(x'-x)^\top \nabla^2 f(x)(x'-x) \ge \mu \norm{x'-x}^2$ for all $x, x' \in \xset$. Finally, we say operator $\gop: \zset \to \zset^*$ is $m$-strongly monotone with respect to convex $r: \zset \to \R$ if for all $z, z' \in \zset$, 
\[\inprod{\gop(z) - \gop(z')}{z - z'} \ge m\inprod{\nabla r(z) - \nabla r(z')}{z - z'} = m\Par{V^r_z(z') + V^r_{z'}(z)}.\]

\paragraph{Convex conjugates.} The (Fenchel dual) convex conjugate of a convex  $f: \xset \to \R$ is denoted
\[f^*(x^*) \defeq \max_{x \in \xset} \inprod{x}{x^*} - f(x).\] 
We allow $f^*$ to take the value $\infty$. We recall the following facts about convex conjugates.
\begin{fact}\label{fact:dualsc}
Let $f: \xset \to \R$ be differentiable.
	\begin{enumerate}
		\item For all $x \in \xset$, $\nabla f(x) \in \argmax_{x^* \in \xset^*} \inprod{x^*}{x} - f^*(x^*)$.
		\item $(f^*)^* = f$.
		\item If $f^*$ is differentiable, for all $x \in \xset$, $\nabla f^*(\nabla f(x)) = x$.
		\item If $f$ is $L$-smooth, then for all $x, x' \in \xset$,
		\[f(x') - f(x) - \inprod{\nabla f(x)}{x' - x} \ge \frac 1 {2L}\norm{\nabla f(x') - \nabla f(x)}^2.\] 
		If $f$ is $\mu$-strongly convex, $f^*$ is $\frac 1 \mu$-smooth.
	\end{enumerate}
\end{fact}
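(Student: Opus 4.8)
The plan is to dispatch the four items in order, under the standing (and necessary) assumption that $f$ is convex as well as differentiable; since $f$ is finite-valued on all of the Euclidean space $\xset$, it is automatically continuous, hence closed and proper, so conjugacy is well behaved.

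\textbf{Items 1--3.} For item 1, I would begin from the Fenchel--Young inequality, which is immediate from the definition of $f^*$: for every $x$ and $x^*$, $f^*(x^*) \ge \inprod{x^*}{x} - f(x)$, i.e.\ $\inprod{x^*}{x} - f^*(x^*) \le f(x)$, so the maximum over $x^*$ of the left-hand side is at most $f(x)$. It then suffices to check that equality holds at $x^* = \nabla f(x)$, i.e.\ that $f^*(\nabla f(x)) = \inprod{x}{\nabla f(x)} - f(x)$: the map $u \mapsto \inprod{u}{\nabla f(x)} - f(u)$ defining $f^*(\nabla f(x))$ is concave and differentiable with gradient $\nabla f(x) - \nabla f(u)$, which vanishes at $u = x$, so $x$ is a global maximizer. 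This proves item 1, and the very same computation gives $(f^*)^*(x) = \sup_{x^*}\inprod{x^*}{x} - f^*(x^*) = \inprod{\nabla f(x)}{x} - f^*(\nabla f(x)) = f(x)$, which is item 2. For item 3, the map $x^* \mapsto \inprod{x^*}{x} - f^*(x^*)$ is concave, and when $f^*$ is differentiable it has gradient $x - \nabla f^*(x^*)$; since item 1 says this map is maximized at $x^* = \nabla f(x)$, its gradient vanishes there, giving $\nabla f^*(\nabla f(x)) = x$.

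\textbf{Item 4, the first inequality.} Fix $x$ and set $\phi(z) \defeq f(z) - \inprod{\nabla f(x)}{z}$; this is convex and $L$-smooth (its gradient differs from $\nabla f$ by a constant), and $\nabla\phi(x) = 0$, so $x$ globally minimizes $\phi$. From $L$-smoothness I get the descent lemma $\phi(w) \le \phi(z) + \inprod{\nabla\phi(z)}{w - z} + \frac{L}{2}\norm{w - z}^2$ (by integrating $\norm{\nabla\phi(z + t(w - z)) - \nabla\phi(z)} \le Lt\norm{w - z}$ along the segment), and plugging $w = z - \frac{1}{L}\nabla\phi(z)$ yields $\phi(x) = \min_z \phi(z) \le \phi(z) - \frac{1}{2L}\norm{\nabla\phi(z)}^2$. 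Unwinding $\phi(x) = f(x) - \inprod{\nabla f(x)}{x}$, $\phi(z) = f(z) - \inprod{\nabla f(x)}{z}$, $\nabla\phi(z) = \nabla f(z) - \nabla f(x)$, and renaming $z$ as $x'$, gives exactly the claimed bound.

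\textbf{Item 4, the second statement — the main point.} Now assume $f$ is $\mu$-strongly convex. Taking $x = 0$, $x' = u$ in the strong convexity inequality shows $f$ grows at least quadratically, hence is coercive; so for every $x^* \in \xset^*$ the concave continuous map $u \mapsto \inprod{x^*}{u} - f(u)$ attains its supremum at some $u^*$ with $\nabla f(u^*) = x^*$ by first-order optimality, and $u^*$ is unique because monotonicity of $\nabla f$ (a consequence of strong convexity) gives $\mu\norm{u^* - u'}^2 \le \inprod{\nabla f(u^*) - \nabla f(u')}{u^* - u'}$, which is $0$ if $\nabla f(u') = x^*$ as well. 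Hence $f^*$ is finite everywhere and $\nabla f$ is a bijection of $\xset$ onto $\xset^*$; writing $\Psi \defeq (\nabla f)^{-1}$, strong convexity rewritten in the variables $u = \nabla f(x)$, $v = \nabla f(x')$ reads $\inprod{u - v}{\Psi(u) - \Psi(v)} \ge \mu\norm{\Psi(u) - \Psi(v)}^2$, so Cauchy--Schwarz makes $\Psi$ a $\frac1\mu$-Lipschitz map. To finish I identify $\Psi$ with $\nabla f^*$: since $f^*(x^*) = \inprod{\Psi(x^*)}{x^*} - f(\Psi(x^*))$, comparing this expression at two points $x^*, y^*$ and using that $\Psi(y^*)$ (resp.\ $\Psi(x^*)$) is the maximizer at $y^*$ (resp.\ $x^*$) sandwiches $f^*(y^*) - f^*(x^*) - \inprod{\Psi(x^*)}{y^* - x^*}$ between $0$ and $\inprod{\Psi(y^*) - \Psi(x^*)}{y^* - x^*} \le \frac1\mu\norm{y^* - x^*}^2$; this bound is $o(\norm{y^* - x^*})$, so $f^*$ is differentiable with $\nabla f^* = \Psi$, which is $\frac1\mu$-Lipschitz, i.e.\ $f^*$ is $\frac1\mu$-smooth. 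I expect this last item to be the only step demanding real care: everything before it is a one-line manipulation, but here one must first upgrade ``$f$ is strongly convex'' to ``$f^*$ is finite and differentiable everywhere'' — via coercivity of $f$ and surjectivity of $\nabla f$ — before the clean Cauchy--Schwarz argument can even be phrased.
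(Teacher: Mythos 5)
Your proofs are correct. Note, though, that the paper does not actually prove this fact: it dispatches items 1--3 to Chapter 11 of Rockafellar, the first half of item 4 to Appendix A of \cite{CohenST21}, and the second half to \cite{KakadeST09}. What you have written is a self-contained replacement for those citations, and the arguments you use are the standard ones underlying them: Fenchel--Young plus first-order optimality of the concave map $u \mapsto \inprod{u}{\nabla f(x)} - f(u)$ for items 1--3, the descent lemma applied to $\phi(z) = f(z) - \inprod{\nabla f(x)}{z}$ evaluated at the gradient step for the co-coercivity bound, and the duality between strong convexity and smoothness for the last claim. You correctly identify the last item as the only delicate step, and you supply the intermediate facts it needs --- coercivity of $f$ from the quadratic lower bound, existence and uniqueness of the maximizer via strong monotonicity of $\nabla f$ (hence bijectivity of $\nabla f$ onto $\xset^*$), the $\tfrac1\mu$-Lipschitz bound on $(\nabla f)^{-1}$ via Cauchy--Schwarz, and the two-sided sandwich identifying $(\nabla f)^{-1}$ with $\nabla f^*$ and establishing differentiability of $f^*$ in the process --- all of which are sound. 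Your standing assumption that $f$ is convex is necessary and consistent with the paper's setup, where conjugates are only defined for convex functions. What your route buys is independence from the references; what it costs is length, which is presumably why the paper chose to cite.
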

\begin{proof}
The first three items all follow from Chapter 11 of \cite{Rockafellar70}. The first part of the fourth item is shown in Appendix A of \cite{CohenST21}, and the second part is shown in  \cite{KakadeST09}.
\end{proof}
For a function $f: \xset \to \R$, we define the set $\xset^*_f \subset \xset^*$ to be the set of points realizable as a gradient, namely $\xset^*_f \defeq \{\nabla f(x) \mid x \in \xset\}$. This will be come relevant in applications of Item 4 in Fact~\ref{fact:dualsc} throughout the paper, when $\nabla f$ is not surjective (onto $\xset^*$).

%

\section{Minimax optimization}
\label{sec:minimax}

In this section, we provide efficient algorithms for computing an approximate saddle point of the following separable minimax optimization problem:
\begin{equation}\label{eq:minimax}
	\min_{x \in \xset} \max_{y \in \yset} 
	\Fmm(x, y) 
	\text{ for }
	\Fmm \defeq f(x) + h(x, y) - g(y) \,.
\end{equation}
Here and throughout this section $f: \xset \rightarrow \R$ and $g: \yset \rightarrow \R$ are differentiable, convex functions and $h: \xset \times \yset \rightarrow \R$ is a differentiable, convex-concave function. For the remainder, we focus on algorithms for solving the following regularized formulation of \eqref{eq:minimax}:
\begin{equation}\label{eq:minimax_reg}
	\min_{x \in \xset} \max_{y \in \yset} 
	\Freg(x, y) 
	\text{ for }
	\Freg(x, y) \defeq f(x) + h(x, y) - g(y) + \frac{\mu\x}{2}\norm{x}^2 - \frac{\mu\y} 2 \norm{y}^2.
\end{equation}
To instead solve an instance of \eqref{eq:minimax} where $f$ is $\mu\x$-strongly convex and $g$ is $\mu\y$-strongly convex, we may instead equivalently solve \eqref{eq:minimax_reg} by reparameterizing $f \gets f - \frac{\mu\x} 2 \norm{\cdot}^2$, $g \gets g - \frac{\mu\y} 2 \norm{\cdot}^2$. As it is notationally convenient for our analysis, we focus on solving the problem \eqref{eq:minimax_reg} and then give the results for \eqref{eq:minimax} at the end of this section in \Cref{cor:minimax}.

In designing methods for solving \eqref{eq:minimax_reg} we make the following additional regularity assumptions.

\begin{assumption}[Minimax regularity]\label{assume:minimax}
We assume the following about \eqref{eq:minimax_reg}.
	\begin{enumerate}
		\item $f$ is $L\x$-smooth and $g$ is $L\y$-smooth.
		\item $h$ has the following blockwise-smoothness properties: for all $u, v \in \xset \times \yset$,
		\begin{equation}\label{eq:Hlipbound}
			\begin{aligned}
				\norm{\nabla_x h(u) - \nabla_x h(v)} &\le \Lam\xx \norm{u\x - v\x} + \Lam\xy \norm{u\y - v\y},\\
				\norm{\nabla_y h(u) - \nabla_y h(v)} &\le \Lam\xy \norm{u\x - v\x} + \Lam\yy \norm{u\y - v\y}.
		\end{aligned}\end{equation}
	\end{enumerate}
\end{assumption}

Note that when $h$ is twice-differentiable, \eqref{eq:Hlipbound} equates to everywhere operator norm bounds on blocks of $\nabla^2 h$. Namely, for all $w \in \xset \times \yset$, 
\begin{align*}
\norm{\nabla^2_{xx} h(w)}_{\textup{op}} \le \Lam\xx,\;  \norm{\nabla^2_{xy} h(w)}_{\textup{op}} \le \Lam\xy \text{, and } \norm{\nabla^2_{yy} h(w)}_{\textup{op}} \le \Lam\yy.
\end{align*}
In the particular case when $h(x, y) = y^\top \ma x - b^\top y + c^\top x$ is bilinear, clearly $\Lam\xx = \Lam\yy = 0$ (as remarked in the introduction). In this case, we may then set $\Lam\xy \defeq \norm{\ma}_{\textup{op}}$.

The remainder of this section is organized as follows.

\begin{enumerate}
	\item In Section~\ref{ssec:mmsetup}, we state a primal-dual formulation of \eqref{eq:minimax_reg} which we will apply our methods to, and prove that its solution yields a solution to \eqref{eq:minimax_reg}.
	\item In Section~\ref{ssec:mmalgo}, we give our algorithm and prove it is efficiently implementable.
	\item In Section~\ref{ssec:mmconv}, we prove the convergence rate of our algorithm.
	\item In Section~\ref{ssec:mmres}, we state and prove our main result, Theorem~\ref{thm:main}.
\end{enumerate}

\subsection{Setup}\label{ssec:mmsetup}

To solve \eqref{eq:minimax_reg}, we will instead find a saddle point to the expanded primal-dual function
\begin{equation}\label{eq:primaldual}\Fregpd(z) \defeq
\inprod{z\xd}{z\x} - \inprod{z\yd}{z\y} + \frac{\mu\x} 2 \norm{z\x}^2 - \frac{\mu\y} 2 \norm{z\y}^2 + h(z\x, z\y) -  f^*(z\xd) + g^*(z\yd).\end{equation}
We denote the domain of $\Fmmpd$ by $\zset \defeq \xset \times \yset \times \xset^* \times \yset^*$. For $z \in \zset$, we refer to its blocks by $(z\x, z\y, z\xd, z\yd)$. The primal-dual function $\Fregpd$ is related to $\Freg$ in the following way.

\begin{lemma}\label{lem:sameopt}
	Let $z_\star$ be the saddle point to \eqref{eq:primaldual}. Then, $(z_\star\xsup, z_\star\ysup)$ is a saddle point to \eqref{eq:minimax_reg}.
\end{lemma}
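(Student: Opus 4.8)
The key observation is that $\Fregpd$ is obtained from $\Freg$ by replacing $f(z\x)$ with its biconjugate representation $\max_{z\xd}\inprod{z\xd}{z\x} - f^*(z\xd)$ and $-g(z\y)$ with $\min_{z\yd}-\inprod{z\yd}{z\y} + g^*(z\yd)$, so the claim is simply that ``lifting'' the minimax problem \eqref{eq:minimax_reg} by introducing these dual variables does not change its set of primal saddle points. I would prove this in three steps. First, I would check that $\Fregpd$ is convex-concave for the partition in which $(z\x, z\yd)$ are the minimization variables and $(z\y, z\xd)$ are the maximization variables (matching the $\min$--$\max$ form displayed in Section~\ref{ssec:mmintro}): relative to $\Freg$, the only new terms are $\inprod{z\xd}{z\x} - f^*(z\xd)$, which is bilinear in $(z\x,z\xd)$ minus the convex function $f^*$ of $z\xd$ — hence affine (so convex) in the minimization block for fixed maximization block, and concave in the maximization block for fixed minimization block — and symmetrically $-\inprod{z\yd}{z\y} + g^*(z\yd)$. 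Together with the convex-concavity of $h$ and the quadratic regularizers, this shows $\Fregpd$ is convex in $(z\x,z\yd)$ and concave in $(z\y,z\xd)$.

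Second, I would carry out the partial optimization over the dual blocks. Since the $z\xd$- and $z\yd$-dependence of $\Fregpd$ is additively decoupled, for every $(x,y)\in\xset\times\yset$ (writing $\Fregpd(x,y,x^*,y^*)$ for $\Fregpd$ evaluated at the point with blocks $z\x=x$, $z\y=y$, $z\xd=x^*$, $z\yd=y^*$),
\[\min_{y^*\in\yset^*}\ \max_{x^*\in\xset^*}\ \Fregpd(x,y,x^*,y^*) = f(x) - g(y) + \frac{\mu\x}{2}\norm{x}^2 - \frac{\mu\y}{2}\norm{y}^2 + h(x,y) = \Freg(x,y),\]
where I use $\max_{x^*}\inprod{x^*}{x}-f^*(x^*) = (f^*)^*(x) = f(x)$ by definition of the conjugate together with Fact~\ref{fact:dualsc}(2) (valid since $f$, being differentiable and convex, is closed), and likewise $\min_{y^*}-\inprod{y^*}{y}+g^*(y^*) = -g(y)$.

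Third, I would invoke the standard fact that partial minimization/maximization preserves saddle points: if $\Psi(\alpha_1,\alpha_2,\beta_1,\beta_2)$ is convex in $(\alpha_1,\alpha_2)$ and concave in $(\beta_1,\beta_2)$ and $(\alpha_1^\star,\alpha_2^\star,\beta_1^\star,\beta_2^\star)$ is a saddle point of $\Psi$, then $(\alpha_1^\star,\beta_1^\star)$ is a saddle point of $(\alpha_1,\beta_1)\mapsto\inf_{\alpha_2}\sup_{\beta_2}\Psi$. This follows directly from the definition of a saddle point, with no minimax theorem needed: for any $\beta_1$, $\inf_{\alpha_2}\sup_{\beta_2}\Psi(\alpha_1^\star,\alpha_2,\beta_1,\beta_2) \le \sup_{\beta_2}\Psi(\alpha_1^\star,\alpha_2^\star,\beta_1,\beta_2) \le \Psi(\alpha_1^\star,\alpha_2^\star,\beta_1^\star,\beta_2^\star)$ (the last step because $(\beta_1^\star,\beta_2^\star)$ maximizes $\Psi(\alpha_1^\star,\alpha_2^\star,\cdot,\cdot)$), and symmetrically $\inf_{\alpha_2}\sup_{\beta_2}\Psi(\alpha_1,\alpha_2,\beta_1^\star,\beta_2) \ge \inf_{\alpha_2}\Psi(\alpha_1,\alpha_2,\beta_1^\star,\beta_2^\star) \ge \Psi(\alpha_1^\star,\alpha_2^\star,\beta_1^\star,\beta_2^\star)$; taking $\beta_1=\beta_1^\star$ in the first chain also identifies the common value. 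Applying this with $\Psi=\Fregpd$, $(\alpha_1,\alpha_2)=(z\x,z\yd)$, $(\beta_1,\beta_2)=(z\y,z\xd)$, and combining with the identity from Step 2, shows that $(z_\star\xsup,z_\star\ysup)$ is a saddle point of $\Freg$, i.e.\ of \eqref{eq:minimax_reg}.

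The only point needing care — more bookkeeping than obstacle — is keeping straight which dual block plays the minimization role and which plays the maximization role: $z\xd$ enters as a maximization variable (it reconstructs the convex term $f$) while $z\yd$ enters as a minimization variable (it reconstructs $-g$), and interchanging them would break the convex-concave structure required in Steps 1 and 3. The use of $(f^*)^*=f$ and $(g^*)^*=g$ is routine given Fact~\ref{fact:dualsc}(2), so the rest is a short formal argument.
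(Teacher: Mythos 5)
Your proof is correct and follows essentially the same route as the paper's: eliminate the dual blocks by partial optimization and invoke $(f^*)^*=f$, $(g^*)^*=g$ (Item 2 of Fact~\ref{fact:dualsc}). The paper's version is a two-line sketch of exactly this argument; you have merely made explicit the convex-concave partition of the blocks and the elementary fact that partial min/max preserves saddle points.
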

\begin{proof}
	By performing the maximization over $z\xd$ and minimization over $z\yd$, we see that the problem of computing a saddle point to the objective in \eqref{eq:primaldual} is equivalent to
	\begin{align*}
	\min_{z\xsup \in \xset} \max_{z\ysup \in \yset} \frac{\mu\x} 2 \norm{z\x}^2 - \frac{\mu\y} 2 \norm{z\y}^2 + h(z\xsup, z\ysup) +  \Par{\max_{z\xd \in \xset^*} \inprod{z\xd}{z\xsup} - f^*(z\xd)} - \Par{\max_{z\yd \in \yset^*} \inprod{z\yd}{z\ysup} - g^*(z\yd)}.
	\end{align*}
	By Item 2 in Fact~\ref{fact:dualsc}, this is the same as \eqref{eq:minimax_reg}.
\end{proof}

We next define $\gop$, the gradient operator of $\Fmmpd$. Before doing so, it will be convenient to define $r: \zset \to \R$, which combines the (unsigned) separable components of $\Fmmpd$:
\begin{equation}\label{eq:rdef}
r(z) \defeq \frac{\mu\x} 2 \norm{z\x}^2 + \frac{\mu\y} 2 \norm{z\y}^2 + f^*(z\xd) + g^*(z\yd).
\end{equation}
The function $r$ will also serve as a regularizer in our algorithm. With this definition, we decompose $\gop$ into three parts, roughly corresponding to the contribution from $r$, the contributions from the bilinear portion of the primal-dual representations of $f$ and $g$, and the contribution from $h$. In particular, we define
\begin{equation}\label{eq:gdecomp}
\begin{aligned}
\gop^r(z) &\defeq \nabla r(z) = \Par{\mu\x z\x, \mu\y z\y, \nabla f^*(z\xd), \nabla g^*(z\yd)} \\
\gcross(z) &\defeq (z\xd, z\yd, -z\x, -z\y), \\
\gop^h(z) &\defeq \Par{\nabla_x h(z\x, z\y), -\nabla_y h(z\x, z\y), 0, 0}. \\
\end{aligned}
\end{equation}
It is straightforward to check that $\gop$, the gradient operator of $\Fmmpd$, satisfies
\begin{equation}\label{eq:gdef} 
\gop(z) \defeq \gop^r(z) + \gcross(z) + \gop^h(z).
\end{equation}
Finally, we note that by construction $\gop$ is $1$-strongly monotone with respect to $r$.

\begin{lemma}[Strong monotonicity]\label{lem:sm}
	The operator $\gop$ (as defined in \eqref{eq:gdef}) is 1-strongly monotone with respect to the function $r: \zset \to \R$ as in \eqref{eq:rdef}.
\end{lemma}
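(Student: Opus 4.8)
The plan is to verify the strong monotonicity definition directly by checking it for each of the three operators $\gop^r$, $\gcross$, $\gop^h$ in the decomposition \eqref{eq:gdecomp}, and summing the contributions. Recall we must show, for all $z, z' \in \zset$,
\[
\inprod{\gop(z) - \gop(z')}{z - z'} \ge \inprod{\nabla r(z) - \nabla r(z')}{z - z'} = V^r_z(z') + V^r_{z'}(z).
\]
Since $\gop = \gop^r + \gcross + \gop^h$ and $\gop^r = \nabla r$, it suffices to show that the remaining two operators $\gcross$ and $\gop^h$ are each (plain) monotone, i.e.\ have nonnegative inner product with $z - z'$; then the $\gop^r$ term contributes exactly $\inprod{\nabla r(z) - \nabla r(z')}{z - z'}$ and we are done.

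First I would handle $\gcross$. Writing $w = z - z'$ with blocks $(w\x, w\y, w\xd, w\yd)$, bilinearity of $\gcross$ gives
\[
\inprod{\gcross(z) - \gcross(z')}{z - z'} = \inprod{\gcross(w)}{w} = \inprod{w\xd}{w\x} + \inprod{w\yd}{w\y} - \inprod{w\x}{w\xd} - \inprod{w\y}{w\yd} = 0,
\]
so $\gcross$ is monotone (in fact it is the skew-symmetric part, with zero inner product). Next, for $\gop^h$, note its only nonzero blocks are the gradient-operator blocks of the convex-concave function $h$, so $\inprod{\gop^h(z) - \gop^h(z')}{z - z'} = \inprod{\nabla_x h(z\x, z\y) - \nabla_x h(z'^\mathsf{x}, z'^\mathsf{y})}{z\x - z'^\mathsf{x}} - \inprod{\nabla_y h(z\x,z\y) - \nabla_y h(z'^\mathsf{x}, z'^\mathsf{y})}{z\y - z'^\mathsf{y}}$, which is exactly the monotonicity expression for the gradient operator of $h$; this is nonnegative by convex-concavity of $h$ (cited as a fact in the Preliminaries, with proof deferred to Appendix~\ref{apdx:sm}). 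Finally, for the $\gop^r$ term, $\gop^r = \nabla r$ by definition, so $\inprod{\gop^r(z) - \gop^r(z')}{z - z'} = \inprod{\nabla r(z) - \nabla r(z')}{z - z'}$, and the three-point identity \eqref{eq:threept} with $w = z$, $u = z'$ (applied to $r$) rewrites this as $V^r_z(z') + V^r_{z'}(z)$. Summing the three contributions yields $\inprod{\gop(z) - \gop(z')}{z - z'} \ge \inprod{\nabla r(z) - \nabla r(z')}{z-z'} = V^r_z(z') + V^r_{z'}(z)$, which is precisely $1$-strong monotonicity with respect to $r$.

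I do not expect any serious obstacle here; the only things to be careful about are (i) getting the signs right in the $\gop^h$ block — the $-\nabla_y h$ in the definition of $\gop^h$ is exactly what makes the $y$-block contribution nonnegative rather than nonpositive, matching the standard saddle-point gradient operator convention, and (ii) making sure $r$ is differentiable so that $\nabla r$ and the Bregman divergence $V^r$ are well-defined — this holds provided $f^*$ and $g^*$ are differentiable, which follows from $f, g$ being smooth (hence $f^*, g^*$ strongly convex, so differentiable on the relevant domain), consistent with Assumption~\ref{assume:minimax} and the remarks following Fact~\ref{fact:dualsc}. Modulo those bookkeeping points the argument is a one-line consequence of the decomposition plus monotonicity of the bilinear and convex-concave pieces.
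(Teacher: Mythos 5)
Your proposal is correct and follows exactly the paper's argument: decompose $\gop = \gop^r + \gcross + \gop^h$, observe that $\gcross$ (being skew) and $\gop^h$ (the gradient operator of convex-concave $h$) are monotone while $\gop^r = \nabla r$ is $1$-strongly monotone with respect to $r$, and add the three contributions. The paper simply cites Fact~\ref{fact:sm} for these facts where you verify them inline, so your write-up is just a slightly more explicit version of the same proof.
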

\begin{proof}
	Consider the decomposition of $\gop = \gop^r + \gcross + \gop^h$ defined in \eqref{eq:gdecomp} and \eqref{eq:gdef}. By definition and~\Cref{item:sm-convex,item:sm-convex-concave,item:sm-self} from~\Cref{fact:sm}, we know the operators $\gop^h$ and $\gcross$ are monotone, and $\gop^r = \nabla r$ is $1$-strongly monotone with respect to $r$. Combining the three operators and using additivity of monotonicity in~\Cref{item:sm-additive} of~\Cref{fact:sm} yields the claim.
\end{proof}

\subsection{Algorithm}\label{ssec:mmalgo}

Our algorithm will be an instantiation of \emph{strongly monotone mirror prox} \cite{CohenST21} stated as Algorithm~\ref{alg:smmp} below, an alternative to the mirror prox algorithm originally proposed by \cite{Nemirovski04}.

\begin{algorithm2e}
	\caption{$\textsc{SM-Mirror-Prox}(\lam, T, z_0)$: Strongly monotone mirror prox \cite{CohenST21}}
	\label{alg:smmp}
	\DontPrintSemicolon
		\codeInput Convex $r: \zset \to \R$, $m$-strongly monotone $\gop: \zset \to \zset^*$ (with respect to $r$),  $z_0 \in \zset$\;
		\codeParameter $\lam > 0$, $T \in \N$\;
		\For{$0 \le t < T$}{
		$z_{t+1/2} \gets \Prox^r_{z_t}(\tfrac{1}{\lam}\gop(z_t))$\;
		$z_{t + 1} \gets \argmin_{z \in \zset}\{\frac 1 \lam\inprod{\gop(z_{t+1/2})}{z} + \frac m \lam V^r_{z_{t+1/2}}(z) + V^r_{z_t}(z)\}$\;
		}
\end{algorithm2e}

In order to analyze Algorithm~\ref{alg:smmp}, we need to introduce a definition from \cite{CohenST21}.

\begin{definition}[Relative Lipschitzness]\label{def:rl}
	We say operator $\gop: \zset \to \zset^*$ is $\lam$-relatively Lipschitz with respect to convex $r: \zset \to \R$ over $\zset_{\textup{alg}} \subseteq \zset$ if for every three $z, w, u \in \zset_{\textup{alg}}$,
	\[\inprod{\gop(w) - \gop(z)}{w - u} \le \lam\Par{V^r_z(w) + V^r_w(u)}.\]
\end{definition}

As an example of the above definition, we have the following bound when $\gop = \nabla r$, which follows directly from nonnegativity of Bregman divergences and \eqref{eq:threept}.
\begin{restatable}{lemma}{restaternablar}\label{lem:rnablar}
	Let $r: \zset \to \R$ be convex. Then, $\nabla r$ is $1$-relatively Lipschitz with respect to $r$ over $\zset$.
\end{restatable}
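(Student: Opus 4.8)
The plan is to invoke the three-point identity \eqref{eq:threept} directly and then discard a nonnegative term. Concretely, fix arbitrary $z, w, u \in \zset$. Applying \eqref{eq:threept} with the roles of the points chosen so that the inner product $\inprod{\nabla r(w) - \nabla r(z)}{w - u}$ matches the left-hand side, we obtain the exact equality
\[
\inprod{\nabla r(w) - \nabla r(z)}{w - u} = V^r_z(w) + V^r_w(u) - V^r_z(u).
\]

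Next I would use that $r$ is convex, so every Bregman divergence $V^r_{(\cdot)}(\cdot)$ induced by $r$ is nonnegative (as recalled in the Divergences paragraph of Section~\ref{sec:prelims}); in particular $V^r_z(u) \ge 0$. Dropping this term from the equality above yields
\[
\inprod{\nabla r(w) - \nabla r(z)}{w - u} \le V^r_z(w) + V^r_w(u),
\]
which is precisely the statement that $\nabla r$ is $1$-relatively Lipschitz with respect to $r$ over all of $\zset$, per Definition~\ref{def:rl} with $\lam = 1$ and $\zset_{\textup{alg}} = \zset$.

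There is essentially no obstacle here: the lemma is an immediate corollary of the Bregman three-point identity together with nonnegativity of divergences, and the only thing to be slightly careful about is bookkeeping the arguments in \eqref{eq:threept} so that the target inner product appears on the left-hand side. I would keep the proof to these two displayed lines plus a sentence of justification.
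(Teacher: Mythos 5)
Your proposal is correct and matches the paper's own justification exactly: the paper states the lemma "follows directly from nonnegativity of Bregman divergences and \eqref{eq:threept}", which is precisely your argument of applying the three-point identity and dropping the nonnegative term $V^r_z(u)$. No gaps.
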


As another example, Lemma 1 of \cite{CohenST21} shows that if $\gop$ is $L$-Lipschitz and $r$ is $\mu$-strongly convex (the setup considered in \cite{Nemirovski04}), then $\gop$ is $\frac L \mu$-relatively Lipschitz with respect to $r$ over $\zset$. This setup was generalized by \cite{CohenST21} via Definition~\ref{def:rl}, who showed the following.

\begin{proposition}[Proposition 3, \cite{CohenST21}]\label{prop:smmp}
	If $\gop$ is $\lam$-relatively Lipschitz with respect to $r$ over $\zset_{\textup{alg}}$ containing all iterates of Algorithm~\ref{alg:smmp}, and its VI is solved by $z_\star$, the iterates of Algorithm~\ref{alg:smmp} satisfy 
	\[V^r_{z_t}(z_\star) \le \Par{1 + \frac m \lam}^{t} V^r_{z_0}(z_\star), \text{ for all } t \in [T].\]
\end{proposition}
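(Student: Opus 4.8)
The plan is to run the standard extragradient (mirror prox) energy argument, modified to exploit that the $z_{t+1}$ update of Algorithm~\ref{alg:smmp} carries an extra $\tfrac{m}{\lam}V^r_{z_{t+1/2}}(\cdot)$ term which, together with $m$-strong monotonicity, upgrades the usual regret bound into a per-iteration geometric contraction of $V^r_{\cdot}(z_\star)$. First I would record the first-order optimality conditions for the two subproblems defining $z_{t+1/2}$ and $z_{t+1}$; for any $u \in \zset$ these read
\[\inprod{\tfrac{1}{\lam}\gop(z_t) + \nabla r(z_{t+1/2}) - \nabla r(z_t)}{z_{t+1/2} - u} \le 0,\]
\[\inprod{\tfrac{1}{\lam}\gop(z_{t+1/2}) + \tfrac{m}{\lam}\Par{\nabla r(z_{t+1}) - \nabla r(z_{t+1/2})} + \nabla r(z_{t+1}) - \nabla r(z_t)}{z_{t+1} - u} \le 0,\]
and in fact hold with equality since $\zset$ is unconstrained, though the inequalities suffice. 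Rewriting each $\nabla r$ inner product via the three-point identity \eqref{eq:threept} converts both into statements purely about Bregman divergences $V^r$.

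Next I would upper bound $\tfrac{1}{\lam}\inprod{\gop(z_{t+1/2})}{z_{t+1/2} - z_\star}$ by splitting it as $\tfrac{1}{\lam}\inprod{\gop(z_{t+1/2})}{z_{t+1/2} - z_{t+1}} + \tfrac{1}{\lam}\inprod{\gop(z_{t+1/2})}{z_{t+1} - z_\star}$. For the first summand, write $\gop(z_{t+1/2}) = \gop(z_t) + \big(\gop(z_{t+1/2}) - \gop(z_t)\big)$: the $\gop(z_t)$ contribution is controlled by the first optimality condition (with $u = z_{t+1}$), and the difference term by $\lam$-relative Lipschitzness (Definition~\ref{def:rl}) applied to the triple $(z,w,u) = (z_t, z_{t+1/2}, z_{t+1})$, which is legitimate since by hypothesis all iterates lie in $\zset_{\textup{alg}}$; the two resulting copies of $V^r_{z_t}(z_{t+1/2})$ and of $V^r_{z_{t+1/2}}(z_{t+1})$ then cancel, leaving only $V^r_{z_t}(z_{t+1})$. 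For the second summand I invoke the second optimality condition with $u = z_\star$, which contributes a telescoping $V^r_{z_t}(z_\star) - V^r_{z_{t+1}}(z_\star)$, a $-V^r_{z_t}(z_{t+1})$ cancelling the leftover above, a $+\tfrac{m}{\lam}V^r_{z_{t+1/2}}(z_\star)$, and further divergence terms of favorable sign.

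Finally I would lower bound the left-hand side: since $\gop$ is $m$-strongly monotone with respect to $r$ and $z_\star$ solves the VI in $\gop$ (so $\inprod{\gop(z_\star)}{z_{t+1/2} - z_\star} \ge 0$),
\[\tfrac{1}{\lam}\inprod{\gop(z_{t+1/2})}{z_{t+1/2} - z_\star} \ge \tfrac{m}{\lam}\Par{V^r_{z_{t+1/2}}(z_\star) + V^r_{z_\star}(z_{t+1/2})} \ge \tfrac{m}{\lam}V^r_{z_{t+1/2}}(z_\star).\]
Chaining this with the upper bound, the two copies of $\tfrac{m}{\lam}V^r_{z_{t+1/2}}(z_\star)$ cancel, and after discarding the remaining nonnegative divergences (such as $\tfrac{m}{\lam}V^r_{z_{t+1/2}}(z_{t+1})$) one obtains $\Par{1 + \tfrac{m}{\lam}}V^r_{z_{t+1}}(z_\star) \le V^r_{z_t}(z_\star)$; iterating over $0 \le t < T$ gives $V^r_{z_t}(z_\star) \le \Par{1 + \tfrac{m}{\lam}}^{-t}V^r_{z_0}(z_\star)$ (so the exponent in the stated bound should read $-t$).

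The main obstacle is purely the bookkeeping of Bregman terms: one must verify that (i) the relative-Lipschitzness error exactly matches the optimality terms at the half-step so that the $V^r_{z_t}(z_{t+1/2})$ and $V^r_{z_{t+1/2}}(z_{t+1})$ pieces vanish, (ii) the $\tfrac{m}{\lam}V^r_{z_{t+1/2}}(z_\star)$ produced by strong monotonicity precisely cancels the one generated by the $z_{t+1}$ update, and (iii) every uncancelled divergence carries the sign that allows it to be dropped. No genuinely new analytic input beyond the optimality conditions, \eqref{eq:threept}, Definition~\ref{def:rl}, $m$-strong monotonicity, and the VI property is required.
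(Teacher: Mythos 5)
Your proof is correct and is essentially the argument given for Proposition 3 in \cite{CohenST21} (the paper itself imports the result without reproving it): the three-point expansions of the two optimality conditions, the cancellation of $V^r_{z_t}(z_{t+1/2})$ and $V^r_{z_{t+1/2}}(z_{t+1})$ against the relative-Lipschitzness error, and the matching of the two $\tfrac{m}{\lam}V^r_{z_{t+1/2}}(z_\star)$ terms via strong monotonicity and the VI property all go through exactly as you describe. You are also right that the exponent in the displayed bound should be $-t$ rather than $t$; as printed the statement is a typo.
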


Our algorithm in this section, Algorithm~\ref{alg:mm}, will simply apply Algorithm~\ref{alg:smmp} to the operator-regularizer pair $(\gop, r)$ defined in \eqref{eq:gdef} and \eqref{eq:rdef}. We give this implementation as pseudocode below, and show that it is a correct implementation of Algorithm~\ref{alg:smmp} in the following lemma.

\begin{algorithm2e}[ht!]
	\caption{$\textsc{Minimax-Solve}(\Freg, x_0, y_0)$: Separable minimax optimization}
	\label{alg:mm}
	\DontPrintSemicolon
		\codeInput \eqref{eq:minimax_reg} satisfying Assumption~\ref{assume:minimax}, $(x_0, y_0) \in \xset \times \yset$\;
		\codeParameter $\lambda>0$, $T \in \N$\;
		 $(z_0\xsup, z_0\ysup) \gets (x_0, y_0)$, $(z_0\xpsup, z_0\ypsup) \gets (x_0, y_0)$
		\For{$0 \le t < T$}{
		\begin{align*}\gop\x\gets \mu\x z\xsup_t + \nabla f(z\xpsup_t) + \nabla_x h(z\xsup_t, z\ysup_t)\\
		\gop\y\gets \mu\y z\ysup_t + \nabla g(z\ypsup_t) - \nabla_y h(z\xsup_t, z\ysup_t).
		\end{align*}	 \Comment*{gradient step:}
		$z\xsup_{t+1/2} \gets z\x - \tfrac{1}{\lam\mu\x} \gop\x$\;
		$z\ysup_{t+1/2} \gets z\y - \tfrac{1}{\lam\mu\y} \gop\y$\;
		$z\xpsup_{t+1/2} \gets (1 - \frac 1 \lam) z\xpsup_t + \frac 1 \lam z\xsup_t$ and  $z\ypsup_{t+1/2} \gets (1 - \frac 1 \lam) z\ypsup_t + \frac 1 \lam z\ysup_t$\;
		\begin{align*}\gop\x\gets \mu\x z\xsup_{t+1/2} + \nabla f(z\xpsup_{t+1/2}) + \nabla_x h(z\xsup_{t+1/2}, z\ysup_{t+1/2})\\
			\gop\y\gets \mu\y z\ysup_{t+1/2} + \nabla g(z\ypsup_{t+1/2}) - \nabla_y h(z\xsup_{t+1/2}, z\ysup_{t+1/2})
		\end{align*}\Comment*{extragradient step:}
		$z\xsup_{t + 1} \gets \frac{1}{1 + \lam} z_{t + 1/2}\x + \frac{\lam}{1 + \lam} z_t\x - \frac{1}{(1 + \lam)\mu\x}\gop\x$\;
		$z\ysup_{t + 1} \gets  \frac{1}{1 + \lam} z_{t + 1/2}\y + \frac{\lam}{1 + \lam} z_t\y - \frac{1}{(1 + \lam)\mu\y}\gop\y$\;
		$z\xpsup_{t + 1} \gets \frac \lam {1 + \lam} z\xpsup_t + \frac 1 {1 + \lam} z\xsup_{t+1/2}$ and  $z\ypsup_{t + 1} \gets \frac \lam {1 + \lam} z\ypsup_t + \frac 1 {1 + \lam} z\ysup_{t+1/2}$
		}
\end{algorithm2e}

\begin{lemma}\label{lem:itercost}
Algorithm~\ref{alg:mm} implements Algorithm~\ref{alg:smmp} with $m = 1$ on $(\gop, r)$ defined in \eqref{eq:gdef}, \eqref{eq:rdef}. 
\end{lemma}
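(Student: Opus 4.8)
The plan is to verify that the two lines of Algorithm~\ref{alg:mm} in each iteration implement the two proximal-type steps of Algorithm~\ref{alg:smmp} with $m=1$ applied to the operator $\gop = \gop^r + \gcross + \gop^h$ of \eqref{eq:gdef} and the regularizer $r$ of \eqref{eq:rdef}, block by block. The key observation is that $r$ is \emph{separable} across the four blocks $(z\x, z\y, z\xd, z\yd)$: it decomposes as $\frac{\mu\x}{2}\norm{z\x}^2 + \frac{\mu\y}{2}\norm{z\y}^2 + f^*(z\xd) + g^*(z\yd)$, so each $\Prox^r$ and each composite $\argmin$ decouples into four independent one-block minimizations, which I can analyze one at a time.

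For the two \emph{Euclidean} blocks $z\x$ and $z\y$, the regularizer restricted to that block is $\tfrac{\mu\x}{2}\norm{\cdot}^2$ (resp.\ $\tfrac{\mu\y}{2}\norm{\cdot}^2$), whose Bregman divergence is $\tfrac{\mu\x}{2}\norm{\cdot-\cdot}^2$ (resp.\ $\tfrac{\mu\y}{2}$). The relevant components of $\gop(z)$ in these blocks are $\gop\x = \mu\x z\x + \nabla f^*(z\xd) + \nabla_x h(z\x,z\y)$ — wait, here I must use the crucial substitution: in Algorithm~\ref{alg:mm} the dual block $z\xd$ is maintained \emph{implicitly} via the extra iterate $z\xpsup$, with the invariant $z\xd = \nabla f(z\xpsup)$, so that $\nabla f^*(z\xd) = \nabla f^*(\nabla f(z\xpsup)) = z\xpsup$ by Item 3 of Fact~\ref{fact:dualsc}. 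This is why the pseudocode writes $\nabla f(z\xpsup_t)$ in the $x$-component of $\gop\x$: the $x$-block of $\gop$ is $\mu\x z\x + z\xd{}^{\text{(Euclidean identification via $\gcross$)}}$; I will need to keep the bilinear coupling $\gcross$ straight here — the $x$-block of $\gop(z)$ is $\mu\x z\x + z\xd + \nabla_x h$, and $z\xd$ equals $\nabla f(z\xpsup)$ under the invariant, giving exactly the displayed $\gop\x$. Solving $z\x_{t+1/2} = \argmin_{z\x}\{\tfrac1\lambda\inprod{\gop\x}{z\x} + \tfrac{\mu\x}{2}\norm{z\x - z\x_t}^2\}$ yields $z\x_{t+1/2} = z\x_t - \tfrac{1}{\lambda\mu\x}\gop\x$, matching the gradient step; the extragradient step $z\x_{t+1} = \argmin_{z\x}\{\tfrac1\lambda\inprod{\gop\x}{z\x} + \tfrac1\lambda\cdot\tfrac{\mu\x}{2}\norm{z\x - z\x_{t+1/2}}^2 + \tfrac{\mu\x}{2}\norm{z\x - z\x_t}^2\}$ solves to $\frac{1}{1+\lambda}z\x_{t+1/2} + \frac{\lambda}{1+\lambda}z\x_t - \frac{1}{(1+\lambda)\mu\x}\gop\x$, again matching. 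The $y$-blocks are identical up to the sign flip already built into $\gop\y$ and $\gcross$.

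For the two \emph{dual} blocks $z\xd, z\yd$, the regularizer restricted there is $f^*$ (resp.\ $g^*$), and the component of $\gop$ is the $\gcross$ contribution $-z\x$ (resp.\ $-z\y$), since $\gop^r$ contributes $\nabla f^*(z\xd)$ which I must handle as part of the Bregman geometry. The step $z\xd_{t+1/2} = \Prox^{f^*}_{z\xd_t}(\tfrac1\lambda(-z\x_t + \nabla f^*(z\xd_t)))$ — combining the $\gcross$ and $\gop^r$ parts — has optimality condition $\nabla f^*(z\xd_{t+1/2}) = \nabla f^*(z\xd_t) - \tfrac1\lambda\nabla f^*(z\xd_t) + \tfrac1\lambda z\x_t - (\nabla f^*(z\xd_{t+1/2}) - \nabla f^*(z\xd_t))$... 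I'll instead push everything through the substitution $z\xd_t = \nabla f(z\xpsup_t)$: using $\nabla f^* \circ \nabla f = \mathrm{id}$, the update on $z\xd$ translates into the stated convex-combination update $z\xpsup_{t+1/2} = (1-\tfrac1\lambda)z\xpsup_t + \tfrac1\lambda z\x_t$ on the primal surrogate (and similarly $z\xpsup_{t+1}$), and one checks inductively that the invariant $z\xd = \nabla f(z\xpsup)$ is preserved — equivalently, that the implicit dual iterate $z\xd$ always stays in $\xset^*_f$, so that Fact~\ref{fact:dualsc}(3) applies. I would make this the formal core of the lemma: show by induction on $t$ that $z\xd_t = \nabla f(z\xpsup_t)$ and $z\yd_t = \nabla g(z\ypsup_t)$, and that under these identifications the four-block updates of Algorithm~\ref{alg:smmp} are exactly the displayed updates of Algorithm~\ref{alg:mm}.

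The main obstacle I anticipate is bookkeeping the dual-block proximal step cleanly: the step minimizes $\tfrac1\lambda\inprod{-z\x + \nabla f^*(z\xd_t)}{z\xd} + V^{f^*}_{z\xd_t}(z\xd)$ (for the gradient step) and an analogous three-term objective with an added $\tfrac1\lambda V^{f^*}_{z\xd_{t+1/2}}(z\xd)$ (for the extragradient step), and I must verify that the minimizer, after applying $\nabla f^*$ to both sides of the optimality condition and invoking $\nabla f^*(\nabla f(\cdot)) = \mathrm{id}$, collapses to the simple affine combination in the primal surrogate variable $z\xpsup$. This requires carefully expanding $\nabla V^{f^*}_{a}(b) = \nabla f^*(b) - \nabla f^*(a)$ and grouping terms; the $\tfrac m\lambda$ factor with $m=1$ is what makes the coefficients $\tfrac{\lambda}{1+\lambda}$ and $\tfrac1{1+\lambda}$ come out correctly. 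Everything else is a direct computation with Euclidean proximal steps, which I will state but not belabor.
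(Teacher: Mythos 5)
Your proposal is correct and follows essentially the same route as the paper's proof: establish by induction the invariant $z_t\xdsup = \nabla f(z_t\xpsup)$, $z_t\ydsup = \nabla g(z_t\ypsup)$ (and its half-step analogue), then verify block by block—using the separability of $r$ and Items 1 and 3 of Fact~\ref{fact:dualsc}—that the dual-block proximal steps collapse to the affine combinations in the primal surrogates while the Euclidean blocks reduce to the displayed gradient and extragradient updates. The only cosmetic difference is that the paper handles the dual-block optimality condition by rewriting the $\argmin$ as an $\argmax$ of $\inprod{\cdot}{z\xdsup} - f^*(z\xdsup)$ and invoking Fact~\ref{fact:dualsc}(1) directly, which sidesteps the invertibility bookkeeping you flag, but your version (restricting to $\xset^*_f$ so that $\nabla f^* \circ \nabla f = \mathrm{id}$ applies) is equally valid.
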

\begin{proof}
Let $\{z_t, z_{t+1/2}\}_{0 \le t \le T}$ be the iterates of Algorithm~\ref{alg:smmp}. We will inductively show that Algorithm~\ref{alg:mm} preserves the invariants
\[z_t = \Par{z\xsup_t, z\ysup_t, \nabla f\Par{z\xpsup_t}, \nabla g\Par{z\ypsup_t}},\; z_{t+1/2} = \Par{z\xsup_{t+1/2}, z\ysup_{t+1/2}, \nabla f\Par{z\xpsup_{t+1/2}}, \nabla g\Par{z\ypsup_{t+1/2}}},\]
for the iterates of Algorithm~\ref{alg:mm}. Once we prove this claim, it is clear from inspection that Algorithm~\ref{alg:mm} implements Algorithm~\ref{alg:smmp}, upon recalling the definitions \eqref{eq:gdef}, \eqref{eq:rdef}.
	
The base case of our induction follows from our initialization so that $(\nabla f(z_0\xpsup), \nabla g(z_0\ypsup)) \gets (\nabla f(x_0), \nabla f(y_0))$. Next, 
suppose for some $0 \le t < T$, we have $z_t\xd = \nabla f( z\xpsup_t)$ and $z_t\yd = \nabla g(z\ypsup_t)$. By the updates in Algorithm~\ref{alg:smmp},
	\begin{align*}z_{t+1/2}\xd &\gets \argmin_{z\xd \in \xset^*} \Brace{\frac 1 \lam \inprod{\nabla f^*(z_t\xd) - z\xsup_t}{z\xd} + V^{f^*}_{z_t\xd}(z\xd)} \\
	&= \argmin_{z\xd \in \xset^*} \Brace{\frac 1 \lam \inprod{z\xpsup_t - z\xsup_t}{z\xd} - \inprod{z_t\xpsup}{z\xd} + f^*(z\xd)} \\
	&= \argmax_{z\xd \in \xset^*} \Brace{\inprod{\Par{1 - \frac 1 \lam} z\xpsup_t + \frac 1 \lam z\x_t}{z\xd} - f^*(z\xd)} = \nabla f\Par{\Par{1 - \frac 1 \lam} z\xpsup_t + \frac 1 \lam z\xsup_t}. \end{align*}
	The second line used our inductive hypothesis and Item 3 in Fact~\ref{fact:dualsc}, and the last used Item 1 in Fact~\ref{fact:dualsc}. Hence, the update to $z\xpsup_{t+1/2}$ in Algorithm~\ref{alg:mm} preserves our invariant; a symmetric argument yields $z_{t+1/2}\yd = \nabla g(z\ypsup_{t+1/2})$ where $z\ypsup_{t+1/2} \defeq (1 - \frac 1 \lam)z\ypsup_t + \frac 1 \lam z\ysup_t$. 
	
	Similarly, we show we may preserve this invariant for $z_{t + 1}$:
	\begin{align*}
	z_{t + 1}\xd &\gets \argmin_{z\xd \in \xset^*} \Brace{\frac 1 \lam \inprod{z\xpsup_{t+1/2} - z\x_{t+1/2}}{z\xd} - \frac 1 \lam \inprod{z\xpsup_{t+1/2}}{z\xd} - \inprod{z\xpsup_{t}}{z\xd} + \Par{1 + \frac 1 \lam}f^*(z\xd)} \\
	&= \argmax_{a \in \xset^*} \Brace{\inprod{z\xpsup_{t} + \frac 1 \lam z\xsup_{t+1/2}}{z\xd} - \Par{1 + \frac 1 \lam}f^*(z\xd)} = \nabla f\Par{\frac \lam {1 + \lam}z\xpsup_{t} + \frac 1 {1 + \lam} z\x_{t+1/2}}.
	\end{align*}
	Hence, we may set $z\xpsup_{t + 1} \defeq \frac \lam {1 + \lam} z\xpsup_t + \frac 1 {1 + \lam} z\xsup_{t+1/2}$ and similarly, $z\ypsup_{t + 1} \defeq \frac \lam {1 + \lam} z\ypsup_t + \frac \lam {1 + \lam} z\ysup_{t+1/2}$.
\end{proof}

As an immediate corollary of Lemma~\ref{lem:itercost}, we have the following characterization of our iterates, recalling the definitions of $\xset^*_f$ and $\yset^*_g$ from Section~\ref{sec:prelims}.

\begin{corollary}\label{cor:zalgmm}
Define the product space $\zset_{\textup{alg}} \defeq \xset \times \yset \times \xset^*_f \times \yset^*_g$, where $\xset^*_f \defeq \{\nabla f(x) \mid x \in \xset\}$ and $\yset^*_g \defeq \{\nabla g(y) \mid y \in \yset\}$. Then all iterates of Algorithm~\ref{alg:mm} lie in $\zset_{\textup{alg}}$.
\end{corollary}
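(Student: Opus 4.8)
The plan is to read the claim directly off the invariants already established inside the proof of Lemma~\ref{lem:itercost}. There it was shown, by induction on $t$, that the iterates of Algorithm~\ref{alg:mm} satisfy
\[z_t = \Par{z\xsup_t, z\ysup_t, \nabla f\Par{z\xpsup_t}, \nabla g\Par{z\ypsup_t}}, \qquad z_{t+1/2} = \Par{z\xsup_{t+1/2}, z\ysup_{t+1/2}, \nabla f\Par{z\xpsup_{t+1/2}}, \nabla g\Par{z\ypsup_{t+1/2}}}\]
for all $0 \le t \le T$. The first two ($\xset$- and $\yset$-) blocks of each iterate are by construction points of the Euclidean spaces $\xset$ and $\yset$, so there is nothing to verify for them; the work is only in the dual blocks.

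For the dual blocks, the first step is to observe that the auxiliary sequences $z\xpsup_t, z\xpsup_{t+1/2}$ (and symmetrically $z\ypsup_t, z\ypsup_{t+1/2}$) generated by Algorithm~\ref{alg:mm} always remain in $\xset$ (resp.\ $\yset$): they are initialized at $x_0 \in \xset$ (resp.\ $y_0 \in \yset$) and each subsequent update is an affine — in fact convex — combination of previously computed elements of $\xset$ (resp.\ $\yset$), which stays in the vector space $\xset$ (resp.\ $\yset$). Given this, the very definition $\xset^*_f \defeq \{\nabla f(x) \mid x \in \xset\}$ and $\yset^*_g \defeq \{\nabla g(y) \mid y \in \yset\}$ immediately places the third and fourth blocks $\nabla f(z\xpsup_t)$, $\nabla g(z\ypsup_t)$ of $z_t$ (and likewise those of $z_{t+1/2}$) into $\xset^*_f$ and $\yset^*_g$ respectively. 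Combining the four blockwise memberships yields $z_t, z_{t+1/2} \in \xset \times \yset \times \xset^*_f \times \yset^*_g = \zset_{\textup{alg}}$ for every $t$, which is the assertion.

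Since each step is a direct consequence of Lemma~\ref{lem:itercost}, there is no substantive obstacle; the only point worth recording carefully is that the update rules never push the dual blocks outside the realizable-gradient sets $\xset^*_f, \yset^*_g$, which is exactly what the gradient-form invariant above guarantees. The reason this corollary is worth stating separately is its downstream role: it isolates the set $\zset_{\textup{alg}}$ on which we will later need to check that $\gop$ is relatively Lipschitz with respect to $r$ in the sense of Definition~\ref{def:rl}, and in particular it is what licenses applying Item~4 of Fact~\ref{fact:dualsc} to the (possibly non-surjective) gradient maps $\nabla f$, $\nabla g$ restricted to these sets.
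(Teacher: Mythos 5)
Your proof is correct and follows the same route as the paper, which simply declares the corollary immediate from Lemma~\ref{lem:itercost}: the inductive invariant there expresses every dual block as $\nabla f(\cdot)$ or $\nabla g(\cdot)$ evaluated at an auxiliary point that remains in $\xset$ (resp.\ $\yset$), so membership in $\xset^*_f$ and $\yset^*_g$ is definitional. Your added remarks on why the auxiliary points stay in $\xset,\yset$ and on the downstream role of $\zset_{\textup{alg}}$ are accurate but not needed beyond what the paper records.
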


For a point $z \in \zset_{\textup{alg}}$, we define the points $z\xp \defeq \nabla f^*(z\xd)$ and $z\yp \defeq \nabla g^*(z\yd)$. By Item 3 of Fact~\ref{fact:dualsc}, this implies $z\xp, z\yp \in \xset$ since $z\xd$ and $z\yd$ are appropriate gradients.

\subsection{Convergence analysis}\label{ssec:mmconv}

In order to use Proposition~\ref{prop:smmp} to analyze Algorithm~\ref{alg:mm}, we require a strong monotonicity bound and a relative Lipschitzness bound on the pair $(\gop, r)$; the former is already given by Lemma~\ref{lem:sm}. We build up to the latter bound by first giving the following consequences of Assumption~\ref{assume:minimax}.

\begin{lemma}[Minimax smoothness implications]
\label{lem:smoothness_implications}
Let convex $f : \xset \rightarrow \R$ and $g : \yset \rightarrow \R$, and convex-concave $h : \xset \times \yset \rightarrow \R$ satisfy \Cref{assume:minimax}. Then, the following hold.
\begin{enumerate}
	\item \label{item:minimax_fsmooth_equiv} $\Abs{\inprod{\nabla f\Par{v} - \nabla f\Par{w}}{x - y}} \le \alpha L\x V^f_{v}\Par{w} + \alpha^{-1} V_{x}(y)$ for all $v, w, x, y \in \xset$ and $\alpha > 0$.
	\item \label{item:minimax_gsmooth_equiv}  $\Abs{\inprod{\nabla g\Par{v} - \nabla g\Par{w}}{x - y}} \le \alpha L\y V^g_{v}\Par{w} + \alpha^{-1} V_{x}(y)$ for all $v, w, x, y \in \yset$ and $\alpha > 0$.
	\item \label{item:minimax_hsmooth_equiv} $\gop^h$ is 1-relatively Lipschitz with respect to $r^h_\alpha : \zset \rightarrow \R$ defined for all $z \in \zset$ and $\alpha > 0$ by
	$r_\alpha^h(z) \defeq \half\left(\Lambda\xx + \alpha \Lambda\xy \right) \norm{z\x}^2 + \half\left(\Lambda\yy + \alpha^{-1} \Lambda\xy \right)  \norm{z\y}^2$.
\end{enumerate}
\end{lemma}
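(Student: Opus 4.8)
My plan is to prove each of the three items as a consequence of a single inequality: if a convex function $\phi$ is $L$-smooth, then $\langle \nabla\phi(v)-\nabla\phi(w), p\rangle$ can be controlled in terms of $V^\phi_v(w)$ and $\|p\|^2$ via the dual strong convexity bound (Item 4 of Fact~\ref{fact:dualsc}) and Young's inequality. For Items~\ref{item:minimax_fsmooth_equiv} and~\ref{item:minimax_gsmooth_equiv} I would argue as follows. By Cauchy-Schwarz, $|\langle \nabla f(v)-\nabla f(w), x-y\rangle| \le \|\nabla f(v)-\nabla f(w)\|\,\|x-y\|$, and then Young's inequality with parameter $\alpha$ gives $\le \frac{\alpha}{2}\|\nabla f(v)-\nabla f(w)\|^2 + \frac{1}{2\alpha}\|x-y\|^2$. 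The second term is exactly $\alpha^{-1} V_x(y)$ since $V_x(y) = \frac12\|x-y\|^2$. For the first term, apply Item 4 of Fact~\ref{fact:dualsc}: since $f$ is $L\x$-smooth, $\frac{1}{2L\x}\|\nabla f(v)-\nabla f(w)\|^2 \le f(w)-f(v)-\langle\nabla f(v),w-v\rangle = V^f_v(w)$, so $\frac{\alpha}{2}\|\nabla f(v)-\nabla f(w)\|^2 \le \alpha L\x V^f_v(w)$. Combining yields exactly the claimed bound; Item~\ref{item:minimax_gsmooth_equiv} is identical with $g$, $L\y$ in place of $f$, $L\x$.

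For Item~\ref{item:minimax_hsmooth_equiv}, I would unfold the definition of relative Lipschitzness: I need $\langle \gop^h(w)-\gop^h(z), w-u\rangle \le V^{r^h_\alpha}_z(w) + V^{r^h_\alpha}_w(u)$ for all $z,w,u\in\zset$. Since $\gop^h(z) = (\nabla_x h(z\x,z\y), -\nabla_y h(z\x,z\y), 0, 0)$, the inner product only involves the $\xset$ and $\yset$ blocks, so writing $z = (z\x,z\y,\cdot,\cdot)$ etc., the left side equals $\langle \nabla_x h(w\x,w\y)-\nabla_x h(z\x,z\y), w\x-u\x\rangle - \langle \nabla_y h(w\x,w\y)-\nabla_y h(z\x,z\y), w\y-u\y\rangle$. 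I would bound each inner product by Cauchy-Schwarz, then apply the blockwise-smoothness bounds \eqref{eq:Hlipbound} to get, e.g., $\|\nabla_x h(w)-\nabla_x h(z)\| \le \Lam\xx\|w\x-z\x\| + \Lam\xy\|w\y-z\y\|$, giving cross terms like $\Lam\xx\|w\x-z\x\|\|w\x-u\x\|$, $\Lam\xy\|w\y-z\y\|\|w\x-u\x\|$, and symmetrically for the $y$ block. I then apply Young's inequality to each mixed term — crucially using the parameter $\alpha$ to split the $\Lam\xy$-terms asymmetrically between the $x$ and $y$ norms — and collect everything into a sum of $\|w\x-z\x\|^2$, $\|w\y-z\y\|^2$, $\|w\x-u\x\|^2$, $\|w\y-u\y\|^2$ terms. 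Since $r^h_\alpha$ is a fixed positive-definite quadratic, $V^{r^h_\alpha}_z(w) = \frac12(\Lam\xx+\alpha\Lam\xy)\|w\x-z\x\|^2 + \frac12(\Lam\yy+\alpha^{-1}\Lam\xy)\|w\y-z\y\|^2$ (and likewise for $V^{r^h_\alpha}_w(u)$), so the target RHS is exactly such a sum; I would choose the Young's-inequality parameters so that the accumulated coefficients match.

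The main obstacle is the bookkeeping in Item~\ref{item:minimax_hsmooth_equiv}: there are four cross terms (two from the $x$-block inner product, two from the $y$-block), each of which must be split by Young's inequality, and the splitting parameters must be chosen jointly so that the total coefficient on each of the four squared norms is at most what $V^{r^h_\alpha}_z(w)+V^{r^h_\alpha}_w(u)$ provides. The natural choice is to pair $\|w\x-z\x\|\|w\x-u\x\|$ symmetrically (coefficient $\Lam\xx$ split as $\frac{\Lam\xx}{2}$ each), and to split each $\Lam\xy$ cross term with a parameter $\alpha$ so that the $\|w\x-\cdot\|^2$ side picks up $\frac{\alpha\Lam\xy}{2}$ and the $\|w\y-\cdot\|^2$ side picks up $\frac{\Lam\xy}{2\alpha}$; summing the two $\Lam\xy$ cross terms (one from each block's inner product) then contributes $\alpha\Lam\xy$ to the $x$-coefficient and $\alpha^{-1}\Lam\xy$ to the $y$-coefficient, which is precisely the $r^h_\alpha$ definition. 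I would verify the arithmetic matches exactly and note that this is where the specific form of $r^h_\alpha$ comes from. The rest is routine; no deep idea beyond Young's inequality, Cauchy-Schwarz, and the dual-smoothness fact is needed.
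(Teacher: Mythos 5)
Your proposal is correct and follows essentially the same route as the paper's proof: Items 1 and 2 via Cauchy--Schwarz, Young's inequality with parameter $\alpha$, and the dual smoothness bound of Fact~\ref{fact:dualsc} (the paper phrases the last step through $V^{f^*}_{\nabla f(w)}(\nabla f(v)) = V^f_v(w)$, which is the same content as your direct use of Item 4), and Item 3 via the identical expansion of $\inprod{\gop^h(w)-\gop^h(z)}{w-v}$, the blockwise bounds \eqref{eq:Hlipbound}, and the asymmetric Young splits of the two $\Lam\xy$ cross terms that produce exactly the coefficients $\Lam\xx + \alpha\Lam\xy$ and $\Lam\yy + \alpha^{-1}\Lam\xy$ in $r^h_\alpha$. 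No gaps; the bookkeeping you describe is precisely what the paper carries out.
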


\begin{proof}
We will prove Items 1 and 3, as Item 2 follows symmetrically to Item 1.

\paragraph{Proof of \Cref{item:minimax_fsmooth_equiv}.} We compute:
	\begin{align*}
		\Abs{\inprod{\nabla f(v) - \nabla{f(w)}}{x - y}} &\le \norm{\nabla f(v) - \nabla f(w)} \norm{x - y}\\
				&\le \frac{\alpha}{2} \norm{\nabla f(v) - \nabla f(w)}^2 + \frac{1}{2\alpha} \norm{x - y}^2 \\
		&\le \alpha L\x V^{f^*}_{\nabla f(w)}(\nabla f(v)) + \alpha^{-1} V_{x}(y)  = \alpha L\x V^f_{v}(w) + \alpha^{-1} V_{x}(y) .
	\end{align*}

	The first inequality was Cauchy-Schwarz, the second was Young's inequality, and the third used Items 3 and 4 in Fact~\ref{fact:dualsc}. The last equality follows from Fact~\ref{fact:dualsc}.
	
	\paragraph{Proof of \Cref{item:minimax_hsmooth_equiv}.} Let $w,v,z \in \zset$ be arbitrary. We have, 
	\begin{flalign*}
	& \inprod{\gop^h (w) - \gop^h(z)}{w - v}
	\\
	& \hspace{3em} = \inprod{\nabla_x h(w\xsup, w\ysup) - \nabla_x h(z\xsup,z\ysup)}{w\xsup - v\xsup} 
	-  \inprod{\nabla_y h(w\xsup,w\ysup) - \nabla_y h(z\xsup, z\ysup)}{w\ysup - v\ysup}.
	\end{flalign*}
	Applying Cauchy-Schwarz, Young's inequality, and \Cref{assume:minimax} yields
	\begin{flalign*}
		& \inprod{\nabla_x h(w\xsup,w\ysup) - \nabla_x h(z\xsup,z\ysup)}{w\x - v\x} \leq \norm{\nabla_x h(w\xsup,w\ysup) - \nabla_x h(z\xsup,z\ysup)} \norm{w\x - v\x} \\
		&\hspace{6em}\leq \left( \Lam\xx \norm{w\x - z\x} + \Lam\xy \norm{w\y - z\y} \right) \norm{w\x - v\x} \\
		&\hspace{6em} \leq  \frac{\Lam\xx}{2} \norm{w\x - z\x}^2 + \frac{\Lam\xx}{2}  \norm{w\x - v\x}^2
		+ \Lam\xy \norm{w\y - z\y}  \norm{w\x - v\x}.
	\end{flalign*}
	Symmetrically,
	\[
	\inprod{\nabla_y h(w\xsup,w\ysup) - \nabla_y h(z\xsup,z\ysup)}{w\y - v\y}  \leq  \frac{\Lam\yy}{2} \norm{w\y - z\y}^2 + \frac{\Lam\yy}{2}  \norm{w\y - v\y}^2
	+ \Lam\xy  \norm{w\x - z\x}  \norm{w\y - v\y}.
	\]
	Applying Young's inequality again yields
	\begin{align*}
	\Lam\xy \norm{w\y - z\y}  \norm{w\x - v\x} &\leq \frac{\alpha \Lam\xy}{2}   \norm{w\x - v\x}^2 + \frac{ \Lam\xy}{2 \alpha} \norm{w\y - z\y}^2, \\
	\text{and } \Lam\xy  \norm{w\x - z\x}  \norm{w\y - v\y} &\leq \frac{\alpha \Lam\xy}{2}   \norm{w\x - z\x}^2 + \frac{ \Lam\xy}{2 \alpha} \norm{w\y - v\y}^2.
	\end{align*}
	Combining these inequalities yields the desired bound of
	\begin{align*}
	\inprod{\gop^h (w) - \gop^h(z)}{w - v}
	&\leq \left(\Lambda\xx + \alpha \Lambda\xy\right) \left( V_{z\xsup}(w\xsup) + V_{w\xsup}(v\xsup) \right)
	+ \left(\Lambda\yy + \alpha \Lambda\xy\right) \left( V_{z\ysup}(w\ysup) + V_{w\ysup}(v\ysup) \right) \\
	&= V^{r^h_\alpha}_z(w) + V^{r^h_\alpha}_w(v).
	\end{align*}
\end{proof}

Leveraging~\Cref{lem:smoothness_implications} and~\Cref{lem:rnablar}, we prove relative Lipschitzness of $\gop$ with respect to $r$ in \Cref{lem:rl}. Interestingly, the implications in Lemma~\ref{lem:smoothness_implications} are sufficient for this proof, and this serves as a (potentially) weaker replacement for \Cref{assume:minimax} in yielding a convergence rate for our method.

This is particularly interesting when the condition in \Cref{item:minimax_fsmooth_equiv} is replaced with a non-Euclidean divergence, namely $\Abs{\inprod{\nabla f\Par{v} - \nabla f\Par{w}}{x - y}} \le \alpha L\x V^f_{v}\Par{w} + \alpha^{-1} V^\omega_{x}(y)$ for some convex $\omega: \xset \to \R$. Setting, setting $v = y, w = x, \alpha = \frac 1 {L\x}$ in this condition yields $V^f_x(y) \le LV^\omega_x(y)$. Hence, this extension to \Cref{item:minimax_fsmooth_equiv} generalizes \emph{relative smoothness} between $f$ and $\omega$, a condition introduced by \cite{BauschkeBT17, LuFN18}. It has been previously observed \cite{HanzelyRX18, DragomirTAB19} that relative smoothness alone does not suffice for accelerated rates. \Cref{item:minimax_fsmooth_equiv} provides a new strengthening of relative smoothness which, as shown by its (implicit) use in \cite{CohenST21}, suffices for acceleration. We believe a more thorough investigation comparing these conditions is an interesting avenue for future work.

\begin{lemma}[Relative Lipschitzness]\label{lem:rl}
	Define $\gop: \zset \to \zset^*$ as in \eqref{eq:gdef}, and define $r: \zset \to \R$ as in \eqref{eq:rdef}. Then $\gop$ is $\lam$-relatively Lipschitz with respect to $r$ over $\zset_{\textup{alg}}$ defined in Corollary~\ref{cor:zalgmm} for
	\begin{equation}\label{eq:lamdefmm}\lam = 1 + \sqrt{\frac{L\x}{\mu\x}} + \sqrt{\frac{L\y}{\mu\y}} + \frac{\Lam\xx}{\mu\x} + \frac{\Lam\xy}{\sqrt{\mu\x\mu\y}} + \frac{\Lam\yy}{\mu\y}.\end{equation}
\end{lemma}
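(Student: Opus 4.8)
Overall structure. The plan is to exploit the decomposition $\gop = \gop^r + \gcross + \gop^h$ from \eqref{eq:gdecomp}--\eqref{eq:gdef} and bound the relative Lipschitzness of each summand with respect to $r$ over $\zset_{\textup{alg}}$ separately; since the inner product in \Cref{def:rl} is linear in the operator and Bregman divergences are nonnegative, a sum of $\lam_i$-relatively Lipschitz operators is $(\sum_i \lam_i)$-relatively Lipschitz, so the three bounds simply add. The contribution of $\gop^r = \nabla r$ is exactly $1$ by \Cref{lem:rnablar}, which accounts for the leading term of \eqref{eq:lamdefmm}.

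The $h$-block. For $\gop^h$ I would invoke \Cref{item:minimax_hsmooth_equiv} of \Cref{lem:smoothness_implications}, which states $\gop^h$ is $1$-relatively Lipschitz with respect to $r^h_\alpha(z) = \half(\Lam\xx + \alpha\Lam\xy)\norm{z\x}^2 + \half(\Lam\yy + \alpha^{-1}\Lam\xy)\norm{z\y}^2$ for every $\alpha > 0$. Since the $\xset \times \yset$ part of $r$ is exactly $\frac{\mu\x}{2}\norm{\cdot}^2 + \frac{\mu\y}{2}\norm{\cdot}^2$ and the $f^*, g^*$ terms of $r$ contribute nonnegative divergences, we get $V^{r^h_\alpha}_z(w) \le \max\Par{\frac{\Lam\xx + \alpha\Lam\xy}{\mu\x}, \frac{\Lam\yy + \alpha^{-1}\Lam\xy}{\mu\y}} V^r_z(w)$ for all $z,w$, and likewise with $(z,w)$ replaced by $(w,u)$; so $\gop^h$ is relatively Lipschitz with respect to $r$ with this same prefactor. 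Taking $\alpha = \sqrt{\mu\x/\mu\y}$ makes the prefactor at most $\frac{\Lam\xx}{\mu\x} + \frac{\Lam\yy}{\mu\y} + \frac{\Lam\xy}{\sqrt{\mu\x\mu\y}}$, the $h$-contribution to \eqref{eq:lamdefmm}.

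The bilinear term (the main obstacle). For $\gcross(z) = (z\xd, z\yd, -z\x, -z\y)$, expanding $\inprod{\gcross(w) - \gcross(z)}{w - u}$ pairs the $\xset^*$-block of the difference against the $\xset$-block of $w-u$ and vice versa, and similarly for $\yset$; because the $\xset$-$\xset^*$ and $\yset$-$\yset^*$ coordinates do not interact in $\gcross$, the pairing splits as $\bigl[\inprod{w\xd - z\xd}{w\x - u\x} - \inprod{w\x - z\x}{w\xd - u\xd}\bigr] + \bigl[\inprod{w\yd - z\yd}{w\y - u\y} - \inprod{w\y - z\y}{w\yd - u\yd}\bigr]$, and I would bound each bracket separately. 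Applying Cauchy--Schwarz and then Young's inequality to the two inner products in the $\xset$-bracket bounds it by $\frac{\beta}{2}\norm{w\xd - z\xd}^2 + \frac{1}{2\beta}\norm{w\x - u\x}^2 + \frac{\gamma}{2}\norm{w\x - z\x}^2 + \frac{1}{2\gamma}\norm{w\xd - u\xd}^2$ for any $\beta,\gamma>0$. The crucial point --- and exactly why the claim is restricted to $\zset_{\textup{alg}}$ --- is that by \Cref{cor:zalgmm} the dual iterates satisfy $z\xd, w\xd, u\xd \in \xset^*_f$, so they are genuine gradients of $f$; hence Items~3 and~4 of \Cref{fact:dualsc} give $V^{f^*}_{z\xd}(w\xd) = V^f_{w\xp}(z\xp) \ge \frac{1}{2L\x}\norm{w\xd - z\xd}^2$ (and the analogous bounds for $(z\xd,u\xd)$ and $(w\xd,u\xd)$), whence $V^r_z(w) \ge \frac{\mu\x}{2}\norm{w\x - z\x}^2 + \frac{1}{2L\x}\norm{w\xd - z\xd}^2$ and $V^r_w(u) \ge \frac{\mu\x}{2}\norm{w\x - u\x}^2 + \frac{1}{2L\x}\norm{w\xd - u\xd}^2$. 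Choosing $\beta = 1/\sqrt{L\x\mu\x}$ and $\gamma = \sqrt{L\x\mu\x}$ matches each of the four terms against a distinct portion of $V^r_z(w) + V^r_w(u)$ with slack factor $\sqrt{L\x/\mu\x}$; symmetrically the $\yset$-bracket is at most $\sqrt{L\y/\mu\y}\Par{V^r_z(w) + V^r_w(u)}$. Thus $\gcross$ is relatively Lipschitz with respect to $r$ over $\zset_{\textup{alg}}$ with parameter $\sqrt{L\x/\mu\x} + \sqrt{L\y/\mu\y}$.

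Conclusion. Summing the three contributions $1$, $\sqrt{L\x/\mu\x} + \sqrt{L\y/\mu\y}$, and $\frac{\Lam\xx}{\mu\x} + \frac{\Lam\yy}{\mu\y} + \frac{\Lam\xy}{\sqrt{\mu\x\mu\y}}$ yields exactly \eqref{eq:lamdefmm}. I expect the bilinear step to be the only genuinely delicate one: the Young's parameters must be chosen so that the primal strong convexity ($\mu\x$ in the $\xset$-blocks of $r$) balances against the dual strong convexity ($1/L\x$ in the $\xset^*$-blocks, available only because the iterates stay in $\xset^*_f$) to produce the accelerated $\sqrt{L\x/\mu\x}$ rather than a naive $L\x/\mu\x$, and one must take care that no portion of $V^r_z(w)$ or $V^r_w(u)$ is used twice.
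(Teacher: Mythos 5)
Your proposal is correct and follows essentially the same route as the paper: decompose $\gop = \gop^r + \gcross + \gop^h$, charge $1$ to $\nabla r$ via \Cref{lem:rnablar}, handle $\gop^h$ via \Cref{item:minimax_hsmooth_equiv} of \Cref{lem:smoothness_implications} with $\alpha = \sqrt{\mu\x/\mu\y}$, and balance primal strong convexity against dual smoothness (available only on $\zset_{\textup{alg}}$) for the bilinear term. The only cosmetic difference is that for $\gcross$ you inline the Cauchy--Schwarz/Young's/Fact~\ref{fact:dualsc} argument with explicit parameters $\beta,\gamma$, whereas the paper packages exactly that computation as \Cref{item:minimax_fsmooth_equiv,item:minimax_gsmooth_equiv} of \Cref{lem:smoothness_implications} and invokes them with $\alpha = (\mu\x L\x)^{-1/2}$, $(\mu\y L\y)^{-1/2}$.
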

\begin{proof}
	Let $w,v,z \in \zset_{\textup{alg}}$. We wish to show (cf.\ Definition~\ref{def:rl})
	\[
	\inprod{\gop(w) - \gop(z)}{w - v} \leq \lambda \left(V^r_z(w) + V^r_w(v) \right).
	\]
	Since $\gop = \gop^r + \gcross +\gop^h$ (cf.\ \eqref{eq:gdef}), we bound the contribution of each term individually. The conclusion follows from combining \eqref{eq:sepbound}, \eqref{eq:crossbound}, and \eqref{eq:lipschitz_h_bound}.
	
	\paragraph{Bound on $\gop^r$:} By applying Lemma~\ref{lem:rnablar} to $r$,
	\begin{equation}\label{eq:sepbound}
	\begin{aligned}
	\inprod{\gop^r(w) - \gop^r(z)}{w - v} =
	\inprod{\nabla r(w) - \nabla r(z)}{w - v} &\le V^r_z(w) + V^r_w(v).
	\end{aligned}
	\end{equation}
	\paragraph{Bound on $\gcross$:} For all $a \in \zset_{\textup{alg}}$, we may write for some $a\xp\in \xset$ and $a\yp \in \yset$,
	\begin{align*}
	\gcross(a) &= (a\xd, a\yd, -a\x, -a\y) = (\nabla f(a\xp), \nabla g(a\yp), -a\x, -a\y) \\
	\text{ and } a &= (a\x, a\y, a\xd, a\yd) = (a\x, a\y, \nabla f(a\xp), \nabla g(a\yp)  ) .
	\end{align*}
	Consequently,
	\begin{align*}
	\inprod{\gcross(w) - \gcross(z)}{w - v} 
	&= \inprod{\nabla f(w\xp) - \nabla f(z\xp)}{w\x - v\x} + \inprod{\nabla g(w\xp) - \nabla g(z\xp)}{w\y - v\y}\\
	&- \inprod{ w\x - z\x}{ \nabla f(w\xp) - \nabla f(v\xp)} - \inprod{w\y - z\y}{\nabla g(w\yp) - \nabla g(v\yp)}.
	\end{align*}
    Applying \Cref{lem:smoothness_implications} (\Cref{item:minimax_fsmooth_equiv} and \Cref{item:minimax_gsmooth_equiv}) to each term, with $\alpha = (\mu\x L\x)^{-\half}$ for terms involving $f$ and $\alpha =  (\mu\y L\y)^{-\half}$ for terms involving $g$ yields
    \begin{align*}
    \inprod{\gcross(w) - \gcross(z)}{w - v}  &\leq \sqrt{\frac{L\x}{\mu\x}} \left(V_{w\xp}^f(z\xp) + V_{v\xp}^f(w\xp) \right) + \sqrt{\frac{L\x}{ \mu\x} } \left(\mu\x V_{w\x}(v\x) + \mu\x V_{z\x}(w\x) \right) 
    \\
    &+ \sqrt{\frac{L\y}{ \mu\y}} \left(V_{w\yp}^g(z\yp) + V_{v\yp}^g(w\yp) \right) + \sqrt{\frac{L\y}{ \mu\y}} \left(\mu\y V_{w\y}(v\y) + \mu\y V_{z\y}(w\y) \right).
    \end{align*} 
	Applying Item 3 in Fact~\ref{fact:dualsc} and recalling the definition of $r$ \eqref{eq:rdef} yields
	\begin{align}
	\label{eq:crossbound}
		\inprod{\gcross(w) - \gcross(z)}{w - v} 
		\leq\Par{\sqrt{\frac{L\x}{\mu\x}} + \sqrt{\frac{L\y}{\mu\y}} } 
		 \Par{V^r_z(w) + V^r_w(v)}.
	\end{align}	
	
	\paragraph{Bound on $\gop^{h}$:} 
	Applying \Cref{lem:smoothness_implications} (\Cref{item:minimax_hsmooth_equiv} with $\alpha = \sqrt{\mu\x / \mu\y}$), we have that $\gop^{h}$ is $1$-relatively Lipschitz with respect to $r^h_\alpha : \zset  \rightarrow \R$ defined for all $z \in \xset$ and $\alpha > 0$ by
	\begin{align*}
		r_\alpha^h(z) &\defeq \half \left(\Lambda\xx + \alpha \Lambda\xy \right)  \norm{z\x}^2 + \half \left(\Lambda\yy + \alpha^{-1} \Lambda\xy \right)  \norm{z\y}^2 \\
		&= \left(\frac{\Lambda\xx}{\mu\x} +  \frac{\Lambda\xy}{\sqrt{\mu\x \mu\y}} \right) \cdot \frac{\mu\x} 2 \norm{z\x}^2+ \left(\frac{\Lambda\yy}{\mu\y} +   \frac{\Lambda\xy}{\sqrt{\mu\x \mu\y}} \right) \cdot \frac{\mu\y} 2 \norm{z\y}^2.
	\end{align*}
	Leveraging the nonnegativity of Bregman divergences, we conclude
	\begin{align}
		\inprod{\gop^h(w) - \gop^h(z)}{w - v}
		&\leq V^{r^h_\alpha }_z(w) + V^{r^h_\alpha }_w(v) \nonumber \\
		&\leq \Par{\frac{\Lam\xx}{\mu\x} + \frac{\Lam\xy}{\sqrt{\mu\x\mu\y}} + \frac{\Lam\yy}{\mu\y}} 
		\Par{V^r_z(w) + V^r_w(v)}.
		\label{eq:lipschitz_h_bound}
	\end{align}
\end{proof}

Finally, we provide simple bounds regarding initialization and termination of Algorithm~\ref{alg:mm}.

\begin{lemma}\label{lem:initbound}
Let $(x_0, y_0) \in \xset \times \yset$, and define
\begin{equation}\label{eq:z0def}z_0 \defeq \Par{x_0, y_0, \nabla f(x_0), \nabla g(y_0)}.\end{equation}
Suppose $\gap_{\Freg}(x_0, y_0) \le \Eps_0$. Then, letting $z_\star$ be the solution to \eqref{eq:primaldual},
	\[V^r_{z_0}(z_\star) \le \Par{1 + \frac{L\x}{\mu_x} + \frac{L\y}{\mu\y}}\Eps_0. \]
\end{lemma}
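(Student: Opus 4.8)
The plan is to expand $V^r_{z_0}(z_\star)$ using the separable structure of $r$ in \eqref{eq:rdef}, rewrite each block as a multiple of a squared Euclidean distance, and then control those distances via the duality gap assumption together with the strong convexity--concavity of $\Freg$.

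First I would pin down the relevant coordinates of $z_\star$. Since $\gop$ is strongly monotone with respect to $r$ (\Cref{lem:sm}), the VI in $\gop$ — hence the saddle point of \eqref{eq:primaldual} — is unique; and performing the inner maximization over $z\xd$ and minimization over $z\yd$ as in the proof of \Cref{lem:sameopt}, Item 1 of \Cref{fact:dualsc} gives $z_\star\xd = \nabla f(z_\star\x)$ and $z_\star\yd = \nabla g(z_\star\y)$. By the definition \eqref{eq:z0def} of $z_0$ we also have $z_0\x = x_0$, $z_0\y = y_0$, $z_0\xd = \nabla f(x_0)$, $z_0\yd = \nabla g(y_0)$. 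Expanding the Bregman divergence of \eqref{eq:rdef},
\[ V^r_{z_0}(z_\star) = \tfrac{\mu\x}{2}\norm{x_0 - z_\star\x}^2 + \tfrac{\mu\y}{2}\norm{y_0 - z_\star\y}^2 + V^{f^*}_{\nabla f(x_0)}\Par{\nabla f(z_\star\x)} + V^{g^*}_{\nabla g(y_0)}\Par{\nabla g(z_\star\y)}. \]

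Next I would simplify the conjugate terms. Using $\nabla f^*(\nabla f(x_0)) = x_0$ (Item 3 of \Cref{fact:dualsc}) together with the Fenchel--Young equality $f^*(\nabla f(x)) = \inprod{\nabla f(x)}{x} - f(x)$ (a consequence of Item 1 of \Cref{fact:dualsc}), a short cancellation yields $V^{f^*}_{\nabla f(x_0)}(\nabla f(z_\star\x)) = V^f_{z_\star\x}(x_0)$, which $L\x$-smoothness of $f$ bounds by $\tfrac{L\x}{2}\norm{x_0 - z_\star\x}^2$; symmetrically for $g$. Substituting and regrouping gives
\[ V^r_{z_0}(z_\star) \le \Par{1 + \tfrac{L\x}{\mu\x}} \tfrac{\mu\x}{2}\norm{x_0 - z_\star\x}^2 + \Par{1 + \tfrac{L\y}{\mu\y}}\tfrac{\mu\y}{2}\norm{y_0 - z_\star\y}^2. \]

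Finally I would bound the two squared distances. Since $\Freg(\cdot, z_\star\y)$ is $\mu\x$-strongly convex with minimizer $z_\star\x$, and $\Freg(z_\star\x, \cdot)$ is $\mu\y$-strongly concave with maximizer $z_\star\y$, we get $\max_{y'}\Freg(x_0, y') \ge \Freg(x_0, z_\star\y) \ge \Freg(z_\star\x, z_\star\y) + \tfrac{\mu\x}{2}\norm{x_0 - z_\star\x}^2$ and $\min_{x'}\Freg(x', y_0) \le \Freg(z_\star\x, y_0) \le \Freg(z_\star\x, z_\star\y) - \tfrac{\mu\y}{2}\norm{y_0 - z_\star\y}^2$, so subtracting shows $\tfrac{\mu\x}{2}\norm{x_0 - z_\star\x}^2 + \tfrac{\mu\y}{2}\norm{y_0 - z_\star\y}^2 \le \gap_{\Freg}(x_0, y_0) \le \Eps_0$. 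Plugging this into the previous display and using $(1 + a)u + (1 + b)v \le (1 + a + b)(u + v)$ for nonnegative $a, b, u, v$ yields $V^r_{z_0}(z_\star) \le (1 + \tfrac{L\x}{\mu\x} + \tfrac{L\y}{\mu\y})\Eps_0$, as claimed. The only step needing care — rather than a genuine obstacle — is the identity $V^{f^*}_{\nabla f(x_0)}(\nabla f(z_\star\x)) = V^f_{z_\star\x}(x_0)$: it is precisely what makes the choice of $z_0$ with dual coordinates equal to the primal gradients work, and it requires combining Items 1 and 3 of \Cref{fact:dualsc} correctly; everything else is routine convexity bookkeeping.
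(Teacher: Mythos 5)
Your proposal is correct and follows essentially the same route as the paper's proof: the same block expansion of $V^r_{z_0}(z_\star)$, the same Bregman duality identity $V^{f^*}_{\nabla f(x_0)}(\nabla f(z_\star\x)) = V^f_{z_\star\x}(x_0)$, the same smoothness bound, and the same lower bound $\mu\x V_{x_0}(x_\star) + \mu\y V_{y_0}(y_\star) \le \gap_{\Freg}(x_0,y_0)$. The only cosmetic difference is that you obtain this last inequality by evaluating the max/min directly at the saddle point coordinates, whereas the paper invokes strong convexity/concavity of the best-response functions via \Cref{fact:br}; both are valid and equivalent here.
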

\begin{proof}
By the characterization in Lemma~\ref{lem:sameopt}, we have by Item 1 in Fact~\ref{fact:dualsc}:
\[z_\star = \Par{x_\star, y_\star, \nabla f(x_\star), \nabla g(y_\star)}.\]
	Hence, we bound
	\begin{align*}V^r_{z_0}(z_\star) &= \mu\x V_{x_0}(x_\star) + V^{f}_{x_\star}(x_0) + \mu\y V_{y_0}(y_\star) + V^{g}_{y_\star}(y_0) \\
	&\le  \mu\x V_{x_0}(x_\star) + \frac{L\x}{2} \normx{x_0 - x_\star}^2 + \mu\y V_{y_0}(y_\star) + \frac{L\y}{2} \normy{y_0 - y_\star}^2 \\
	&= \Par{\frac{L\x}{\mu\x} + 1}\mu\x V_{x_0}(x_\star) + \Par{\frac{L\y}{\mu\y} + 1}\mu\y V_{y_0}(y_\star)\\
	& \le \Par{\frac{L\x}{\mu\x} + \frac{L\y}{\mu\y} + 1}\Eps_0.
	\end{align*}
	The first line used Item 3 in Fact~\ref{fact:dualsc}, and the second used smoothness of $f$ and $g$ (Assumption~\ref{assume:minimax}). To obtain the last line, define the functions
	\[\Freg\x(x) \defeq \max_{y \in \yset} \Freg(x, y) \text{ and } \Freg\y(y) \defeq \min_{x \in \xset} \Freg(x, y). \]
	\Cref{fact:br} shows $\Freg\x$ is $\mu\x$-strongly convex and $\Freg\y$ is $\mu\y$-strongly concave, so
	\begin{align*}\gap_{\Freg}(x_0, y_0) &= \Par{\Freg\x(x_0) - \Freg\x(x_\star)} + \Par{\Freg\y(y_\star) - \Freg\y(y_0)} \\
		&\ge \mu\x V_{x_0}(x^\star) + \mu\y V_{y_0}(y^\star).\end{align*}
\end{proof}

\begin{lemma}\label{lem:termbound}
Let $z \in \zset$ have 
\[V^r_z(z_\star) \le \Par{\frac{\mu\x + L\x + \Lam\xx}{\mu\x} + \frac{\mu\y + L\y + \Lam\yy}{\mu\y} + \frac{(\Lam\xy)^2}{\mu\x\mu\y}} \cdot \frac \eps 2,\] 
for $z_\star$ the solution to \eqref{eq:primaldual}. Then,
\[\gap_{\Freg}(z\x, z\y) \le \eps.\] 
\end{lemma}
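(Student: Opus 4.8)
The plan is to run the proof of \Cref{lem:initbound} in reverse: convert smallness of the Bregman divergence $V^r_z(z_\star)$ into smallness of the duality gap, using the smoothness that \Cref{assume:minimax} and the quadratic terms in \eqref{eq:minimax_reg} confer on $\Freg$.

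First I would bound $\gap_{\Freg}(z\x,z\y)$ by a quadratic in $z - z_\star$. Let $\Freg\x(x) \defeq \max_{y \in \yset}\Freg(x,y)$ and $\Freg\y(y) \defeq \min_{x \in \xset}\Freg(x,y)$. By \Cref{lem:sameopt} the primal blocks of $z_\star$ are the saddle point $(x_\star,y_\star)$ of $\Freg$, so $\Freg\x(x_\star) = \Freg\y(y_\star)$ and
\[\gap_{\Freg}(z\x,z\y) = \Par{\Freg\x(z\x) - \Freg\x(x_\star)} + \Par{\Freg\y(y_\star) - \Freg\y(z\y)}.\]
By \Cref{fact:br}, $\Freg\x$ is $\mu\x$-strongly convex with minimizer $x_\star$; by Danskin's theorem $\nabla \Freg\x(x) = \nabla_x \Freg(x, y^+(x))$ for $y^+(x) \defeq \argmax_y \Freg(x,y)$, and combining the $\mu\y$-strong concavity of $\Freg(x,\cdot)$ with the $\Lam\xy$-Lipschitzness of $\nabla_x \Freg(x,\cdot)$ from \Cref{assume:minimax} shows $y^+$ is $\tfrac{\Lam\xy}{\mu\y}$-Lipschitz, hence $\Freg\x$ is $\bigl(\mu\x + L\x + \Lam\xx + \tfrac{(\Lam\xy)^2}{\mu\y}\bigr)$-smooth (symmetrically for $\Freg\y$). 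This yields an upper bound on $\gap_{\Freg}(z\x,z\y)$ that is a weighted sum of $\norm{z\x - x_\star}^2$ and $\norm{z\y - y_\star}^2$.

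Next I would compare with $V^r_z(z_\star)$. From \eqref{eq:rdef} and \Cref{fact:dualsc}, $V^r_z(z_\star) = \mu\x V_{z\x}(x_\star) + \mu\y V_{z\y}(y_\star) + V^{f^*}_{z\xd}(\nabla f(x_\star)) + V^{g^*}_{z\yd}(\nabla g(y_\star))$ with all four summands nonnegative, and $\mu\x V_{z\x}(x_\star) = \tfrac{\mu\x}{2}\norm{z\x - x_\star}^2$ (likewise in $y$). Substituting the previous bound, dividing through by $\mu\x$ and $\mu\y$, and invoking the hypothesized bound on $V^r_z(z_\star)$ should give $\gap_{\Freg}(z\x,z\y) \le \eps$.

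The hard part is making the constant in that last step come out to exactly $\frac{\mu\x + L\x + \Lam\xx}{\mu\x} + \frac{\mu\y + L\y + \Lam\yy}{\mu\y} + \frac{(\Lam\xy)^2}{\mu\x\mu\y}$ rather than something larger. Bounding the $x$- and $y$-contributions each by the whole of $V^r_z(z_\star)$ is lossy, so one must split the mass of $V^r_z(z_\star)$ between them and charge the cross term $\tfrac{(\Lam\xy)^2}{\mu\x\mu\y}$ only once (e.g.\ pairing $\mu\x V_{z\x}(x_\star)$ with $\mu\y V_{z\y}(y_\star)$ via AM--GM), possibly also using the nonnegative dual divergences $V^{f^*},V^{g^*}$ inside $V^r_z(z_\star)$ to absorb slack. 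An equivalent route that makes those available is to first pass to $\gap_{\Freg}(z\x,z\y) \le \gap_{\Fmmpd}(z)$ by Fenchel--Young, then use $\gop(z_\star) = 0$ (valid since $\zset$ is unconstrained) to write $\gap_{\Fmmpd}(z) \le \inprod{\gop(z) - \gop(z_\star)}{z - \hat z}$ for the best-response point $\hat z$ to $z$, and estimate that inner product blockwise through the smoothness implications of \Cref{assume:minimax} as in \Cref{lem:smoothness_implications}.
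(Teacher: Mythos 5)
Your proposal follows the paper's proof essentially verbatim: decompose the gap via the best-response functions $\Freg\x,\Freg\y$, apply the smoothness constants $\mu\x+L\x+\Lam\xx+(\Lam\xy)^2/\mu\y$ (and the $\yset$ analogue) from \Cref{fact:br} together with $\nabla \Freg\x(x_\star)=0$, then compare with $V^r_z(z_\star)$ by dropping the nonnegative dual divergences $V^{f^*},V^{g^*}$. The constant-matching you flag as the hard part needs no AM--GM or mass-splitting: the paper simply bounds each of $\tfrac{\mu\x}{2}\norm{z\x-x_\star}^2$ and $\tfrac{\mu\y}{2}\norm{z\y-y_\star}^2$ by the whole of $V^r_z(z_\star)$ and lets the explicit factor of $2$ (paired with the $\eps/2$ in the hypothesis) absorb the resulting slack.
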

\begin{proof}
We follow the notation of Lemma~\ref{lem:initbound}. From~\Cref{fact:br} we know $\Freg\x$ is $\mathcal{L}\x$-smooth and $\Freg\y$ is $\mathcal{L}\y$-smooth, where 
\[\mathcal{L}\x \defeq \mu\x+L\x + \Lam\xx + \frac{(\Lam\xy)^2}{\mu\y}\text{ and }\mathcal{L}\y \defeq \mu\y+L\y + \Lam\yy + \frac{(\Lam\xy)^2}{\mu\x},\]
under Assumption~\ref{assume:minimax}. Moreover, by Lemma~\ref{lem:sameopt} and the definition of saddle points, $x_\star \defeq z_\star\x$ is the minimizer to $\Freg\x$, and $y_\star \defeq z_\star\y$ is the maximizer to $\Freg\y$. We conclude via
\begin{align*}
\gap_{\Freg}(z\x, z\y) &= \Par{\Freg\x(x) - \Freg\x(x_\star)} + \Par{\Freg\y(y_\star) - \Freg\y(z\y)} \\
&\le \Par{\mu\x + L\x + \Lam\xx + \frac{(\Lam\xy)^2}{\mu\y}}\norm{x - x_\star}^2 \\
&+ \Par{\mu\y + L\y + \Lam\yy + \frac{(\Lam\xy)^2}{\mu\x}} \norm{y - y_\star}^2 \\
&\le 2\Par{\frac{\mu\x + L\x + \Lam\xx}{\mu\x} + \frac{\mu\y + L\y + \Lam\yy}{\mu\y} + \frac{(\Lam\xy)^2}{\mu\x\mu\y}}V^r_z(z_\star) \le \eps.
\end{align*}
The first inequality was smoothness of $\Freg\x$ and $\Freg\y$ (where we used that the gradients at $x_\star$ and $y_\star$ vanish because the optimization problems they solve are over unconstrained domains), and the last inequality was nonnegativity of Bregman divergences.
\end{proof}

%

\subsection{Main result}\label{ssec:mmres}

We now state and prove our main claim.

\begin{restatable}{theorem}{restatemain}\label{thm:main}
	Suppose $\Freg$ in \eqref{eq:minimax_reg} satisfies Assumption~\ref{assume:minimax}, and suppose we have $(x_0, y_0) \in \xset \times \yset$ such that $\gap_{\Freg}(x_0, y_0) \le \Eps_0$. 
	Algorithm~\ref{alg:mm} with $\lam$ as in \eqref{eq:lamdefmm} returns $(x, y) \in \xset \times \yset$ with $\gap_{\Freg}(x, y) \le \eps$ in $T$ iterations, using a total of $O(T)$ gradient calls to each of $f$, $g$, $h$, where
	\begin{equation}\label{eq:tmm}T = O\Par{\kappa_{\textup{mm}} \log\Par{ \frac{\kappa_{\textup{mm}}\Eps_0}{\eps}}}, \text{ for } \kappa_{\textup{mm}} \defeq \sqrt{\frac{L\x}{\mu\x}} + \sqrt{\frac{L\y}{\mu\y}} + \frac{\Lam\xx}{\mu\x} + \frac{\Lam_{xy}}{\sqrt{\mu\x\mu\y}} + \frac{\Lam\yy}{\mu\y}. \end{equation}
\end{restatable}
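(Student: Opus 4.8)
The plan is to assemble the components built up in Sections~\ref{ssec:mmsetup}--\ref{ssec:mmconv} into a single invocation of the strongly monotone mirror prox guarantee. By Lemma~\ref{lem:itercost}, Algorithm~\ref{alg:mm} is exactly Algorithm~\ref{alg:smmp} run on the operator--regularizer pair $(\gop, r)$ of \eqref{eq:gdef}, \eqref{eq:rdef} with $m = 1$, and by Corollary~\ref{cor:zalgmm} all of its iterates lie in $\zset_{\textup{alg}}$. The operator $\gop$ is the gradient operator of the convex-concave $\Fregpd$ from \eqref{eq:primaldual}, so by the discussion in Section~\ref{sec:prelims} its VI is solved by the saddle point $z_\star$ of \eqref{eq:primaldual}; moreover $z_\star = (x_\star, y_\star, \nabla f(x_\star), \nabla g(y_\star)) \in \zset_{\textup{alg}}$, as observed in the proof of Lemma~\ref{lem:initbound}. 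Lemma~\ref{lem:sm} supplies $1$-strong monotonicity of $\gop$ with respect to $r$, and Lemma~\ref{lem:rl} supplies $\lam$-relative Lipschitzness of $\gop$ with respect to $r$ over $\zset_{\textup{alg}}$ for $\lam$ as in \eqref{eq:lamdefmm}. Hence Proposition~\ref{prop:smmp} applies with $m = 1$ and gives, for every $T \in \N$,
\[V^r_{z_T}(z_\star) \le \Par{1 + \frac 1 \lam}^{-T} V^r_{z_0}(z_\star) \le \exp\Par{-\frac T \lam} V^r_{z_0}(z_\star).\]

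Next I would quantify the two endpoints of this chain. Since $\gap_{\Freg}(x_0, y_0) \le \Eps_0$, Lemma~\ref{lem:initbound} gives $V^r_{z_0}(z_\star) \le \Par{1 + L\x/\mu\x + L\y/\mu\y}\Eps_0$. By Lemma~\ref{lem:termbound}, to certify $\gap_{\Freg}(z_T\x, z_T\y) \le \eps$ it suffices to reach $V^r_{z_T}(z_\star) \le \tau$, where $\tau \defeq \tfrac{\eps}{2}\Par{\tfrac{\mu\x + L\x + \Lam\xx}{\mu\x} + \tfrac{\mu\y + L\y + \Lam\yy}{\mu\y} + \tfrac{(\Lam\xy)^2}{\mu\x\mu\y}}$. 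Combining with the contraction bound, any $T \ge \lam\log\Par{V^r_{z_0}(z_\star)/\tau}$ works. Because $L\x \ge \mu\x$ and $L\y \ge \mu\y$, each of the quantities $L\x/\mu\x$, $L\y/\mu\y$, $\Lam\xx/\mu\x$, $(\Lam\xy)^2/(\mu\x\mu\y)$, $\Lam\yy/\mu\y$ is bounded by a polynomial in the summands defining $\kappa_{\textup{mm}}$, so $V^r_{z_0}(z_\star)/\tau = \mathrm{poly}(\kappa_{\textup{mm}}) \cdot \Eps_0/\eps$ and $\log\Par{V^r_{z_0}(z_\star)/\tau} = O(\log(\kappa_{\textup{mm}}\Eps_0/\eps))$. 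The same relations give $\kappa_{\textup{mm}} \ge 1$ and hence $\lam \le 1 + \kappa_{\textup{mm}} \le 2\kappa_{\textup{mm}}$, so taking $T = O(\kappa_{\textup{mm}}\log(\kappa_{\textup{mm}}\Eps_0/\eps))$ as in \eqref{eq:tmm} suffices. Returning $(z_T\x, z_T\y)$ and noting that each iteration of Algorithm~\ref{alg:mm} performs $O(1)$ gradient evaluations of each of $f$, $g$, and $h$, the total gradient complexity is $O(T)$, completing the proof.

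There is no genuinely hard step: all the analytic content resides in the lemmas already proved, and the theorem is essentially their composition. The only point requiring care is the logarithmic bookkeeping in the second paragraph, namely confirming that both the initial potential $V^r_{z_0}(z_\star)$ and the termination threshold $\tau$ lie within a $\mathrm{poly}(\kappa_{\textup{mm}})$ factor of $\Eps_0$ and $\eps$ respectively, so that their ratio contributes only an $O(\log(\kappa_{\textup{mm}}\Eps_0/\eps))$ overhead; this in turn rests only on the elementary relations $L \ge \mu$ for each separable block, which bound the condition-number-type factors appearing in Lemmas~\ref{lem:initbound} and~\ref{lem:termbound} by polynomials in the terms defining $\kappa_{\textup{mm}}$.
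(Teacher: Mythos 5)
Your proposal is correct and follows essentially the same route as the paper's proof: it composes Lemmas~\ref{lem:itercost},~\ref{lem:sm},~\ref{lem:rl} with Proposition~\ref{prop:smmp}, and uses Lemmas~\ref{lem:initbound} and~\ref{lem:termbound} for the endpoints, exactly as the paper does. The only difference is that you spell out the logarithmic bookkeeping (that the initial potential and termination threshold are within $\mathrm{poly}(\kappa_{\textup{mm}})$ factors of $\Eps_0$ and $\eps$), which the paper leaves implicit.
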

\begin{proof}
	By Lemma~\ref{lem:sameopt}, the points $x_\star$ and $y_\star$ are consistent between \eqref{eq:minimax_reg} and \eqref{eq:primaldual}. The gradient complexity of each iteration follows from observation of Algorithm~\ref{alg:mm}.
	
	Next, by Lemma~\ref{lem:itercost}, Algorithm~\ref{alg:mm} implements Algorithm~\ref{alg:smmp} on the pair \eqref{eq:gdef}, \eqref{eq:rdef}. By substituting the bounds on $\lam$ and $m$ in Lemmas~\ref{lem:rl} and~\ref{lem:sm} into Proposition~\ref{prop:smmp} (where we define $\zset_{\textup{alg}}$ as in Corollary~\ref{cor:zalgmm}), it is clear that after $T$ iterations (for a sufficiently large constant in the definition of $T$), we will have $V^r_{z_T}(z_\star)$ is bounded by the quantity in Lemma~\ref{lem:termbound}, where we use the initial bound on $V^r_{z_0}(z^\star)$ from Lemma~\ref{lem:initbound}. The conclusion follows from setting $(x, y) \gets (z_T\x, z_T\y)$.
\end{proof}

As an immediate corollary, we have the following result on solving \eqref{eq:minimax}.

\begin{corollary}\label{cor:minimax}
Suppose for $\Fmm$ in \eqref{eq:minimax} solved by $(x_\star, y_\star)$, $(f - \frac{\mu\x} 2 \norm{\cdot}^2, g - \frac{\mu\y} 2 \norm{\cdot}^2, h)$ satisfies Assumption~\ref{assume:minimax}. There is an algorithm taking $(x_0, y_0) \in \xset \times \yset$ satisfying $\gap_{\Fmm}(x_0, y_0) \le \Eps_0$, which performs $T$ iterations for $T$ in \eqref{eq:tmm}, returns $(x, y) \in \xset \times \yset$ satisfying $\gap_{\Fmm}(x, y) \le \eps$, and uses a total of $O(T)$ gradient calls to each using $O(1)$ gradient calls to each of $f$, $g$, $h$.
\end{corollary}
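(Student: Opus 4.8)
The plan is to derive \Cref{cor:minimax} from \Cref{thm:main} by the reparameterization already indicated below \eqref{eq:minimax_reg}. Set $\tf \defeq f - \frac{\mu\x}{2}\norm{\cdot}^2$ and $\tg \defeq g - \frac{\mu\y}{2}\norm{\cdot}^2$; these are convex (equivalently, $f$ is $\mu\x$-strongly convex and $g$ is $\mu\y$-strongly convex), $h$ is convex-concave, and by hypothesis $(\tf, \tg, h)$ satisfies \Cref{assume:minimax}, so $(\tf, \tg, h)$ defines a bona fide instance $\Freg$ of the regularized problem \eqref{eq:minimax_reg}. Expanding the definition, $\Freg(x, y) = \tf(x) + h(x,y) - \tg(y) + \frac{\mu\x}{2}\norm{x}^2 - \frac{\mu\y}{2}\norm{y}^2 = f(x) + h(x,y) - g(y) = \Fmm(x,y)$, so $\Freg$ and $\Fmm$ are literally the same function on $\xset \times \yset$. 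Consequently $\gap_{\Freg} \equiv \gap_{\Fmm}$ and they share the saddle point $(x_\star, y_\star)$; in particular $\gap_{\Fmm}(x_0, y_0) \le \Eps_0$ is the same as $\gap_{\Freg}(x_0, y_0) \le \Eps_0$.

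Given this identification, I would simply invoke \Cref{thm:main} on $\Freg$ built from $(\tf, \tg, h)$: running \Cref{alg:mm} with $\lam$ as in \eqref{eq:lamdefmm} (taking $L\x, L\y, \Lam\xx, \Lam\xy, \Lam\yy$ to be the parameters witnessing \Cref{assume:minimax} for $(\tf, \tg, h)$) produces, in $T$ iterations with $T$ as in \eqref{eq:tmm}, a pair $(x,y)$ with $\gap_{\Freg}(x, y) = \gap_{\Fmm}(x, y) \le \eps$. It only remains to account for oracle calls: each gradient evaluation of $\tf$ equals $\nabla f(x) - \mu\x x$, i.e.\ one call to the gradient oracle of $f$ plus $O(1)$ vector operations (which are not charged, per the complexity model of \Cref{sec:prelims}); symmetrically $\nabla\tg(y) = \nabla g(y) - \mu\y y$; and gradient evaluations of $h$ are unchanged. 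Since \Cref{alg:mm} issues $O(1)$ gradient calls to each of $\tf$, $\tg$, $h$ per iteration, the total cost is $O(T)$ gradient calls to each of $f$, $g$, $h$, as claimed.

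Because the argument is a direct change of variables, there is no real obstacle; the only points demanding (minor) care are verifying that the reparameterized objective is genuinely identical to $\Fmm$ — so that the duality gap, the saddle point, and the initial-gap hypothesis all transfer verbatim — and confirming that the shifted gradient oracles for $\tf, \tg$ incur only $O(1)$ extra vector arithmetic and hence no additional oracle queries.
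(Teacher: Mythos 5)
Your proposal is correct and is exactly the argument the paper intends: the paper declares \Cref{cor:minimax} an "immediate" consequence of \Cref{thm:main} via the reparameterization $f \gets f - \frac{\mu\x}{2}\norm{\cdot}^2$, $g \gets g - \frac{\mu\y}{2}\norm{\cdot}^2$ announced at the start of \Cref{sec:minimax}, under which $\Freg \equiv \Fmm$. You additionally spell out the oracle accounting ($\nabla\tf = \nabla f - \mu\x\,\mathrm{id}$ costs one call to $\nabla f$ plus uncharged vector operations), which the paper leaves implicit.
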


\section{Finite sum optimization}
\label{sec:finitesum}

In this section, we give an algorithm for efficiently finding an approximate minimizer of the following finite sum optimization problem:
\begin{equation}\label{eq:origfs}
\Ffs(x) \defeq \frac 1 n \sum_{i \in [n]} f_i(x).
\end{equation}
Here and throughout this section $f_i: \xset \to \R$ is a differentiable, convex function for all $i \in [n]$. For the remainder, we focus on algorithms for solving the following regularized formulation of \eqref{eq:origfs}:
\begin{equation}\label{eq:regfs}
\min_{x \in \xset} \Ffsreg(x) \text{ for } \Ffsreg(x) \defeq \nsin f_i(x) + \frac \mu 2 \norm{x}^2 .
\end{equation}
As in Section~\ref{sec:minimax}, to solve an instance of \eqref{eq:origfs} where each $f_i$ is $\mu$-strongly convex, we may instead equivalently solve \eqref{eq:regfs} by reparameterizing $f_i \gets f_i - \frac \mu 2 \norm{\cdot}^2$ for all $i \in [n]$. We further remark that our algorithms extend to solve instances of \eqref{eq:origfs} where $\Ffs$ is $\mu$-strongly convex in $\norm{\cdot}$, but individual summands are not. We provide this result at the end of the section in Corollary~\ref{cor:mainfsunreg}.

In designing methods for solving \eqref{eq:regfs} we make the following additional regularity assumptions.
\begin{assumption}\label{assume:fs}
	For all $i \in [n]$, $f_i$ is $L_i$-smooth.
\end{assumption}

The remainder of this section is organized as follows.
\begin{enumerate}
	\item In Section~\ref{ssec:fssetup}, we state a primal-dual formulation of \eqref{eq:regfs} which we will apply our methods to, and prove that its solution also yields a solution to \eqref{eq:regfs}.
	\item In Section~\ref{ssec:fsalgo}, we give our algorithm and prove it is efficiently implementable.
	\item In Section~\ref{ssec:fsconv}, we prove the convergence rate of our algorithm.
	\item In Section~\ref{ssec:fsres}, we state and prove our main result, Theorem~\ref{thm:mainfs}.
\end{enumerate}

\subsection{Setup}\label{ssec:fssetup}

To solve \eqref{eq:regfs}, we instead find a saddle point to the primal-dual function
\begin{equation}\label{eq:pdfs}\Ffspd\Par{z} \defeq \frac 1 n \sum_{i \in [n]} \Par{\inprod{z\xdi}{z\x} - f_i^*(z\xdi)} + \frac \mu 2 \norm{z\x}^2. \end{equation}
We denote the domain of $\Ffspd$ by $\zset \defeq \xset \times (\xset^*)^n$. For $z \in \zset$, we refer to its blocks by $(z\x, \bin{z\xdi})$. The primal-dual function $\Ffspd$ is related to $\Ffsreg$ in the following way.
\begin{lemma}\label{lem:sameoptfs}
	Let $z_\star$ be the saddle point to \eqref{eq:pdfs}. Then, $z_\star\x$ is a minimizer of \eqref{eq:regfs}.
\end{lemma}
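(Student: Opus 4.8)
The statement to prove is \Cref{lem:sameoptfs}: if $z_\star$ is the saddle point of the primal-dual objective $\Ffspd$ in \eqref{eq:pdfs}, then $z_\star\x$ minimizes $\Ffsreg$ in \eqref{eq:regfs}. This is the exact finite-sum analog of \Cref{lem:sameopt} for the minimax setting, so I would mirror that proof. The plan is to perform the inner maximization over the dual blocks $\bin{z\xdi}$ explicitly and recognize the result as the biconjugate of each $f_i$.

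\textbf{Main step.} Fix the primal variable $z\x$. The objective $\Ffspd(z)$ is separable across the dual blocks $z\xdi$, and for each $i \in [n]$ the term to be maximized is $\inprod{z\xdi}{z\x} - f_i^*(z\xdi)$. Maximizing over $z\xdi \in \xset^*$ gives, by the definition of the convex conjugate applied to $f_i^*$,
\[
\max_{z\xdi \in \xset^*} \inprod{z\xdi}{z\x} - f_i^*(z\xdi) = (f_i^*)^*(z\x) = f_i(z\x),
\]
where the last equality is Item 2 in \Cref{fact:dualsc} (Fenchel--Moreau: $(f_i^*)^* = f_i$ for convex $f_i$). Performing this maximization blockwise inside \eqref{eq:pdfs}, the problem of computing a saddle point of $\Ffspd$ reduces to
\[
\min_{z\x \in \xset}\ \frac 1 n \sum_{i \in [n]} f_i(z\x) + \frac \mu 2 \norm{z\x}^2,
\]
which is exactly \eqref{eq:regfs}. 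Hence the $z\x$-block of the saddle point $z_\star$ is a minimizer of $\Ffsreg$.

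\textbf{Obstacles.} There is essentially no hard step: the argument is a one-line application of Fenchel biconjugacy plus separability of the dual. The only point requiring minor care is justifying that passing from the saddle-point problem to the reduced minimization problem is valid — i.e.\ that one may legitimately "perform the maximization'' inside the $\min\max$. This is routine because the objective is concave in the dual blocks and the maximization decouples completely across $i$ (and over $z\x$), so the order of operations is harmless; the same move is made without further comment in the proof of \Cref{lem:sameopt}. One should also note that $f_i^*$ may take the value $+\infty$ (as flagged in the preliminaries), but this does not affect the biconjugacy identity $(f_i^*)^* = f_i$ for differentiable convex $f_i$. I would write the proof in two sentences, exactly parallel to \Cref{lem:sameopt}.
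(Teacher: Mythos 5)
Your proposal is correct and matches the paper's proof essentially verbatim: both perform the blockwise maximization over each dual variable $z\xdisup$, invoke Item 2 of Fact~\ref{fact:dualsc} to recover $(f_i^*)^* = f_i$, and identify the reduced problem with \eqref{eq:regfs}. No gaps.
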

\begin{proof}
	By performing the maximization over each $z\xdi$, we see that the problem of computing a minimizer to the objective in \eqref{eq:pdfs} is equivalent to
	\[\min_{z\xsup \in \xset} \frac \mu 2 \norm{z\x}^2 + \frac 1 n \sum_{i \in [n]} \Par{\max_{z\xdi \in \xset^*} \inprod{z\xdi}{z\xsup} - f^*_i(z\xdi)}.\]
	By Item 2 in Fact~\ref{fact:dualsc}, this is the same as \eqref{eq:regfs}.
\end{proof}

As in Section~\ref{ssec:mmsetup}, it will be convenient to define the convex function $r: \zset \to \R$, which combines the (unsigned) separable components of $\Ffspd$:
\begin{equation}\label{eq:rfsdef}r\Par{z} \defeq \frac \mu 2 \norm{z\x}^2 + \frac 1 n \sum_{i \in [n]} f^*_i(z\xdi).\end{equation}
Again, $r$ serves as a regularizer in our algorithm. We next define $\gop$, the gradient operator of $\Ffspd$:
\begin{equation}\label{eq:gfsdef}\gop(z) \defeq \Par{\frac 1 n \sum_{i \in [n]} z\xdi + \mu z\x, \Brace{ \frac 1 n \Par{\nabla f^*_i(z\xdi) - z\x}}_{i \in [n]}}.\end{equation}

By construction, $\gop$ is $1$-strongly monotone with respect to $r$.

\begin{lemma}[Strong monotonicity]\label{lem:smfs}
	Define $\gop: \zset \to \zset^*$ as in \eqref{eq:gfsdef}, and define $r: \zset \to \R$ as in \eqref{eq:rfsdef}. Then $\gop$ is $1$-strongly-monotone with respect to $r$.
\end{lemma}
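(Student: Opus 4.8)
The plan is to mirror the proof of \Cref{lem:sm} from the minimax section: decompose $\gop$ into a ``separable'' piece that equals $\nabla r$ and a ``bilinear'' piece, establish (strong) monotonicity of each, and combine them using additivity of monotonicity. Concretely, I would write $\gop = \gop^r + \gcross$, where $\gop^r(z) \defeq \nabla r(z) = \Par{\mu z\x, \Brace{\tfrac 1n \nabla f^*_i(z\xdi)}_{i \in [n]}}$ collects the gradient of the regularizer, and $\gcross(z) \defeq \Par{\tfrac 1n \sum_{i \in [n]} z\xdi, \Brace{-\tfrac 1n z\x}_{i \in [n]}}$ collects the cross terms coming from $\tfrac1n\sum_{i\in[n]} \inprod{z\xdi}{z\x}$ in $\Ffspd$. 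A direct inspection of \eqref{eq:gfsdef} and \eqref{eq:rfsdef} confirms this decomposition reproduces $\gop$.

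Next I would argue that $\gop^r = \nabla r$ is $1$-strongly monotone with respect to $r$. This is immediate from the convexity of $r$ — a nonnegative combination of $\tfrac\mu2\norm{\cdot}^2$ and the convex conjugates $f^*_i$ — since by the definition of the Bregman divergence, $\inprod{\nabla r(z) - \nabla r(z')}{z - z'} = V^r_z(z') + V^r_{z'}(z)$, which is exactly the defining inequality of $1$-strong monotonicity. In the notation of the appendix this is \Cref{item:sm-self} of \Cref{fact:sm}. Then I would note that $\gcross$ is monotone: it is the gradient operator of the bilinear, hence convex-concave, function $\Par{z\x, \Brace{z\xdi}} \mapsto \tfrac1n\sum_{i\in[n]} \inprod{z\xdi}{z\x}$, so monotonicity follows from \Cref{item:sm-convex-concave} of \Cref{fact:sm}; equivalently, a one-line computation gives $\inprod{\gcross(z) - \gcross(z')}{z - z'} = \tfrac1n\sum_{i\in[n]}\inprod{z\xdi - z'\xdi}{z\x - z'\x} - \tfrac1n\sum_{i\in[n]}\inprod{z\x - z'\x}{z\xdi - z'\xdi} = 0$.

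Finally, combining these two facts through additivity of monotonicity (\Cref{item:sm-additive} of \Cref{fact:sm}) yields that $\gop = \gop^r + \gcross$ is $1$-strongly monotone with respect to $r$, which is the claim. I do not expect any genuine obstacle in this proof; the only mild subtlety — shared with \Cref{lem:sm} — is that the conjugates $f^*_i$ need not be differentiable in full generality, which is harmless since $\nabla r$ and $V^r$ remain well-defined, and in the convergence analysis we will in any case restrict attention to the subset of $\zset$ on which the relevant gradients are realizable.
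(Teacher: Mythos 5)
Your proof is correct and follows essentially the same route as the paper, which simply observes that the bilinear component cancels in the strong monotonicity inner product and the remainder is exactly $\nabla r$ (invoking the same argument as \Cref{lem:sm} minus the $\gop^h$ term). Your explicit decomposition $\gop = \gop^r + \gcross$, the one-line cancellation computation, and the appeal to \Cref{fact:sm} are precisely what the paper's terse proof is implicitly relying on.
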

\begin{proof}
	The proof is identical to Lemma~\ref{lem:sm} without the $\gop^h$ term: the bilinear component cancels in the definition of strong monotonicity, and the remaining part is exactly the gradient of $r$.
\end{proof}

\subsection{Algorithm}\label{ssec:fsalgo}

Our algorithm is an instantiation of \emph{randomized mirror prox} \cite{CohenST21} stated as Algorithm~\ref{alg:rmp} below, an extension to mirror prox allowing for randomized gradient estimators. We note that the operators $\gop_i$ need only be defined on iterates of the algorithm.

\begin{algorithm2e}[ht!]
	\caption{$\textsc{Rand-Mirror-Prox}( \{\gop_i\}_{i\in[n]}, w_0)$: Randomized mirror prox \cite{CohenST21}}
	\label{alg:rmp}
		\codeInput Convex $r: \zset \to \R$, probability distribution $p: [n] \to \R_{\ge 0}$ with $\sum_{i \in [n]} p_i = 1$, operators $\{\gop_i\}_{i \in [n]}: \zset \to \zset^*$, $z_0 \in \zset$\;
		\codeParameter $\lam > 0$, $S \in \N$  \;
		\For{$0 \le s < S$}{
		Sample $i \sim p$\;
		$w_{s+1/2} \gets \Prox^r_{w_t}(\tfrac{1}{\lam}\gop_i(w_s))$\;
		$w_{s + 1} \gets \Prox^r_{w_t}(\tfrac{1}{\lam}\gop_i(w_{s+1/2}))$\;
		}
\end{algorithm2e}

We provide the following result from \cite{CohenST21} giving a guarantee on Algorithm~\ref{alg:rmp}.

\begin{proposition}[Proposition 2, \cite{CohenST21}]\label{prop:rmp}
	Suppose $\{\gop_i\}_{i \in [n]}$ are defined so that in each iteration $s$, for all $u \in \zset$, there exists a point $\bw_s \in \zset$ and a monotone operator $\gop: \zset \to \zset^*$ such that (where all expectations fix $w_s$, and condition only on the randomness in iteration $s$)
	\begin{equation}\label{eq:giprops}
	\begin{aligned}
	\E_{i \sim p}\Brack{\inprod{\gop_i(w_{s+1/2})}{w_{s+1/2} - u}} &= \inprod{\gop(\bw_s)}{\bw_s - u} \text{ for all } u \in \zset,\\
	\E_{i \sim p}\Brack{\inprod{\gop_i(w_{s+1/2}) - \gop_i(w_s)}{w_{s+1/2} - w_{s + 1}}} &\le \lam\E_{i \sim p}\Brack{V^r_{w_s}(w_{s+1/2}) + V^r_{w_{s+1/2}}(w_{s + 1})}.
	\end{aligned}
	\end{equation}
	Then (where the expectation below is taken over the randomness of the entire algorithm):
	\begin{align*}
	\E\Brack{\frac 1 S \sum_{0 \le s < S} \inprod{\gop(\bw_s)}{\bw_s - u}} \le \frac{\lam V^r_{w_0}(u)}{S}, \text{ for all } u \in \zset.
	\end{align*}
\end{proposition}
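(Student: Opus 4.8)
The plan is to run the standard two-step (gradient, then extragradient) mirror-prox regret analysis \emph{one iteration at a time}, carrying the conditional expectation over the sampled index along at each stage so that the two hypotheses in \eqref{eq:giprops} plug in at exactly the right places. Fix $u \in \zset$ and an iteration index $s$, let $i$ be the index sampled in iteration $s$, and abbreviate $w \defeq w_s$, $w' \defeq w_{s+1/2}$, and $w^+ \defeq w_{s+1}$. Since $\zset$ is unconstrained and $r$ is convex and differentiable, the two proximal updates in Algorithm~\ref{alg:rmp} are characterized by the stationarity identities $\nabla r(w) - \nabla r(w') = \tfrac{1}{\lam}\gop_i(w)$ and $\nabla r(w) - \nabla r(w^+) = \tfrac{1}{\lam}\gop_i(w')$.

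First I would write
\[
\inprod{\gop_i(w')}{w' - u} = \inprod{\gop_i(w') - \gop_i(w)}{w' - w^+} + \inprod{\gop_i(w)}{w' - w^+} + \inprod{\gop_i(w')}{w^+ - u},
\]
and on the latter two inner products substitute the corresponding stationarity identity and invoke the three-point equality \eqref{eq:threept} (applied to the triple $(w, w', w^+)$ for the middle term, and to $(w, w^+, u)$ for the last, each with a sign flip). After the resulting $\pm\lam V^r_{w}(w^+)$ terms cancel, this produces the one-step identity
\[
\inprod{\gop_i(w')}{w' - u} = \inprod{\gop_i(w') - \gop_i(w)}{w' - w^+} - \lam V^r_{w}(w') - \lam V^r_{w'}(w^+) + \lam V^r_{w}(u) - \lam V^r_{w^+}(u).
\]

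Next I would take $\E_{i \sim p}$ conditioned on $w_s$. By the first line of \eqref{eq:giprops} the left-hand side becomes $\inprod{\gop(\bw_s)}{\bw_s - u}$; by the second line, $\E_{i}[\inprod{\gop_i(w') - \gop_i(w)}{w' - w^+}] \le \lam\E_i[V^r_{w}(w') + V^r_{w'}(w^+)]$, which cancels the two negative divergence terms \emph{in expectation}; and $V^r_{w_s}(u)$ is constant given $w_s$. This gives the one-step drift bound $\inprod{\gop(\bw_s)}{\bw_s - u} \le \lam V^r_{w_s}(u) - \lam\E_i[V^r_{w_{s+1}}(u)]$. Finally I would take total expectation, sum over $s = 0, \dots, S-1$ so that $\lam V^r_{w_s}(u)$ telescopes, drop the nonnegative terminal term $\lam\E[V^r_{w_S}(u)] \ge 0$, and divide by $S$ to obtain the claim.

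The only place the randomized setting genuinely departs from deterministic mirror prox --- and hence the step needing the most care --- is the cancellation of $-\lam V^r_{w}(w') - \lam V^r_{w'}(w^+)$ against the cross term $\inprod{\gop_i(w') - \gop_i(w)}{w' - w^+}$: in the deterministic analysis this cancellation is termwise (via relative Lipschitzness of a single operator), whereas here it holds only after taking $\E_{i \sim p}$, which is precisely why \eqref{eq:giprops} is posed with relative Lipschitzness in expectation rather than pointwise. Everything else is bookkeeping: keeping signs straight in the stationarity identities and in \eqref{eq:threept}, and checking that the conditioning is consistent so that the per-iteration bound and the auxiliary point $\bw_s$ sit under a single conditional expectation. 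Monotonicity of $\gop$ is not needed for this particular argument; it is carried as a hypothesis only because the caller uses it downstream to turn this regret bound into a duality-gap guarantee.
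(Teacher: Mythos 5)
Your proposal is correct and follows essentially the same route as the standard argument (and as the paper's own proof of the analogous Proposition~\ref{prop:newrmp} in Appendix~\ref{apdx:mfs-alg}): the two prox-step optimality conditions combined with \eqref{eq:threept} yield the one-step decomposition, the two hypotheses in \eqref{eq:giprops} are applied under the conditional expectation to replace the left-hand side and absorb the cross term, and the remaining divergence terms telescope. Your observations that the cancellation against $-\lam V^r_{w_s}(w_{s+1/2}) - \lam V^r_{w_{s+1/2}}(w_{s+1})$ holds only in expectation, and that monotonicity of $\gop$ is not used in this derivation, are both accurate.
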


The first condition in \eqref{eq:giprops} is an ``unbiasedness'' requirement on the operators $\{\gop_i\}_{i \in [n]}$ with respect to the operator $\gop$, for which we wish to conclude a regret guarantee. The second posits that relative Lipschitzness (Definition~\ref{def:rl}) holds in an expected sense. We recall that Algorithm~\ref{alg:rmp} requires us to specify a set of sampling probabilities $\{p_i\}_{i \in [n]}$. We define 
\begin{equation}\label{eq:pdef}p_i \defeq \frac{\sqrt{L_i}}{2\sum_{j \in [n]} \sqrt{L_j}} + \frac{1}{2n} \text{ for all } i \in [n].\end{equation}
This choice crucially ensures that all $p_i \ge \frac 1 {2n}$, and that all $\frac{\sqrt{L_i}}{p_i} \le 2\sum_{j \in [n]} \sqrt{L_j}$.

Our algorithm, Algorithm~\ref{alg:fs}, recursively applies Algorithm~\ref{alg:rmp} to the operator-pair $(\gop, r)$ defined in \eqref{eq:gfsdef} and \eqref{eq:rfsdef}, for an appropriate specification of $\bin{\gop_i}$. We give this implementation as pseudocode in Algorithms~\ref{alg:fs} and~\ref{alg:fsop} below, and show that Algorithm~\ref{alg:fsop} is a correct implementation of Algorithm~\ref{alg:rmp} with respect to our specified $\bin{\gop_i}$ in the remainder of the section.

\begin{algorithm2e}[ht!]
	\caption{$\textsc{Finite-Sum-Solve}(\Ffsreg, x_0)$: Finite sum optimization}
	\label{alg:fs}
		\codeInput \eqref{eq:regfs} satisfying Assumption~\ref{assume:fs}, $x_0 \in \xset$\;
		\codeParameter $T\in \N$\;
		$z_0\xsup \gets x_0$, $z\xpisup_0 \gets x_0$, $z\xdisup_0 \gets \nabla f_i(x_0)$ for all $i \in [n]$\;\label{line:fs-initialization}
		\For{$0 \le t < T$}{
		$z_{t+1} \gets \textsc{Finite-Sum-One-Phase}(\Ffsreg, z_t)$\;
		}
\end{algorithm2e}

\begin{algorithm2e}[ht!]
	\caption{$\textsc{Finite-Sum-One-Phase}(\Ffsreg, w_0)$: Finite sum optimization subroutine}
	\label{alg:fsop}
		\codeInput \eqref{eq:regfs} satisfying Assumption~\ref{assume:fs}, $w_0 \in \zset$ specified by $w_0\x, \{w_0\xpi\}_{i \in [n]} \in \xset$\;
		\codeParameter $\lam \ge 2$, $S \in \N$\;
		Sample $0 \le \sigma < S$ uniformly at random\;
		\For{$0 \le s \le \sigma$}{
		Sample $j \in [n]$ according to $p$ defined in \eqref{eq:pdef}\;
		$w\x_{s+1/2} \gets w_s\x - \frac{1}{\lam\mu}(\mu w\x_s + \nsin \nabla f_i(w\xpisup_s))$\;
		$w\xpssup{j}_{s+1/2} \gets (1 - \frac{1}{\lam n p_j}) w\xpssup{j}_s + \frac 1 {\lam n p_j} w\x_s$\;
		$w\xpssup{i}_{s+1/2} \gets w\xpssup{i}_{s} $ for all $i \neq j$\;
		$\Delta_s \gets \nabla f_j(w\xpssup{j}_{s+1/2} ) - \nabla f_j(w\xpssup{j}_{s} )$\;
		$w\x_{s + 1} \gets w_s\x - \frac{1}{\lam\mu}(\mu w\x_{s+1/2} + \nsin \nabla f_i(w\xpssup{i}_{s} ) + \frac{1}{np_j} \Delta_s)$\;
		$w\xpssup{j}_{s + 1} \gets w\xpssup{j}_{s}  + \frac{1}{\lam n p_j} (w\xsup_{s+1/2} - w\xpssup{j}_{s+1/2} )$\;
		$w\xpssup{i}_{s + 1} \gets w\xpssup{i}_{s}$ for all $i \neq j$\;
		}
		\codeReturn $(w_{\sigma + 1/2}\x, \{\nabla f_i((1 - \frac{1}{\lam n p_i}) w_\sigma\xpi + \frac 1 {\lam n p_i} w_\sigma\x)\}_{i \in [n]})$
\end{algorithm2e}

We next describe the operators $\bin{\gop_i}$ used in our implementation of Algorithm~\ref{alg:rmp}. Fix some $0 \le s < S$, and consider some iterates $\{w, w\mdpt(j)\} \defeq \{w_s, w_{s+1/2}\}$ of Algorithm~\ref{alg:rmp} (where we use the notation $(j)$ to mean the iterate that would be taken if $j \in [n]$ was sampled in iteration $s$, and we drop the subscript $s$ for simplicity since we only focus on one iteration). We denote the $\xset$ block of $w\mdpt(j)$ by $w\mdpt\x$, since (as made clear in the following) conditioned on $w$, $w\mdpt\x$ is always the same regardless of the sampled $j \in [n]$. For all $j \in [n]$, we then define the operators
\begin{equation}\label{eq:gidef}\begin{aligned}\gop_j\Par{w} &\defeq \Par{\frac 1 n \sum_{i \in [n]} w\xdi + \mu w\x, \Brace{\frac 1 {np_j} (\nabla f^*_j(w\xdj) - w\x) \cdot \1_{i = j}}} , \\
\gop_j\Par{w\mdpt(j)} &\defeq \Par{\frac 1 n \sum_{i \in [n]} w\xdi + \frac 1 {np_j}\Par{w\mdpt\xdj(j) - w\xdj} + \mu w\mdpt\x, \Brace{\frac 1 {np_j}\Par{\nabla f^*_j\Par{w\mdpt\xdj(j)} - w\mdpt\x} \cdot \1_{i = j}}},
\end{aligned}
\end{equation}
where $\1_{i = j}$ is a zero-one indicator. In other words, $\gop_j(w)$ and $\gop_j(w\mdpt(j))$ both only have two nonzero blocks, corresponding to the $\xset$ and $j^{\text{th}}$ $\xset^*$ blocks. We record the following useful observation about our randomized operators \eqref{eq:gidef}, in accordance with the first condition in \eqref{eq:giprops}. To give a brief interpretation of our ``aggregate point'' defined in \eqref{eq:bzdef}, the $\xset$ coordinate is updated deterministically from $w\x$ according to the corresponding block of $\gop$, and every dual block $j \in [n]$ of $\bw$ is set to the corresponding dual block had $j$ been sampled in that step.

\begin{lemma}[Expected regret]\label{lem:exregret}
	Define $\{\gop_j\}_{j \in [n]}: \zset \to \zset^*$ as in \eqref{eq:gidef}, and the ``aggregate point''
	\begin{equation}\label{eq:bzdef}\bar{w} \defeq \Par{w\mdpt\x, \Brace{w\mdpt\xdj(j)}_{j \in [n]}}.\end{equation}
	Then, for all $u \in \zset$, defining $\gop$ as in \eqref{eq:gfsdef},
	\[\E_{j \sim p}\Brack{\inprod{\gop_j(w\mdpt(j))}{w\mdpt(j) - u}} = \inprod{\gop(\bw)}{\bw - u}.\]
\end{lemma}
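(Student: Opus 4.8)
The plan is to prove the identity directly, by expanding both sides into blockwise inner products over the coordinates of $\zset = \xset \times (\xset^*)^n$ and then matching terms after taking $\E_{j \sim p}$. I would begin by recording the two structural facts I need. From \eqref{eq:gidef}, $\gop_j(w\mdpt(j))$ has exactly two nonzero blocks: the $\xset$ block, equal to $\frac1n\sum_i w\xdi + \frac{1}{np_j}\big(w\mdpt\xdj(j) - w\xdj\big) + \mu w\mdpt\x$, and the $j$-th dual block, equal to $\frac{1}{np_j}\big(\nabla f^*_j(w\mdpt\xdj(j)) - w\mdpt\x\big)$. From \eqref{eq:bzdef}, the aggregate $\bw$ has $\xset$ block $w\mdpt\x$ and $j$-th dual block $w\mdpt\xdj(j)$, so by \eqref{eq:gfsdef} the operator $\gop(\bw)$ has $\xset$ block $\frac1n\sum_j w\mdpt\xdj(j) + \mu w\mdpt\x$ and $j$-th dual block $\frac1n\big(\nabla f^*_j(w\mdpt\xdj(j)) - w\mdpt\x\big)$. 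I would also note at the outset that $w\mdpt\x$ does not depend on the sampled index (a step of the algorithm touches only the $\xset$ block and the $j$-th dual block), which is precisely what makes both the lemma statement and the definition of $\bw$ well posed; the dual blocks of $w\mdpt(j)$ indexed by $i \ne j$ play no role since $\gop_j(w\mdpt(j))$ vanishes there.

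Next I would split $\inprod{\gop_j(w\mdpt(j))}{w\mdpt(j) - u}$, using the first structural fact, into three pieces: a $j$-independent ``main'' term $\inprod{\frac1n\sum_i w\xdi + \mu w\mdpt\x}{w\mdpt\x - u\x}$, a ``correction'' term $\frac{1}{np_j}\inprod{w\mdpt\xdj(j) - w\xdj}{w\mdpt\x - u\x}$, and a dual term $\frac{1}{np_j}\inprod{\nabla f^*_j(w\mdpt\xdj(j)) - w\mdpt\x}{w\mdpt\xdj(j) - u\xdj}$. Taking $\E_{j\sim p}$ and using that the weight $p_j$ cancels the $1/(np_j)$ factor, the correction term becomes $\inprod{\frac1n\sum_j\big(w\mdpt\xdj(j) - w\xdj\big)}{w\mdpt\x - u\x}$ and the dual term becomes $\frac1n\sum_j\inprod{\nabla f^*_j(w\mdpt\xdj(j)) - w\mdpt\x}{w\mdpt\xdj(j) - u\xdj}$, while the main term is unchanged. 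Adding the main term to the expected correction term telescopes $\frac1n\sum_i w\xdi$ into $\frac1n\sum_j w\mdpt\xdj(j)$, yielding $\inprod{\frac1n\sum_j w\mdpt\xdj(j) + \mu w\mdpt\x}{w\mdpt\x - u\x}$, which is precisely the $\xset$-block contribution to $\inprod{\gop(\bw)}{\bw - u}$ by the second structural fact; and the expected dual term is precisely $\sum_j$ of the $j$-th dual-block contribution to $\inprod{\gop(\bw)}{\bw - u}$. Summing the matched pieces gives $\inprod{\gop(\bw)}{\bw - u}$, completing the proof.

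The only genuinely delicate point — and the step I would be most careful with — is this $\xset$-block bookkeeping: $\gop_j(w\mdpt(j))$ carries the extra ``correction'' $\frac{1}{np_j}(w\mdpt\xdj(j) - w\xdj)$ that is absent from $\gop_j(w)$, and one must verify that under $\E_{j\sim p}$ this correction exactly upgrades the stale average $\frac1n\sum_i w\xdi$ to the fresh average $\frac1n\sum_j w\mdpt\xdj(j)$ appearing in $\gop(\bw)$. Everything else is linearity of the inner product and of the expectation, together with the identity $\E_{j\sim p}[\,p_j^{-1}a_j\,] = \sum_j a_j$.
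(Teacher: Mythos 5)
Your proof is correct and follows essentially the same route as the paper's: expand the expectation blockwise, use the cancellation $p_j \cdot \tfrac{1}{np_j} = \tfrac 1 n$, observe that the correction term telescopes the stale average $\tfrac 1 n \sum_i w\xdi$ into $\tfrac 1 n \sum_j w\mdpt\xdj(j)$, and match the result against $\inprod{\gop(\bw)}{\bw - u}$. Your explicit remarks on the sparsity of $\gop_j$ and the $j$-independence of $w\mdpt\x$ are exactly the implicit bookkeeping in the paper's computation.
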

\begin{proof}
	We expand the expectation, using \eqref{eq:gidef} and taking advantage of the sparsity of $\gop_j$:
	\begin{flalign*}
	& \E_{j \sim p}\Brack{\inprod{\gop_j(w\mdpt(j))}{w\mdpt(j) - u}}\\
	& \hspace{5em} = \inprod{\sum_{j \in [n]} p_j \Par{\frac 1 n \sum_{i \in [n]} w\xdi + \frac 1 {np_j} \Par{w\mdpt\xdj(j) - w\xdj} + \mu w\mdpt\x} }{w\mdpt\x - u\x} \\
	&\hspace{7em}+ \sum_{j \in [n]} p_j \inprod{\frac 1 {np_j}\Par{\nabla f^*_j\Par{w\mdpt\xdj(j)} - w\mdpt\x}}{w\mdpt\xdj(j) - u\xdj} \\
	&\hspace{5em}= \inprod{\frac 1 n \sum_{j \in [n]} w\mdpt\xdj(j) + \mu w\mdpt\x }{w\mdpt\x - u\x} \\
	&\hspace{6em}+ \sum_{j \in [n]} \inprod{\frac 1 n\Par{\nabla f^*_j\Par{w\mdpt\xdj(j)} - w\mdpt\x}}{w\mdpt\xdj(j) - u\yj} = \inprod{\gop(\bw)}{\bw - u}.
	\end{flalign*}
\end{proof}
We conclude this section by demonstrating that Algorithm~\ref{alg:fsop} is an appropriate implementation of Algorithm~\ref{alg:rmp}.

\begin{lemma}[Implementation]\label{lem:itercostfs}
Algorithm~\ref{alg:fsop} implements Algorithm~\ref{alg:rmp} on $(\bin{\gop_i}, r)$ defined in \eqref{eq:gidef}, \eqref{eq:rfsdef}, for $\sigma$ iterations, and returns $\bw_\sigma$, following the definition \eqref{eq:bzdef}. Each iteration $s > 0$ is implementable in $O(1)$ gradient calls to some $f_i$, and $O(1)$ vector operations on $\xset$.
\end{lemma}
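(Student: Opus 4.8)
The plan is to prove this the same way \Cref{lem:itercost} was proven for the minimax method: exhibit an inductive invariant that identifies the explicitly maintained primal variables of Algorithm~\ref{alg:fsop} with the (implicitly represented) iterates of Algorithm~\ref{alg:rmp} run on $(\{\gop_i\}_{i\in[n]}, r)$ with the sampling law $p$ from \eqref{eq:pdef}. Concretely, I would show by induction on $s$ that
\[ w_s = \Par{w_s\x,\; \Brace{\nabla f_i\Par{w_s\xpi}}_{i \in [n]}},\qquad w_{s+1/2}(j) = \Par{w\mdpt\x,\; \Brace{\nabla f_i\Par{w_{s+1/2}\xpssup{i}(j)}}_{i \in [n]}}, \]
where $w\mdpt\x$ and $\{w_s\xpi\}$, $\{w_{s+1/2}\xpssup{i}\}$ are the primal iterates of Algorithm~\ref{alg:fsop}, and $w_{s+1/2}\xpssup{i}(j) = w_s\xpi$ for $i \neq j$. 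The base case is immediate from the initialization on Line~\ref{line:fs-initialization}, where $z_0\xpi = x_0$ and $z_0\xdi = \nabla f_i(x_0) = \nabla f_i(z_0\xpi)$.

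For the inductive step I would exploit the block separability of $r$. Since $r(z) = \frac \mu 2 \norm{z\x}^2 + \nsin f_i^*(z\xdi)$, the proximal step $\Prox^r_{w}(\tfrac1\lam \gop)$ decomposes across the $\xset$ block and the $n$ dual blocks; because $\gop_j(w)$ and $\gop_j(w\mdpt(j))$ are supported on only the $\xset$ block and the $j$-th dual block (cf.\ \eqref{eq:gidef}), both the gradient step and the extragradient step leave every dual block $i \neq j$ unchanged, matching the ``for all $i \neq j$'' lines of Algorithm~\ref{alg:fsop}. On the $\xset$ block, $r$ restricts to $\frac\mu2\norm{\cdot}^2$, so the prox is an explicit linear step; substituting the invariant $w_s\xdi = \nabla f_i(w_s\xpi)$ into the $\xset$-component of $\gop_j(w_s)$ turns it into $\mu w_s\x + \nsin \nabla f_i(w_s\xpi)$, and into the $\xset$-component of $\gop_j(w\mdpt(j))$ turns it into $\mu w\mdpt\x + \nsin \nabla f_i(w_s\xpi) + \frac{1}{np_j}\Delta_s$ with $\Delta_s = \nabla f_j(w\mdpt\xpssup{j}) - \nabla f_j(w_s\xpssup{j})$ (using the invariant on the $j$-th dual coordinate of $w\mdpt(j)$). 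This recovers exactly the updates to $w\x_{s+1/2}$ and $w\x_{s+1}$; note also that the $\xset$-component of $\gop_j(w_s)$ is independent of $j$, which is what legitimizes writing $w\mdpt\x$. On the $j$-th dual block, $r$ restricts to $\frac1n f_j^*$ and the prox is a minimization over $\xset^*$; I would solve it with Fact~\ref{fact:dualsc}: the first-order optimality condition expresses $\nabla f_j^*$ of the new $j$-th dual coordinate as a convex combination of $\nabla f_j^*(w\xdj) = w_s\xpssup{j}$ (Item 3) and $w_s\x$, so by Item 1 the new dual coordinate is $\nabla f_j$ evaluated at that convex combination — precisely the updates to $w\xpssup{j}_{s+1/2}$ and $w\xpssup{j}_{s+1}$, so the invariant is preserved. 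Applying the gradient-step computation at iterate $\sigma$ shows the returned tuple $(w_{\sigma+1/2}\x, \{\nabla f_i((1-\tfrac1{\lam n p_i})w_\sigma\xpi + \tfrac1{\lam n p_i}w_\sigma\x)\}_{i})$ equals $(w\mdpt\x, \{w\mdpt\xdi(i)\}_i) = \bw_\sigma$ in the notation of \eqref{eq:bzdef}, since each $w\mdpt\xpssup{i}(i)$ is exactly the convex combination that would have been formed had $i$ been sampled.

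For the cost claim, I would observe that in iteration $s$ the only fresh gradient evaluations are $\nabla f_j(w\xpssup{j}_{s+1/2})$ and $\nabla f_j(w\xpssup{j}_s)$ (the latter cached from the previous iteration's bookkeeping) used to form $\Delta_s$, together with the aggregate $\nsin \nabla f_i(w\xpssup{i}_s)$; since only the single coordinate $j$ among $\{w\xpssup{i}\}$ changes between steps $s-1$ and $s$, this aggregate is maintained incrementally in $O(1)$ gradient calls and $O(1)$ vector operations for every $s > 0$. The iteration $s = 0$ is the stated exception, because forming $\nsin \nabla f_i(w_0\xpi)$ from scratch costs $n$ gradient evaluations.

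The main obstacle is the dual-block prox computation: carefully unwinding an $\argmin$ over $\xset^*$ with the $\tfrac1n f_j^*$ regularizer and the importance-scaled estimator into an \emph{implicit} primal update via the conjugacy identities (Fact~\ref{fact:dualsc}, Items 1 and 3), and the accompanying bookkeeping argument that ``staleness'' of the maintained primal variables is confined to the single sampled coordinate so the aggregate gradient sum updates in $O(1)$. The remaining steps — the $\xset$-block linear updates, the untouched blocks, and matching the return value to \eqref{eq:bzdef} — are routine verification against the pseudocode.
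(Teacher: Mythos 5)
Your proposal is correct and follows essentially the same route as the paper's proof: an inductive invariant identifying the maintained primal variables with implicit representations of the dual iterates, verified via the conjugacy identities in Fact~\ref{fact:dualsc} together with the sparsity of $\gop_j$ and block-separability of $r$, plus the observation that only one dual coordinate changes per step so the aggregate gradient sum is maintained in $O(1)$ work. Your treatment of the $\xset$-block update and the matching of the returned tuple to $\bw_\sigma$ is slightly more explicit than the paper's (which leaves those to inspection), but the argument is the same.
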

\begin{proof}
Let $\{w_s, w_{s+1/2}\}_{0 \le s \le \sigma}$ be the iterates of Algorithm~\ref{alg:rmp}. We will inductively show that Algorithm~\ref{alg:fsop} preserves the invariants
\[w_s = \Par{w\x_s, \bin{\nabla f_i(w\xpisup_s)}},\; w_{s+1/2} = \Par{w\x_s, \Brace{\nabla f_i(w\xpisup_{s+1/2})}_{i\in[n]}}\]
for all $0 \le s \le \sigma$. Once we prove this claim, it is clear from inspection that Algorithm~\ref{alg:fsop} implements Algorithm~\ref{alg:rmp} and returns $\bw_\sigma$, upon recalling the definitions \eqref{eq:gidef}, \eqref{eq:rfsdef}, and \eqref{eq:bzdef}.
	
The base case of our induction follows from the initialization guarantee of~\Cref{line:fs-initialization} in~\Cref{alg:fsop}. Next, suppose for some $0\le s\le \sigma$, we have $w_s\xdi = \nabla f(w\xpi_s)$ for all $i \in [n]$. By the updates in Algorithm~\ref{alg:rmp}, if $j \in [n]$ was sampled on iteration $s$,
	\begin{align*}w_{s+1/2}\xdj &\gets \argmin_{w\xdj \in \xset^*}\Brace{\frac 1 {\lam n p_j} \inprod{w\xdj_s - w\x_s}{w\xdj} - \inprod{w\xdj_s}{w\xdj} + f^*_j\Par{w\xdj}} \\
	&= \argmax_{w\xdj \in \xset^*}\Brace{\inprod{\Par{1 - \frac 1 {\lam n p_j}} w\xdj_s + \frac 1 {\lam n p_j} w\x_s}{w\xdj} - f^*_j\Par{w\xdj}} \\
	&= \nabla f_j\Par{\Par{1 - \frac 1 {\lam n p_j}} w\xdj_s + \frac 1 {\lam n p_j} w\x_s}.
	\end{align*}
	Here, we used the first item in Fact~\ref{fact:dualsc} in the last line. Hence, the update to $w\xdj_{s+1/2}$ in Algorithm~\ref{alg:fsop} preserves our invariant, and all other $w\xdi_{s+1/2}$, $i \neq j$ do not change by sparsity of $\gop_j$. An analogous argument shows the update to each $w\xdi_{s+1}$ preserves our invariant. Finally, in every iteration $s > 0$, the updates to $w\x_{s+1/2}$ and $w\x_{s + 1}$ only require evaluating one new gradient each, by $1$-sparsity of the dual block updates in the prior iteration.
\end{proof}

\subsection{Convergence analysis}\label{ssec:fsconv}

In this section, we prove a convergence result on Algorithm~\ref{alg:fsop} via an application of Proposition~\ref{prop:rmp}. To begin, we require a bound on the quantity $\lam$ in \eqref{eq:giprops}.

\begin{lemma}[Expected relative Lipschitzness]\label{lem:exrl}
Define $\{\gop_j\}_{j \in [n]}: \zset \to \zset^*$ as in \eqref{eq:gidef}, and define $r: \zset \to \R$ as in \eqref{eq:rfsdef}. Letting $w_+(j)$ be $w_{s + 1}$ in Algorithm~\ref{alg:rmp} if $j \in [n]$ was sampled in iteration $s$,
\[\E_{j \sim p}\Brack{\inprod{\gop_j(w\mdpt(j)) - \gop_j(w)}{w\mdpt(j) - w_+(j)}} \le \E_{j \sim p}\Brack{V^r_{w}\Par{w\mdpt(j)} + V^r_{w\mdpt(j)}\Par{w_+(j)} }\]
for
\begin{equation}\label{eq:lamdeffs}\lam = 2n + \frac{2\sum_{j \in [n]} \sqrt{L_j}}{\sqrt{n\mu}}.\end{equation}
\end{lemma}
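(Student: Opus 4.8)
The plan is to expand the inner product $\inprod{\gop_j(w\mdpt(j)) - \gop_j(w)}{w\mdpt(j) - w_+(j)}$ using the sparse structure of $\gop_j$ in~\eqref{eq:gidef}, and bound it by the sum of two Bregman-divergence terms (in expectation over $j \sim p$), choosing $\lam$ as in~\eqref{eq:lamdeffs}. Since both $\gop_j(w)$ and $\gop_j(w\mdpt(j))$ have only two nonzero blocks — the $\xset$ block and the $j^{\text{th}}$ dual block — the inner product splits into an $\xset$-contribution and a dual $\xset^*_j$-contribution. For the $\xset$ block, the difference $\gop_j(w\mdpt(j))\x - \gop_j(w)\x$ equals $\mu(w\mdpt\x - w\x) + \frac{1}{np_j}(w\mdpt\xdj(j) - w\xdj)$; the first summand is handled exactly as in the bilinear/strongly monotone piece (it contributes a clean $V_w(w\mdpt) + V_{w\mdpt}(w_+)$-type bound using $r$'s $\frac{\mu}{2}\norm{\cdot}^2$ component), while the cross term $\frac{1}{np_j}(w\mdpt\xdj(j) - w\xdj)$ paired with $w\mdpt\x - w_+\x$ is the genuinely new quantity and must be handled by Young's inequality together with Item 4 of Fact~\ref{fact:dualsc} to convert $\norm{\nabla f_j(\cdot) - \nabla f_j(\cdot)}^2$ into $L_j$ times a Bregman divergence in $f_j$ (equivalently, via Item 3, a divergence in $f_j^*$, which is what appears in $r$).

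Concretely, I would apply Cauchy–Schwarz and Young's inequality to the cross terms with a parameter $\eta_j$ chosen proportional to $np_j$ (so that the $\frac{1}{np_j}$ factors are absorbed), turning each $\frac{1}{np_j}\inprod{\nabla f_j(a) - \nabla f_j(b)}{c - d}$ into $\frac{1}{2\eta_j}\norm{\nabla f_j(a) - \nabla f_j(b)}^2 + \frac{\eta_j}{2(np_j)^2}\norm{c-d}^2$. The first piece becomes $\le \frac{L_j}{\eta_j}\,(f_j$-divergence$)$ by Fact~\ref{fact:dualsc}, Item 4; since the $j^{\text{th}}$ dual block of $r$ carries weight $\frac1n$ on $f_j^*$, and by Fact~\ref{fact:dualsc} Item 3 a divergence in $f_j^*$ between dual points equals a divergence in $f_j$ between their primal preimages, this contributes to $V^r_w(w\mdpt(j)) + V^r_{w\mdpt(j)}(w_+(j))$ with coefficient on the order of $\frac{nL_j}{\eta_j}$. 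The second piece, being an $\norm{\cdot}^2$ term on $\xset$-differences, contributes to the $\frac{\mu}{2}\norm{\cdot}^2$ part of $r$ with coefficient on the order of $\frac{\eta_j}{\mu (np_j)^2}$. Optimizing (roughly $\eta_j \asymp n\sqrt{L_j/\mu}\cdot np_j$, using $p_j \ge \frac{1}{2n}$ and $\frac{\sqrt{L_j}}{p_j} \le 2\sum_k\sqrt{L_k}$ from~\eqref{eq:pdef}), and then taking the expectation $\E_{j\sim p}$, the $\frac1{np_j}$-type weights interact with the sampling probabilities so that $\E_{j\sim p}[\text{coefficient}]$ collapses to $O(n + \frac{\sum_k \sqrt{L_k}}{\sqrt{n\mu}})$; tracking constants gives exactly $\lam = 2n + \frac{2\sum_{j}\sqrt{L_j}}{\sqrt{n\mu}}$. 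I would also use Lemma~\ref{lem:rnablar} (i.e.\ $\nabla r$ is $1$-relatively Lipschitz) for the $\gop^r$-like $\mu(w\mdpt\x - w\x)$ and $\nabla f_j^*(\cdot) - w\x$ diagonal pieces, which contribute only to the leading $n$ in $\lam$ after accounting for the $p_j \ge \frac1{2n}$ reweighting.

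The main obstacle I anticipate is the careful bookkeeping of the sampling weights: because the $j^{\text{th}}$ dual block of $\gop_j$ is rescaled by $\frac{1}{np_j}$ but then sampled with probability $p_j$, and $r$'s $j^{\text{th}}$ dual block has its own weight $\frac1n$, one must verify that after applying Young's inequality the per-$j$ coefficients, once averaged against $p_j$, telescope correctly — in particular that the choice~\eqref{eq:pdef} (which guarantees $p_j\ge\frac1{2n}$ and $\sqrt{L_j}/p_j \le 2\sum_k\sqrt{L_k}$) is exactly what makes the two competing terms balance at $\sqrt{n\mu}^{-1}\sum_k\sqrt{L_k}$. There is also a subtlety that $w_+(j)$'s $\xset$-block uses $w\mdpt\x$ (via the extragradient update) while its dual block uses a primal point derived from $w\xpj$ and $w\x_{s+1/2}$; one must ensure the Bregman-divergence terms generated are precisely those appearing in $V^r_{w}(w\mdpt(j)) + V^r_{w\mdpt(j)}(w_+(j))$ and not some shifted version — this is where invoking Lemma~\ref{lem:itercostfs}'s invariant characterization of the iterates (so that dual blocks are genuine gradients of $f_j$ at known primal points) is essential. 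Once the weights are pinned down, the remainder is a routine Cauchy–Schwarz/Young computation of the kind already carried out in Lemma~\ref{lem:rl}.
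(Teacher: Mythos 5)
Your proposal is correct and follows essentially the same route as the paper: the paper likewise splits the inner product into the two ``diagonal'' blocks (handled by Lemma~\ref{lem:rnablar} together with $p_j \ge \tfrac{1}{2n}$, giving the $2n$ term) and the two cross terms (handled by the Cauchy--Schwarz/Young/Fact~\ref{fact:dualsc} argument, packaged there as Item~1 of Lemma~\ref{lem:smoothness_implications} applied to $(\tfrac{\mu}{2}\norm{\cdot}^2, nf_j)$ with $\alpha = (n\mu L_j)^{-1/2}$, and then the uniform bound $\sqrt{L_j}/p_j \le 2\sum_k \sqrt{L_k}$). The bookkeeping you flag as the main obstacle is resolved exactly as you anticipate.
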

\begin{proof}
We begin by expanding the expectation of the left-hand side:
\begin{flalign}
& \E_{j \sim p}\Brack{\inprod{\gop_j(w\mdpt(j)) - \gop_j(w)}{w\mdpt(j) - w_+(j)}} = \E_{j \sim p}\Brack{\inprod{\mu w\mdpt\x - \mu w\x}{w\mdpt\x - w_+\x(j)}} \nonumber\\
&\hspace{7em} + \E_{j \sim p}\Brack{\frac 1 {np_j} \inprod{\nabla f^*_j\Par{w\mdpt\xdj(j)} - \nabla f^*_j\Par{w\xdj}}{w\mdpt\xdj(j) - w_+\xdj(j)}} \nonumber\\
&\hspace{7em} + \E_{j \sim p}\Brack{\frac 1 {np_j} \inprod{w\mdpt\xdj(j) - w\xdj}{w\mdpt\x - w_+\x(j)}} \nonumber\\
&\hspace{7em} + \E_{j \sim p}\Brack{\frac 1 {np_j} \inprod{w\x - w\mdpt\x}{w\mdpt\xdj(j) - w_+\xdj(j)}} .\label{eq:4terms}
\end{flalign}
To bound the first two lines of \eqref{eq:4terms}, fix some $j \in [n]$. We apply Lemma~\ref{lem:rnablar} to the functions $\frac \mu 2 \norm{\cdot}^2$ and $\frac 1 n \nabla f^*_j$, and use nonnegativity of Bregman divergences, to conclude
\begin{gather*}\inprod{\mu w\mdpt\x - \mu w\x}{w\mdpt\x - w_+\x(j)} + \frac 1 {np_j} \inprod{\nabla f^*_j\Par{w\mdpt\xdj(j)} - \nabla f^*_j\Par{w\xdj}}{w\mdpt\xdj(j) - w_+\xdj(j)} \\ \le 2n\Par{V^r_{w}\Par{w\mdpt(j)}+ V^r_{w\mdpt(j)}\Par{w_+(j)}}.\end{gather*}
In particular, we used $\frac 1 {p_j} \le 2n$ by assumption, and noted we only need to handle the case where the second inner product term above is positive (in the other case, the above inequality is clearly true). Hence, taking expectations the first two lines in \eqref{eq:4terms} contribute $2n$ to $\lambda$ in the final bound.

To bound the last two lines of \eqref{eq:4terms}, fix $j \in [n]$. By applying \Cref{item:minimax_fsmooth_equiv} in Lemma~\ref{lem:smoothness_implications} to the pair $(\frac \mu 2 \norm{\cdot}^2, nf_i)$, we have
\begin{gather*}
\frac 1 {n} \inprod{w\mdpt\xdj(j) - w\xdj}{w\mdpt\x - w_+\x(j)} + \frac 1 {n} \inprod{w\x - w\mdpt\x}{w\mdpt\xdj(j) - w_+\xdj(j)} \\
\le \frac 1 n \sqrt{\frac{nL_j}{\mu}}\Par{\mu V_{w\mdpt\x}\Par{w_+\x(j)} + V^{f^*_j}_{w\xdj}\Par{w\mdpt\xdj(j)}} + \frac 1 n \sqrt{\frac{nL_j}{\mu}} \Par{\mu V_{w\x}\Par{w\mdpt\x} + V^{f^*_j}_{w\mdpt\xdj(j)}\Par{w_+\xdj(j)}} \\
= \sqrt{\frac{L_j}{n\mu}}\Par{V^r_w\Par{w\mdpt(j)} + V^r_{w\mdpt(j)}\Par{w_+(j)}}.
\end{gather*}

Using $\frac{\sqrt{L_i}}{p_i} \le 2\sum_{j \in [n]} \sqrt{L_j}$ and taking expectations over the above display,
\begin{gather*}
\E_{j \sim p}\Brack{\frac 1 {np_j} \inprod{w\mdpt\xdj(j) - w\xdj}{w\mdpt\x - w_+\x(j)} + \frac 1 {np_j} \inprod{w\x - w\mdpt\x}{w\mdpt\xdj(j) - w_+\xdj(j)} } \\
 \le \frac{2\sum_{j \in [n]} \sqrt{L_j}}{\sqrt{n\mu}} \E_{j \sim p}\Brack{V^r_w\Par{w\mdpt(j)} + V^r_{w\mdpt(j)}\Par{w_+(j)}} .
\end{gather*}
Hence, the last two lines in \eqref{eq:4terms} contribute $\frac{2\sum_{j \in [n]} \sqrt{L_j}}{\sqrt{n\mu}}$ to $\lam$ in the final bound.
\end{proof}

We next apply Proposition~\ref{prop:rmp} to analyze the convergence of Algorithm~\ref{alg:fsop}.

\begin{lemma}\label{lem:fsophalfdiv}
Let $w_0 \defeq (w\x_0, \{\nabla f_i(w\xpi_0)\}_{i \in [n]})$, which is the input $z_t$ to Algorithm~\ref{alg:fsop} at iteration $t$. If $S \ge 2\lam$ in Algorithm~\ref{alg:fsop} with $\lam$ as in \eqref{eq:lamdeffs}, then Algorithm~\ref{alg:fsop} returns $\widetilde{w} \gets \bw_{\sigma}$ as defined in~\eqref{eq:bzdef} such that for $z_\star$ as the saddle point to \eqref{eq:pdfs},
\[\E V^r_{\widetilde{w}}(z_\star) \le \half V^r_{w_0}(z_\star).\]
\end{lemma}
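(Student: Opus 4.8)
The plan is to combine the regret guarantee of Proposition~\ref{prop:rmp} with the $1$-strong monotonicity of $\gop$ and the variational inequality satisfied by $z_\star$, and then to use the uniform subsampling of $\sigma$ to convert a time-averaged regret bound into a one-step divergence contraction. All of the technical work (unbiasedness, expected relative Lipschitzness, strong monotonicity, and the implementation correspondence) is already in place from the preceding lemmas, so the argument is essentially a chaining of three inequalities.

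First, I would invoke Proposition~\ref{prop:rmp}. By Lemma~\ref{lem:itercostfs}, running Algorithm~\ref{alg:fsop} (for $S$ iterations, treating the terminal $\sigma$ as a uniform subsample of a length-$S$ trajectory) coincides with running Algorithm~\ref{alg:rmp} on $(\{\gop_i\}_{i\in[n]}, r)$ from \eqref{eq:gidef}, \eqref{eq:rfsdef}, with associated aggregate points $\bw_s$ as in \eqref{eq:bzdef}. The two hypotheses \eqref{eq:giprops} are supplied by Lemma~\ref{lem:exregret} (the unbiasedness identity, with $\gop$ as in \eqref{eq:gfsdef}) and Lemma~\ref{lem:exrl} (the expected relative Lipschitzness bound, with $\lam$ as in \eqref{eq:lamdeffs}), and monotonicity of $\gop$ follows from Lemma~\ref{lem:smfs}. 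Hence, taking $u = z_\star$ in Proposition~\ref{prop:rmp},
\[\E\Brack{\frac 1 S \sum_{0 \le s < S} \inprod{\gop(\bw_s)}{\bw_s - z_\star}} \le \frac{\lam V^r_{w_0}(z_\star)}{S}.\]

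Next, I would lower bound each regret term by a Bregman divergence. Since $z_\star$ is the saddle point of \eqref{eq:pdfs}, its gradient operator $\gop$ satisfies the VI $\inprod{\gop(z_\star)}{z_\star - z} \le 0$ for all $z \in \zset$, so $\inprod{\gop(z_\star)}{\bw_s - z_\star} \ge 0$. Adding this to the strong monotonicity bound $\inprod{\gop(\bw_s) - \gop(z_\star)}{\bw_s - z_\star} \ge V^r_{z_\star}(\bw_s) + V^r_{\bw_s}(z_\star) \ge V^r_{\bw_s}(z_\star)$ from Lemma~\ref{lem:smfs} (discarding the nonnegative $V^r_{z_\star}(\bw_s)$) gives $\inprod{\gop(\bw_s)}{\bw_s - z_\star} \ge V^r_{\bw_s}(z_\star)$ for every $s$. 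Finally, because $\sigma$ is drawn uniformly from $\{0,\dots,S-1\}$ independently of the trajectory and Lemma~\ref{lem:itercostfs} identifies the returned point $\widetilde w$ with $\bw_\sigma$, we have $\E V^r_{\widetilde w}(z_\star) = \E[\tfrac 1 S \sum_{0 \le s < S} V^r_{\bw_s}(z_\star)]$; chaining with the previous two displays and using $S \ge 2\lam$ yields $\E V^r_{\widetilde w}(z_\star) \le \tfrac{\lam}{S} V^r_{w_0}(z_\star) \le \half V^r_{w_0}(z_\star)$.

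I do not anticipate a genuine obstacle, since every nontrivial ingredient is an already-established lemma; the only points requiring care are the bookkeeping that the output of Algorithm~\ref{alg:fsop} is exactly the aggregate point $\bw_\sigma$ in the sense of \eqref{eq:bzdef} (so that the regret guarantee concerns the correct quantity), and the verification that $z_\star$, being the saddle point of the convex–concave primal–dual objective \eqref{eq:pdfs}, indeed solves the VI in $\gop$ so that the nonnegative cross term $\inprod{\gop(z_\star)}{\bw_s - z_\star}$ may be dropped.
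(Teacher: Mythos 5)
Your proposal is correct and follows essentially the same route as the paper: invoke Proposition~\ref{prop:rmp} via Lemmas~\ref{lem:exregret} and~\ref{lem:exrl}, pass to $\bw_\sigma$ by uniform subsampling, and convert the regret term to $V^r_{\bw_\sigma}(z_\star)$ using Lemma~\ref{lem:smfs} together with the VI at $z_\star$. The only (immaterial) difference is that you apply the strong-monotonicity lower bound to each $\bw_s$ before averaging over $\sigma$, whereas the paper averages first.
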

\begin{proof}
We apply Proposition~\ref{prop:rmp}, where \eqref{eq:giprops} is satisfied via Lemmas~\ref{lem:exregret} and~\ref{lem:exrl}. By Proposition~\ref{prop:rmp} with $u = z_\star $ and $S \ge 2 \lam$,
\[\E\Brack{\frac 1 S \sum_{0 \le s < S} \inprod{\gop(\bw_s)}{\bw_s - z_\star}} \le \half V^r_{w_0}(z_\star).\]
Moreover, since $\sigma$ is uniformly chosen in $[0, S - 1]$, we have
\[\E\Brack{\inprod{\gop(\bw_{\sigma})}{\bw_{\sigma} - z_\star}} \le \half V^r_{w_0}(z_\star).\]
Finally, Lemma~\ref{lem:itercostfs} shows that (an implicit representation of) $\bw_{\sigma}$ is indeed returned. We conclude by applying Lemma~\ref{lem:smfs} and using that $z_\star$ solves the VI in $\gop$, yielding
\[\E\Brack{\inprod{\gop(\bw_{\sigma})}{\bw_{\sigma} - z_\star}} \ge \E\Brack{\inprod{\gop(\bw_{\sigma}) - \gop(z_\star)}{\bw_{\sigma} - z_\star}} \ge V^r_{\bw_\sigma}(z_\star).\]
\end{proof}

Finally, we provide a simple bound regarding initialization of Algorithm~\ref{alg:fs}.

\begin{lemma}\label{lem:initboundfs}
Let $x_0 \in \xset$, and define
\begin{equation}\label{eq:z0deffs}z_0 \defeq \Par{x_0, \Brace{\nabla f_i(x_0)}_{i \in [n]}}.\end{equation}
Moreover, suppose that for $x_\star$ the solution to \eqref{eq:regfs}, $\Ffsreg(x_0)-\Ffsreg(x_\star) \le \Eps_0$. Then, letting $z_\star$ be the solution to \eqref{eq:pdfs}, we have
	\[V^r_{z_0}(z_\star) \le \Par{1 + \frac{\ssin L_i}{n\mu}} \Eps_0. \]
\end{lemma}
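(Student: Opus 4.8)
The plan is to mirror the proof of Lemma~\ref{lem:initbound}, exploiting that both $r$ and $z_\star$ decompose coordinatewise. First I would invoke Lemma~\ref{lem:sameoptfs} together with Item~1 of Fact~\ref{fact:dualsc} to identify the saddle point of \eqref{eq:pdfs} explicitly as $z_\star = \Par{x_\star, \bin{\nabla f_i(x_\star)}}$, where $x_\star$ minimizes \eqref{eq:regfs}. Since $r$ from \eqref{eq:rfsdef} is a sum of the separable terms $\frac{\mu}{2}\norm{\cdot}^2$ on the $\xset$ block and $\frac 1 n f_i^*$ on the $i$-th dual block, the Bregman divergence $V^r_{z_0}(z_\star)$ splits as
\[V^r_{z_0}(z_\star) = \mu V_{x_0}(x_\star) + \frac 1 n \ssin V^{f_i^*}_{\nabla f_i(x_0)}\Par{\nabla f_i(x_\star)}.\]

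Next I would rewrite each dual term using Item~3 of Fact~\ref{fact:dualsc}, which gives $\nabla f_i^*(\nabla f_i(x)) = x$ and hence the identity $V^{f_i^*}_{\nabla f_i(x_0)}(\nabla f_i(x_\star)) = V^{f_i}_{x_\star}(x_0)$ (exactly as already used in the proof of Lemma~\ref{lem:smoothness_implications}); then I would bound $V^{f_i}_{x_\star}(x_0) \le \frac{L_i}{2}\norm{x_0 - x_\star}^2 = L_i V_{x_0}(x_\star)$ by $L_i$-smoothness of $f_i$ (Assumption~\ref{assume:fs}). Summing over $i$ gives $\frac 1 n \ssin V^{f_i^*}_{\nabla f_i(x_0)}(\nabla f_i(x_\star)) \le \frac{\ssin L_i}{n\mu}\cdot \mu V_{x_0}(x_\star)$, so altogether $V^r_{z_0}(z_\star) \le \Par{1 + \frac{\ssin L_i}{n\mu}} \mu V_{x_0}(x_\star)$.

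Finally I would close the loop by showing $\mu V_{x_0}(x_\star) \le \Eps_0$. This uses that $\Ffsreg$ is $\mu$-strongly convex (the $\frac{\mu}{2}\norm{\cdot}^2$ term together with convexity of $\nsin f_i$), so since $\nabla \Ffsreg(x_\star) = 0$ we have $\Ffsreg(x_0) - \Ffsreg(x_\star) = V^{\Ffsreg}_{x_\star}(x_0) \ge \frac{\mu}{2}\norm{x_0 - x_\star}^2 = \mu V_{x_0}(x_\star)$, and the hypothesis $\Ffsreg(x_0) - \Ffsreg(x_\star) \le \Eps_0$ finishes the bound.

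This argument is entirely routine; the only place requiring care is the bookkeeping of the separable Bregman decomposition and the direction of the conjugate-divergence identity $V^{f_i^*}_{\nabla f_i(x_0)}(\nabla f_i(x_\star)) = V^{f_i}_{x_\star}(x_0)$, so that the $L_i$-smoothness estimate is applied on the correct side. I do not expect a substantive obstacle here — the lemma is the finite-sum transcription of Lemma~\ref{lem:initbound}, with the single $\yset$ block of the minimax setting replaced by the average over the $n$ dual blocks $\bin{z\xdi}$.
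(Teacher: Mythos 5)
Your proof is correct and follows essentially the same route as the paper: identify $z_\star = (x_\star, \{\nabla f_i(x_\star)\}_{i\in[n]})$, split the separable divergence, convert each dual term $V^{f_i^*}_{\nabla f_i(x_0)}(\nabla f_i(x_\star))$ to $V^{f_i}_{x_\star}(x_0)$, bound it by $L_i V_{x_0}(x_\star)$ via smoothness, and finish with $\mu$-strong convexity of $\Ffsreg$ at its minimizer. The only difference is cosmetic — you make the conjugate-divergence identity explicit where the paper applies it implicitly.
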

\begin{proof}
By the characterization in Lemma~\ref{lem:sameoptfs}, we have by Item 1 in Fact~\ref{fact:dualsc}:
\[z_\star = \Par{x_\star, \bin{\nabla f_i(x_\star)}}.\]
Hence, we bound analogously to Lemma~\ref{lem:initbound}:
\begin{align*}
V^r_{z_0}(z_\star) &\le \mu V_{x_0}(x_\star) + V^{\frac 1 n \sum_{i \in [n]} f_i}_{x_\star}(x_0) \\
&\le \mu V_{x_0}(x_\star) + \frac{\ssin L_i}{2n} \norm{x_0 - x_\star}^2 \\
&\le \Par{1 + \frac{\ssin L_i}{n\mu}} \mu V_{x_0}(x_\star) \le \Par{1 + \frac{\ssin L_i}{n\mu}} \Eps_0.\end{align*}
The last line applied strong convexity of $\Ffsreg$.
\end{proof}

\subsection{Main result}\label{ssec:fsres}

We now state and prove our main claim.

\begin{restatable}{theorem}{restatemainfs}\label{thm:mainfs}
	Suppose $\Ffsreg$ satisfies Assumption~\ref{assume:fs} and has minimizer $x_\star$, and suppose we have $x_0 \in \xset$ such that $ \Ffsreg(x_0)-\Ffsreg(x_\star) \le \Eps_0$. 
	Algorithm~\ref{alg:fs} using Algorithm~\ref{alg:fsop} with $\lam$ as in \eqref{eq:lamdeffs} returns $x \in \xset$ with $\E \Ffsreg(x) - \Ffsreg(x_\star) \le \eps$ in $N_{\textup{tot}}$ iterations, using a total of $O(N_{\textup{tot}})$ gradient calls each to some $f_i$ for $i \in [n]$,
	where
	\begin{equation}\label{eq:tkdef}N_{\textup{tot}}= O\Par{\kappa_{\textup{fs}} \log\Par{\frac{\kappa_{\textup{fs}} \Eps_0}{\eps}}},\text{ for } \kappa_{\textup{fs}} \defeq n + \frac{\sum_{i \in [n]} \sqrt{L_i}}{\sqrt{n\mu}}.\end{equation}
\end{restatable}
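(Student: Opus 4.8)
The plan is to turn the single-phase contraction of \Cref{lem:fsophalfdiv} into geometric convergence along the outer recursion of Algorithm~\ref{alg:fs}, and then pass from a bound on $V^r_{z_T}(z_\star)$ to a bound on the function-value gap of $\Ffsreg$. First I would fix parameters: take $\lam$ as in \eqref{eq:lamdeffs} (so that $\lam = 2\kappa_{\textup{fs}}$) and run every call to Algorithm~\ref{alg:fsop} with $S \defeq \lceil 2\lam\rceil = \Theta(\kappa_{\textup{fs}})$, which meets the hypothesis $S \ge 2\lam$ of \Cref{lem:fsophalfdiv}. A one-line induction shows that each iterate $z_t$ of Algorithm~\ref{alg:fs} has the form $(z_t\x, \{\nabla f_i(z_t\xpi)\}_{i\in[n]})$ required as input to Algorithm~\ref{alg:fsop}: the base case is \Cref{line:fs-initialization}, and the inductive step is exactly the shape of the value returned by Algorithm~\ref{alg:fsop} (established in \Cref{lem:itercostfs}). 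Hence \Cref{lem:fsophalfdiv} applies in every phase.

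Next I would chain the contraction. Conditioning on $z_t$ and taking expectation over the internal randomness of phase $t$, \Cref{lem:fsophalfdiv} gives $\E\big[V^r_{z_{t+1}}(z_\star)\mid z_t\big] \le \half V^r_{z_t}(z_\star)$, where $z_\star$ is the saddle point of \eqref{eq:pdfs}; by the tower rule and induction, $\E V^r_{z_T}(z_\star) \le 2^{-T} V^r_{z_0}(z_\star)$. Combining with the initialization bound \Cref{lem:initboundfs} yields $\E V^r_{z_T}(z_\star) \le 2^{-T}\big(1 + \tfrac1{n\mu}\ssin L_i\big)\Eps_0$.

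The remaining ingredient — not stated as a lemma above, but short — is a termination bound analogous to \Cref{lem:termbound}. Since $\Ffsreg$ is $\mathcal L$-smooth with $\mathcal L \defeq \mu + \tfrac1n\ssin L_i$ and $\nabla \Ffsreg(x_\star) = 0$, we have $\Ffsreg(x) - \Ffsreg(x_\star) \le \tfrac{\mathcal L}{2}\norm{x - x_\star}^2$ for all $x$; and since $V^r_z(z_\star) \ge \tfrac\mu2\norm{z\x - x_\star}^2$ (drop the nonnegative conjugate divergences and use $z_\star\x = x_\star$ from \Cref{lem:sameoptfs}), we get $\Ffsreg(z\x) - \Ffsreg(x_\star) \le \tfrac{\mathcal L}{\mu}V^r_z(z_\star) = \big(1 + \tfrac1{n\mu}\ssin L_i\big)V^r_z(z_\star)$. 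Taking $z = z_T$ and $x = z_T\x$ and plugging in the previous paragraph gives $\E\Ffsreg(z_T\x) - \Ffsreg(x_\star) \le 2^{-T}\big(1 + \tfrac1{n\mu}\ssin L_i\big)^2\Eps_0$. Since $\tfrac1{n\mu}\ssin L_i \le \big(\tfrac1{\sqrt{n\mu}}\ssin\sqrt{L_i}\big)^2 \le \kappa_{\textup{fs}}^2$ (using $(\sum a_i)^2 \ge \sum a_i^2$ for nonnegative $a_i$, together with $\kappa_{\textup{fs}} \ge \tfrac1{\sqrt{n\mu}}\ssin\sqrt{L_i}$), choosing $T = O\big(\log(\kappa_{\textup{fs}}\Eps_0/\eps)\big)$ makes the right side at most $\eps$; the algorithm outputs $x \defeq z_T\x$.

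For the gradient count, each phase runs at most $S = \Theta(\kappa_{\textup{fs}})$ iterations, and by \Cref{lem:itercostfs} every iteration costs $O(1)$ gradient calls once the running average $\nsin \nabla f_i(w\xpi)$ is maintained incrementally (only one dual block changes per step); recomputing this average afresh at the start of a phase costs $O(n)$ gradient calls, which is absorbed since $S \ge 2\lam \ge 4n$. Hence $N_{\textup{tot}} = O(TS) = O\big(\kappa_{\textup{fs}}\log(\kappa_{\textup{fs}}\Eps_0/\eps)\big)$ gradient calls, as claimed. \textbf{Main obstacle.} With the machinery already in place this theorem is mostly careful bookkeeping; the genuinely load-bearing checks are (i) that the contraction, which holds only in \emph{conditional} expectation, composes cleanly over phases via the tower rule, and (ii) that the constant $1 + \tfrac1{n\mu}\ssin L_i$ appearing in both the initialization and termination bounds is controlled by $\kappa_{\textup{fs}}^2$ — precisely the Cauchy--Schwarz-type inequality above — so that only $O(\log(\kappa_{\textup{fs}}\Eps_0/\eps))$ phases are needed rather than a number scaling with the problem's condition number.
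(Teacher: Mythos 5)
Your proposal is correct and follows essentially the same route as the paper's proof: recursively apply the one-phase contraction of \Cref{lem:fsophalfdiv}, combine with the initialization bound of \Cref{lem:initboundfs} and the $\mathcal{L}$-smoothness of $\Ffsreg$ at $x_\star$ to convert the divergence bound into a function-value gap, and absorb the $O(n)$ per-phase setup cost into $S = \Omega(n)$. You merely spell out details the paper leaves implicit (the tower-rule chaining, the form-preservation induction on the iterates, and the $\kappa_{\textup{fs}}^2$ control of the conditioning constant), all of which check out.
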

\begin{proof}
By Lemma~\ref{lem:sameoptfs}, the point $x_\star$ is consistent between \eqref{eq:origfs} and \eqref{eq:pdfs}. We run Algorithm~\ref{alg:fs} with
\[T =O\Par{\log\Par{\frac{\kappa_{\textup{fs}} \Eps_0}{\eps}}}.\]
By recursively applying Lemma~\ref{lem:fsophalfdiv} for $T$ times, we obtain a point $z$ such that 
\[\E V^r_z(z_\star) \le \frac{\eps \mu}{\mathcal{L}} \text{ for } \mathcal{L} = \mu + \nsin L_i,\]
and hence applying $\mathcal{L}$-smoothness of $\Ffsreg$ and optimality of $z_\star\x$ yields the claim. The complexity follows from Lemma~\ref{lem:itercost}, and spending $O(n)$ gradient evaluations on the first and last iterates of each call to Algorithm~\ref{alg:fsop} (which is subsumed by the fact that $S = \Omega(n)$).
\end{proof}

By applying a generic reduction we derive in Appendix~\ref{apdx:error}, we then obtain the following corollary.

\begin{restatable}{corollary}{restatecormainfsunreg}\label{cor:mainfsunreg}
	Suppose the summands $\{f_i\}_{i \in [n]}$ in \eqref{eq:origfs} satisfy Assumption~\ref{assume:fs}, and $\Ffs$ is $\mu$-strongly convex with minimizer $x_\star$. Further, suppose we have $x_0 \in \xset$ such that $\Ffs(x_0) - \Ffs(x_\star) \le \Eps_0.$
	Algorithm~\ref{alg:redx-outer} using Algorithm~\ref{alg:fs} to implement steps returns $x \in \xset$ with $\E \Ffs(x) - \Ffs(x_\star) \le \eps$ in $N_{\textup{tot}}$ iterations, using a total of $O(N_{\textup{tot}})$ gradient calls each to some $f_i$ for $i \in [n]$, where
	\[N_{\textup{tot}} = O\Par{\kappa_{\textup{fs}} \log\Par{\frac{\kappa_{\textup{fs}}\Eps_0}{\eps}}},\text{ for } \kappa_{\textup{fs}} \defeq n + \ssin \frac{\sqrt{L_i}}{\sqrt{n\mu}}.\]
\end{restatable}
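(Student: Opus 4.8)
The plan is to reduce the setting of Corollary~\ref{cor:mainfsunreg} --- where only $\Ffs$, not the individual (convex, $L_i$-smooth) summands, is strongly convex --- to a short sequence of instances of the form \eqref{eq:regfs}, each of which Theorem~\ref{thm:mainfs} already handles, via a proximal-point outer loop (this is the role of Algorithm~\ref{alg:redx-outer}). Fix the regularizer weight equal to the strong convexity parameter $\mu$: starting from $x_0$, at outer step $k$ form $\Phi_k(x) \defeq \Ffs(x) + \frac\mu2\norm{x - x_k}^2 = \nsin\big(f_i(x) + \frac\mu2\norm{x - x_k}^2\big)$ and let $x_{k+1}$ be an approximate minimizer of $\Phi_k$ produced by a call to Algorithm~\ref{alg:fs}, warm-started (in both its primal and dual blocks) at the state carried over from step $k$.

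The first point to verify is that each surrogate $\Phi_k$ satisfies the hypotheses of Theorem~\ref{thm:mainfs} with the same $\kappa_{\textup{fs}}$. Writing $f_i(x) + \frac\mu2\norm{x - x_k}^2 = \big(f_i(x) - \mu\inprod{x_k}{x}\big) + \frac\mu2\norm{x}^2 + \tfrac\mu2\norm{x_k}^2$, we see $\Phi_k$ is, up to an additive constant, exactly an instance of \eqref{eq:regfs} with summands $f_i - \mu\inprod{x_k}{\cdot}$, which remain convex and $L_i$-smooth, and strong convexity parameter $\mu$; hence $\kappa_{\textup{fs}}$ is literally unchanged. Thus one call to Algorithm~\ref{alg:fs} reducing the $\Phi_k$-suboptimality by a fixed constant factor costs $O(\kappa_{\textup{fs}}\log\kappa_{\textup{fs}})$ gradient evaluations by Theorem~\ref{thm:mainfs}, where the $\log\kappa_{\textup{fs}}$ arises from the divergence-to-function-value conversions (cf.\ Lemma~\ref{lem:initboundfs} and the termination argument in the proof of Theorem~\ref{thm:mainfs}).

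Next I would establish geometric convergence of the outer loop. For the exact proximal iterate $x_{k+1}^\star \defeq \argmin_x \Phi_k(x)$, combining $\Phi_k(x_{k+1}^\star) \le \Phi_k(x_\star)$, the $2\mu$-strong-convexity bound $\Phi_k(x_\star) \ge \Phi_k(x_{k+1}^\star) + \mu\norm{x_\star - x_{k+1}^\star}^2$, and the lower bound $\Ffs(x_{k+1}^\star) - \Ffs(x_\star) \ge \frac\mu2\norm{x_{k+1}^\star - x_\star}^2$ yields, after expanding the squared norms, the contraction $\norm{x_{k+1}^\star - x_\star}^2 \le \frac13\norm{x_k - x_\star}^2$. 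Running Algorithm~\ref{alg:fs} with a fixed per-phase iteration budget so that $\E[\norm{x_{k+1} - x_{k+1}^\star}^2 \mid x_k]$ is at most a small constant times $\norm{x_k - x_\star}^2$ --- which is possible since, by smoothness of $\Ffs$, the initial $\Phi_k$-suboptimality at the warm start is $O(\bar L\norm{x_k - x_\star}^2)$ with $\bar L \defeq \nsin L_i$, so only $O(\log(\bar L/\mu)) = O(\log\kappa_{\textup{fs}})$ halvings are needed --- the bound $\norm{a+b}^2 \le 2\norm{a}^2 + 2\norm{b}^2$ together with the tower property gives $\E\norm{x_{k+1} - x_\star}^2 \le c\,\E\norm{x_k - x_\star}^2$ for an absolute $c < 1$. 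Since $\norm{x_0 - x_\star}^2 \le \frac2\mu\Eps_0$ by strong convexity and $\E\Ffs(x_K) - \Ffs(x_\star) \le \frac{\bar L}2\E\norm{x_K - x_\star}^2$, taking $K = O(\log(\frac{\bar L\Eps_0}{\mu\eps})) = O(\log(\kappa_{\textup{fs}}\Eps_0/\eps))$ outer steps suffices, using that $\bar L/\mu$ is polynomial in $\kappa_{\textup{fs}}$ by Cauchy-Schwarz.

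Multiplying these bounds, the straightforward version of the reduction costs $O(\kappa_{\textup{fs}}\log\kappa_{\textup{fs}}\cdot\log(\kappa_{\textup{fs}}\Eps_0/\eps))$ gradient calls, and the main obstacle --- the reason Appendix~\ref{apdx:error} is needed --- is shaving the extra $\log\kappa_{\textup{fs}}$ to reach the stated $O(\kappa_{\textup{fs}}\log(\kappa_{\textup{fs}}\Eps_0/\eps))$. The idea I would pursue is to carry the dual iterates forward between phases so that the initial regularizer-divergence handed to phase $k+1$ is $\Theta(\mu\norm{x_k - x_\star}^2)$ rather than inflated by $\bar L/\mu$; then each phase needs only $O(1)$ halvings of $V^r$ and costs $O(\kappa_{\textup{fs}})$, collapsing the product to a single logarithm. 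The remaining bookkeeping to confirm is that Theorem~\ref{thm:mainfs}'s in-expectation guarantee composes across the randomly generated outer iterates --- cleanly handled by fixing $K$ in advance and chaining conditional expectations via the tower property --- and that the final conversion back to $\Ffs$-suboptimality (rather than $\Ffsreg$-suboptimality) loses only the $\bar L$ factor already accounted for above.
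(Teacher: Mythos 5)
Your reduction is the same one the paper uses (Algorithm~\ref{alg:redx-outer} in Appendix~\ref{apdx:error}): a proximal-point outer loop whose $k$-th subproblem adds a $\Theta(\mu)$-weighted quadratic centered at $x_k$ to $\Ffs$, becomes an instance of \eqref{eq:regfs} with unchanged $\kappa_{\textup{fs}}$ after absorbing the linear shift, and is solved by warm-started calls to Algorithm~\ref{alg:fs} via Theorem~\ref{thm:mainfs}. Your contraction computation for the exact proximal iterates is correct and is essentially Lemma~\ref{redx:convex}, up to the constant in the regularizer weight ($\mu$ versus the paper's $\mu/4$), and your handling of the inexactness and of the expectations via the tower property is fine.

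Where the write-up stops short is the step you yourself flag: removing the extra $\log\kappa_{\textup{fs}}$ per outer iteration. What you actually prove is $O\bigl(\kappa_{\textup{fs}}\log\kappa_{\textup{fs}}\log(\kappa_{\textup{fs}}\Eps_0/\eps)\bigr)$; the proposed fix (carrying the dual blocks across outer iterations so that the initial $V^r$-divergence to the subproblem optimum is $\Theta\bigl(\mu\norm{x_k-x^\star_{k+1}}^2\bigr)$ rather than $\Theta\bigl(\bar L\norm{x_k-x^\star_{k+1}}^2\bigr)$, writing $\bar L \defeq \frac 1 n \sum_{i \in [n]} L_i$) is only sketched, and it is not immediate: the dual part of $V^r$ is measured in the conjugate divergences $V^{f_i^*}$, which by Fact~\ref{fact:dualsc} are as large as $L_i$ times squared primal distances, so carrying the duals forward does not by itself eliminate the $\bar L/\mu$ inflation --- one must argue the carried duals are close to the \emph{new} subproblem optimum's duals at the scale of $\mu$ times the outer potential, and that is the nontrivial part. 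For what it is worth, the paper's own proof is equally terse at exactly this point: it asserts that initializing the subproblem solver at $x_k$ permits $T=2$ halving phases of $S = O(\kappa_{\textup{fs}})$ iterations each, without spelling out why the initial subproblem divergence avoids the $\bar L/\mu$ blow-up of Lemma~\ref{lem:initboundfs}. So your proposal matches the paper's route and is correct up to the same accounting step the paper leaves implicit; to fully close it you would need to execute the dual-carrying potential argument in detail or accept the extra $\log\kappa_{\textup{fs}}$ factor.
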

 	%
%

\section{Minimax finite sum optimization}
\label{sec:mmfs}

In this section, we provide efficient algorithms for computing an approximate saddle point of the following minimax finite sum optimization problem:
\begin{equation}\label{eq:origmmfs-orig}
\min_{x\in\xset}\max_{y\in\yset}\Fmmfs(x, y)\defeq \nsin \Par{\xfun_i(x) + \xyfun_i(x,y) - \yfun_i(y)}.
\end{equation}
Here and throughout this section $\{f_i:\xset\rightarrow\R\}_{i\in[n]}$, $\{g_i:\yset\rightarrow\R\}_{i\in[n]}$ are differentiable convex functions, and $\{\xyfun_i:\xset\times\yset\rightarrow\R\}_{i\in[n]}$ are differentiable convex-concave functions. For the remainder, we focus on algorithms for solving the following regularized formulation of~\eqref{eq:origmmfs-orig}:
\begin{equation}\label{eq:origmmfs}
\min_{x \in \xset} \max_{y \in \yset} \Fmmfsreg(x, y)\defeq \nsin \Par{\xfun_i(x) + \xyfun_i(x,y) - \yfun_i(y)}+\frac{\mu\x}{2}\norm{x}^2-\frac{\mu\y}{2}\norm{y}^2.
\end{equation}

As in~\Cref{sec:minimax} and~\Cref{sec:finitesum}, to instead solve an instance of~\eqref{eq:origmmfs-orig} where each $f_i$ is $2\mu\x$-strongly convex and each $g_i$ is $2\mu\y$-strongly convex, we may instead equivalently solve \eqref{eq:origmmfs} by reparameterizing $f_i\gets f_i-\mu\x\norm{\cdot}^2$, $g_i\gets g_i-\mu\y\norm{\cdot}^2$ for each $i\in[n]$. The extra factor of $2$ is so we can make a strong convexity assumption in Assumption~\ref{assume:minimax-fs} about separable summands, which only affects our final bounds by constants. We further remark that our algorithms extend to solve instances of~\eqref{eq:origmmfs-orig} where $f$, $g$ is $\mu\x$ and $\mu\y$-strongly convex in $\norm{\cdot}$, but individual summands are not. We provide this result at the end of the section in Corollary~\ref{cor:mainmmfsunreg}.

In designing methods for solving~\eqref{eq:origmmfs} we make the following additional regularity assumptions.

\begin{assumption}\label{assume:minimax-fs}
We assume the following about \eqref{eq:origmmfs} for all $i \in [n]$.
	\begin{enumerate}
		\item $f_i$ is $L_i\x$-smooth and $\mu_i\x$-strongly convex and $g_i$ is $L_i\y$-smooth and $\mu_i\y$-strongly convex.
		\item $h_i$ has the following blockwise-smoothness properties: for all $u, v \in \xset \times \yset$,
		\begin{equation}\label{eq:Hlipbound}
			\begin{aligned}
				\norm{\nabla_x h_i(u) - \nabla_x h_i(v)} &\le \Lam_i\xx \norm{u\x - v\x} + \Lam_i\xy \norm{u\y - v\y}\text{ and }\\
				\norm{\nabla_y h_i(u) - \nabla_y h_i(v)} &\le \Lam_i\xy \norm{u\x - v\x} + \Lam_i\yy \norm{u\y - v\y}.
		\end{aligned}\end{equation}
	\end{enumerate}
\end{assumption}

The remainder of this section is organized as follows.

\begin{enumerate}
	\item In Section~\ref{ssec:mmfs-setup}, we state a primal-dual formulation of \eqref{eq:origmmfs} which we will apply our methods to, and prove that its solution also yields a solution to \eqref{eq:origmmfs}.
	\item In Section~\ref{ssec:mmfs-alg}, we give our algorithm, which is composed of an outer loop and an inner loop, and prove it is efficiently implementable.
	\item In Section~\ref{mmfs:convergence}, we prove the convergence rate of our inner loop.
	\item In Section~\ref{ssec:mmfsouter}, we prove the convergence rate of our outer loop.
	\item In Section~\ref{ssec:mmfs-main}, we state and prove our main result, Theorem~\ref{thm:mmfs}.
\end{enumerate}

\subsection{Setup}\label{ssec:mmfs-setup}

To solve \eqref{eq:origmmfs}, we will instead find a saddle point to the primal-dual function
\begin{equation}\label{eq:mmfspd}
\begin{aligned}
\Fmmfspd\Par{z} &\defeq \frac{\mu\x}{2}\norm{z\xsup}^2 - \frac{\mu\y}{2}\norm{z\ysup}^2 \\
&+ \nsin \Par{\xyfun_i(z\xsup, z\ysup) + \inprod{z\xdisup}{z\xsup} - \inprod{z\ydisup}{z\ysup} - f^*_i\Par{z\xdisup} + \yfun_i^*(z\ydisup)}.
\end{aligned}
\end{equation}
We denote the domain of $\Fmmfspd$ by $\zset \defeq \xset \times \yset \times (\xset^*)^n \times (\yset^*)^n$. For $z \in \zset$, we refer to its blocks by $(z\x, z\y, \{z\xdi\}_{i \in [n]}, \{z\ydi\}_{i \in [n]})$. The primal-dual function $\Fmmfspd$ is related to the original function $\Fmmfs$ in the following way; we omit the proof, as it follows analogously to the proofs of Lemmas~\ref{lem:sameopt} and~\ref{lem:sameoptfs}.

\begin{lemma}\label{lem:sameoptmmfs}
Let $z_\star = (\varfulls{z_\star})$ be the saddle point to \eqref{eq:mmfspd}. Then, $(z\xsup_\star, z\ysup_\star)$ is a saddle point to \eqref{eq:origmmfs}.
\end{lemma}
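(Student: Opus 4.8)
The plan is to mirror the proofs of Lemma~\ref{lem:sameopt} and Lemma~\ref{lem:sameoptfs}, performing the inner optimizations over the dual blocks to recover the original regularized problem. First I would observe that $\Fmmfspd$ is separable in the dual variables: the term $\inprod{z\xdisup}{z\xsup} - f^*_i(z\xdisup)$ depends only on $z\xdisup$ (and $z\xsup$), and similarly $- \inprod{z\ydisup}{z\ysup} + g^*_i(z\ydisup)$ depends only on $z\ydisup$ (and $z\ysup$). Since the saddle-point problem is convex in $(z\xsup, \{z\xdisup\})$ and concave in $(z\ysup, \{z\ydisup\})$, I can exchange the order of the inner min/max over the dual blocks with the outer min/max over the primal blocks. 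Carrying out the maximization over each $z\xdisup \in \xset^*$ gives
\[
\max_{z\xdisup \in \xset^*} \inprod{z\xdisup}{z\xsup} - f^*_i(z\xdisup) = (f^*_i)^*(z\xsup) = f_i(z\xsup),
\]
using Item~2 of Fact~\ref{fact:dualsc}, and symmetrically the minimization over $z\ydisup$ produces $g_i(z\ysup)$ with the appropriate sign.

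The key steps, in order, are: (1) note that the dual blocks decouple and each appears in a concave (resp.\ convex) way so Sion-type exchange is valid, or more simply just plug in the optimizing dual block pointwise since the outer problem's value at each $(z\xsup, z\ysup)$ equals the partially optimized objective; (2) apply $(f^*_i)^* = f_i$ and $(g^*_i)^* = g_i$ to collapse the dual terms; (3) conclude that the reduced objective is exactly $\Fmmfsreg(z\xsup, z\ysup)$ as defined in \eqref{eq:origmmfs}, so that a saddle point $(z_\star\xsup, z_\star\ysup, \{z_\star\xdisup\}, \{z_\star\ydisup\})$ of $\Fmmfspd$ projects to a saddle point $(z_\star\xsup, z_\star\ysup)$ of $\Fmmfsreg$; (4) observe conversely (if needed for the ``saddle point'' phrasing) that given a saddle point $(x_\star, y_\star)$ of $\Fmmfsreg$, setting $z_\star\xdisup = \nabla f_i(x_\star)$ and $z_\star\ydisup = \nabla g_i(y_\star)$ via Item~1 of Fact~\ref{fact:dualsc} yields a saddle point of $\Fmmfspd$, confirming consistency.

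I expect no serious obstacle here; this is a routine duality computation and the excerpt itself says the proof ``follows analogously'' to Lemmas~\ref{lem:sameopt} and~\ref{lem:sameoptfs}. The only point requiring a sentence of care is the justification for swapping the inner dual optimization with the outer primal optimization — but since the dual blocks are unconstrained and the objective is jointly convex-concave with the separable structure, this is exactly the same manipulation performed in Lemma~\ref{lem:sameopt} (performing the maximization over $z\xd$ and minimization over $z\yd$), just replicated $n$ times over the index set $[n]$. Thus the proof amounts to: perform the per-summand dual optimizations, invoke biconjugacy, and match the result against \eqref{eq:origmmfs}.
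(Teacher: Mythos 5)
Your proposal is correct and matches the paper's intended argument exactly: the paper omits this proof, stating it follows analogously to Lemmas~\ref{lem:sameopt} and~\ref{lem:sameoptfs}, which is precisely the per-summand dual optimization plus biconjugacy computation you describe. No gaps.
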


As in Section~\ref{ssec:mmsetup}, it will be convenient to define the convex function $r: \zset \to \R$, which combines the (unsigned) separable components of $\Fmmfspd$:
\begin{equation}\label{eq:rmmfsdef}
r\Par{\varfull{z}} \defeq \frac{\mu\x}{2} \norm{z\xsup}^2 + \frac{\mu\y}{2}\norm{z\ysup}^2 + \frac 1 n \sum_{i \in [n]}  \xfun^*_i\Par{z\xdisup} + \frac 1 n \sum_{i \in [n]} \yfun^*_i\Par{z\ydisup}.
\end{equation}

Again, $r$ serves as a regularizer in our algorithm. We next define $\goptot$, the gradient operator of $\Fmmfspd$. We decompose $\goptot$ into three parts, roughly corresponding to the contribution from $r$, the contributions from the primal-dual representations of $\bin{f_i}$ and $\bin{g_i}$, and the contribution from $\bin{h_i}$. In particular, we define
\begin{equation}\label{eq:gdefmmfs}
\begin{aligned}
\goptot(z) &\defeq \nabla r(z) + \gop^\xyfun(z) + \gcross(z), \\
\nabla r\Par{\varfull{z}} &\defeq \Par{\mu\x z\x, \mu\y z\y, \bin{\frac 1 n \nabla \xfun^*_i\Par{z\xdisup}}, \bin{\frac 1 n \nabla \yfun^*_i\Par{z\ydisup}}}, \\
\gop^\xyfun\Par{\varfull{z}} &\defeq \Par{\frac 1 n \sum_{i \in [n]} \nabla_x \xyfun_i(z\x, z\y), -\frac 1 n \sum_{i \in [n]} \nabla_y \xyfun_i(z\x, z\y), \bin{0}, \bin{0}}, \\
\gcross\Par{\varfull{z}} &\defeq \Par{\nsin z\xdisup, \nsin z\ydisup, \bin{-\frac 1 n z\xsup}, \bin{-\frac 1 n z\ysup}}.
\end{aligned}
\end{equation}

\subsection{Algorithm}\label{ssec:mmfs-alg}

In this section we present our algorithm which consists of the following two parts; its design is inspired by a similar strategy used in prior work \cite{CarmonJST19, CarmonJST20}.
\begin{enumerate}
	\item Our ``outer loop'' is based on a proximal point method (Algorithm~\ref{alg:mmfs-outer}, adapted from \cite{Nemirovski04}).
	\item Our ``inner loop'' solves each proximal subproblem to high accuracy via a careful analysis of randomized mirror prox (Algorithm~\ref{alg:mmfs}, adapted from Algorithm~\ref{alg:rmp}).
\end{enumerate} 
At each iteration $t$ of the outer loop (Algorithm~\ref{alg:mmfs-outer}), we require an accurate approximation
\begin{equation}\label{eq:mmfs-def-phi}
	z_{t + 1}\approx z^\star_{t+1}~~\text{which solves the VI in}~~\gop \defeq \goptot(z) + \gamma \Par{\nabla r(z)-\nabla r(z_{t})},
\end{equation}
where we recall the definitions of $\gtot$ and $r$ from~\eqref{eq:gdefmmfs} and~\eqref{eq:rmmfsdef}, and when $z_t$ is clear from context (i.e.\ we are analyzing a single implementation of the inner loop).

To implement our inner loop (i.e.\ solve the VI in $\gop$), we apply randomized mirror prox (Algorithm~\ref{alg:rmp}) with a new analysis. In particular, we will not be able to obtain the expected relative Lipschitzness bound required by Proposition~\ref{prop:rmp} for our randomized gradient estimators, so we develop a new ``partial variance'' analysis of Algorithm~\ref{alg:rmp} to obtain our rate. We use this terminology because we use variance bounds on a component of $\gop$ for which we cannot directly obtain expected relative Lipschitzness bounds. We prove Proposition~\ref{prop:newrmp} in Appendix~\ref{apdx:mfs-alg}.

\begin{restatable}[Partial variance analysis of randomized mirror prox]{proposition}{propnewrmp}\label{prop:newrmp}
Suppose (possibly random) $\goptilde$ is defined so that in each iteration $s$, for all $u \in \zset$ and all $\rho > 0$, there exists a (possibly random) point $\bw_{s} \in \zset$ and a $\gamma$-strongly monotone operator $\gop: \zset \to \zset^*$ (with respect to $r$) such that
\begin{equation}\label{eq:newassume}
\begin{aligned}
\E\Brack{\inprod{\goptilde(w_{s+1/2})}{w_{s+1/2} - w_\star}} &= \E\Brack{\inprod{\gop(\bw_{s})}{\bw_{s} - w_\star}}, \\
\E\Brack{\inprod{\goptilde(w_{s+1/2}) - \goptilde(w_{s})}{w_{s+1/2} - w_{s + 1}}} &\le \Par{\lam_0 + \frac 1 \rho}\E\Brack{V^r_{w_s}(w_{s+1/2}) + V^r_{w_{s+1/2}}(w_{s + 1})} \\
&+ \rho \lam_1\E\Brack{V^r_{w_0}(w_\star) + V^r_{\bw_s}(w_\star)},
\end{aligned}
\end{equation}
where $w_\star$ solves the VI in $\gop$. Then by setting 
\[\rho \gets \frac \gamma {5\lam_1},\; \lam \gets \lam_0 + \frac 1 \rho,\; T \gets \frac{5\lam}{\gamma} = \frac{5\lam_0}{\gamma} + \frac{25\lam_1}{\gamma^2}, \]
in Algorithm~\ref{alg:rmp}, and returning $\bw_\sigma$ for $0 \le \sigma < S$ sampled uniformly at random, 
\[\E\Brack{V^r_{\bar{w}_{\sigma}}\Par{w_\star}} \le \half V^r_{w_0}(w_\star).\]
\end{restatable}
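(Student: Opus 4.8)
The plan is to mimic the structure of the proof of Proposition~\ref{prop:rmp} (Proposition 2 of \cite{CohenST21}), but track an additional ``drift'' term arising from the $\rho\lam_1$ contribution in \eqref{eq:newassume}, and absorb it using strong monotonicity of $\gop$ with respect to $r$. First I would write down, for a fixed iteration $s$ and arbitrary $u \in \zset$, the standard mirror-prox inequality: by optimality of the two $\Prox^r$ steps in Algorithm~\ref{alg:rmp} (applied to $\frac 1 \lam \goptilde(w_s)$ and $\frac 1 \lam \goptilde(w_{s+1/2})$) together with the three-point identity \eqref{eq:threept}, one gets the deterministic bound
\[
\frac 1 \lam \inprod{\goptilde(w_{s+1/2})}{w_{s+1/2} - u} \le V^r_{w_s}(u) - V^r_{w_{s+1}}(u) - V^r_{w_s}(w_{s+1/2}) - V^r_{w_{s+1/2}}(w_{s+1}) + \frac 1 \lam \inprod{\goptilde(w_{s+1/2}) - \goptilde(w_s)}{w_{s+1/2} - w_{s+1}}.
\]
Taking expectations conditioned on $w_s$ (and $w_0$) and invoking the two hypotheses of \eqref{eq:newassume}, with $\rho$ as specified so that $\lam = \lam_0 + \tfrac 1 \rho$, the two Bregman penalty terms $-V^r_{w_s}(w_{s+1/2}) - V^r_{w_{s+1/2}}(w_{s+1})$ are exactly cancelled by the $(\lam_0 + \tfrac 1 \rho)$ part of the relative-Lipschitzness-type bound, leaving only the residual drift term. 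Setting $u = w_\star$ and using the first (unbiasedness) line of \eqref{eq:newassume} to replace $\E\inprod{\goptilde(w_{s+1/2})}{w_{s+1/2}-w_\star}$ by $\E\inprod{\gop(\bw_s)}{\bw_s - w_\star}$, this yields
\[
\frac 1 \lam \E\Brack{\inprod{\gop(\bw_s)}{\bw_s - w_\star}} \le \E\Brack{V^r_{w_s}(w_\star) - V^r_{w_{s+1}}(w_\star)} + \frac{\rho\lam_1}{\lam} \E\Brack{V^r_{w_0}(w_\star) + V^r_{\bw_s}(w_\star)}.
\]

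Next I would control the left side from below using $\gamma$-strong monotonicity of $\gop$ with respect to $r$ and the fact that $w_\star$ solves the VI in $\gop$: $\inprod{\gop(\bw_s)}{\bw_s - w_\star} \ge \inprod{\gop(\bw_s) - \gop(w_\star)}{\bw_s - w_\star} \ge \gamma\Par{V^r_{\bw_s}(w_\star) + V^r_{w_\star}(\bw_s)} \ge \gamma V^r_{\bw_s}(w_\star)$. Substituting and multiplying through by $\lam$, the $V^r_{\bw_s}(w_\star)$ terms on both sides combine: the left contributes $\gamma V^r_{\bw_s}(w_\star)$ and the right subtracts $\rho\lam_1 V^r_{\bw_s}(w_\star)$, so the net coefficient on $V^r_{\bw_s}(w_\star)$ is $\gamma - \rho\lam_1 = \gamma - \tfrac\gamma 5 \cdot \tfrac{\lam_1}{\lam_1} = \tfrac{4\gamma}{5} > 0$ by the choice $\rho = \tfrac{\gamma}{5\lam_1}$. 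Telescoping over $0 \le s < S$ (the deterministic $V^r$ differences telescope, and $V^r_{w_S}(w_\star) \ge 0$), and using $S = T = \tfrac{5\lam}{\gamma}$, gives
\[
\frac{4\gamma}{5}\cdot \frac 1 S \sum_{0 \le s < S}\E\Brack{V^r_{\bw_s}(w_\star)} \le \frac{\lam}{S} V^r_{w_0}(w_\star) + \frac{\rho\lam_1}{S}\sum_{0 \le s < S}\E\Brack{V^r_{w_0}(w_\star)} = \Par{\frac{\lam}{S} + \rho\lam_1} V^r_{w_0}(w_\star).
\]
With $\tfrac{\lam}{S} = \tfrac\gamma 5$ and $\rho\lam_1 = \tfrac\gamma 5$, the right-hand side is $\tfrac{2\gamma}{5} V^r_{w_0}(w_\star)$, so $\tfrac 1 S \sum_s \E[V^r_{\bw_s}(w_\star)] \le \tfrac{1}{2} V^r_{w_0}(w_\star)$. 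Since $\sigma$ is drawn uniformly from $\{0,\dots,S-1\}$, the left side is exactly $\E[V^r_{\bw_\sigma}(w_\star)]$, which is the claim.

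The main obstacle I anticipate is bookkeeping the nested/conditional expectations correctly: $\bw_s$ and the relevant operator $\gop$ may themselves be random and depend on $w_0, \dots, w_s$, and the hypotheses in \eqref{eq:newassume} are stated as expectations conditioned on the current iterate. One has to be careful that the telescoping of the $V^r_{w_s}(w_\star)$ terms survives taking total expectation (it does, since these are genuine quantities along the trajectory, not conditional ones), and that the drift term $\rho\lam_1 \E[V^r_{w_0}(w_\star) + V^r_{\bw_s}(w_\star)]$ is handled with the $V^r_{\bw_s}(w_\star)$ piece moved to the left before summing — otherwise the sum $\sum_s V^r_{\bw_s}(w_\star)$ on the right would not obviously be dominated. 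The numerical constants ($5$, $4/5$, $2/5$) are exactly engineered so that this rearrangement closes; I would double-check them but expect no difficulty. Everything else is a routine instantiation of the mirror-prox regret argument already used in \cite{CohenST21}.
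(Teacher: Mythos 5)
Your proposal is correct and follows essentially the same route as the paper's proof: combine the two prox-step optimality conditions, apply the unbiasedness and partial-variance hypotheses, lower-bound the left side by $\gamma V^r_{\bw_s}(w_\star)$ via strong monotonicity and the VI condition, move the $\rho\lam_1 V^r_{\bw_s}(w_\star)$ drift term to the left, and telescope; the constants work out exactly as you computed. The only differences are cosmetic (you merge the two optimality inequalities before taking expectations, and you write $S$ where the paper's statement and proof interchange $T$ and $S$), so there is nothing to change.
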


For simplicity in the following we denote $\bz \defeq z_t$ whenever we discuss a single proximal subproblem. We next introduce the gradient estimator $\goptilde$ we use in each inner loop, i.e.\ finding a solution to the VI in $\gop$ defined in \eqref{eq:mmfs-def-phi}.  We first define three sampling distributions $p$, $q$, $r$, via
\begin{equation}\label{eq:pqrdef}
\begin{gathered}
p_j \defeq \frac{\sqrt{L\x_j}}{2\ssin \sqrt{L\x_i}} + \frac 1 {2n}\text{ for all } j \in [n],~~q_k \defeq \frac{\sqrt{L\y_k}}{2\ssin \sqrt{L\y_i}} + \frac 1 {2n} \text{ for all } k \in [n], \\
\text{and}~r_\ell \defeq \frac{\Ltot_\ell}{2\ssin \Ltot_i} + \frac 1 {2n} \text{ for all } \ell \in [n], \text{ where } \Ltot_i \defeq \frac{\Lam\xx_i}{\mu\x} + \frac{\Lam\xy_i}{\sqrt{\mu\x\mu\y}} + \frac{\Lam\yy_i}{\mu\y} \text{ for all } i \in [n].
\end{gathered}
\end{equation}

Algorithm~\ref{alg:mmfs} will run in logarithmically many phases, each initialized at an ``anchor point'' $w_0$ (cf.~\Cref{line:mmfs-inner-end}). We construct gradient estimators for Algorithm~\ref{alg:rmp} of $\gop(w) = \goptot(w)+\gamma(\nabla r(w)-\nabla r(\bz))$ as defined in~\eqref{eq:mmfs-def-phi} as follows. In each iteration, for a current anchor point $w_0$, we sample four coordinates $j \sim p$, $k \sim q$, and $\ell, \ell' \sim r$, all independently. We believe that it is likely that other sampling schemes, e.g.\ sampling $j$ and $k$ non-independently, will also suffice for our method but focus on the independent scheme for simplicity. We use $g\xy$ to refer to the $\xset\times \yset$ blocks of a vector $g$ in $\zset^*$, and $\dual$ to refer to all other blocks corresponding to $\left(\xset^*\right)^n\times \left(\yset^*\right)^n$. Then we define for an iterate $w = w_s$ of Algorithm~\ref{alg:mmfs} (where $\gop^\xyfun$ is as in \eqref{eq:gdefmmfs}):
\begin{equation}\label{eq:gjkldef}
\begin{aligned}
\goptilde(w) &\defeq \gopt_{jk\ell}(w) \defeq \gopt^\xyfun_{jk\ell}(w) + \gsept_{jk\ell}(w) + \gcrosst_{jk\ell}(w), \\
\Brack{\gopt^\xyfun_{jk\ell}(w)}\x &\defeq \Brack{\gopt^\xyfun(w_0)}\x + \frac 1 {nr_{\ell}} \Par{\nabla_x \xyfun_{\ell}(w\xsup, w\ysup) - \nabla_x \xyfun_{\ell}(w\xsup_0, w\ysup_0)}, \\
\Brack{\gopt^\xyfun_{jk\ell}(w)}\y&\defeq \Brack{\gopt^\xyfun(w_0)}\y -\frac 1 {nr_{\ell}} \Par{\nabla_y \xyfun_{\ell}(w\xsup, w\ysup) - \nabla_y \xyfun_{\ell}(w\xsup_0, w\ysup_0)}, \\
\Brack{\gopt^\xyfun_{jk\ell}(w)}\dual&\defeq \Par{\bin{0}, \bin{0}},\\
\Brack{\gsept_{jk\ell}(w)}\xy &\defeq \Par{1 + \gamma}\Par{\mu\x w\xsup, \mu\y w\ysup} - \gamma\Par{\mu\x \bz\xsup, \mu\y \bz\ysup}, \\
\Brack{\gsept_{jk\ell}(w)}\dual &\defeq (1 + \gamma)\Par{\bin{\frac 1 {n p_j} \nabla  \xfun^*_j\Par{w\xdssup{j}}  \cdot \1_{i = j}}, \bin{\frac 1 {n q_k} \nabla \yfun^*_k\Par{w\xdssup{k}}  \cdot \1_{i = k}}} \\
&- \gamma \Par{\bin{\frac 1 {n p_j} \nabla \xfun^*_j\Par{\bz\xdssup{j}}  \cdot \1_{i = j}}, \bin{\frac 1 {n q_k} \nabla \yfun^*_k\Par{\bz\ydssup{k}} \cdot \1_{i = k}}}, \\
\Brack{\gcrosst_{jk\ell}(w)}\xy &\defeq \Par{\nsin w\xdisup,\nsin w\ydisup}, \\
\Brack{\gcrosst_{jk\ell}(w)}\dual &\defeq \Par{\bin{-\frac 1 {n p_j} w\xsup\cdot \1_{i = j}}, \bin{-\frac 1 {n q_k}w\ysup \cdot \1_{i = k}}}.
\end{aligned}
\end{equation}
In particular, the estimator $\gopt_{jk\ell}(w)$ only depends on the sampled indices $j, k, \ell$, and not $\ell'$. Next, consider taking the step $w\mdpt(jk\ell) \gets \Prox_{w}^r(\frac 1 \lam g_{jk\ell}(w))$ as in Algorithm~\ref{alg:rmp}, where we use the shorthand $w\mdpt(jk\ell) = w_{s + 1/2}$ to indicate the iterate of Algorithm~\ref{alg:rmp} taken from $w_s$ assuming $j, k, \ell$ were sampled. Observing the form of $g_{jk\ell}$, we denote the blocks of $w\mdpt(jk\ell)$ by
\begin{align*}
w\mdpt(jk\ell) &\defeq \Par{w\xsup\mdpt(\ell), w\ysup\mdpt(\ell), \bin{w\xdssup{i}\mdpt(j)}, \bin{w\ydssup{i}\mdpt(k)}},
\end{align*}
where we write $w\xsup\mdpt(\ell)$ to indicate that it only depends on the random choice of $\ell$ (and not $j$ or $k$); we use similar notation for the other blocks. We also define
\[\Delta\x(j) \defeq w\xdssup{j}\mdpt(j) - w\xdssup{j}(j),\; \Delta\y(k) \defeq w\ydssup{k}\mdpt(k)- w\ydssup{k}(k),\]
and then set (where we use the notation $\gop_{jk\ell'}$ to signify its dependence on $j, k, \ell'$, and not $\ell$):
\begin{equation}\label{eq:gpjkldef}
\begin{aligned}
\goptilde(w\mdpt(jk\ell)) &\defeq \gopt_{jk\ell'}(w\mdpt(jk\ell)) \defeq \gopt^\xyfun_{jk\ell'}(w\mdpt(jk\ell)) + \gsept_{jk\ell'}(w\mdpt(jk\ell)) + \gcrosst_{jk\ell'}(w\mdpt(jk\ell)), \\
\Brack{\gopt^\xyfun_{jk\ell'}(w\mdpt(jk\ell))}\x &\defeq \Brack{\gopt^\xyfun(w_0)}\x + \frac 1 {nr_{\ell'}} \Par{\nabla_x \xyfun_{\ell'}(w\mdpt\xsup(\ell), w\mdpt\ysup(\ell)) - \nabla_x \xyfun_{\ell'}(w\xsup_0, w\ysup_0)}, \\
\Brack{\gopt^\xyfun_{jk\ell'}(w\mdpt(jk\ell))}\y&\defeq \Brack{\gopt^\xyfun(w_0)}\y -\frac 1 {nr_{\ell'}} \Par{\nabla_y \xyfun_{\ell'}(w\xsup\mdpt(\ell), w\ysup\mdpt(\ell)) - \nabla_y \xyfun_{\ell'}(w\xsup_0, w\ysup_0)}, \\
\Brack{\gopt^\xyfun_{jk\ell'}(w\mdpt(jk\ell))}\dual &\defeq \Par{\bin{0}, \bin{0}},\\
\Brack{\gsept_{jk\ell'}(w\mdpt(jk\ell))}\xy &\defeq \Par{1 + \gamma}\Par{\mu\x w\mdpt\xsup(\ell), \mu\y w\mdpt\ysup(\ell)} - \gamma\Par{\mu\x \bz\xsup, \mu\y \bz\ysup}, \\
\Brack{\gsept_{jk\ell'}(w\mdpt(jk\ell))}\dual &\defeq (1 + \gamma)\Par{\bin{\frac 1 {n p_j} \nabla  \xfun^*_j\Par{w\mdpt\xdssup{j}}  \cdot \1_{i = j}}, \bin{\frac 1 {n q_k} \nabla \yfun^*_k\Par{w\mdpt\xdssup{k}}  \cdot \1_{i = k}}} \\
&- \gamma \Par{\bin{\frac 1 {n p_j} \nabla \xfun^*_j\Par{\bz\xdssup{j}}  \cdot \1_{i = j}}, \bin{\frac 1 {n q_k} \nabla \yfun^*_k\Par{\bz\ydssup{k}} \cdot \1_{i = k}}}, \\
\Brack{\gcrosst_{jk\ell'}(w\mdpt(jk\ell))}\xy &\defeq \Par{\nsin w\xdisup + \frac 1 {np_j} \Delta\x(j),\nsin w\ydisup + \frac 1 {nq_k}\Delta\y(k)}, \\
\Brack{\gcrosst_{jk\ell'}(w\mdpt(jk\ell))}\dual &\defeq \Par{\bin{-\frac 1 {n p_j} w\xsup \mdpt(\ell) \cdot \1_{i = j}}, \bin{-\frac 1 {n q_k}w\ysup \mdpt(\ell) \cdot \1_{i = k}}}.
\end{aligned}
\end{equation}

We also define the random ``aggregate point'' we will use in Proposition~\ref{prop:newrmp}:
\begin{equation}\label{eq:bwdef}\bw(\ell) \defeq w + \Par{w\xsup_{\mdpt}(\ell) - w\xsup, w\ysup_{\mdpt}(\ell) - w\ysup, \{\Delta\x(j)\}_{j\in[n]}, \{\Delta\y(k)\}_{k\in[n]}}.\end{equation}
Notably, $\bw(\ell)$ depends only on the randomly sampled $\ell$. We record the following useful observation about our randomized operators \eqref{eq:gjkldef}, \eqref{eq:gpjkldef}, in accordance with the first condition in \eqref{eq:newassume}.

\begin{restatable}{lemma}{lemexregretmmfs}\label{lem:exregretmmfs}
	Define $\{\gopt_{jk\ell}, \gopt_{jk\ell'}\}: \zset \to \zset^*$ as in \eqref{eq:gjkldef}, \eqref{eq:gpjkldef}, and the random ``aggregate point'' $\bw(\ell)$ as in \eqref{eq:bwdef}. Then, for all $u \in \zset$,
	recalling the definition of $\gop = \goptot + \gamma (\nabla r - \nabla r(\bz))$ from \eqref{eq:mmfs-def-phi},
	\[\E\Brack{\inprod{\gopt_{jk\ell'}(w\mdpt(jk\ell))}{w\mdpt(jk\ell) - u}} = \E_{\ell \sim r}\Brack{\inprod{\gop(\bw(\ell))}{\bw(\ell) - u}}.\]
\end{restatable}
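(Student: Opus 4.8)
The plan is to verify this unbiasedness identity by conditioning on the index $\ell \sim r$ that controls the primal blocks of the midpoint, taking the expectation over the remaining independent samples $j\sim p$, $k\sim q$, $\ell'\sim r$ block-by-block, and then taking the outer expectation over $\ell$. This follows the template of the single-sum computation in Lemma~\ref{lem:exregret}, now with four sampling distributions to track and an extra $\gamma$-regularization term. Throughout I will use three structural facts about the construction \eqref{eq:gjkldef}, \eqref{eq:gpjkldef}, \eqref{eq:bwdef}: (i) the $\xset$- and $\yset$-blocks of $w\mdpt(jk\ell)$ and of $\bw(\ell)$ are the same pair $(w\mdpt\xsup(\ell), w\mdpt\ysup(\ell))$, hence depend only on $\ell$; (ii) the estimator \eqref{eq:gjkldef} has $1$-sparse dual blocks, so the proximal step of Algorithm~\ref{alg:rmp} changes only the $j$-th $\xset^*$ block (resp.\ $k$-th $\yset^*$ block) of $w\mdpt(jk\ell)$, which combined with \eqref{eq:bwdef} shows the $i$-th $\xset^*$ block of $\bw(\ell)$ equals $w\xdssup{i}\mdpt(i)$ for every $i$ (and symmetrically for $\yset^*$); and (iii) the $\xset^*$- and $\yset^*$-blocks of the estimator $\gopt_{jk\ell'}(w\mdpt(jk\ell))$ are supported only on coordinates $j$ and $k$, carrying the reweightings $\tfrac1{np_j}$ and $\tfrac1{nq_k}$.

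The observation that makes the computation go through is that, although the two ``lever arms'' $w\mdpt(jk\ell)-u$ and $\bw(\ell)-u$ are not equal, they agree on exactly the coordinates where the corresponding block of the estimator is supported: on the dense $\xset,\yset$ blocks both equal $(w\mdpt\xsup(\ell)-u\x,\, w\mdpt\ysup(\ell)-u\y)$, while on the $\xset^*$ block the estimator is supported on coordinate $j$, where $w\mdpt(jk\ell)$'s block is $w\xdssup{j}\mdpt(j)-u\xdisup$, which is precisely the coordinate-$j$ block of $\bw(\ell)-u$ by fact (ii) (and symmetrically for $\yset^*$ at $k$). Hence $\inprod{\gopt_{jk\ell'}(w\mdpt(jk\ell))}{w\mdpt(jk\ell)-u}$ may be rewritten coordinate-wise against $\bw(\ell)-u$, and since $\bw(\ell)-u$ is a constant once $\ell$ is fixed, I can pull $\E_{j,k,\ell'}$ inside: it then suffices to show $\E_{j,k,\ell'}[\gopt_{jk\ell'}(w\mdpt(jk\ell))] = \gop(\bw(\ell))$ block by block, where $\gop = \goptot + \gamma(\nabla r - \nabla r(\bz))$ as in \eqref{eq:mmfs-def-phi}.

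This block-by-block check is routine, so I only sketch it. For the $\xset$ block: the $\gopt^\xyfun_{jk\ell'}$ part contributes $[\gop^\xyfun(w_0)]\x + \E_{\ell'}[\tfrac1{nr_{\ell'}}(\nabla_x h_{\ell'}(w\mdpt\xsup(\ell),w\mdpt\ysup(\ell)) - \nabla_x h_{\ell'}(w_0\x,w_0\y))]$, and since $\ell$ is fixed the inner point is a constant, so the expectation is $\tfrac1n\ssin(\nabla_x h_i(w\mdpt\xsup(\ell),w\mdpt\ysup(\ell)) - \nabla_x h_i(w_0))$, which telescopes against $[\gop^\xyfun(w_0)]\x=\nsin \nabla_x h_i(w_0)$ to give $\nsin \nabla_x h_i(w\mdpt\xsup(\ell),w\mdpt\ysup(\ell)) = [\gop^\xyfun(\bw(\ell))]\x$; the $\gsept_{jk\ell'}$ part is the deterministic $(1+\gamma)\mu\x w\mdpt\xsup(\ell) - \gamma\mu\x\bz\xsup$, which is the $\xset$-block of $(\nabla r+\gamma(\nabla r-\nabla r(\bz)))(\bw(\ell))$; and the $\gcrosst_{jk\ell'}$ part $\nsin w\xdisup + \tfrac1{np_j}\Delta\x(j)$ has $\E_j$ equal to $\tfrac1n\ssin(w\xdisup+\Delta\x(i))$, the $\xset$-block of $\gcross(\bw(\ell))$ by fact (ii). Summing recovers the $\xset$-block of $\gop(\bw(\ell))$; the $\yset$ block is symmetric via $\E_k,\E_{\ell'}$. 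For the $\xset^*$ block, sparsity leaves only coordinate $j$, where the estimator equals $\tfrac1{np_j}((1+\gamma)\nabla f^*_j(w\xdssup{j}\mdpt(j)) - \gamma\nabla f^*_j(\bz\xdssup{j}) - w\mdpt\xsup(\ell))$; taking $\E_{j\sim p}$ cancels $p_j$ against $\tfrac1{np_j}$ and yields $\tfrac1n\ssin((1+\gamma)\nabla f^*_i(w\xdssup{i}\mdpt(i)) - \gamma\nabla f^*_i(\bz\xdisup) - w\mdpt\xsup(\ell))$, which is exactly the $i$-th $\xset^*$ block of $\goptot(\bw(\ell)) + \gamma(\nabla r(\bw(\ell)) - \nabla r(\bz))$ read off from \eqref{eq:gdefmmfs} and evaluated using fact (ii); the $\yset^*$ blocks are identical with $q$. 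Combining the four groups gives $\inprod{\gop(\bw(\ell))}{\bw(\ell)-u}$, and taking $\E_{\ell\sim r}$ finishes.

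The work is essentially bookkeeping rather than a genuine obstacle, but the step needing care is the coordinate-wise matching of the lever arms in the second paragraph: one must confirm that the estimator's dual blocks are supported on exactly the sampled coordinates and that those coordinates of $w\mdpt(jk\ell)$ coincide with the corresponding coordinates of $\bw(\ell)$, so the $\tfrac1{np_j}$, $\tfrac1{nq_k}$ reweightings telescope cleanly under $\E_j$, $\E_k$. A related point worth stating is why the extragradient estimator uses a \emph{fresh} index $\ell'\sim r$ rather than reusing $\ell$: the point $(w\mdpt\xsup(\ell),w\mdpt\ysup(\ell))$ at which $\nabla h$ is evaluated already depends on $\ell$, so reusing $\ell$ would leave a residual correlation and spoil the telescoping in the $\gop^\xyfun$ computation, whereas an independent $\ell'$ (after conditioning on $\ell$) restores exact unbiasedness — this is precisely why the aggregate point $\bw(\ell)$, and hence the right-hand side of the lemma, still depends on $\ell$, and it follows the decoupling strategy of \cite{CarmonJST19}.
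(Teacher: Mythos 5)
Your proposal is correct and follows essentially the same route as the paper's proof: condition on $\ell$, exploit the $1$-sparsity of the dual blocks so the $p_j$, $q_k$ weights cancel the $\tfrac{1}{np_j}$, $\tfrac{1}{nq_k}$ reweightings under $\E_j$, $\E_k$, use independence of $\ell'$ to handle the $\gop^\xyfun$ part, and identify the result block-by-block with $\inprod{\gop(\bw(\ell))}{\bw(\ell)-u}$ before averaging over $\ell$. Your explicit observation that the two lever arms $w\mdpt(jk\ell)-u$ and $\bw(\ell)-u$ coincide on exactly the support of the estimator is the same fact the paper uses implicitly when it pairs each sparse dual block with $w\mdpt\xdssup{j}(j)-u\xdssup{j}$, so there is no substantive difference.
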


We prove Lemma~\ref{lem:exregretmmfs} in Appendix~\ref{apdx:mfs-alg}. Finally, we give a complete implementation of our method as pseudocode below in Algorithms~\ref{alg:mmfs-outer} (the outer loop) and~\ref{alg:mmfs} (the inner loop). We also show that it is a correct implementation in the following Lemma~\ref{lem:imp-mmfs}, which we prove in Appendix~\ref{apdx:mfs-alg}.

\begin{algorithm2e}
	\caption{$\mm$-$\fs$-$\textsc{Solve}(\Fmmfsreg, x_0, y_0)$: Minimax finite sum optimization}
     \label{alg:mmfs-outer}
     \DontPrintSemicolon
		\codeInput \eqref{eq:origmmfs} satisfying Assumption~\ref{assume:minimax-fs}, $(x_0, y_0) \in \xset \times \yset$\;
		\codeParameter $T \in \N$\;
		$z_0\x \gets x_0$, $z_0\y \gets y_0$, $z_0\xpi \gets x_0, z_0\xdi \gets \nabla f_i(x_0), z_0\ypi \gets y_0, z_0\ydi \gets \nabla g_i(y_0)$ for all $i \in [n]$\;\label{line:mmfs-initialization}
		\For{$0 \le t < T$}{
		 $z_{t+1}\gets \mm$-$\fs$-$\textsc{Inner}(\Fmmfsreg, \{z_t\x, z_t\y, \{z_t\xpi\}_{i \in [n]}, \{z_t\ypi\}_{i \in [n]}\})$\;\label{line:mmfs-z-update}
		}
		\codeReturn $(z_T\x, z_T\y)$
\end{algorithm2e}

\begin{algorithm2e}
	\caption{$\mm$-$\fs$-$\textsc{Inner}(\Fmmfsreg, \bz\x, \bz\y, \{\bz\xpi\}_{i \in [n]} , \{\bz\ypi\}_{i \in [n]} )$: Minimax finite sum optimization subroutine}
	\label{alg:mmfs}
	\DontPrintSemicolon
	\codeInput \eqref{eq:origmmfs} satisfying Assumption~\ref{assume:minimax-fs}, $\bz\x, \{\bz\xpi\}_{i \in [n]} \in \xset$, $\bz\y, \{\bz\ypi\}_{i \in [n]} \in \yset$ \;
	\codeParameter $\gamma \ge 1$, $\lambda > 0$, $N, S \in \N$\;
	$w_0\gets \bz$\;
			\For{$0\le \tau < N$}{
			Sample $0\le \sigma <S$ uniformly at random\;
		\For{$0 \le s \le \sigma$}{\label{line:mmfs-inner-start}
		Sample $j, k, \ell, \ell' \in [n]$ independently according to $p, q, r, r$ respectively defined in \eqref{eq:pqrdef}, and define
		\begin{flalign*}
		\left[\gsept\right]\x &\defeq \Par{1 + \gamma}\mu\x w_s\xsup - \gamma\mu\x \bz\xsup, ~~~\left[\gsept\right]\y =\Par{1 + \gamma}\mu\y w_s\ysup - \gamma\mu\y \bz\ysup,\\
		\left[\gcrosst\right]\x &\defeq \frac {\sum_{i\in[n]}\nabla f_i\Par{w_s\xpisup}} {n}, ~~~\left[\gcrosst\right]\y \defeq \frac {\sum_{i\in[n]}\nabla g_i\Par{w_s\ypisup}} {n},\\ 
	\gop\x &\defeq \Brack{\gop^\xyfun(w_0)}\x + \frac {\nabla_x \xyfun_{\ell}(w_s\xsup, w_s\ysup) - \nabla_x \xyfun_{\ell}(w\xsup_0, w\ysup_0)}{nr_{\ell}} +\Brack{\gsept}\x +\Brack{\gcrosst}\x,\\
	\gop\y &\defeq \Brack{\gop^\xyfun(w_0)}\y - \frac{\nabla_y \xyfun_{\ell}(w_s\xsup, w_s\ysup) - \nabla_y \xyfun_{\ell}(w\xsup_0, w\ysup_0)}{nr_{\ell}}  +\Brack{\gsept}\y +\Brack{\gcrosst}\y
\end{flalign*}\\
$w\xsup_{s+1/2}\gets w\xsup_{s}- \frac{1}{\lambda\mu\x}\gop\x$, $w\ysup_{s+1/2}\gets  w\ysup_{s}- \frac{1}{\lambda\mu\y}\gop\y$\;\label{line:mmfs-grad-primal}
$w\xpssup{j}_{s+1/2}\gets w\xpssup{j}_s-\frac{1}{n\lambda p_j}\Par{\Par{1+\gamma}w\xpssup{j}_s-\gamma \bz\xpssup{j}-w\xsup_s}$\label{line:mmfs-grad-dual-x}\;
 $w\ypssup{k}_{s+1/2}\gets w\xpssup{k}_s-\frac{1}{n\lambda p_k}\Par{\Par{1+\gamma}w\ypssup{k}_s-\gamma \bz\ypssup{k}-w\ysup_s}$\label{line:mmfs-grad-dual-y}\;
Define
		\begin{flalign*}
		\left[\gsept\right]\x &\defeq \Par{1 + \gamma}\mu\x w_{s+1/2}\xsup - \gamma\mu\x \bz\xsup, ~~\left[\gsept\right]\y \defeq\Par{1 + \gamma}\mu\y w_{s+1/2}\ysup - \gamma\mu\y \bz\ysup,\\
		\left[\gcrosst\right]\x &\defeq \frac {\sum_{i\in[n]}\nabla f_i\Par{w_s\xpisup}} {n}+\frac{\nabla f_j\Par{w_{s+1/2}\xpssup{j}}-\nabla f_j\Par{w_{s}\xpssup{j}}}{np_j},\\
		\left[\gcrosst\right]\y &\defeq \frac {\sum_{i\in[n]}\nabla g_i\Par{w_s\ypisup}} {n}+\frac{\nabla g_k\Par{w_{s+1/2}\ypssup{k}}-\nabla g_k\Par{w_{s}\ypssup{k}}}{nq_k},\\ 
	\gop\x &\defeq \Brack{\gop^\xyfun(w_0)}\x + \frac {\nabla_x \xyfun_{\ell'}(w_{s+1/2}\xsup, w_{s+1/2}\ysup) - \nabla_x \xyfun_{\ell'}(w\xsup_0, w\ysup_0)}{nr_{\ell'}} +\Brack{\gsept}\x +\Brack{\gcrosst}\x,\\
	\gop\y &\defeq \Brack{\gop^\xyfun(w_0)}\y - \frac{\nabla_y \xyfun_{\ell'}(w_{s+1/2}\xsup, w_{s+1/2}\ysup) - \nabla_y \xyfun_{\ell'}(w\xsup_0, w\ysup_0)}{nr_{\ell'}}  +\Brack{\gsept}\y +\Brack{\gcrosst}\y
\end{flalign*}\\
$w\xsup_{s+1}\gets w\xsup_{s}- \frac{1}{\lambda\mu\x}\gop\x$, $w\ysup_{s+1}\gets  w\ysup_{s}- \frac{1}{\lambda\mu\y}\gop\y$\;\label{line:mmfs-extragrad-primal}
 $w\xpssup{j}_{s+1}\gets w\xpssup{j}_{s}-\frac{1}{n\lambda p_j}\Par{\Par{1+\gamma}w\xpssup{j}_{s+1/2}-\gamma \bz\xpssup{j}-w\xsup_{s+1/2}}$\label{line:mmfs-extragrad-dual-x}\;
 $w\ypssup{k}_{s+1}\gets w\xpssup{k}_{s}-\frac{1}{n\lambda p_k}\Par{\Par{1+\gamma}w\ypssup{k}_{s+1/2}-\gamma \bz\ypssup{k}-w\ysup_{s+1/2}}$\label{line:mmfs-extragrad-dual-y}\;
		}

		$\bar{w}\xpssup{i}\gets w\xpssup{i}_{\sigma}-\frac{1}{n\lambda p_i}\Par{\Par{1+\gamma}w\xpssup{i}_{\sigma}-\gamma \bz\xpssup{i}-w\xsup_{\sigma}}$ for each $i \in[n]$\;\label{line:mmfs-return-start}
		 $\bar{w}\ypssup{i}\gets w\ypssup{i}_{\sigma}-\frac{1}{n\lambda q_i}\Par{\Par{1+\gamma}w\ypssup{i}_{\sigma}-\gamma \bz\ypssup{i}-w\ysup_{\sigma}}$ for each $i \in[n]$\;
		 $w_{0}\xy \gets w_{\sigma+1/2}\xy$, $w_0\xpi \gets \bw\xpi$, $w_0\ypi \gets \bw\ypi$ for all $i \in [n]$\label{line:mmfs-inner-end}\;
		 }
	 \codeReturn $(w_0\x, w_0\y, \{\nabla f_i(w_0\xpi)\}_{i \in [n]}, \{\nabla g_i(w_0\ypi)\}_{i \in [n]})$
\end{algorithm2e}

\begin{restatable}{lemma}{lemimpmmfs}\label{lem:imp-mmfs}
Lines~\ref{line:mmfs-inner-start} to~\ref{line:mmfs-inner-end} of Algorithm~\ref{alg:mmfs} implement Algorithm~\ref{alg:rmp} on $(\{\goptilde\}, r)$ defined in \eqref{eq:gjkldef}, \eqref{eq:gpjkldef}, \eqref{eq:rmmfsdef}, for $\sigma$ iterations, and returns $\bw_\sigma$, following the definition \eqref{eq:bwdef}. Each iteration $s > 0$ is implementable in $O(1)$ gradient calls to some $\{f_j, g_k, h_l\}$, and $O(1)$ vector operations on $\xset$ and $\yset$.
\end{restatable}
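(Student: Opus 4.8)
The plan is to follow the template of \Cref{lem:itercost,lem:itercostfs}: exhibit a ``compressed representation'' invariant linking the (hypothetical) iterates of \Cref{alg:rmp}, run on the operator family $\{\goptilde\}$ of \eqref{eq:gjkldef}, \eqref{eq:gpjkldef} and the regularizer $r$ of \eqref{eq:rmmfsdef}, to the variables carried by \Cref{alg:mmfs}, and then verify that the explicit updates in Lines~\ref{line:mmfs-inner-start}--\ref{line:mmfs-inner-end} realize the abstract prox steps. Fix a single phase (with sampled horizon $\sigma$ and fixed outer anchor $\bz$); let $w_s, w_{s+1/2}$ denote the would-be iterates of \Cref{alg:rmp} and $(j,k,\ell,\ell')$ the indices drawn in step $s$. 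I would prove by induction on $s$ that
\[
w_s = \Par{w_s\x, w_s\y, \bin{\nabla f_i(w_s\xpi)}, \bin{\nabla g_i(w_s\ypi)}},\quad w_{s+1/2} = \Par{w_{s+1/2}\x, w_{s+1/2}\y, \bin{\nabla f_i(w_{s+1/2}\xpi)}, \bin{\nabla g_i(w_{s+1/2}\ypi)}},
\]
where the $\xpi$ (resp.\ $\ypi$) variables coincide with those maintained by \Cref{alg:mmfs}, differ only in coordinate $j$ (resp.\ $k$) when passing from $w_s$ to $w_{s+1/2}$ and from $w_{s+1/2}$ to $w_{s+1}$, and every dual block of $w_s$ and of $\bz$ lies in the image of the relevant $\nabla f_i$ or $\nabla g_i$ (so that Item~3 of \Cref{fact:dualsc} may be invoked). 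The base case $w_0 \gets \bz$ holds since the dual blocks of $\bz$ are by construction $\nabla f_i(\bz\xpi)$ and $\nabla g_i(\bz\ypi)$: for the very first phase this comes from \Cref{line:mmfs-initialization} of \Cref{alg:mmfs-outer}, and for every later phase or outer iteration from \Cref{line:mmfs-return-start} and the final return line of \Cref{alg:mmfs}.

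For the step $w_s \mapsto w_{s+1/2} = \Prox^r_{w_s}\Par{\tfrac 1 \lam \gopt_{jk\ell}(w_s)}$: since $r$ is separable across the blocks of $\zset$, the prox decouples block-by-block. On the $\xsup$ block $r$ restricts to $\tfrac{\mu\x}{2}\norm{\cdot}^2$, so the update is a gradient step; rewriting the bilinear part $\Brack{\gcrosst}\x = \nsin w_s\xdi = \nsin \nabla f_i(w_s\xpi)$ via the inductive hypothesis, and leaving $\Brack{\gopt^\xyfun(w_0)}\x$ and the index-$\ell$ variance-reduction correction as written, this coincides with \Cref{line:mmfs-grad-primal}, and symmetrically for the $\ysup$ block. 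By the sparsity of $\gsept$ and $\gcrosst$ and the vanishing dual part of $\gopt^\xyfun$, only the $j$-th $\xset^*$ block and $k$-th $\yset^*$ block move. On the $j$-th block $r$ restricts to (a scalar multiple of) $f_j^*$; substituting $\nabla f_j^*(w_s\xdj) = w_s\xpssup{j}$ and $\nabla f_j^*(\bz\xdj) = \bz\xpssup{j}$ (Item~3 of \Cref{fact:dualsc}, licensed by the invariant) turns the proximal objective into the maximization of $\inprod{\cdot}{z\xdj} - f_j^*(z\xdj)$ against a shifted linear functional, whose maximizer --- by Item~1 of \Cref{fact:dualsc} --- is $\nabla f_j$ evaluated at precisely the affine combination of $w_s\xpssup{j}$, $\bz\xpssup{j}$ and $w_s\x$ assigned to $w\xpssup{j}_{s+1/2}$ in \Cref{line:mmfs-grad-dual-x}; so the invariant is preserved, and symmetrically for $k$ via \Cref{line:mmfs-grad-dual-y}. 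The step $w_{s+1/2} \mapsto w_{s+1}$ is structurally identical, now using $\gopt_{jk\ell'}(w\mdpt(jk\ell))$ of \eqref{eq:gpjkldef}: the inductive hypothesis identifies the corrections $\tfrac{1}{np_j}\Delta\x(j)$ and $\tfrac{1}{nq_k}\Delta\y(k)$ in the $\xy$ block of $\gcrosst$ with $\tfrac 1 {np_j}\Par{\nabla f_j(w\xpssup{j}_{s+1/2}) - \nabla f_j(w\xpssup{j}_s)}$ and its $g_k$ analog, and the index-$\ell'$ $h$-term enters unchanged, so Lines~\ref{line:mmfs-extragrad-primal}--\ref{line:mmfs-extragrad-dual-y} reproduce the prox step. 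This shows Lines~\ref{line:mmfs-inner-start}--\ref{line:mmfs-inner-end} simulate $\sigma$ steps of \Cref{alg:rmp}. Finally, Lines~\ref{line:mmfs-return-start}--\ref{line:mmfs-inner-end} apply to \emph{every} index $i$ the same affine map used in \Cref{line:mmfs-grad-dual-x} at $s = \sigma$, hence produce $\bw\xpi$ with $\nabla f_i(\bw\xpi)$ equal to the $i$-th dual block of $w\mdpt$ ``had $i$ been sampled''; paired with $w_{\sigma+1/2}\xy$, the returned tuple is exactly $\bw_\sigma$ as defined in \eqref{eq:bwdef}.

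For per-iteration cost: in iteration $s$, evaluating $\gopt_{jk\ell}(w_s)$ and $\gopt_{jk\ell'}(w\mdpt(jk\ell))$ requires the running sums $\nsin \nabla f_i(w_s\xpi)$ and $\nsin \nabla g_i(w_s\ypi)$, the phase-constant quantities $\Brack{\gopt^\xyfun(w_0)}$, $\nabla_x h_\ell(w_0\x,w_0\y)$, $\nabla_y h_\ell(w_0\x,w_0\y)$ evaluated once, and the fresh gradients $\nabla_x h_\ell, \nabla_y h_\ell, \nabla_x h_{\ell'}, \nabla_y h_{\ell'}, \nabla f_j, \nabla g_k$ at the current iterates --- a total of $O(1)$ gradient calls. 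Because only $O(1)$ coordinates of the $\xpi$/$\ypi$ representations change per iteration, the two running sums are maintainable incrementally at $O(1)$ new gradient evaluations apiece; $\Brack{\gopt^\xyfun(w_0)}$ is computed once per phase. Hence every iteration $s > 0$ uses $O(1)$ gradient calls and $O(1)$ vector operations on $\xset, \yset$, the $O(n)$ work at $s = 0$ and at each per-phase re-anchoring being subsumed since $S = \Omega(n)$.

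The main obstacle is the bookkeeping in the dual blocks: one must check that the prox in the Fenchel conjugates $f_j^*$ (resp.\ $g_k^*$), which here also carries the proximal-point shift toward $\bz$ and the importance weights associated with the samplers \eqref{eq:pqrdef}, collapses cleanly via \Cref{fact:dualsc} to the stated affine update on the compressed variable $w\xpssup{j}$; that the variance-reduction corrections $\Delta\x, \Delta\y$ appearing in the extragradient half-step are exactly the increments the invariant predicts; and that all arguments fed to $\nabla f_j^*, \nabla g_k^*$ remain in the appropriate gradient images throughout, so these identities are valid. Everything else is a routine adaptation of \Cref{lem:itercost,lem:itercostfs}.
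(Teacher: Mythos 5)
Your proposal is correct and follows essentially the same route as the paper's proof: the same compressed-representation invariant $w_s = (w_s\x, w_s\y, \{\nabla f_i(w_s\xpi)\}, \{\nabla g_i(w_s\ypi)\})$ established by induction, the same use of Items 1 and 3 of \Cref{fact:dualsc} to collapse the dual-block prox steps into the affine updates of Lines~\ref{line:mmfs-grad-dual-x}--\ref{line:mmfs-grad-dual-y} and their extragradient analogues, and the same sparsity argument for the $O(1)$ per-iteration cost. Your treatment of the base case, the returned point $\bw_\sigma$, and the incremental maintenance of the running gradient sums is, if anything, slightly more explicit than the paper's.
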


\subsection{Inner loop convergence analysis}\label{mmfs:convergence}

We give a convergence guarantee on Algorithm~\ref{alg:mmfs} for solving the VI in $\gop \defeq \gtot + \gamma (\nabla r - \nabla r(\bz))$. In order to use~\Cref{prop:newrmp} to solve our problem, we must prove strong monotonicity of $\gop$ and specify the parameters $\lambda_0$, $\lam_1$ and $\rho$ in \eqref{eq:newassume}; note that Lemma~\ref{lem:exregretmmfs} handles the first condition in \eqref{eq:newassume}. To this end we give the following properties on $\gop$, $\goptilde$ as defined in~\eqref{eq:gjkldef} and~\eqref{eq:gpjkldef}; proofs of Lemmas~\ref{lem:lam0bound} and~\ref{lem:lam1bound} are deferred to Appendix~\ref{apdx:mmfsinner}.

\paragraph{Strong monotonicity.} We begin by proving strong monotonicity of $\gop$.

\begin{lemma}[Strong monotonicity]\label{lem:mmfs-sm}
Define $\gop: \zset \to \zset^*$ as in \eqref{eq:mmfs-def-phi}, and define $r: \zset \to \R$ as in \eqref{eq:rmmfsdef}. Then $\gop$ is $(1 +\gamma)$-strongly monotone with respect to $r$.
\end{lemma}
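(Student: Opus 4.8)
The plan is to mirror the proof of Lemma~\ref{lem:sm}, exploiting the additive decomposition of $\goptot$ from~\eqref{eq:gdefmmfs}. First I would rewrite
\[\gop(z) = \goptot(z) + \gamma\Par{\nabla r(z) - \nabla r(\bz)} = (1+\gamma)\nabla r(z) + \gop^\xyfun(z) + \gcross(z) - \gamma \nabla r(\bz),\]
and observe that the last term $-\gamma\nabla r(\bz)$ is constant in $z$, hence contributes nothing to the difference $\gop(z) - \gop(z')$ that appears in the definition of strong monotonicity (cf.\ the ``Regularity'' paragraph of Section~\ref{sec:prelims}). Thus it suffices to show that the operator $z \mapsto (1+\gamma)\nabla r(z) + \gop^\xyfun(z) + \gcross(z)$ is $(1+\gamma)$-strongly monotone with respect to $r$.

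Next I would treat each summand separately. The operator $\gop^\xyfun$ is, up to averaging, the gradient operator of the convex-concave function $\nsin h_i$, hence monotone by~\Cref{item:sm-convex-concave} of~\Cref{fact:sm}. The operator $\gcross$ is linear and skew: for any $\delta \defeq z - z' \in \zset$, a block-wise expansion of $\inprod{\gcross(\delta)}{\delta}$ shows that the $\xset$-block pairing $\nsin\inprod{\delta\xdi}{\delta\x}$ exactly cancels the sum over $i$ of the $i$-th $\xset^*$-block pairings $-\frac1n\inprod{\delta\x}{\delta\xdi}$, and likewise the $\yset$- and $\yset^*$-blocks cancel; hence $\inprod{\gcross(\delta)}{\delta} = 0$, so $\gcross$ is monotone (\Cref{item:sm-self}/\Cref{item:sm-convex-concave} of~\Cref{fact:sm} applied to the appropriate bilinear form). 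Finally, by the three-point identity~\eqref{eq:threept} (with $u = z'$), $\inprod{\nabla r(z) - \nabla r(z')}{z-z'} = V^r_z(z') + V^r_{z'}(z)$, so $(1+\gamma)\nabla r$ is $(1+\gamma)$-strongly monotone with respect to $r$.

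Combining the three operators and using additivity of (strong) monotonicity (\Cref{item:sm-additive} of~\Cref{fact:sm}) yields that $\gop$ is $(1+\gamma)$-strongly monotone with respect to $r$, as claimed. There is no substantive obstacle here: the only mild points of care are discarding the constant $-\gamma\nabla r(\bz)$ term correctly, and tracking the block structure of $\zset = \xset \times \yset \times (\xset^*)^n \times (\yset^*)^n$ when verifying that $\gcross$ cancels against itself.
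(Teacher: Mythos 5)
Your proof is correct and follows the same route as the paper: decompose $\gop$ as $(1+\gamma)\nabla r + \gop^\xyfun + \gcross - \gamma\nabla r(\bz)$, discard the constant shift, and combine monotonicity of $\gop^\xyfun$ and $\gcross$ with $(1+\gamma)$-strong monotonicity of $(1+\gamma)\nabla r$ via additivity, exactly as in the argument of Lemma~\ref{lem:sm} that the paper invokes. The block-wise cancellation you spell out for $\gcross$ is the right verification of its skewness on the product space $\zset$.
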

\begin{proof}
We decompose $\gop(z) = (1+\gamma)\nabla r(z) + \gcross(z) + \gop^\xyfun(z) - \gamma \nabla r(\bz)$, using the definitions in \eqref{eq:gdefmmfs}. By a similar argument as~\Cref{lem:sm}, we obtain the claim.
\end{proof}

\paragraph{Expected relative Lipschitzness.} We next provide bounds on the components of \eqref{eq:newassume} corresponding to $\gsep$ and $\gcross$, where we use the shorthand $\gsep \defeq (1 + \gamma) \nabla r - \gamma \nabla r(\bz)$ in the remainder of this section. In particular, we provide a partial bound on the quantity $\lam_0$.

\begin{restatable}{lemma}{lamzerobound}\label{lem:lam0bound}
Define $\{\gopt_{jk\ell}, \gopt_{jk\ell'}\}: \zset \to \zset^*$ as in \eqref{eq:gjkldef}, \eqref{eq:gpjkldef}, and define $r: \zset \to \R$ as in \eqref{eq:rmmfsdef}. Letting $w_{+}(jk\ell\ell')$ be $w_{s + 1}$ in Algorithm~\ref{alg:mmfs} if $j, k, \ell, \ell'$ were sampled in iteration $s$, defining
\begin{align*}
\gopt\Hno_{jk\ell}(w) &\defeq \gsept_{jk\ell}(w) + \gcrosst_{jk\ell}(w), \\
\gopt\Hno_{jk\ell'}(w\mdpt(jk\ell)) &\defeq \gsept_{jk\ell'}(w\mdpt(jk\ell)) + \gcrosst_{jk\ell'}(w\mdpt(jk\ell)),
\end{align*}
we have
\[\E\Brack{\inprod{\gopt\Hno_{jk\ell'}(w\mdpt(jk\ell)) -\gopt\Hno_{jk\ell}(w)}{w\mdpt(jk\ell) - w_{+}(jk\ell\ell')}} \le \lam\Hno \E\Brack{V^r_{w}\Par{w\mdpt(jk\ell)} + V^r_{w\mdpt(jk\ell)}\Par{w_{+}(jk\ell\ell')}},\]
for
\[\lam\Hno = 2n(1 + \gamma) + \frac{\ssin \sqrt{L\x_i}}{\sqrt{n\mu\x}} + \frac{\ssin \sqrt{L\y_i}}{\sqrt{n\mu\y}}.\]
\end{restatable}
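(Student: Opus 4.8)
The plan is to reuse the expected-relative-Lipschitzness argument for finite sums (\Cref{lem:exrl}), adapted to a setting with both $\xset^*$ and $\yset^*$ dual blocks and with the separable part carrying the proximal scaling $(1+\gamma)$. First I would record the structural observation that $\gopt\Hno = \gsept + \gcrosst$, that $\gsept$ is the $(j,k)$-sparsification of $(1+\gamma)\nabla r(\cdot) - \gamma\nabla r(\bz)$ (its $\xset\times\yset$ block equals this exactly, while its dual block keeps only the $j$-th $\xset^*$ and $k$-th $\yset^*$ coordinates, rescaled by $\tfrac{1}{np_j}$ and $\tfrac{1}{nq_k}$), and that $\gcrosst$ is the analogous sparsification of the bilinear operator $\gcross$ of \eqref{eq:gdefmmfs}. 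Because $\gamma\nabla r(\bz)$ is a constant, it cancels in $\gopt\Hno_{jk\ell'}(w\mdpt(jk\ell)) - \gopt\Hno_{jk\ell}(w)$, leaving a scaled sparsified $\nabla r$-difference plus a sparsified $\gcross$-difference. Expanding $\inprod{\cdot}{w\mdpt(jk\ell) - w_{+}(jk\ell\ell')}$, and using both the \Cref{lem:imp-mmfs} invariant that every dual iterate equals the appropriate $\nabla f_i$ or $\nabla g_i$ and the $1$-sparsity of the $j$-th $\xset^*$ and $k$-th $\yset^*$ updates, this inner product splits, per block, into a diagonal primal term $(1+\gamma)\mu\x\inprod{w\mdpt\x - w\x}{w\mdpt\x - w_{+}\x}$, a diagonal dual term $(1+\gamma)\tfrac{1}{np_j}\inprod{\nabla f^*_j(w\mdpt\xdj) - \nabla f^*_j(w\xdj)}{w\mdpt\xdj - w_{+}\xdj}$, and two primal-dual cross terms from $\gcrosst$, namely $\tfrac{1}{np_j}\inprod{\Delta\x(j)}{w\mdpt\x - w_{+}\x}$ and $-\tfrac{1}{np_j}\inprod{w\mdpt\x(\ell) - w\x}{w\mdpt\xdj - w_{+}\xdj}$ (plus the $\yset$ analogs with $k$, $g$, $\mu\y$).

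For the diagonal terms I would apply \Cref{lem:rnablar} to $\tfrac{\mu\x}{2}\norm{\cdot}^2$ and to $\tfrac{1}{n}f^*_j$ (whose gradients are $1$-relatively Lipschitz with respect to themselves), discard the negative Bregman term by nonnegativity, and then use $\tfrac{1}{p_j}\le 2n$ and $\tfrac{1}{q_k}\le 2n$ (immediate from the definitions of $p, q$ in \eqref{eq:pqrdef}) together with the observation that $\mu\x V_{w\x}(w\mdpt\x)$ and $\tfrac{1}{n}V^{f^*_j}_{w\xdj}(w\mdpt\xdj)$ are literally summands of $V^r_w(w\mdpt)$ (and similarly for the $\yset$ blocks and for the shift from $w\mdpt$ to $w_{+}$); this bounds the diagonal contribution by $2n(1+\gamma)\bigl(V^r_{w}(w\mdpt) + V^r_{w\mdpt}(w_{+})\bigr)$ in expectation. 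For the cross terms I would write $\Delta\x(j)$ and $w\mdpt\xdj - w_{+}\xdj$ as differences of $\nabla f_j$-values via the invariant and then apply \Cref{item:minimax_fsmooth_equiv} of \Cref{lem:smoothness_implications} with $f\gets f_j$, $L\x\gets L\x_j$ and Young parameter $\alpha = (nL\x_j\mu\x)^{-1/2}$ (and symmetrically \Cref{item:minimax_gsmooth_equiv} with $g\gets g_k$, $\alpha = (nL\y_k\mu\y)^{-1/2}$ on the $\yset$ side), using the conjugate Bregman identity $V^{f^*}_{\nabla f(u)}(\nabla f(v)) = V^f_{v}(u)$ to route the resulting divergences into $V^r$ — exactly the manipulation in \Cref{lem:exrl}. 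This gives, for each $\xset$ cross term, a per-sample bound $\tfrac{\sqrt{L\x_j}}{p_j\sqrt{n\mu\x}}\bigl(V^r_w(w\mdpt) + V^r_{w\mdpt}(w_{+})\bigr)$, and taking the expectation over the independent draws $j\sim p$, $k\sim q$ while using $\tfrac{\sqrt{L\x_j}}{p_j}\le 2\ssin\sqrt{L\x_i}$ and $\tfrac{\sqrt{L\y_k}}{q_k}\le 2\ssin\sqrt{L\y_i}$ yields the remaining $\tfrac{\ssin\sqrt{L\x_i}}{\sqrt{n\mu\x}} + \tfrac{\ssin\sqrt{L\y_i}}{\sqrt{n\mu\y}}$ part of $\lam\Hno$. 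One subtlety worth flagging: $w\mdpt\x(\ell)$ and $w\mdpt\y(\ell)$ depend on the independently sampled $\ell$, but since they enter only inside $V^r$-divergences their $\ell$-dependence is inert for the reweighting combinatorics, which involve only $j$ and $k$.

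The main obstacle is the bookkeeping rather than any single inequality: one must carefully enumerate every primal and dual cross term produced by the sparsification and by the asymmetry between the gradient step $w\mapsto w\mdpt$ and the extragradient step $w\mdpt\mapsto w_{+}$, and for each identify the exact summand of $V^r_w(w\mdpt)$ or $V^r_{w\mdpt}(w_{+})$ into which it must be absorbed — in particular getting the base-point order of the conjugate Bregman divergences right so that Item 4 of \Cref{fact:dualsc} applies in the correct direction. Relative to \Cref{lem:exrl}, the number of terms is essentially doubled by the $\xset/\yset$ split and complicated slightly by the $(1+\gamma)$ scaling of the separable blocks, but every other step follows the \Cref{lem:exrl} template; the unbiasedness half of condition \eqref{eq:newassume} is handled separately by \Cref{lem:exregretmmfs}.
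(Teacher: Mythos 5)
Your proposal is correct and follows essentially the same route as the paper, which splits $\gopt\Hno$ into its $\gsept$ and $\gcrosst$ parts (Lemmas~\ref{lem:lamsep} and~\ref{lem:lamcross}), bounds the former via \Cref{lem:rnablar}, nonnegativity of Bregman divergences, and $p_j, q_k \ge \tfrac{1}{2n}$, and bounds the latter via \Cref{item:minimax_fsmooth_equiv} of \Cref{lem:smoothness_implications} with the same $\alpha$-tuning and importance-weight bounds $\tfrac{\sqrt{L\x_j}}{p_j} \le 2\ssin\sqrt{L\x_i}$, exactly mirroring \Cref{lem:exrl}. The only discrepancy is a factor of $2$ on the cross terms (your argument, like the paper's Lemma~\ref{lem:lamcross}, yields $\tfrac{2\ssin\sqrt{L\x_i}}{\sqrt{n\mu\x}}$ rather than the constant stated in \Cref{lem:lam0bound}), which is immaterial to the downstream rate.
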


\paragraph{Partial variance bound.} Finally, we provide bounds on the components of \eqref{eq:newassume} corresponding to $\gop^\xyfun$. Namely, we bound the quantity $\lam_1$, and complete the bound on $\lam_0$ within~\Cref{prop:newrmp}.

\begin{restatable}{lemma}{lamonebound}\label{lem:lam1bound}
Following notation of Lemma~\ref{lem:lam0bound}, and recalling the definition \eqref{eq:lamhdef}, for
\[\lam_1 \defeq 32(\lam^\xyfun)^2,\]
where we define
\begin{equation}\label{eq:lamhdef}\lam^\xyfun \defeq \nsin\Par{\frac{\Lam\xx_i}{\mu\x} + \frac{\Lam_i\xy}{\sqrt{\mu\x\mu\y}} + \frac{\Lam\yy_i}{\mu\y}}.\end{equation}
we have for any $\rho > 0$,
\begin{flalign}
& \E\Brack{\inprod{\gopt^\xyfun_{jk\ell'}(w\mdpt(jk\ell)) - \gopt^\xyfun_{jk\ell}(w)}{w\mdpt(jk\ell) - w_{+}(jk\ell\ell')}}\nonumber\\
& \hspace{1em} \le \Par{2\lam^\xyfun + \frac 1 \rho}\E\Brack{V^r_{w}\Par{w\mdpt(jk\ell)} + V^r_{w\mdpt(jk\ell)}\Par{w_{+}(jk\ell\ell')}} + \rho\lam_1\E\Brack{V^r_{w_0}(w^\star) + V^r_{\bw(\ell)}(w_\star)}.\label{eq:ghbound}
\end{flalign}
\end{restatable}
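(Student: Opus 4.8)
The plan is to exploit the variance-reduced structure of $\gopt^\xyfun$. Abbreviate $w \defeq w_s$, $w\mdpt \defeq w\mdpt(jk\ell)$, $w_+ \defeq w_+(jk\ell\ell')$, $\bw \defeq \bw(\ell)$, and recall $\gop^\xyfun = \nsin \gop^{h_i}$. I would decompose
\[
\gopt^\xyfun_{jk\ell'}(w\mdpt) - \gopt^\xyfun_{jk\ell}(w) = E_3 + E_1 - E_2,
\]
where $E_3 \defeq \gop^\xyfun(w\mdpt) - \gop^\xyfun(w)$ is the exact, bias-free increment, and $E_2 \defeq \gopt^\xyfun_{jk\ell}(w) - \gop^\xyfun(w)$, $E_1 \defeq \gopt^\xyfun_{jk\ell'}(w\mdpt) - \gop^\xyfun(w\mdpt)$ are the estimation errors at $w$ and at $w\mdpt$ respectively (the anchor offsets $\gop^\xyfun(w_0)$ in \eqref{eq:gjkldef}, \eqref{eq:gpjkldef} cancel); all three are supported on the $\xset \times \yset$ blocks. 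Conditioning on $\ell$ (hence on $w\mdpt$ and $E_2$, which depend only on $\ell$), the term $E_1$ depends only on the extra independent index $\ell'$ and satisfies $\E_{\ell'}[E_1 \mid \ell] = 0$ by the unbiasedness built into \eqref{eq:gpjkldef} (cf.\ the computation in Lemma~\ref{lem:exregretmmfs}); similarly $\E_{\ell \sim r}[E_2] = 0$. Consequently $\E\norm{E_1 - E_2}^2 = \E\norm{E_1}^2 + \E\norm{E_2}^2$, with no cross term.

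For the deterministic increment $E_3$, applying \Cref{item:minimax_hsmooth_equiv} of Lemma~\ref{lem:smoothness_implications} to each $h_i$ with $\alpha = \sqrt{\mu\x/\mu\y}$ and averaging over $i$ shows, exactly as in the derivation of \eqref{eq:lipschitz_h_bound}, that $\nsin r^{h_i}_\alpha$ is dominated by $\lam^\xyfun$ times the $\xset \times \yset$ part of $r$; hence $\gop^\xyfun$ is $\lam^\xyfun$-relatively Lipschitz with respect to $r$ and $\inprod{E_3}{w\mdpt - w_+} \le \lam^\xyfun\bigl(V^r_w(w\mdpt) + V^r_{w\mdpt}(w_+)\bigr)$. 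For the stochastic part, writing $\norm{v}_\star^2 \defeq \tfrac1{\mu\x}\norm{v\x}^2 + \tfrac1{\mu\y}\norm{v\y}^2$ for $v$ on $\xset\times\yset$, a weighted Young's inequality gives
\[
\inprod{E_1 - E_2}{w\mdpt - w_+} \le \tfrac1{2\rho}\bigl(\mu\x\norm{w\mdpt\x - w_+\x}^2 + \mu\y\norm{w\mdpt\y - w_+\y}^2\bigr) + \tfrac\rho2\norm{E_1 - E_2}_\star^2 \le \tfrac1\rho V^r_{w\mdpt}(w_+) + \tfrac\rho2\norm{E_1 - E_2}_\star^2.
\]
So it suffices to show $\E\norm{E_i}_\star^2 \le 32(\lam^\xyfun)^2\,\E\bigl[V^r_{w_0}(w_\star) + V^r_{\bw}(w_\star)\bigr]$ for $i \in \{1,2\}$ (possibly up to an extra multiple of $\E V^r_w(w\mdpt)$), since then $\E\bigl[\tfrac\rho2\norm{E_1-E_2}_\star^2\bigr] \le \rho\lam_1\,\E\bigl[V^r_{w_0}(w_\star) + V^r_{\bw}(w_\star)\bigr]$ with $\lam_1 = 32(\lam^\xyfun)^2$.

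The core variance estimate follows from Assumption~\ref{assume:minimax-fs} and the $\Ltot$-proportional sampling \eqref{eq:pqrdef}: as the second moment of the single-sample term dominates the variance, for the relevant reference point $u$ ($u = w\mdpt$ for $E_1$, $u = w$ for $E_2$),
\[
\E\norm{E_i\x}^2 \le \tfrac1{n^2}\ssin \tfrac1{r_i}\norm{\nabla_x h_i(u) - \nabla_x h_i(w_0)}^2 \le 4(\lam^\xyfun)^2\bigl((\mu\x)^2\norm{u\x - w_0\x}^2 + \mu\x\mu\y\norm{u\y - w_0\y}^2\bigr),
\]
the three gains being $r_i \ge \tfrac{\Ltot_i}{2\ssin \Ltot_i}$, then $(\Lam\xx_i)^2/\Ltot_i \le \mu\x\Lam\xx_i$ and $(\Lam\xy_i)^2/\Ltot_i \le \sqrt{\mu\x\mu\y}\,\Lam\xy_i$, then $\nsin \Lam\xx_i \le \mu\x\lam^\xyfun$ and $\nsin \Lam\xy_i \le \sqrt{\mu\x\mu\y}\,\lam^\xyfun$; the $\nabla_y$-block is symmetric (with $\Lam\yy_i,\Lam\xy_i$). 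Summing the two blocks, $\E\norm{E_i}_\star^2 \le 8(\lam^\xyfun)^2\bigl(\mu\x\norm{u\x - w_0\x}^2 + \mu\y\norm{u\y - w_0\y}^2\bigr)$. For $E_1$ we are done: the $\xset\times\yset$ blocks of $\bw$ equal those of $w\mdpt$ by \eqref{eq:bwdef}, so triangulating through $w_\star$ yields $\mu\x\norm{w\mdpt\x - w_0\x}^2 + \mu\y\norm{w\mdpt\y - w_0\y}^2 \le 4\bigl(V^r_{\bw}(w_\star) + V^r_{w_0}(w_\star)\bigr)$, hence $\E\norm{E_1}_\star^2 \le 32(\lam^\xyfun)^2\,\E\bigl[V^r_{w_0}(w_\star) + V^r_{\bw}(w_\star)\bigr]$.

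The one genuinely delicate point — and where I expect essentially all the work to lie — is $E_2$, whose variance is controlled by the displacement $\norm{w\x - w_0\x}$ of the \emph{moving} iterate $w = w_s$, which is not one of the reference points on the right-hand side of \eqref{eq:ghbound}. Here I would triangulate $\norm{w_s\x - w_0\x} \le \norm{w_s\x - w\mdpt\x} + \norm{w\mdpt\x - w_\star\x} + \norm{w_\star\x - w_0\x}$, bounding the first summand by the current proximal step ($\mu\x\norm{w_s\x - w\mdpt\x}^2 \le 2V^r_w(w\mdpt)$, a divergence already present on the right-hand side with coefficient $2\lam^\xyfun + \tfrac1\rho$) and the last two by $V^r_{\bw}(w_\star)$ (using $\bw\x = w\mdpt\x$) and $V^r_{w_0}(w_\star)$. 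Thus $E_2$'s variance splits between an $\E V^r_w(w\mdpt)$ term — absorbed into the $2\lam^\xyfun$ slack, using that the eventual invocation (in Proposition~\ref{prop:newrmp}) takes $\rho$ small enough that $\rho(\lam^\xyfun)^2 = O(\lam^\xyfun)$ — and the same ``$B$-bucket'' with constant $32(\lam^\xyfun)^2$. Collecting $E_3$'s contribution $\lam^\xyfun(V^r_w(w\mdpt) + V^r_{w\mdpt}(w_+))$, the Young term $\tfrac1\rho V^r_{w\mdpt}(w_+) \le \tfrac1\rho(V^r_w(w\mdpt) + V^r_{w\mdpt}(w_+))$, and $\rho\lam_1\,\E[V^r_{w_0}(w_\star) + V^r_{\bw}(w_\star)]$, and carefully tracking the Young and blockwise-smoothness constants, gives \eqref{eq:ghbound} with $\lam_1 = 32(\lam^\xyfun)^2$; the remaining steps are mechanical.
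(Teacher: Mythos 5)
Most of your outline tracks the paper's actual proof: the orthogonality of the two estimation errors, the $\rho$-weighted Young's step producing the $\tfrac1\rho V^r_{w\mdpt(jk\ell)}(w_+(jk\ell\ell'))$ contribution, the variance calculation via $r_\ell \ge \Ltot_\ell/(2\ssin \Ltot_i)$ and $(\Lam\xx_i)^2/\Ltot_i \le \mu\x\Lam\xx_i$, and the triangulation of $\norm{w\mdpt\xsup(\ell) - w_0\xsup}$ through $w_\star$ (using that $\bw(\ell)$ shares its $\xset\times\yset$ blocks with $w\mdpt(jk\ell)$) are exactly the ingredients used there, and your treatment of $E_1$ and $E_3$ is correct.

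The gap is $E_2$. Centering the $\ell$-sampled term at the moving iterate $w = w_s$ leaves a variance controlled by $\norm{w\xsup - w_0\xsup}$, and your proposed fix --- triangulating through $w\mdpt(jk\ell)$ and absorbing the resulting $\rho\cdot O\bigl((\lam^\xyfun)^2\bigr)\,\E V^r_{w}(w\mdpt(jk\ell))$ into ``the $2\lam^\xyfun$ slack'' --- does not prove the stated inequality: the lemma must hold for \emph{every} $\rho>0$ with the fixed coefficient $2\lam^\xyfun + \tfrac1\rho$ on the proximal divergences, and $\rho(\lam^\xyfun)^2$ exceeds any such coefficient once $\rho$ is large. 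This is not cosmetic; the hypothesis \eqref{eq:newassume} of Proposition~\ref{prop:newrmp} is quantified over all $\rho>0$, and even at the specific $\rho=\gamma/(5\lam_1)$ eventually used, your argument perturbs the coefficient of $\E V^r_{w}(w\mdpt(jk\ell))$ in a way you only gesture at (there is no slack: the paper's $2\lam^\xyfun$ is attained exactly). The paper avoids the issue by never centering at $w_s$: it splits the $\ell$-term as
\[
\frac{1}{nr_\ell}\Par{\nabla_x \xyfun_\ell(w) - \nabla_x \xyfun_\ell(w_0)}
= \frac{1}{nr_\ell}\Par{\nabla_x \xyfun_\ell(w\mdpt(jk\ell)) - \nabla_x \xyfun_\ell(w_0)}
- \frac{1}{nr_\ell}\Par{\nabla_x \xyfun_\ell(w\mdpt(jk\ell)) - \nabla_x \xyfun_\ell(w)}
\]
(the terms $\circled{2}$ and $\circled{3}$ in the paper's proof). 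The first piece joins your $E_1$ bucket, since its displacement is again $w\mdpt(jk\ell) - w_0$ and hence triangulates to $V^r_{w_0}(w_\star) + V^r_{\bw(\ell)}(w_\star)$; the second is a single-summand relative-Lipschitzness term bounded by $\frac{\Ltot_\ell}{nr_\ell}\Par{V^r_{w}(w\mdpt(jk\ell)) + V^r_{w\mdpt(jk\ell)}(w_+(jk\ell\ell'))} \le 2\lam^\xyfun\Par{\cdots}$ with no $\rho$ at all, which is precisely where the coefficient $2\lam^\xyfun$ comes from (the paper folds your $E_3$ into this term rather than keeping the full average). Replacing your $E_2/E_3$ split with this re-centered one repairs the argument; the rest of your proposal then goes through.
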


Combining the properties we prove above with~\Cref{prop:newrmp}, we obtain the following convergence guarantee for each loop $0 \le \tau < N$ of Lines~\ref{line:mmfs-inner-start} to~\ref{line:mmfs-inner-end} in~\Cref{alg:mmfs}.

\begin{proposition}\label{prop:onephase}
Consider a run of Lines~\ref{line:mmfs-inner-start} to~\ref{line:mmfs-inner-end} in~\Cref{alg:mmfs} initialized at $w_0 \in \zset$, with
\begin{equation}\label{eq:lamSdef}
\begin{aligned}\lam \gets \Par{2n(1 + \gamma) + \frac{2\ssin \sqrt{L\x_i}}{\sqrt{n\mu\x}} + \frac{2\ssin \sqrt{L\y_i}}{\sqrt{n\mu\y}} + 2\lam^\xyfun} + \frac{160(\lam^\xyfun)^2}{\gamma},\; S \gets \frac{5\lam}{\gamma},
\end{aligned}
\end{equation}
where $\lam^\xyfun$ is defined in \eqref{eq:lamhdef}. Letting $\widetilde{w}$ be the new setting of $w_0$ in Line~\ref{line:mmfs-inner-end} at the end of the run,
\[\E\Brack{V^r_{\widetilde{w}}(w^\star)} \le \half V^r_{w_0}(w^\star),\]
where $w^\star$ solves the VI in $\gop$ (defined in \eqref{eq:mmfs-def-phi}).
\end{proposition}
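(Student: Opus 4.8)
The plan is to obtain Proposition~\ref{prop:onephase} as a direct instantiation of the partial variance analysis, Proposition~\ref{prop:newrmp}, applied to the randomized estimator $\goptilde$ from \eqref{eq:gjkldef}, \eqref{eq:gpjkldef}, the regularizer $r$ from \eqref{eq:rmmfsdef}, and the target operator $\gop = \goptot + \gamma(\nabla r - \nabla r(\bz))$ from \eqref{eq:mmfs-def-phi}. So the proof amounts to checking the three hypotheses of Proposition~\ref{prop:newrmp}. First, strong monotonicity: Lemma~\ref{lem:mmfs-sm} gives that $\gop$ is $(1+\gamma)$-strongly monotone with respect to $r$, hence in particular $\gamma$-strongly monotone, so we may invoke Proposition~\ref{prop:newrmp} with its strong-monotonicity parameter set to $\gamma$ (this is the choice that makes the resulting $\rho, \lam, T$ match \eqref{eq:lamSdef}). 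Second, the unbiasedness identity (first line of \eqref{eq:newassume}): this is exactly Lemma~\ref{lem:exregretmmfs}, with the aggregate point $\bw_s$ of Proposition~\ref{prop:newrmp} taken to be $\bw(\ell)$ from \eqref{eq:bwdef}; note $\bw(\ell)$ depends only on the sampled $\ell$, consistent with the fact that Proposition~\ref{prop:newrmp} returns $\bw_\sigma$.

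Third, the partial-variance inequality (second and third lines of \eqref{eq:newassume}): this is where Lemmas~\ref{lem:lam0bound} and~\ref{lem:lam1bound} are combined. Decompose $\goptilde$ additively along the lines already set up for Lemma~\ref{lem:lam0bound}, writing $\gopt_{jk\ell}(w) = \gopt\Hno_{jk\ell}(w) + \gopt^\xyfun_{jk\ell}(w)$ and $\gopt_{jk\ell'}(w\mdpt(jk\ell)) = \gopt\Hno_{jk\ell'}(w\mdpt(jk\ell)) + \gopt^\xyfun_{jk\ell'}(w\mdpt(jk\ell))$, where $\gopt\Hno \defeq \gsept + \gcrosst$ collects the separable and cross terms and $\gopt^\xyfun$ is the $h$-component. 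Since the mirror-prox step $w\mdpt(jk\ell) \gets \Prox^r_{w}(\tfrac1\lam\goptilde(w))$ and the subsequent iterate $w_{+}(jk\ell\ell')$ are defined from the full estimator $\goptilde$ (and hence are the same objects appearing in both lemmas), linearity of the inner product lets us add the two bounds. Lemma~\ref{lem:lam0bound} contributes $\lam\Hno\,\E[V^r_{w}(w\mdpt(jk\ell)) + V^r_{w\mdpt(jk\ell)}(w_{+}(jk\ell\ell'))]$ with $\lam\Hno = 2n(1+\gamma) + \tfrac{\ssin\sqrt{L\x_i}}{\sqrt{n\mu\x}} + \tfrac{\ssin\sqrt{L\y_i}}{\sqrt{n\mu\y}}$, and Lemma~\ref{lem:lam1bound} contributes $(2\lam^\xyfun + \tfrac1\rho)\,\E[V^r_{w}(w\mdpt(jk\ell)) + V^r_{w\mdpt(jk\ell)}(w_{+}(jk\ell\ell'))] + \rho\lam_1\,\E[V^r_{w_0}(w^\star) + V^r_{\bw(\ell)}(w^\star)]$ with $\lam_1 = 32(\lam^\xyfun)^2$. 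Summing, the second condition of \eqref{eq:newassume} holds with $\lam_0 \defeq \lam\Hno + 2\lam^\xyfun$ and $\lam_1 = 32(\lam^\xyfun)^2$.

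Finally, feeding $(\lam_0,\lam_1,\gamma)$ into Proposition~\ref{prop:newrmp}, it prescribes $\rho = \gamma/(5\lam_1)$, $\lam = \lam_0 + 1/\rho = \lam\Hno + 2\lam^\xyfun + 5\lam_1/\gamma = \lam\Hno + 2\lam^\xyfun + 160(\lam^\xyfun)^2/\gamma$, and $T = 5\lam/\gamma$, which we take as $S$ in Algorithm~\ref{alg:mmfs}. This matches \eqref{eq:lamSdef}: the $\lam$ displayed there is a convenient upper bound on the quantity just computed (for instance it doubles the $\sqrt{L\x_i}, \sqrt{L\y_i}$ terms relative to $\lam\Hno$), and since the relative-Lipschitz-type hypothesis \eqref{eq:newassume} continues to hold verbatim when $\lam_0$ — and hence $\lam$ — is enlarged and $T$ is scaled accordingly, running with the slightly inflated $\lam$ and $S$ from \eqref{eq:lamSdef} is valid. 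Proposition~\ref{prop:newrmp} then gives $\E[V^r_{\bw_\sigma}(w^\star)] \le \tfrac12 V^r_{w_0}(w^\star)$, and by Lemma~\ref{lem:imp-mmfs} the point $\bw_\sigma$ is precisely what is stored as the new $w_0$ in Line~\ref{line:mmfs-inner-end}, i.e.\ $\widetilde w$; this yields the claim.

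The substantive work is entirely in the supporting lemmas (Lemmas~\ref{lem:lam0bound} and~\ref{lem:lam1bound}, proven in the appendix) and in Proposition~\ref{prop:newrmp} itself. Within this proposition the only care needed is bookkeeping: confirming that the two lemmas are stated relative to a common $w\mdpt(jk\ell)$ and $w_{+}(jk\ell\ell')$ so that their estimates may be added, tracking which randomness each aggregate point and estimator depends on so that Lemma~\ref{lem:exregretmmfs} and the return statement of Proposition~\ref{prop:newrmp} line up, and reconciling the precise prescribed constant for $\lam$ with the (harmlessly larger) constant written in \eqref{eq:lamSdef}. I expect this constant-chasing and the verification that enlarging $\lam$ preserves \eqref{eq:newassume} to be the only mild obstacle; the argument is otherwise a routine assembly of the stated ingredients.
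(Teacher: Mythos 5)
Your proposal is correct and is exactly the paper's (implicit) argument: the paper derives Proposition~\ref{prop:onephase} by feeding Lemma~\ref{lem:mmfs-sm}, Lemma~\ref{lem:exregretmmfs}, and the combination of Lemmas~\ref{lem:lam0bound} and~\ref{lem:lam1bound} into Proposition~\ref{prop:newrmp}, with the return point identified via Lemma~\ref{lem:imp-mmfs}. Your observation that the $\lam$ in \eqref{eq:lamSdef} slightly overcounts the $\sqrt{L\x_i},\sqrt{L\y_i}$ terms relative to $\lam_0+1/\rho$, and that enlarging $\lam$ (with $S=5\lam/\gamma$ scaled accordingly) preserves the hypothesis and conclusion of Proposition~\ref{prop:newrmp}, is the right way to reconcile the constants.
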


\subsection{Outer loop convergence analysis}\label{ssec:mmfsouter}

We state the following convergence guarantee on our outer loop, Algorithm~\ref{alg:mmfs-outer}. The analysis is a somewhat technical modification of the standard proximal point analysis for solving VIs \cite{Nemirovski04}, to handle approximation error. As a result, we state the claim here and defer a proof to Appendix~\ref{apdx:mmfsouter}.

\begin{restatable}{proposition}{propmultiphase}\label{prop:multiphase}
Consider a single iteration $0 \le t < T$ of Algorithm~\ref{alg:mmfs-outer}, and let $z_\star$ is the saddle point to $\Fmmfspd$ (defined in \eqref{eq:mmfspd}). Setting $S$ as in \eqref{eq:lamSdef} and
\begin{equation}\label{def:kappa-max}
N \defeq O\Par{\log\Par{\gamma \lam}},
\end{equation}
for an appropriately large constant in our implementation of Algorithm~\ref{alg:mmfs} and $\lam$ as in \eqref{eq:lamSdef}, we have
\[\E V^r_{z_{t + 1}}(z_\star) \le \frac{4\gamma}{1 + 4\gamma} V^r_{z_t}(z_\star).\]
\end{restatable}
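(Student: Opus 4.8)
The plan is to combine the inner-loop guarantee of Proposition~\ref{prop:onephase} with a standard proximal-point contraction argument for variational inequalities, carefully tracking the approximation error introduced by the fact that Algorithm~\ref{alg:mmfs} only solves each proximal subproblem inexactly. First I would set up notation: fix the iteration $0 \le t < T$, write $\bz \defeq z_t$, let $z^\star_{t+1}$ be the exact solution to the VI in $\gop \defeq \goptot + \gamma(\nabla r - \nabla r(\bz))$ (which, by Lemma~\ref{lem:mmfs-sm}, is $(1+\gamma)$-strongly monotone with respect to $r$), and recall that $z_\star$ is the saddle point to $\Fmmfspd$, equivalently the solution to the VI in $\goptot$. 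The output $z_{t+1}$ of the inner loop is, after $N$ phases, an approximation to $z^\star_{t+1}$. By Proposition~\ref{prop:onephase}, each phase halves $V^r_{\cdot}(z^\star_{t+1})$ in expectation, so after $N = O(\log(\gamma\lam))$ phases we get $\E V^r_{z_{t+1}}(z^\star_{t+1}) \le (\gamma\lam)^{-c} V^r_{z_t}(z^\star_{t+1})$ for a constant $c$ we choose large; I will then need an a priori bound on $V^r_{z_t}(z^\star_{t+1})$ in terms of $V^r_{z_t}(z_\star)$ to turn this into a usable bound (e.g. via strong monotonicity of $\gop$ and the fact that $z^\star_{t+1}$ solves the shifted VI, or by a triangle-type inequality for Bregman divergences of $r$).

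Next I would carry out the deterministic proximal-point contraction for the \emph{exact} step. The defining inequality $\inprod{\gop(z^\star_{t+1})}{z^\star_{t+1} - z_\star} \le 0$ expands, using $\gop = \goptot + \gamma\nabla r - \gamma\nabla r(z_t)$, into
\[
\inprod{\goptot(z^\star_{t+1})}{z^\star_{t+1} - z_\star} + \gamma\inprod{\nabla r(z^\star_{t+1}) - \nabla r(z_t)}{z^\star_{t+1} - z_\star} \le 0.
\]
The first term is $\ge 0$ by monotonicity of $\goptot$ together with $\inprod{\goptot(z_\star)}{z^\star_{t+1} - z_\star} \ge 0$ (since $z_\star$ solves the VI in $\goptot$). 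Applying the three-point identity \eqref{eq:threept} to the second term, $\inprod{\nabla r(z^\star_{t+1}) - \nabla r(z_t)}{z^\star_{t+1} - z_\star} = V^r_{z_t}(z^\star_{t+1}) + V^r_{z^\star_{t+1}}(z_\star) - V^r_{z_t}(z_\star)$, yields the clean contraction
\[
V^r_{z^\star_{t+1}}(z_\star) + V^r_{z_t}(z^\star_{t+1}) \le V^r_{z_t}(z_\star),
\]
i.e.\ for the exact step $V^r_{z^\star_{t+1}}(z_\star) \le V^r_{z_t}(z_\star) - V^r_{z_t}(z^\star_{t+1})$. This would already give contraction by a factor $\tfrac{1}{1+\gamma}$ if $z_{t+1} = z^\star_{t+1}$ exactly, since strong monotonicity of $\gop$ forces $V^r_{z_t}(z^\star_{t+1})$ to be a $\gamma$-fraction of something comparable to $V^r_{z_t}(z_\star)$; I would make this precise by also using $\inprod{\gop(z^\star_{t+1})}{z^\star_{t+1}-z_t} \le 0$ against $(1+\gamma)$-strong monotonicity to lower-bound $V^r_{z_t}(z^\star_{t+1})$ appropriately.

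Finally I would handle the inexactness: write $z_{t+1} = z^\star_{t+1} + (\text{error})$ and bound $V^r_{z_{t+1}}(z_\star)$ against $V^r_{z^\star_{t+1}}(z_\star)$ plus a correction controlled by $\E V^r_{z_{t+1}}(z^\star_{t+1})$, using convexity/smoothness properties of $V^r$ and Young's inequality; choosing the constant $c$ in $N = O(\log(\gamma\lam))$ large enough makes this correction a small multiple (say $\le \tfrac{1}{4}$) of $V^r_{z_t}(z_\star)$. Combining the exact contraction with the error term and optimizing constants should deliver $\E V^r_{z_{t+1}}(z_\star) \le \tfrac{4\gamma}{1+4\gamma} V^r_{z_t}(z_\star)$. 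The main obstacle I anticipate is the error-propagation step: the inner-loop guarantee is in terms of $V^r_{\cdot}(z^\star_{t+1})$, not $V^r_{\cdot}(z_\star)$, and Bregman divergences of the (non-quadratic, only $f^*_i$- and $g^*_i$-dependent) regularizer $r$ do not obey a clean triangle inequality, so I will need to pass through the strong-convexity/smoothness sandwich on $r$ — or re-derive the contraction directly in a way that tolerates the inexact iterate — to make the cross terms between "approximation error" and "distance to $z_\star$" absorbable. Getting the precise constant $\tfrac{4\gamma}{1+4\gamma}$ rather than a worse constant will require being somewhat careful in how the $\tfrac14$-type slacks are allocated.
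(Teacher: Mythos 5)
Your skeleton is the paper's: use the VI optimality condition of the exact proximal point $z^\star_{t+1}$ with the three-point identity \eqref{eq:threept} and (strong) monotonicity of $\goptot$ to get the exact contraction, then absorb the inexactness of the inner solve by running $N = O(\log(\gamma\lam))$ halving phases of Proposition~\ref{prop:onephase}. Your exact-step inequality $V^r_{z^\star_{t+1}}(z_\star) + V^r_{z_t}(z^\star_{t+1}) \le V^r_{z_t}(z_\star)$ is correct, and it already gives the a priori bound $V^r_{z_t}(z^\star_{t+1}) \le V^r_{z_t}(z_\star)$ you need to seed the inner loop; moreover, lower-bounding $\inprod{\goptot(z^\star_{t+1})}{z^\star_{t+1}-z_\star} \ge V^r_{z^\star_{t+1}}(z_\star)$ by $1$-strong monotonicity of $\goptot$ with respect to $r$ (rather than the detour through $\inprod{\gop(z^\star_{t+1})}{z^\star_{t+1}-z_t}$ you sketch) directly yields $(1+\gamma)V^r_{z^\star_{t+1}}(z_\star) \le \gamma V^r_{z_t}(z_\star)$.

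The one substantive divergence is the error-propagation step, and the route you list first has a trap. If you pass to norms and use the triangle inequality, i.e.\ $\norm{z_{t+1}-z_\star}^2 \le (1+\delta)\norm{z^\star_{t+1}-z_\star}^2 + (1+\delta^{-1})\norm{z_{t+1}-z^\star_{t+1}}^2$ blockwise and then convert back to Bregman divergences of $r$ via the smoothness/strong-convexity sandwich on each $f_i^*$ and $g_i^*$, the condition-number factor $L_i/\mu_i$ lands on the \emph{leading} term $V^r_{z^\star_{t+1}}(z_\star)$, and the contraction is destroyed. The comparison must instead start from the exact identity (a rearrangement of \eqref{eq:threept})
\[
V^r_{z_{t+1}}(z_\star) = V^r_{z^\star_{t+1}}(z_\star) + V^r_{z_{t+1}}(z^\star_{t+1}) + \inprod{\nabla r(z^\star_{t+1}) - \nabla r(z_{t+1})}{z_\star - z^\star_{t+1}},
\]
with Young's inequality applied to the cross term so that only a $(1+O(1/\gamma))$ factor multiplies $V^r_{z^\star_{t+1}}(z_\star)$ and the full condition-number blowup multiplies only $V^r_{z_{t+1}}(z^\star_{t+1})$, which $N$ controls; the slack between $\frac{\gamma}{1+\gamma}$ and $\frac{4\gamma}{1+4\gamma}$ is $\Theta(1/\gamma)$, which is exactly what this budget allows. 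The paper instead takes your second fallback: it never compares divergences directly, but bounds $\inprod{\goptot(z_{t+1})}{z_{t+1}-z_\star}$ from above by the exact-step recursion plus operator-difference error terms (each split by Cauchy--Schwarz and Young using Assumption~\ref{assume:minimax-fs}, contributing $\tfrac14 V^r_{z_{t+1}}(z_\star)$ plus $O(\gamma^2\widetilde{\kappa})\,V^r_{z_{t+1}}(z^\star_{t+1})$) and from below by $V^r_{z_{t+1}}(z_\star)$ via strong monotonicity, then rearranges. Either organization closes the argument; just do not use the naive triangle inequality.
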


\subsection{Main result}\label{ssec:mmfs-main}

We now state and prove our main claim.

\begin{theorem}\label{thm:mmfs}
Suppose $\Fmmfs$ in~\eqref{eq:origmmfs} satisfies Assumption~\ref{assume:minimax-fs}, and has saddle point $(x_\star, y_\star)$. Further, suppose we have $(x_0, y_0) \in \xset \times \yset$ such that $\gap_{\Fmmfsreg}(x_0, y_0)\le \Eps_0$.
Algorithm~\ref{alg:mmfs-outer} using Algorithm~\ref{alg:mmfs} with $\lam$ as in \eqref{eq:lamSdef} returns $(x, y) \in \xset \times \yset$ with $\E \gap_{\Fmmfsreg}(x, y) \le \eps$ in $N_{\textup{tot}}$ iterations, using a total of $O(N_{\textup{tot}})$ gradient calls each to some $f_i$, $g_i$, or $h_i$ for $i \in [n]$, where
\begin{equation}\label{eq:tnsdef}
\begin{gathered}
N_{\textup{tot}} = O\Par{\kappa_{\textup{mmfs}}\log\Par{\kappa_{\textup{mmfs}}}\log \Par{\frac{\kappa_{\textup{mmfs}} \Eps_0}{\eps}}},\\
\text{for } \kappa_{\textup{mmfs}} \defeq n + \frac{1}{\sqrt{n}} \sum_{i \in [n]} \Par{\sqrt{\frac{L\x_i}{\mu\x}} + \sqrt{\frac{L\y_i}{\mu\y}} + \frac{\Lam\xx_i}{\mu\x} + \frac{\Lam\xy_i}{\sqrt{\mu\x\mu\y}} + \frac{\Lam\yy_i}{\mu\y}}. 
\end{gathered}
\end{equation}
In particular, we use $N_{\textup{tot}} = NTS$ for
\[T = O\Par{\gamma \log\Par{\frac{\kappa_{\textup{fs}} \Eps_0}{\eps}}},\; N = O\Par{\log\Par{\kappa_{\textup{mmfs}}}},\; S = O\Par{n + \frac{\kappa_{\textup{mmfs}}}{\gamma}+ \frac{(\lam^h)^2}{\gamma^2}},\; \gamma = \frac{\lam^h}{\sqrt{n}}.\]
\end{theorem}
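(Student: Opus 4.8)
The plan is to assemble the theorem from the per-phase contraction of \Cref{prop:onephase}, the per-outer-iteration contraction of \Cref{prop:multiphase}, initialization and termination bounds in the spirit of \Cref{lem:initbound,lem:termbound}, and finally a short optimization over the free parameter $\gamma$. By \Cref{lem:sameoptmmfs}, any saddle point $z_\star$ of $\Fmmfspd$ has $(z_\star\x, z_\star\y)$ equal to the saddle point of $\Fmmfsreg$, so it suffices to produce some $z_T$ with $\E V^r_{z_T}(z_\star)$ small and return $(z_T\x, z_T\y)$. First I would record an initialization bound for $z_0$ as set in \Cref{line:mmfs-initialization}: arguing exactly as in \Cref{lem:initbound,lem:initboundfs} --- using Item 3 of \Cref{fact:dualsc} to identify the dual blocks of $z_\star$ as $\{\nabla f_i(x_\star)\}_{i\in[n]}$ and $\{\nabla g_i(y_\star)\}_{i\in[n]}$, the $L\x_i$- and $L\y_i$-smoothness from \Cref{assume:minimax-fs}, and the $\mu\x$-strong convexity (resp.\ $\mu\y$-strong concavity) of the reduced functions $\Fmmfsreg\x(x) \defeq \max_y \Fmmfsreg(x, y)$ and $\Fmmfsreg\y(y) \defeq \min_x \Fmmfsreg(x, y)$ --- one obtains $V^r_{z_0}(z_\star) \le C_0 \Eps_0$, where $C_0 \defeq 1 + \tfrac{\ssin L\x_i}{n\mu\x} + \tfrac{\ssin L\y_i}{n\mu\y} = \mathrm{poly}(\kappa_{\textup{mmfs}})$.

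Next I would iterate \Cref{prop:multiphase} over the $T$ iterations of \Cref{alg:mmfs-outer}, with $S$ and $N$ set as in \eqref{eq:lamSdef} and \eqref{def:kappa-max} so that each call to \Cref{alg:mmfs} solves its proximal subproblem to the required accuracy; taking total expectations via the tower property yields $\E V^r_{z_T}(z_\star) \le \big(\tfrac{4\gamma}{1+4\gamma}\big)^T V^r_{z_0}(z_\star)$. Since $\log\tfrac{1+4\gamma}{4\gamma} = \log\!\big(1 + \tfrac{1}{4\gamma}\big) \ge \tfrac{1}{8\gamma}$ for $\gamma \ge 1$, choosing $T = O\big(\gamma \log(\kappa_{\textup{mmfs}}\Eps_0/\eps)\big)$ forces $\E V^r_{z_T}(z_\star) \le \eps / C_1$ for any prescribed polynomial $C_1$. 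For termination I would prove the analog of \Cref{lem:termbound}: $\Fmmfsreg\x$ and $\Fmmfsreg\y$ are $\mathcal{L}\x$- and $\mathcal{L}\y$-smooth for $\mathcal{L}\x, \mathcal{L}\y = \mathrm{poly}(\text{problem parameters})$, and since $x_\star = z_\star\x$ and $y_\star = z_\star\y$ are unconstrained optimizers of $\Fmmfsreg\x$ and $\Fmmfsreg\y$, smoothness together with nonnegativity of Bregman divergences converts the bound on $V^r_{z_T}(z_\star)$ into $\E\gap_{\Fmmfsreg}(z_T\x, z_T\y) \le \eps$, once $C_1$ is chosen to absorb $\mathcal{L}\x,\mathcal{L}\y$ and the $\mu\x,\mu\y$ weights appearing in $r$.

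For the complexity, \Cref{lem:imp-mmfs} shows that Lines~\ref{line:mmfs-inner-start}--\ref{line:mmfs-inner-end} of \Cref{alg:mmfs} implement \Cref{alg:rmp} with $O(1)$ gradient calls per inner iteration $s > 0$; hence each of the $N$ phases inside one outer iteration costs $O(\sigma) + O(n) = O(S)$ gradient calls --- the $O(n)$ covering the first inner iterate and forming the aggregate point in Lines~\ref{line:mmfs-return-start}ff --- plus an additional $O(n)$ for \Cref{line:mmfs-initialization} and the final return. Since $S = \Omega(n)$ (from \eqref{eq:lamSdef}, $S = 5\lam/\gamma \ge 10 n (1+\gamma)/\gamma \ge 10 n$), the total is $N_{\textup{tot}} = O(NTS)$. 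It remains to substitute $\gamma = \max\{1, \lam^h/\sqrt n\}$: from \eqref{eq:lamSdef}, using $\gamma \ge 1$, $\tfrac{n(1+\gamma)}{\gamma} = O(n)$, and $\tfrac{1}{\sqrt n}\ssin\!\big(\tfrac{\Lam\xx_i}{\mu\x}+\tfrac{\Lam\xy_i}{\sqrt{\mu\x\mu\y}}+\tfrac{\Lam\yy_i}{\mu\y}\big) = \sqrt n\,\lam^h \le \kappa_{\textup{mmfs}}$, one gets $S = O\big(n + \tfrac{\kappa_{\textup{mmfs}}}{\gamma} + \tfrac{(\lam^h)^2}{\gamma^2}\big)$, hence $\gamma S = O\big(\gamma n + \kappa_{\textup{mmfs}} + \tfrac{(\lam^h)^2}{\gamma}\big)$. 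The choice $\gamma = \lam^h/\sqrt n$ balances the two movable terms --- both equal to $\sqrt n\,\lam^h \le \kappa_{\textup{mmfs}}$ --- while the fallback $\gamma = 1$ (used when $\lam^h < \sqrt n$) gives $\gamma n = n \le \kappa_{\textup{mmfs}}$ and $\tfrac{(\lam^h)^2}{\gamma} < n$, so $\gamma S = O(\kappa_{\textup{mmfs}})$ in all cases. Combining with $T = O\big(\gamma\log(\kappa_{\textup{mmfs}}\Eps_0/\eps)\big)$ and $N = O(\log\kappa_{\textup{mmfs}})$ gives $N_{\textup{tot}} = NTS = O\big(\kappa_{\textup{mmfs}}\log\kappa_{\textup{mmfs}}\log(\kappa_{\textup{mmfs}}\Eps_0/\eps)\big)$, as claimed. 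The analytic work is all carried by \Cref{prop:onephase,prop:multiphase}; the main obstacles here are bookkeeping --- keeping the initialization and termination polynomials consistent with the claimed logarithmic factors, handling the three nested loop counts $N$, $T$, $S$ cleanly, and verifying that the tuning $\gamma = \lam^h/\sqrt n$ simultaneously respects the $\gamma \ge 1$ constraint of \Cref{alg:mmfs} and keeps $S = \Omega(n)$ so the per-phase $O(n)$ overhead is absorbed.
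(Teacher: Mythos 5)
Your proposal is correct and follows essentially the same route as the paper's (much terser) proof: combine \Cref{lem:sameoptmmfs}, an initialization bound analogous to \Cref{lem:initbound}, the contractions of \Cref{prop:onephase,prop:multiphase}, a termination bound analogous to \Cref{lem:termbound}, and the stated parameter choices. Your treatment is actually more careful than the paper's in one respect --- taking $\gamma = \max\{1, \lam^h/\sqrt{n}\}$ so that the $\gamma \ge 1$ requirement of \Cref{alg:mmfs} is respected while the balancing argument still yields $\gamma S = O(\kappa_{\textup{mmfs}})$ --- a detail the paper leaves implicit.
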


\begin{proof} By Lemma~\ref{lem:sameoptmmfs}, the point $(x_\star, y_\star)$ is consistent between \eqref{eq:origmmfs} and \eqref{eq:mmfspd}. The complexity of each iteration follows from observation of~\Cref{alg:mmfs-outer} and~\ref{alg:mmfs}.

Next, by~\Cref{prop:onephase} and~\Cref{prop:multiphase}, and our choices of $T$, $N$, and $S$ for appropriately large constants, we obtain a point $(x, y) \in \xset \times \yset$ such that
\[\E V^r_{(x, y)}(z_\star) \le \frac \eps 4 \Par{\frac 1 {\kappa_{\textup{mmfs}}}}^2.\]
Here we used an analogous argument to \Cref{lem:initbound} to bound the initial divergence. We then use a similar bound as in \Cref{lem:termbound} 
to obtain the desired duality gap bound.
\end{proof}

We now revisit the problem \eqref{eq:origmmfs-orig}. We apply a generic reduction framework we develop in Appendix~\ref{apdx:error} to develop a solver for this problem under a relaxed version of~\Cref{assume:minimax-fs}, without requiring strong convexity of individual summands.

\begin{assumption}\label{assume:minimax-fs-gen}
	We assume the following about \eqref{eq:origmmfs-orig} for all $i \in [n]$.
	\begin{enumerate}
		\item $f_i$ is $L_i\x$-smooth, and $g_i$ is $L_i\y$-smooth.
		\item $h$ has the following blockwise-smoothness properties: for all $u, v \in \xset \times \yset$,
		\begin{equation}\label{eq:Hlipbound-gen}
			\begin{aligned}
				\norm{\nabla_x h_i(u) - \nabla_x h_i(v)} &\le \Lam_i\xx \norm{u\x - v\x} + \Lam_i\xy \norm{u\y - v\y},\\
				\norm{\nabla_y h_i(u) - \nabla_y h_i(v)} &\le \Lam_i\xy \norm{u\x - v\x} + \Lam_i\yy \norm{u\y - v\y}.
		\end{aligned}\end{equation}
	\end{enumerate}
\end{assumption}

For minimax finite sum optimization problems with this set of relaxed conditions, we conclude with the following corollary of Theorem~\ref{thm:mmfs}.

\begin{restatable}{corollary}{restatecormainmmfsunreg}\label{cor:mainmmfsunreg}
	Suppose the summands $\{f_i, g_i, h_i\}_{i \in [n]}$ in \eqref{eq:origmmfs-orig} satisfy Assumption~\ref{assume:minimax-fs-gen}, and $\Fmmfs$ is $\mu\x$-strongly convex in $x$, $\mu\y$-strongly convex in $y$, with saddle point $(x_\star,y_\star)$. Further, suppose we have $(x_0, y_0) \in \xset\times\yset$ such that $\gap_{\Fmmfs}(x_0, y_0) \le \Eps_0$.	Algorithm~\ref{alg:redx-outer} using Algorithm~\ref{alg:mmfs-outer} and~\ref{alg:mmfs} to implement steps returns $(x,y) \in \xset\times\yset$ with $\E\gap(x, y) \le \eps$ in $N_{\textup{tot}}$ iterations, using a total of $O(N_{\textup{tot}})$ gradient calls each to some $f_i$, $g_i$, or $h_i$ for $i \in [n]$, where
	\begin{gather*}
		N_{\textup{tot}} = O\Par{\kappa_{\textup{mmfs}}\log(\kappa_{\textup{mmfs}})\log\Par{\frac{\kappa_{\textup{mmfs}} \Eps_0} \eps}},\\
		\text{for } \kappa_{\textup{mmfs}} \defeq n + \frac{1}{\sqrt{n}} \sum_{i \in [n]} \Par{\sqrt{\frac{L\x_i}{\mu\x}} + \sqrt{\frac{L\y_i}{\mu\y}} + \frac{\Lam\xx_i}{\mu\x} + \frac{\Lam\xy_i}{\sqrt{\mu\x\mu\y}} + \frac{\Lam\yy_i}{\mu\y}}.
	\end{gather*}
\end{restatable}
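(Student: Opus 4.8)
The plan is to obtain \Cref{cor:mainmmfsunreg} by feeding \Cref{thm:mmfs} into the generic proximal-point reduction of \Cref{apdx:error} (\Cref{alg:redx-outer}), so the only new work is (i) checking the subproblems handed to \Cref{thm:mmfs} satisfy \Cref{assume:minimax-fs}, (ii) checking their rate parameter is $O(\kappa_{\textup{mmfs}})$, and (iii) converting the reduction's guarantee into the claimed expected duality-gap bound with only a single extra $\log(\Eps_0/\eps)$ factor. The obstruction addressed is precisely that \Cref{assume:minimax-fs-gen} gives only smoothness of each $f_i,g_i,h_i$ together with $(\mu\x,\mu\y)$-strong convexity/concavity of the \emph{whole} objective, whereas \Cref{thm:mmfs} needs each separable summand to be individually strongly convex/concave.

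\textbf{The reduction and parameter bookkeeping.} At outer iteration $t$, with current iterate $(x_t,y_t)$, I would have \Cref{alg:redx-outer} hand to the solver of \Cref{thm:mmfs} (i.e.\ \Cref{alg:mmfs-outer} on top of \Cref{alg:mmfs}) the regularized minimax finite sum problem with $f_i \gets f_i + \tfrac{\mu\x}{2}\norm{\cdot}^2 - \langle \mu\x x_t,\cdot\rangle$ and $g_i \gets g_i + \tfrac{\mu\y}{2}\norm{\cdot}^2 - \langle \mu\y y_t,\cdot\rangle$ for every $i\in[n]$, leaving the couplings $h_i$ (hence all of $\Lam\xx_i,\Lam\xy_i,\Lam\yy_i$) untouched; this is exactly $\Fmmfs + \tfrac{\mu\x}{2}\norm{x-x_t}^2 - \tfrac{\mu\y}{2}\norm{y-y_t}^2$ up to a constant. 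Since $f_i,g_i$ were convex, each new $f_i$ is $\mu\x$-strongly convex and each new $g_i$ is $\mu\y$-strongly convex, so \Cref{assume:minimax-fs} holds; the separable smoothness bounds become $L_i\x+\mu\x$, $L_i\y+\mu\y$ and the total strong convexity becomes $(2\mu\x,2\mu\y)$. Substituting these into the definition of $\kappa_{\textup{mmfs}}$ changes the coupling and strong-convexity-denominator terms by only constant factors, while the separable terms become $\tfrac1{\sqrt n}\sum_i\sqrt{(L_i\x+\mu\x)/(2\mu\x)} \le \tfrac1{\sqrt n}\sum_i\big(\sqrt{L_i\x/\mu\x}+O(1)\big)$, and the extra $O(1)$ contributions sum to $O(\sqrt n)=O(\kappa_{\textup{mmfs}})$ since $\kappa_{\textup{mmfs}}\ge n$. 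Thus each subproblem has rate parameter $O(\kappa_{\textup{mmfs}})$ and \Cref{thm:mmfs} solves it in $O(\kappa_{\textup{mmfs}}\log(\kappa_{\textup{mmfs}})\log(1/\text{target}))$ gradient evaluations. The outer iteration $(x_t,y_t)\mapsto(x_{t+1},y_{t+1})$ is an (inexact, randomized) proximal-point step for the $(\mu\x,\mu\y)$-strongly-monotone gradient operator of $\Fmmfs$ with proximal weights $(\mu\x,\mu\y)$, which contracts an appropriate potential by a constant factor per step (standard proximal-point analysis for VIs, cf.\ \cite{Nemirovski04}, adapted in \Cref{apdx:error} to tolerate inexactness), so $O(\log(\Eps_0/\eps))$ outer steps suffice; finally, since the proximal penalty vanishes at $(x_t,y_t)$, a near-saddle of the last subproblem is a near-saddle of $\Fmmfs$, giving $\E\,\gap_{\Fmmfs}(x,y)\le\eps$.

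\textbf{Main obstacle.} The delicate step is preventing the outer/inner composition from costing an extra logarithmic factor: solving each proximal subproblem ``from scratch'' would incur a $\log$ per outer step. The fix, which is the technical content of \Cref{apdx:error}, is to warm-start each call to \Cref{alg:mmfs-outer} at $(x_t,y_t)$ together with the associated dual state, so that the input gap of inner call $t$ is within a constant factor of its required accuracy and hence that call runs for only $O(\kappa_{\textup{mmfs}}\log\kappa_{\textup{mmfs}})$ iterations; summing over the $O(\log(\Eps_0/\eps))$ outer steps yields $N_{\textup{tot}}=O(\kappa_{\textup{mmfs}}\log(\kappa_{\textup{mmfs}})\log(\kappa_{\textup{mmfs}}\Eps_0/\eps))$. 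I expect the real work to be making this warm-started, expected-potential recursion compose cleanly with the randomized inner guarantees of \Cref{prop:onephase,prop:multiphase} (so that conditional expectations telescope through both loops), and verifying that the chosen proximal weight is simultaneously large enough for constant outer contraction and small enough to leave $\kappa_{\textup{mmfs}}$ unchanged up to constants.
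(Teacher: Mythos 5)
Your proposal is correct and takes essentially the same route as the paper's proof: the paper likewise wraps Theorem~\ref{thm:mmfs} in the proximal-point reduction of Appendix~\ref{apdx:error} (Lemma~\ref{lem:redx-minimax}), warm-starts each subproblem at the current iterate $z_k$ so that only a constant-factor potential decrease is needed per outer step (hence no extra multiplicative logarithm), and absorbs the induced constant-factor changes to the smoothness and strong-convexity parameters into $\kappa_{\textup{mmfs}}$. The only cosmetic difference is that you fold the proximal quadratic into each summand $f_i, g_i$ so as to satisfy Assumption~\ref{assume:minimax-fs} directly, whereas the paper keeps it as the explicit outer regularizer of \eqref{eq:origmmfs} with $\mu\x, \mu\y$ rescaled by constants.
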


	\subsection*{Acknowledgments}
	We thank Yair Carmon, Arun Jambulapati and Guanghui Lan for helpful conversations. YJ was supported by a Stanford Graduate Fellowship. AS was supported in part by a Microsoft Research Faculty Fellowship, NSF CAREER Award CCF-1844855, NSF Grant CCF-1955039, a PayPal research award, and a Sloan Research Fellowship. KT was supported in part by a Google Ph.D. Fellowship, a Simons-Berkeley VMware Research Fellowship, a Microsoft Research Faculty Fellowship, NSF CAREER Award CCF-1844855, NSF Grant CCF-1955039, and a PayPal research award.

	\bibliographystyle{alpha}	
\newcommand{\etalchar}[1]{$^{#1}$}

	\newpage
	\begin{appendix}
\section{Reducing strongly monotone problems to regularized subproblems}\label{apdx:error}

\subsection{Convex optimization}\label{ssec:error_convex}

We give the following generic reduction for strongly convex optimization in the form of an algorithm. Similar reductions are standard in the literature~\cite{FrostigGKS15}, but we include the algorithm and full analysis here for completeness.

\begin{algorithm2e}
	\caption{$\textsc{Redx-Convex}$: Strongly convex optimization reduction}
	\label{alg:redx-outer}
	\DontPrintSemicolon
		\codeInput $\mu$-strongly convex $f:\xset\rightarrow \R$,  $x_0 \in \xset$\;
		\codeParameter $K \in \N$\;
		\For{$0 \le k < K$}{
		$x_{k+1} \gets $ any (possibly random) point satisfying
		\[\E V_{x_{k + 1}}(x^\star_{k + 1}) \le \frac 1 4 V_{x_k}(x^\star_{k + 1}), \text{ where } x^\star_{k + 1} \defeq \argmin_{x \in \xset} f(x) + \frac \mu 4 V_{x_k}(x) \]
		}
\end{algorithm2e}

\begin{lemma}\label{redx:convex}
In Algorithm~\ref{alg:redx-outer}, letting $x_\star$ minimize $f$, we have for every $k \in [K]$:
	\[
	\E V_{x_k}\Par{x_\star}\le \frac{1}{2^k}V_{x_0}\Par{x_\star}.
	\]
\end{lemma}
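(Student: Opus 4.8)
\emph{Proof plan.} The plan is to prove the one-step contraction
\[
\E\Brack{V_{x_{k+1}}(x_\star)\,\middle|\, x_k}\le \half V_{x_k}(x_\star)
\]
for each $k$, and then iterate it: taking total expectations and inducting on $k$ gives $\E V_{x_k}(x_\star)\le 2^{-k}V_{x_0}(x_\star)$. Throughout I write $x^\star_{k+1}\defeq \argmin_{x\in\xset}\{f(x)+\tfrac\mu4 V_{x_k}(x)\}$ as in Algorithm~\ref{alg:redx-outer}; since this objective is strongly convex and $\xset$ is unconstrained Euclidean, its minimizer is unique and satisfies the first-order condition $\nabla f(x^\star_{k+1})=\tfrac\mu4(x_k-x^\star_{k+1})$, using $\nabla V_{x_k}(x)=x-x_k$.

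The conceptual core is a proximal-point contraction between $x^\star_{k+1}$ and the true minimizer $x_\star$ of $f$ (which satisfies $\nabla f(x_\star)=0$). First I would apply $\mu$-strong convexity of $f$ to the pair $(x^\star_{k+1},x_\star)$ in both orders and add the two resulting inequalities; the function values cancel and one obtains $\inprod{\nabla f(x^\star_{k+1})}{x^\star_{k+1}-x_\star}\ge \mu\norm{x^\star_{k+1}-x_\star}^2$. Substituting the optimality condition turns this into $\inprod{x_k-x^\star_{k+1}}{x^\star_{k+1}-x_\star}\ge 4\norm{x^\star_{k+1}-x_\star}^2$. Rewriting the left-hand side via the three-point identity~\eqref{eq:threept} with $r=\half\norm{\cdot}^2$ (so $\inprod{w-z}{w-u}=V_z(w)+V_w(u)-V_z(u)$, applied with $w=x^\star_{k+1}$, $z=x_k$, $u=x_\star$), and recalling $\norm{x^\star_{k+1}-x_\star}^2=2V_{x^\star_{k+1}}(x_\star)$, this becomes the key estimate
\[
V_{x_k}(x^\star_{k+1})+9\,V_{x^\star_{k+1}}(x_\star)\le V_{x_k}(x_\star).
\]

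To close the argument I would combine this with the inexactness guarantee of the step. The approximate triangle inequality $\norm{a-c}^2\le 2\norm{a-b}^2+2\norm{b-c}^2$ with $(a,b,c)=(x_{k+1},x^\star_{k+1},x_\star)$ gives $V_{x_{k+1}}(x_\star)\le 2V_{x_{k+1}}(x^\star_{k+1})+2V_{x^\star_{k+1}}(x_\star)$. Taking expectation conditional on $x_k$ — under which $x^\star_{k+1}$ is deterministic — and invoking the algorithmic guarantee $\E\Brack{V_{x_{k+1}}(x^\star_{k+1})\mid x_k}\le \tfrac14 V_{x_k}(x^\star_{k+1})$ yields $\E\Brack{V_{x_{k+1}}(x_\star)\mid x_k}\le \half V_{x_k}(x^\star_{k+1})+2V_{x^\star_{k+1}}(x_\star)$. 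Substituting the key estimate $V_{x_k}(x^\star_{k+1})\le V_{x_k}(x_\star)-9V_{x^\star_{k+1}}(x_\star)$ and discarding the nonpositive remainder $-\tfrac52 V_{x^\star_{k+1}}(x_\star)$ gives exactly $\E\Brack{V_{x_{k+1}}(x_\star)\mid x_k}\le \half V_{x_k}(x_\star)$, which is the desired one-step bound; the lemma follows by the tower property and induction.

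I do not expect a genuine obstacle here — the statement is a standard reduction. The only points needing care are the constant bookkeeping in the proximal contraction (the factor $4=\mu/(\mu/4)$ propagating to the coefficient $9$, which comfortably exceeds the value $4$ needed so that $\tfrac12\cdot 9\ge 2$ absorbs the stray $2V_{x^\star_{k+1}}(x_\star)$ term) and the measurability conventions, namely reading the per-step guarantee of Algorithm~\ref{alg:redx-outer} as a statement conditional on $x_k$ so that the recursion on $\E V_{x_k}(x_\star)$ is legitimate.
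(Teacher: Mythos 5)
Your proof is correct and follows essentially the same route as the paper: the proximal first-order condition at $x^\star_{k+1}$ plus the three-point identity to get a contraction of the form $V_{x_k}(x^\star_{k+1}) + c\,V_{x^\star_{k+1}}(x_\star) \le V_{x_k}(x_\star)$, then the $2a^2+2b^2$ triangle inequality combined with the inexactness guarantee, and telescoping. The only (immaterial) difference is that you derive the key estimate from strong monotonicity of $\nabla f$, obtaining $c=9$, whereas the paper chains $\mu V_{x^\star_{k+1}}(x_\star)\le f(x^\star_{k+1})-f(x_\star)\le\inprod{\nabla f(x^\star_{k+1})}{x^\star_{k+1}-x_\star}$ to get $c=5$; both comfortably absorb the stray $2V_{x^\star_{k+1}}(x_\star)$ term.
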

\begin{proof}
By applying the optimality condition on $x^\star_{k + 1}$, strong convexity of $f$, and \eqref{eq:threept},
\begin{align*}
\inprod{\nabla f(x^\star_{k+1})} {x^\star_{k+1}-x_\star} &\le \frac \mu 4 \inprod{x_k - x^\star_{k + 1}}{x^\star_{k + 1} - x_\star}\\
\implies \mu V_{x^\star_{k+1}}(x_\star) &\le f(x^\star_{k+1})-f(x_\star) \\
&\le \inprod{\nabla f(x^\star_{k+1})}{x^\star_{k+1}-x_\star} \\
&\le \frac{\mu}{4} V_{x_{k}}(x_\star)-  \frac{\mu}{4}  V_{x^\star_{k+1}}(x_\star) -  \frac{\mu}{4}  V_{x_{k}}(x^\star_{k+1}).
\end{align*}
Further by the triangle inequality and $(a + b)^2 \le 2a^2 + 2b^2$, we have
\[V_{x_{k+1}}(x_\star) \le 2V_{x_{k + 1}}(x_{k + 1}^\star) + 2V_{x_{k + 1}^\star}(x_\star).\]
Hence, combining these pieces,
\begin{align*}
	\E V_{x_{k+1}}(x_\star) & \le 2V_{x^\star_{k+1}}(x_\star) + 2\E V_{x_{k+1}}(x^\star_{k+1}) \\
	&\le 2V_{x^\star_{k+1}}(x_\star) + \half V_{x_k}(x_{k + 1}^\star) \\
	&\le  \half V_{x_{k}}(x_\star)- \half  V_{x^\star_{k+1}}(x_\star) \le \half V_{x_k}(x_\star).
\end{align*}

\end{proof}

We apply this reduction in order to prove Corollary~\ref{cor:mainfsunreg}. 

\restatecormainfsunreg*
\begin{proof}
The overhead $K$ is asymptotically the same here as the parameter $T$ in Theorem~\ref{thm:mainfs}, by analogous smoothness and strong convexity arguments. Moreover, we use Theorem~\ref{thm:mainfs} to solve each subproblem required by Algorithm~\ref{alg:redx-outer}; in particular, the subproblem is equivalent to approximately minimizing $\Ffs + \frac \mu 8 \norm{\cdot}^2$, up to a linear shift which does not affect any smoothness bounds, and a constant in the strong convexity. We note that we will initialize the subproblem solver in iteration $k$ with $x_k$. We hence can set $T = 2$ and $S = O(\kappa_{\textup{fs}})$, yielding the desired iteration bound.
\end{proof}

\subsection{Convex-concave optimization}\label{ssec:error_minimax}

We give the following generic reduction for strongly convex-concave optimization in the form of an algorithm. For simplicity in this section, we define for $z = (z\x, z\y) \in \xset \times \yset$,
\[\omega(z) \defeq \frac{\mu\x} 2 \norm{z\x}^2 + \frac{\mu\y} 2 \norm{z\y}^2.\]

\begin{algorithm2e}
	\caption{$\textsc{Redx-Minimax}$: Reduction for minimax}
	\label{alg:redx-outer-mm}
	\DontPrintSemicolon
		\codeInput $F: \xset \times \yset \to \R$ such that $F(\cdot, y)$ is $\mu\x$-strongly convex for all $y \in \yset$ and $F(x, \cdot)$ $\mu\y$-strongly concave for all $x \in \xset$, $z_0 \in \xset \times \yset$ \;
		\codeParameter $K \in \N$\;
		\For{$0 \le k < K$}{
		$z_{k + 1} \gets$ any (possibly random) point satisfying
		\begin{gather*}
		\E\Brack{ V^\omega_{z_{k + 1}}\Par{z_{k + 1}^\star} }\le \frac{1}{4}\Par{V^\omega_{z_k}\Par{z_{k + 1}^\star}},\\
		 \text{where } z_{k + 1}^\star \defeq \argmin_{z\x \in \xset} \argmax_{z\y \in \yset} F(z\x,z\y) + \frac{\mu\x} 4 V_{z_k\x}\Par{z\x} - \frac{\mu\y} 4 V_{z_k\y}\Par{z\y}
		\end{gather*}
		}
\end{algorithm2e}

\begin{lemma}\label{lem:redx-minimax}
In Algorithm~\ref{alg:redx-outer-mm}, letting $(x_\star, y_\star)$ be the saddle point of $F$, we have for every $k \in [K]$:
	\[
	\E\Brack{ V^\omega_{z_k}(z_\star)} \le \frac{1}{2^k} V^\omega_{z_0}(z_\star) .
	\]
\end{lemma}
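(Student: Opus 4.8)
\emph{Proof plan.} The plan is to mirror the proof of Lemma~\ref{redx:convex}, replacing the gradient $\nabla f$ by the gradient operator $\gop_F(z) \defeq \Par{\nabla_x F(z\x, z\y),\, -\nabla_y F(z\x, z\y)}$ of $F$, and the Euclidean divergence by $V^\omega$, which is symmetric in its two arguments because $\omega$ is quadratic. First I would record two facts about $\gop_F$. Since $(x_\star, y_\star)$ is the saddle point of $F$ over an unconstrained domain, $\gop_F(z_\star) = 0$, so $z_\star$ solves the VI in $\gop_F$. Moreover $\gop_F$ is strongly monotone with respect to $\omega$: the operator $\gop_F - \nabla\omega$ is the gradient operator of the convex-concave function $(x,y) \mapsto F(x,y) - \frac{\mu\x}2\norm{x}^2 + \frac{\mu\y}2\norm{y}^2$ (here using the strong convexity of $F$ in $x$ and strong concavity in $y$), hence monotone by \Cref{fact:sm}, while $\nabla\omega$ satisfies $\inprod{\nabla\omega(z) - \nabla\omega(z')}{z - z'} = V^\omega_z(z') + V^\omega_{z'}(z) = 2V^\omega_z(z')$ by the three-point identity \eqref{eq:threept}; adding the two gives $\inprod{\gop_F(z) - \gop_F(z')}{z - z'} \ge 2V^\omega_z(z')$ for all $z, z'$.

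Next I would use the first-order optimality condition for the exact proximal subproblem. Since $z_{k+1}^\star$ is the (unique) saddle point of the strongly convex--strongly concave function defining it, $\gop_F(z_{k+1}^\star) = -\tfrac14\Par{\nabla\omega(z_{k+1}^\star) - \nabla\omega(z_k)}$. Taking the inner product with $z_{k+1}^\star - z_\star$, lower bounding the left side via strong monotonicity together with $\inprod{\gop_F(z_\star)}{z_{k+1}^\star - z_\star} \ge 0$, and expanding the right side via \eqref{eq:threept} as $-\tfrac14\Par{V^\omega_{z_k}(z_{k+1}^\star) + V^\omega_{z_{k+1}^\star}(z_\star) - V^\omega_{z_k}(z_\star)}$, I obtain
\[ 2V^\omega_{z_{k+1}^\star}(z_\star) \le \tfrac14\Par{V^\omega_{z_k}(z_\star) - V^\omega_{z_k}(z_{k+1}^\star) - V^\omega_{z_{k+1}^\star}(z_\star)}, \]
the minimax analogue of the key inequality in the proof of Lemma~\ref{redx:convex}. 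Nonnegativity of the left side also gives $V^\omega_{z_k}(z_{k+1}^\star) \le V^\omega_{z_k}(z_\star)$.

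Finally I would fold in the algorithm's approximation guarantee. Conditioning on $z_k$ so that $z_{k+1}^\star$ is deterministic, the inequality $\norm{a-c}^2 \le 2\norm{a-b}^2 + 2\norm{b-c}^2$ applied to each block gives $V^\omega_{z_{k+1}}(z_\star) \le 2V^\omega_{z_{k+1}}(z_{k+1}^\star) + 2V^\omega_{z_{k+1}^\star}(z_\star)$. Taking conditional expectations, using $\E[V^\omega_{z_{k+1}}(z_{k+1}^\star) \mid z_k] \le \tfrac14 V^\omega_{z_k}(z_{k+1}^\star)$, substituting the displayed inequality for $2V^\omega_{z_{k+1}^\star}(z_\star)$, dropping $-\tfrac14 V^\omega_{z_{k+1}^\star}(z_\star) \le 0$, and using $V^\omega_{z_k}(z_{k+1}^\star) \le V^\omega_{z_k}(z_\star)$, all terms collect to $\E[V^\omega_{z_{k+1}}(z_\star) \mid z_k] \le \tfrac12 V^\omega_{z_k}(z_\star)$. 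Taking total expectations and iterating from $k=0$ (where the claim holds with equality) yields $\E V^\omega_{z_k}(z_\star) \le 2^{-k} V^\omega_{z_0}(z_\star)$.

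The only delicate point — and it is bookkeeping rather than a genuine obstacle — is tracking constants: one must check that the factor $2$ produced by strong monotonicity with respect to the $(\mu\x, \mu\y)$-weighted $\omega$, the factor $\tfrac14$ on the proximal regularizer, and the two factor-$2$ losses from the triangle inequalities combine to exactly the contraction $\tfrac12$, and that the expectation defining $z_{k+1}$ is taken conditionally on the history through step $k$ so that $z_{k+1}^\star$ may be treated as fixed.
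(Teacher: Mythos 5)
Your proposal is correct and follows essentially the same route as the paper's proof: the first-order optimality condition for the proximal subproblem combined with strong monotonicity of the gradient operator and the three-point identity \eqref{eq:threept} yields the key inequality bounding $V^\omega_{z_{k+1}^\star}(z_\star)$, and the triangle inequality plus the algorithm's conditional approximation guarantee then give the factor-$\tfrac12$ contraction. The only cosmetic difference is that you invoke the factor-$2$ strong monotonicity bound up front while the paper uses the factor-$1$ bound and doubles the resulting inequality later; the constants collect identically in both versions.
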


\begin{proof}
By applying the optimality conditions on $z^\star_{k + 1}$, strong convexity-concavity of $F$, and \eqref{eq:threept}, and letting $\gop^F$ be the gradient operator of $F$,
\begin{flalign*}
\inprod{\gop^F(z^\star_{k + 1})}{z^\star_{k + 1} - z_\star} &\le \frac {\mu\x} 4 \inprod{z\x_k - [z^\star_{k + 1}]\x}{[z^\star_{k + 1}]\x - z_\star\x} \\
&+ \frac{\mu\y} 4 \inprod{z\y_k - [z^\star_{k + 1}]\y}{[z^\star_{k + 1}]\y - z_\star\y} \\
\implies V^\omega_{z^\star_{k + 1}}(z_\star) &\le \inprod{\gop^F(z^\star_{k + 1})}{z^\star_{k + 1} - z_\star} \\
&\le \frac 1 4 V^\omega_{z_k}(z_\star) - \frac 1 4 V^\omega_{z^\star_{k + 1}}(z_\star) - \frac 1 4 V_{z_k}(z_{k + 1}^\star).
\end{flalign*}
Further by the triangle inequality and $(a + b)^2 \le 2a^2 + 2b^2$, we have
\[V^\omega_{z_{k + 1}}(z_\star) \le 2 V^\omega_{z_{k + 1}}(z_{k + 1}^\star) + 2 V^\omega_{z_{k + 1}^\star}(z_\star).\]
Hence, combining these pieces,
\begin{align*}
\E V^\omega_{z_{k + 1}}(z_\star) &\le 2 V^\omega_{z_{k + 1}^\star}(z_\star) + 2 \E V^\omega_{z_{k + 1}}(z_{k + 1}^\star) \\
&\le 2 V^\omega_{z_{k + 1}^\star}(z_\star) + \frac 1 2 V^\omega_{z_k}(z_{k + 1}^\star) \\
&\le \half V^\omega_{z_k}(z_\star) - \half V^\omega_{z^\star_{k + 1}}(z_\star) \le \half V^\omega_{z_k}(z_\star).
\end{align*}
\end{proof}

We apply this reduction in order to prove Corollary~\ref{cor:mainmmfsunreg}.

\restatecormainmmfsunreg*
\begin{proof}
The overhead $K$ is asymptotically the same here as the logarithmic term in the parameter $T$ in Theorem~\ref{thm:mmfs}, by analogous smoothness and strong convexity arguments. Moreover, we use Theorem~\ref{thm:mmfs} with $\mu\x$, $\mu\y$ rescaled by constants to solve each subproblem required by Algorithm~\ref{alg:redx-outer-mm}; in particular, the subproblem is equivalent to approximately finding a saddle point to $\Ffs(z) + \frac {\mu\x} 8 \norm{z\x}^2 - \frac{\mu\y} 8 \norm{z\y}^2$, up to a linear shift which does not affect any smoothness bounds. We note that we will initialize the subproblem solver in iteration $k$ with $z_k$. We hence can set $T = O(\gamma)$, yielding the desired iteration bound.
\end{proof}

\section{Helper facts}
\label{apdx:sm}

Here we state two helper facts that are used throughout the analysis, for completeness of the paper. The first gives a few properties on monotone operators. We first recall by definition, an operator $\gop:\zset\rightarrow \zset^*$ is monotone if
\[
\inprod{\gop(z)-\gop(z')}{z-z'}\ge 0,~~\text{for all}~z',z'\in\zset.
\]
An operator $\gop$ is $m$-strongly monotone with respect to convex $r:\zset\rightarrow\R$ if for all $z,z'\in\zset$, 
\[
\inprod{\gop(z)-\gop(z')}{z-z'}\ge m\inprod{\nabla r(z)-\nabla r(z')}{z-z'},~~\text{for all}~z',z'\in\zset.
\]

We state the following standard facts about monotone operators and their specialization to convex-concave functions, and include references or proofs for completeness.

\begin{fact}\label{fact:sm}
The following facts about monotone operators hold true:
	\begin{enumerate}
		\item Given a convex function $f(x):\xset\rightarrow \R$, its induced operator $\gop = \nabla f:\xset\rightarrow\xset^*$ is monotone.\label{item:sm-convex}
		\item Given a convex-concave function $h(x,y):\xset\times\yset\rightarrow\R$, its induced operator $\gop(x,y)$ $=  (\nabla_x h(x, y), -\nabla_y h(x, y)):\xset\times\yset\rightarrow \xset^*\times\yset^*$ is monotone.\label{item:sm-convex-concave}
		\item Given a convex function $f$, its induced operator $\gop = \nabla f$ is  $1$-strongly monotone with respect to itself.\label{item:sm-self}
		\item Monotonicity is preserved under addition: For any $m,m'\ge 0$, if $\gop$ is $m$-strongly monotone and $\Psi$ is $m'$-strongly monotone with respect to convex $r$, then $\gop+\Psi$ is $(m+m')$-strongly monotone with respect to $r$. \label{item:sm-additive}
	\end{enumerate}
\end{fact}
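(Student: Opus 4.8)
The plan is to verify each of the four items directly from the definitions in \Cref{sec:prelims}, in the order stated. For \Cref{item:sm-convex}, I would add the two first-order convexity inequalities $f(z') \ge f(z) + \inprod{\nabla f(z)}{z' - z}$ and $f(z) \ge f(z') + \inprod{\nabla f(z')}{z - z'}$ and rearrange, obtaining $\inprod{\nabla f(z) - \nabla f(z')}{z - z'} \ge 0$, which is exactly monotonicity of $\gop = \nabla f$. \Cref{item:sm-self} is then immediate: taking $\gop = \nabla f$ and $r = f$, the defining inequality for $1$-strong monotonicity with respect to $r$, namely $\inprod{\gop(z) - \gop(z')}{z - z'} \ge \inprod{\nabla r(z) - \nabla r(z')}{z - z'}$, holds with equality. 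For \Cref{item:sm-additive}, I would split $\inprod{(\gop + \Psi)(z) - (\gop + \Psi)(z')}{z - z'}$ into the two inner products for $\gop$ and $\Psi$, lower bound them by $m\inprod{\nabla r(z) - \nabla r(z')}{z - z'}$ and $m'\inprod{\nabla r(z) - \nabla r(z')}{z - z'}$ respectively, and sum; the plain additivity-of-monotonicity statement is recovered with $m = m' = 0$ and any convex $r$.

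The only item requiring a genuine computation is \Cref{item:sm-convex-concave}. Writing $z = (x, y)$, $z' = (x', y')$ and $\gop(z) = (\nabla_x h(x, y), -\nabla_y h(x, y))$, I would introduce the scalar
\[ Q \defeq \Par{h(x, y) - h(x, y')} - \Par{h(x', y) - h(x', y')}. \]
Summing the first-order convexity inequalities for $h(\cdot, y)$ between $x$ and $x'$ and for $h(\cdot, y')$ between $x'$ and $x$ yields $\inprod{\nabla_x h(x, y) - \nabla_x h(x', y')}{x - x'} \ge Q$; symmetrically, summing the first-order concavity inequalities for $h(x, \cdot)$ and $h(x', \cdot)$ yields $\inprod{\nabla_y h(x, y) - \nabla_y h(x', y')}{y - y'} \le Q$. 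Subtracting the second from the first gives $\inprod{\gop(z) - \gop(z')}{z - z'} = \inprod{\nabla_x h(x,y) - \nabla_x h(x',y')}{x - x'} - \inprod{\nabla_y h(x,y) - \nabla_y h(x',y')}{y - y'} \ge Q - Q = 0$, which is the claim; alternatively, one may simply cite \cite{Rockafellar70b}.

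The main (and essentially only) obstacle is the bookkeeping in \Cref{item:sm-convex-concave}: one must check that the $x$-block and $y$-block inner products are bounded by the \emph{same} quantity $Q$ but from opposite sides, and that the minus sign in the definition of $\gop$ combines them into a nonnegative total. Every other item is a one-line consequence of the definitions, so I expect no real difficulty.
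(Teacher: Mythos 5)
Your proposal is correct, and for \Cref{item:sm-self} and \Cref{item:sm-additive} it coincides with the paper's argument (equality of the two sides of the strong-monotonicity definition when $\gop = \nabla r = \nabla f$, and summing the two defining inequalities, respectively). The only real divergence is in \Cref{item:sm-convex} and especially \Cref{item:sm-convex-concave}: the paper disposes of both by citing \cite{Rockafellar70b} as ``basic facts of convexity and minimax optimization,'' whereas you supply the standard self-contained four-point argument. Your bookkeeping checks out: with $Q = h(x,y) - h(x,y') - h(x',y) + h(x',y')$, the two convexity inequalities for $h(\cdot,y)$ at $(x,x')$ and $h(\cdot,y')$ at $(x',x)$ sum to $\inprod{\nabla_x h(x,y) - \nabla_x h(x',y')}{x - x'} \ge Q$, the two concavity inequalities give $\inprod{\nabla_y h(x,y) - \nabla_y h(x',y')}{y - y'} \le Q$, and the minus sign in $\gop$ makes the difference nonnegative. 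The trade-off is the usual one: the citation is shorter, while your derivation makes the paper self-contained and makes explicit why the sign convention $(\nabla_x h, -\nabla_y h)$ is the right one for monotonicity.
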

\begin{proof}
The first two items are basic fact of convexity and minimax optimization~\cite{Rockafellar70b}. For the third item, we note that for any $x,x'\in\xset$
\[
\inprod{\gop(x)-\gop(x')}{x-x'}= \inprod{\nabla f(x)-\nabla f(x')}{x-x'},
\]
which satisfies $1$-strong monotonicity with respect to $f$ by definition.

For the fourth item, we note that for any $m,m'\ge0$ and assumed $\gop, \Psi$,
\begin{gather*}
\inprod{\gop(z)-\gop(z')}{z-z'}\ge m\inprod{\nabla r(z)-\nabla r(z')}{z-z'},\\
\inprod{\Psi(z)-\Psi(z')}{z-z'}\ge m'\inprod{\nabla r(z)-\nabla r(z')}{z-z'},\\
\implies\inprod{\gop(z)+\Psi(z)-\Par{\gop(z')+\Psi(z')}}{z-z'}\ge (m+m')\inprod{\nabla r(z)-\nabla r(z')}{z-z'}.
\end{gather*}

\end{proof}

These facts about monotone operators find usage in proving (relative) strong monotonicity of our operators; see~\Cref{lem:sm},~\ref{lem:smfs} and~\ref{lem:mmfs-sm} in the main paper.

The second fact bounds the smoothness of best-response function of some given convex-concave function $h;\xset\times\yset\rightarrow \R$. We refer readers to Fact 1 of \cite{WangL20} for a complete proof.

\begin{fact}[Fact 1, \cite{WangL20}]\label{fact:br}
	Suppose $h$ satisfies the blockwise-smoothness properties: for all $u, v \in \xset \times \yset$,
		\begin{equation}\label{eq:Hlipbound-fact}
			\begin{aligned}
				\norm{\nabla_x h(u) - \nabla_x h(v)} &\le \Lam\xx \norm{u\x - v\x} + \Lam\xy \norm{u\y - v\y},\\
				\norm{\nabla_y h(u) - \nabla_y h(v)} &\le \Lam\xy \norm{u\x - v\x} + \Lam\yy \norm{u\y - v\y},
		\end{aligned}\end{equation}
		and suppose $h$ is $\mu\x$-strongly convex in $x$ and $\mu\y$-strongly concave in $y$. The best response function $h\y(x) \defeq \max_{y\in\yset}h(x,y)$ is $\mu\x$-strongly convex and $\Lam\xx+\frac{(\Lam\xy)^2}{\mu\y}$-smooth, and $h\x(y) \defeq \min_{x\in\yset}h(x,y)$ is $\mu\y$-strongly concave and $\Lam\yy+\frac{(\Lam\xy)^2}{\mu\x}$-smooth.
\end{fact}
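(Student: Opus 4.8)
The plan is to run the standard envelope-theorem argument, reasoning through the unique best responses. Since $\yset$ is an unconstrained Euclidean space and $h(x, \cdot)$ is differentiable and $\mu\y$-strongly concave (hence coercive), for each $x \in \xset$ there is a unique maximizer $y^\star(x) \defeq \argmax_{y \in \yset} h(x, y)$, characterized by the stationarity condition $\nabla_y h(x, y^\star(x)) = 0$. Danskin's theorem then gives that $h\y$ is differentiable with $\nabla h\y(x) = \nabla_x h(x, y^\star(x))$; symmetrically $h\x$ is differentiable, with gradient obtained by the envelope formula at the unique minimizer $x^\star(y) \defeq \argmin_{x \in \xset} h(x, y)$. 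We treat $h\y$; the claims for $h\x$ follow by the symmetric argument (interchanging $x \leftrightarrow y$, convexity $\leftrightarrow$ concavity, and $\mu\x \leftrightarrow \mu\y$). Establishing the envelope formula (existence, uniqueness, differentiability) is the only point where any care is needed; the two estimates below are short.

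For $\mu\x$-strong convexity of $h\y$: fix $x_1, x_2 \in \xset$. By definition of the max, $h\y(x_2) = h(x_2, y^\star(x_2)) \ge h(x_2, y^\star(x_1))$, and since $h(\cdot, y^\star(x_1))$ is $\mu\x$-strongly convex,
\[ h(x_2, y^\star(x_1)) \ge h(x_1, y^\star(x_1)) + \inprod{\nabla_x h(x_1, y^\star(x_1))}{x_2 - x_1} + \frac{\mu\x}{2}\norm{x_2 - x_1}^2. \]
Using $h(x_1, y^\star(x_1)) = h\y(x_1)$ and $\nabla_x h(x_1, y^\star(x_1)) = \nabla h\y(x_1)$, this is exactly the defining inequality of $\mu\x$-strong convexity of $h\y$ at $x_1, x_2$. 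The claim for $h\x$ is symmetric.

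For smoothness of $h\y$, I would first bound the Lipschitz constant of $x \mapsto y^\star(x)$. From stationarity, $\nabla_y h(x_1, y^\star(x_1)) = \nabla_y h(x_2, y^\star(x_2)) = 0$; $\mu\y$-strong concavity of $h(x_1, \cdot)$ gives $\inprod{\nabla_y h(x_1, y^\star(x_2))}{y^\star(x_1) - y^\star(x_2)} \ge \mu\y \norm{y^\star(x_1) - y^\star(x_2)}^2$, and subtracting $\nabla_y h(x_2, y^\star(x_2)) = 0$, applying Cauchy--Schwarz, and invoking the blockwise bound $\norm{\nabla_y h(x_1, y^\star(x_2)) - \nabla_y h(x_2, y^\star(x_2))} \le \Lam\xy \norm{x_1 - x_2}$ yields $\norm{y^\star(x_1) - y^\star(x_2)} \le \frac{\Lam\xy}{\mu\y}\norm{x_1 - x_2}$. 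Then, by the envelope formula and the first blockwise-smoothness bound applied to $\nabla_x h$,
\[ \norm{\nabla h\y(x_1) - \nabla h\y(x_2)} = \norm{\nabla_x h(x_1, y^\star(x_1)) - \nabla_x h(x_2, y^\star(x_2))} \le \Lam\xx \norm{x_1 - x_2} + \Lam\xy \norm{y^\star(x_1) - y^\star(x_2)}, \]
and substituting the Lipschitz bound on $y^\star$ gives $\norm{\nabla h\y(x_1) - \nabla h\y(x_2)} \le \big(\Lam\xx + \frac{(\Lam\xy)^2}{\mu\y}\big)\norm{x_1 - x_2}$, as claimed; the bound on $h\x$ is symmetric.

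The main obstacle, such as it is, is purely the bookkeeping around Danskin's theorem: one must verify that the maximizer/minimizer is well defined (strong concavity/convexity plus differentiability on the unconstrained Euclidean domain suffice) and that the envelope formula applies, so that $\nabla h\y$ and $\nabla h\x$ have the clean forms used above. With that in hand, the two displays complete the proof for $h\y$, and the symmetric computation handles $h\x$; since the excerpt defers this statement to \cite{WangL20}, it would also suffice to simply cite that reference for the full details.
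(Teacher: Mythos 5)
Your proof is correct and is the standard envelope/Danskin argument; the paper itself offers no proof and simply defers to Fact 1 of \cite{WangL20}, which proceeds in essentially the same way (Lipschitz bound on the best response via strong concavity and the cross-block smoothness, then the envelope formula). One small remark: your strong-convexity display already shows that $\nabla_x h(x_1, y^\star(x_1))$ is a subgradient of $h\y$ at $x_1$, and your smoothness display shows this selection is Lipschitz, which together certify differentiability of $h\y$ and the envelope formula directly -- so the appeal to Danskin's theorem can be made entirely self-contained.
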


We use this fact when converting radius bounds to duality gap bounds in~\Cref{lem:initbound} and~\ref{lem:termbound}.
\section{Proofs for~\Cref{sec:mmfs}}\label{apdx:mmfs}

\subsection{Proofs for~\Cref{ssec:mmfs-alg}}\label{apdx:mfs-alg}

\propnewrmp*

\begin{proof}
First, consider a single iteration $0 \le s < S$, and fix the point $w_s$ in Algorithm~\ref{alg:rmp}. By the optimality conditions on $w_{s+1/2}$ and $w_{s + 1}$, we have
\begin{align*}
\frac 1 \lam \inprod{\goptilde(w_s)}{w_{s+1/2} - w_{s+1}} &\le V^r_{w_s}(w_{s+1}) - V^r_{w_{s+1/2}}(w_{s+1}) - V^r_{w_s}(w_{s+1/2}), \\
\frac 1 \lam \inprod{\goptilde(w_{s+1/2})}{w_{s+1} - w_\star} &\le V^r_{w_s}(w_\star) - V^r_{w_{s+1}}(w_\star) - V^r_{w_s}(w_{s+1}).
\end{align*} 
Summing the above, rearranging, and taking expectations yields
\begin{flalign*}
& \E\Brack{\frac 1 \lam \inprod{\gop(\bw_s)}{\bw_s - w_\star}} = \E\Brack{\frac 1 \lam \inprod{\goptilde(w_{s+1/2})}{w_{s+1/2} - w_\star}}\\
& \hspace{1em} \le \E\Brack{V^r_{w_s}(w_\star) - V^r_{w_{s+1}}(w_\star)} + \E\Brack{\frac 1 \lam \inprod{\goptilde(w_{s+1/2}) - \goptilde(w_s)}{w_{s+1/2} - w_{s+1}} - V^r_{w_s}(w_{s+1/2}) + V^r_{w_{s+1/2}}(w_{s+1})} \\
& \hspace{1em} \le \E\Brack{V^r_{w_s}(w_\star) - V^r_{w_{s+1}}(w_\star)} + \frac{\rho\lam_1}{\lam} \E\Brack{V^r_{w_0}(w_\star) + V^r_{\bw_s}(w_\star)}.
\end{flalign*}
In the last line we used the assumption \eqref{eq:newassume}. Since $w_\star$ solves the VI in $\gop$, adding $\E\frac 1 \lam \inprod{\gop(w_\star)}{w_\star - \bw_s}$ to the left-hand side above and applying strong monotonicity of $g$ in $r$ yields
\[\E\Brack{\frac 1 \lam V^r_{\bw_s}(w_\star)} \le \E\Brack{V^r_{w_s}(w_\star) - V^r_{w_{s+1}}(w_\star)} + \frac{\rho\lam_1}{\lam} \E\Brack{V^r_{w_0}(w_\star) + V^r_{\bw_s}(w_\star)}.\]
Telescoping the above for $0 \le s < S$ and using nonnegativity of Bregman divergences yields
\begin{align*}
\Par{\gamma - \rho\lam_1} \E\Brack{\frac 1 T \sum_{0 \le t < T} V^r_{\bw_s}(w_\star)} &\le \Par{\frac{\lam}{T} + \rho\lam_1} V^r_{w_0}(w_\star).
\end{align*}
Substituting our choices of $\bw_s$, $\rho$, $\lam$, and $T$ yields the claim.
\end{proof}

\lemexregretmmfs*

\begin{proof}
	We demonstrate this equality for the $\xset$ and $(\xset^*)^n$ blocks; the others (the $\yset$ and $(\yset^*)^n$ blocks) follow symmetrically. We will use the definitions of $\gop^h$ and $\gcross$ from \eqref{eq:gdefmmfs}.
	
	\paragraph{$\xset$ block.} Fix $\ell \in [n]$. We first observe that
	\begin{align*}
	\E_{\ell' \sim r}\Brack{\Brack{\gopt^\xyfun_{jk\ell'}(w\mdpt(jk\ell))}\x} &= \Brack{\gop^\xyfun(\bw(\ell))}\x, \\
	\E_{\ell' \sim r}\Brack{\Brack{\gsept_{jk\ell'}(w\mdpt(jk\ell))}\x} &= (1 + \gamma) \Brack{\nabla r(\bw(\ell))}\x - \gamma \Brack{\nabla r(\bz)}\x.
	\end{align*}
	Moreover, by expanding the expectation over $j \sim p$,
	\begin{align*}
	\E_{j \sim p}\Brack{\inprod{\Brack{\gcrosst_{jk\ell'}(w\mdpt(jk\ell))}\x}{w\mdpt\xsup(\ell) - u\x}} &= \inprod{\frac 1 n \sum_{j \in [n]} (w\xdssup{j} + \Delta\x(j))}{w\mdpt\xsup(\ell) - u\x} \\
	&= \inprod{\Brack{\gcross(\bw(\ell)) }\x}{w\mdpt\xsup(\ell) - u\x}.
	\end{align*}
	Summing, we conclude that for fixed $\ell$ and taking expectations over $j, k, \ell'$,
	\[\E\Brack{\inprod{\Brack{\gop_{jk\ell'}(w\mdpt(jk\ell))}\x}{w\mdpt\xsup(\ell) - u\x}} = \inprod{\Brack{\gop(\bw(\ell)) }\x}{w\mdpt\xsup(\ell) - u\x}.\]
	The conclusion for the $\xset$ block follows by taking expectations over $\ell$.
	
	\paragraph{$\xset^*$ blocks.} Note that the $[\gopt^h_{jk\ell'}]\xdsup$ blocks are always zero. Next, for the $[\gsept_{jk\ell'}]\xdsup$ component, by expanding the expectation over $j \sim p$ and taking advantage of sparsity, for any $\ell \in [n]$,
	\begin{align*}
	\E_{j \sim p}\Brack{\inprod{\Brack{\gsept_{jk\ell'}(w\mdpt(jk\ell))}\xdsup}{w\mdpt\xdsup (jk\ell)- u\xdsup}} &= (1 + \gamma)\sum_{j \in [n]} \inprod{\frac 1 n \nabla f^*_j\Par{w\mdpt\xdssup{j}}}{w\mdpt\xdssup{j} - u\xdssup{j}} \\
	&- \gamma \sum_{j \in [n]} \inprod{\frac 1 n \nabla f^*_j\Par{\bz\xdssup{j}}}{w\mdpt\xdssup{j} - u^{\mathsf{a_j}}} \\
	&= \inprod{(1 + \gamma) \Brack{\nabla r(\bw(\ell))}\xdsup - \gamma \Brack{\nabla r(\bz)}\xdsup}{\bw\xdsup(\ell) - u\xdsup}.
	\end{align*}
	Here, we recall $\xdssup{j}$ denotes the block corresponding to the $j^{\text{th}}$ copy of $\xset^*$. Finally, for the $[\gcrosst_{jk\ell'}]\xdsup$ component, fix $\ell \in [n]$. Expanding the expectation over $j \sim p$ and taking advantage of sparsity,
	\begin{align*}
	\E_{j \sim p}\Brack{\inprod{\Brack{\gcrosst_{jk\ell'}(w\mdpt(jk\ell))}\xdsup}{\Brack{w\mdpt(jk\ell)}\xdsup - u\xdsup}} = \inprod{\Brack{\gcross(\bw(\ell))}\xdsup}{\bw\xdsup(\ell) - u\xdsup}.
	\end{align*}
	Summing, we conclude that for fixed $\ell$ and taking expectations over $j, k, \ell'$,
	\[\E\inprod{\Brack{g_{jk\ell'}(w\mdpt(jk\ell))}\xdsup}{\Brack{w\mdpt(jk\ell)}\xdsup - u\xdsup} = \inprod{\Brack{\gtot(\bw(\ell)) }\xdsup}{\bw\xdsup(\ell) - u\xdsup}.\]
	The conclusion for the $\xset^*$ blocks follows by taking expectations over $\ell$.
\end{proof}

\lemimpmmfs*
\begin{proof}
Let $\{w_s, w_{s + 1/2}\}_{0 \le s \le \sigma}$ be the iterates of Algorithm~\ref{alg:rmp}. We will inductively show that some run of Lines~\ref{line:mmfs-inner-start} to~\ref{line:mmfs-inner-end} in Algorithm~\ref{alg:mmfs} preserves the invariants
\begin{align*}w_s &= \Par{w_s\x, w_s\y, \bin{\nabla f_i(w_s\xpi)}, \bin{\nabla f_i(w_s\ypi)}},\\
w_{s + 1/2} &= \Par{w_{s + 1/2}\x, w_{s + 1/2}\y, \bin{\nabla f_i(w_{s + 1/2}\xpi)}, \bin{\nabla f_i(w_{s + 1/2}\ypi)}}\end{align*}
for all $0 \le s \le \sigma$. Once we prove this claim, it is clear that Lines~\ref{line:mmfs-inner-start} to~\ref{line:mmfs-inner-end} in Algorithm~\ref{alg:mmfs} implements Algorithm~\ref{alg:rmp} and returns $\bw_\sigma$, upon recalling the definitions \eqref{eq:gjkldef}, \eqref{eq:gpjkldef}, \eqref{eq:rmmfsdef}, and \eqref{eq:bwdef}.

The base case of our induction follows from the way $w_0$ is initialized in Line~\ref{line:mmfs-inner-end}. Next, suppose for some $0 \le s \le \sigma$, our inductive claim holds. By the update in Line~\ref{line:mmfs-grad-dual-x} of Algorithm~\ref{alg:mmfs}, if $j \in [n]$ was sampled in iteration $s$, using the first item in Fact~\ref{fact:dualsc},
	\begin{align*}
	w_{s+1/2}\xdssup{j} & \gets \argmin_{w\xdssup{j} \in \xset^*}\Brace{\inprod{\frac{1}{n\lambda p_j}\Par{(1+\gamma)  w_s\xpssup{j}-\gamma   \bz\xpssup{j}-w_s\xsup}}{w\xdssup{j}}+V^{f_j^*}_{w_s\xdssup{j}}\Par{w\xdssup{j}}}\\
	& = \nabla f_j \Par{w\xpssup{j}_{s}-\frac{1}{n\lambda p_j}\Par{\Par{1+\gamma}w\xpssup{j}_{s}-\gamma \bz\xpssup{j}-w\xsup_{s}}}.
	\end{align*}
Similarly, by the update in Line~\ref{line:mmfs-grad-dual-y}, if $k \in [n]$ was sampled in iteration $s$,
 \begin{align*}
	w_{s+1/2}\ydssup{k} & \gets \argmin_{w\ydssup{k}}\inprod{\frac{1}{n\lambda q_k}\Par{(1+\gamma)w_s\ypssup{k}-\gamma  \bz\ypssup{k}-w_s\ysup}}{w\ydssup{k}}+V^{g_k^*}_{w_s\ydssup{k}}\Par{w\ydssup{k}}.\\
	& = \nabla g_k \Par{w\ypssup{k}_{s}-\frac{1}{n\lambda q_k}\Par{\Par{1+\gamma}w\ypssup{k}_{s}-\gamma \bz\ypssup{k}-w\ysup_{s}}}.
	\end{align*}
Hence, the updates to $w_{s + 1/2}\xdj$ and $w_{s + 1/2}\ydk$ preserve our invariant, and all other $w_{s + 1/2}\xdi$, $i \neq j$ and $w_{s + 1/2}\ydi$, $i \neq k$ do not change by sparsity of $\gop_{jk\ell}$. Analogously the updates to each $w_{s + 1}\xdi$ and $w_{s + 1}\ydi$ preserve our invariant. Finally, in every iteration $s > 0$, the updates to $w\xy_{s + 1/2}$ and $w\xy_{s + 1}$ only require evaluating $O(1)$ new gradients each, by $1$-sparsity of the dual block updates.
\end{proof}

\subsection{Proofs for~\Cref{mmfs:convergence}}\label{apdx:mmfsinner}

\lamzerobound*

\begin{proof}
This is immediate upon combining the following Lemmas~\ref{lem:lamsep} and~\ref{lem:lamcross}.
\end{proof}

\begin{lemma}\label{lem:lamsep}
Following notation of Lemma~\ref{lem:lam0bound}, for $\lam^{\textup{sep}} \defeq 2n(1 + \gamma)$, we have
\[\E\Brack{\inprod{\gsept_{jk\ell'}(w\mdpt(jk\ell)) - \gsept_{jk\ell}(w)}{w\mdpt(jk\ell) - w_{+}(jk\ell\ell')}} \le \lam^{\textup{sep}}\E\Brack{V^r_{w}\Par{w\mdpt(jk\ell)} + V^r_{w\mdpt(jk\ell)}\Par{w_{+}(jk\ell\ell')}}.\]
\end{lemma}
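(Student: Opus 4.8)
\textbf{Proof plan for Lemma~\ref{lem:lamsep}.} The operator $\gsept$ consists of two pieces, the $\xy$ block which is $(1+\gamma)(\mu\x w\x, \mu\y w\y) - \gamma(\mu\x\bz\x, \mu\y\bz\y)$, and the dual blocks which involve $\nabla f_j^*$ and $\nabla g_k^*$ scaled by $\tfrac{1+\gamma}{np_j}$ (resp. $\tfrac{1+\gamma}{nq_k}$) and indicator-sparsified, again shifted by the $\bz$-dependent constant. The plan is to observe that the constant shifts depending only on $\bz$ cancel entirely in the difference $\gsept_{jk\ell'}(w\mdpt(jk\ell)) - \gsept_{jk\ell}(w)$, since this quantity only sees the $w$-dependent parts. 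After cancellation, the inner product $\inprod{\gsept_{jk\ell'}(w\mdpt(jk\ell)) - \gsept_{jk\ell}(w)}{w\mdpt(jk\ell) - w_+(jk\ell\ell')}$ splits into: (i) the $\xy$-block contribution $(1+\gamma)\inprod{(\mu\x(w\mdpt\x - w\x), \mu\y(w\mdpt\y - w\y))}{\cdot}$, and (ii) the dual-block contribution, which because of the indicator sparsity only picks up the $j$-th $\xset^*$ block and the $k$-th $\yset^*$ block, contributing $\tfrac{1+\gamma}{np_j}\inprod{\nabla f_j^*(w\mdpt\xdssup{j}(j)) - \nabla f_j^*(w\xdssup{j}(j))}{\cdot}$ and the analogous $g_k^*$ term.

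\textbf{Main steps.} First I would write out the difference explicitly, confirming the $\gamma\bz$ shifts cancel. Second, for the $\xy$ block, note $(\mu\x(\cdot)\x, \mu\y(\cdot)\y)$ is exactly $\nabla\omega$ where $\omega(z) = \tfrac{\mu\x}{2}\norm{z\x}^2 + \tfrac{\mu\y}{2}\norm{z\y}^2$, so I apply Lemma~\ref{lem:rnablar} to $\omega$ to get $\inprod{\nabla\omega(w\mdpt) - \nabla\omega(w)}{w\mdpt - w_+} \le V^\omega_w(w\mdpt) + V^\omega_{w\mdpt}(w_+)$, and $\omega$ is a summand of $r$ (cf.~\eqref{eq:rmmfsdef}) so $V^\omega_\cdot(\cdot) \le V^r_\cdot(\cdot)$ by nonnegativity of the remaining Bregman terms. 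Third, for the $j$-th dual block: apply Lemma~\ref{lem:rnablar} to the convex function $\tfrac1n f_j^*$, yielding $\tfrac1n\inprod{\nabla f_j^*(w\mdpt\xdssup{j}) - \nabla f_j^*(w\xdssup{j})}{w\mdpt\xdssup{j} - w_+\xdssup{j}} \le V^{\frac1n f_j^*}_{w}(w\mdpt) + V^{\frac1n f_j^*}_{w\mdpt}(w_+) \le V^r_w(w\mdpt) + V^r_{w\mdpt}(w_+)$ (again the other blocks of $r$ contribute nonnegatively). Using $\tfrac1{p_j} \le 2n$ (from the definition~\eqref{eq:pqrdef}, $p_j \ge \tfrac1{2n}$), the prefactor $\tfrac{1+\gamma}{np_j} \le 2(1+\gamma)$; I only need to handle the case where this inner product is positive, as in the proof of Lemma~\ref{lem:exrl}. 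The $k$-th dual block is symmetric with $q_k$ in place of $p_j$. Summing the three contributions — the $\xy$ term gives a coefficient $1+\gamma$, each dual term gives $2(1+\gamma)$ — and taking expectations over $j, k, \ell, \ell'$ gives a total coefficient bounded by $\lam^{\textup{sep}} = 2n(1+\gamma)$ (indeed $(1+\gamma) + 2(1+\gamma) + 2(1+\gamma) \le 2n(1+\gamma)$ whenever $n \ge 3$, and the small-$n$ case is absorbed into constants, or one simply notes the $2n(1+\gamma)$ bound is loose enough to cover all terms).

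\textbf{Anticipated obstacle.} The main bookkeeping subtlety is tracking which blocks of $w\mdpt(jk\ell)$ and $w_+(jk\ell\ell')$ actually differ from those of $w$, since the estimators $\gsept_{jk\ell}$ are sparse in the dual coordinates but the $\xy$ block is dense. I expect the cleanest route is to note that the step taken by Algorithm~\ref{alg:rmp} from $w$ using $\gsept_{jk\ell}$ moves the $\xy$ block deterministically and moves only the $j$-th $\xset^*$ and $k$-th $\yset^*$ dual blocks, so in the inner product $\inprod{\gsept_{jk\ell'}(w\mdpt) - \gsept_{jk\ell}(w)}{w\mdpt - w_+}$ only those blocks are ever paired nontrivially — the other dual blocks of the operator difference are zero. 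This makes the per-block application of Lemma~\ref{lem:rnablar} legitimate. A secondary point requiring care is that the $\ell, \ell'$ randomness does not affect $\gsept$ at all (as noted after~\eqref{eq:gjkldef}), so the expectation over $\ell, \ell'$ is vacuous for this lemma and I can treat $j \sim p$, $k \sim q$ as the only relevant sampling; the bound holds even pointwise in $j, k$ before taking expectations, which I would state to streamline the argument.
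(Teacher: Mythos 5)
Your route is the paper's: prove the bound pointwise for fixed $j,k,\ell,\ell'$ (correctly noting that $\gsept$ does not depend on $\ell,\ell'$), cancel the $\bz$-dependent shifts, and apply Lemma~\ref{lem:rnablar} block by block together with $p_j, q_k \ge \frac{1}{2n}$. However, your quantitative accounting has a factor-of-$n$ error in the dual blocks, and your final aggregation step would not recover the claimed constant even after that error is fixed. For the first point, the $j$-th dual contribution is
\[
\frac{1+\gamma}{np_j}\inprod{\nabla f_j^*\Par{w\mdpt\xdssup{j}} - \nabla f_j^*\Par{w\xdssup{j}}}{w\mdpt\xdssup{j} - w_+\xdssup{j}} = \frac{1+\gamma}{p_j}\cdot\frac1n\inprod{\nabla f_j^*\Par{w\mdpt\xdssup{j}} - \nabla f_j^*\Par{w\xdssup{j}}}{w\mdpt\xdssup{j} - w_+\xdssup{j}},
\]
and your own intermediate bound controls $\frac1n\inprod{\cdot}{\cdot}$ by $V^{\frac1n f_j^*}_{w}(w\mdpt(jk\ell)) + V^{\frac1n f_j^*}_{w\mdpt(jk\ell)}(w_+(jk\ell\ell'))$; multiplying by $\frac{1+\gamma}{p_j} \le 2n(1+\gamma)$ (in the case the inner product is positive) gives a coefficient of $2n(1+\gamma)$, not $2(1+\gamma)$. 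You cannot spend the $\frac1n$ twice: if you instead use it to reduce the prefactor to $\frac{1+\gamma}{np_j} \le 2(1+\gamma)$, you are left with $V^{f_j^*} = n\,V^{\frac1n f_j^*} \le n\,V^r$, and the factor of $n$ reappears.

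Second, with the corrected coefficients --- $(1+\gamma)$ for the $\xset\times\yset$ block and $2n(1+\gamma)$ for each of the two active dual blocks --- your strategy of summing the coefficients against the full $V^r$ yields $(4n+1)(1+\gamma)$, which exceeds $\lam^{\textup{sep}} = 2n(1+\gamma)$ for every $n$, so the argument fails outright. The missing observation is that the three contributions are controlled by Bregman divergences of \emph{disjoint} summands of $r$: the $\xset\times\yset$ term by the divergence of $\frac{\mu\x}{2}\norm{z\x}^2 + \frac{\mu\y}{2}\norm{z\y}^2$, the $j$-th dual term by $V^{\frac1n f_j^*}$, and the $k$-th dual term by $V^{\frac1n g_k^*}$. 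Since $V^r$ is the sum of all of these nonnegative components, the total is bounded by the \emph{maximum} of the per-block coefficients times $V^r_{w}(w\mdpt(jk\ell)) + V^r_{w\mdpt(jk\ell)}(w_+(jk\ell\ell'))$, i.e.\ by $2n(1+\gamma)$. This is exactly how the analogous computation in the proof of Lemma~\ref{lem:exrl} arrives at the constant $2n$, and it is the step your write-up needs.
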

\begin{proof}
The proof is similar to (part of) the proof of Lemma~\ref{lem:exrl}. We claim that for any $j, k, \ell, \ell'$,
\[\inprod{\gsept_{jk\ell'}(w\mdpt(jk\ell)) - \gsept_{jk\ell}(w)}{w\mdpt(jk\ell) - w_{+}(jk\ell\ell')} \le \lam^{\textup{sep}}\Par{V^r_{w}\Par{w\mdpt(jk\ell)} + V^r_{w\mdpt(jk\ell)}\Par{w_{+}(jk\ell\ell')}}.\]
Fix $j, k, \ell, \ell'$. Since all $p_j$ and $q_k$ are lower bounded by $\frac 1 {2n}$ by assumption, applying Lemma~\ref{lem:rnablar} to the relevant blocks of $r$ and nonnegativity of Bregman divergences proves the above display.
\end{proof}

\begin{lemma}\label{lem:lamcross}
Following notation of Lemma~\ref{lem:lam0bound}, for
\[\lam^{\textup{cross}} \defeq \frac{2\ssin \sqrt{L\x_i}}{\sqrt{n\mu\x}} + \frac{2\ssin \sqrt{L\y_i}}{\sqrt{n\mu\y}},\]
we have
\[\E\Brack{\inprod{\gcrosst_{jk\ell'}(w\mdpt(jk\ell)) - \gcrosst_{jk\ell}(w)}{w\mdpt(jk\ell) - w_{+}(jk\ell\ell')}} \le \lam^{\textup{cross}}\E\Brack{V^r_{w}\Par{w\mdpt(jk\ell)} + V^r_{w\mdpt(jk\ell)}\Par{w_{+}(jk\ell\ell')}}.\]
\end{lemma}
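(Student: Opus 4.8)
\textbf{Proof plan for Lemma~\ref{lem:lamcross}.}

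The plan is to mimic the structure of the bound on the last two lines of \eqref{eq:4terms} in the proof of Lemma~\ref{lem:exrl}, but now keeping track of two families of dual coordinates ($j$ sampled from $p$ for the $\xset$-side, $k$ sampled from $q$ for the $\yset$-side) and the importance-sampling reweightings $\tfrac{1}{np_j}$, $\tfrac{1}{nq_k}$. First I would write out the difference $\gcrosst_{jk\ell'}(w\mdpt(jk\ell)) - \gcrosst_{jk\ell}(w)$ block by block using \eqref{eq:gjkldef} and \eqref{eq:gpjkldef}. Observe that the $\xset\times\yset$ blocks of $\gcrosst$ differ only by the extra $\tfrac 1 {np_j}\Delta\x(j)$ and $\tfrac 1 {nq_k}\Delta\y(k)$ terms present in $w\mdpt(jk\ell)$, while the dual blocks differ because $\gcrosst_{jk\ell}(w)$ uses $-\tfrac1{np_j}w\xsup$ (and $-\tfrac1{nq_k}w\ysup$) whereas $\gcrosst_{jk\ell'}(w\mdpt(jk\ell))$ uses $-\tfrac1{np_j}w\mdpt\xsup(\ell)$ (and the $\yset$-analog). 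Pairing these against $w\mdpt(jk\ell) - w_+(jk\ell\ell')$ and using that $\Delta\x(j) = w\mdpt\xdssup{j}(j) - w\xdssup{j}(j)$ is the change in the $j$-th dual block, the inner product collapses — exactly as the ``cross terms'' did in Lemma~\ref{lem:exrl} — into two symmetric bilinear expressions: one coupling $\tfrac{1}{np_j}(w\mdpt\xdssup{j}(j) - w\xdssup{j}(j))$ with $w\mdpt\xsup - w_+\xsup$ and $\tfrac{1}{np_j}(w\xsup - w\mdpt\xsup)$ with $w\mdpt\xdssup{j}(j) - w_+\xdssup{j}(j)$, plus the entirely analogous pair for the $\yset$-side with $q_k$.

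Next I would bound each bilinear pair exactly as in Lemma~\ref{lem:exrl}: apply \Cref{item:minimax_fsmooth_equiv} of Lemma~\ref{lem:smoothness_implications} to the pair $(\tfrac{\mu\x}{2}\norm{\cdot}^2, nf_j\x)$ with the balancing parameter $\alpha = \sqrt{n/(\mu\x L\x_j)}$, so that for a fixed $j$ the $\xset$-side contribution is at most $\tfrac{1}{n}\sqrt{\tfrac{nL\x_j}{\mu\x}}$ times $\mu\x V_{w\mdpt\x}(w_+\x) + V^{f^{*}_j\x}_{w\xdssup{j}(j)}(w\mdpt\xdssup{j}(j)) + \mu\x V_{w\x}(w\mdpt\x) + V^{f^{*}_j\x}_{w\mdpt\xdssup{j}(j)}(w_+\xdssup{j}(j))$, which by Item~3 of Fact~\ref{fact:dualsc} and the definition \eqref{eq:rmmfsdef} of $r$ is $\sqrt{\tfrac{L\x_j}{n\mu\x}}\big(V^r_w(w\mdpt(jk\ell)) + V^r_{w\mdpt(jk\ell)}(w_+(jk\ell\ell'))\big)$; symmetrically for the $\yset$-side with $L\y_k$, $\mu\y$, $q_k$. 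Then I would multiply by the reweighting $\tfrac{1}{np_j}$ (resp.\ $\tfrac{1}{nq_k}$) and take the expectation over $j\sim p$, $k\sim q$ (the other samples $\ell,\ell'$ are irrelevant here), using the crucial property of \eqref{eq:pqrdef} that $\tfrac{\sqrt{L\x_j}}{p_j} \le 2\ssin\sqrt{L\x_i}$ and $\tfrac{\sqrt{L\y_k}}{q_k} \le 2\ssin\sqrt{L\y_i}$. This yields the claimed $\lam^{\textup{cross}} = \tfrac{2\ssin\sqrt{L\x_i}}{\sqrt{n\mu\x}} + \tfrac{2\ssin\sqrt{L\y_i}}{\sqrt{n\mu\y}}$ multiplying $\E[V^r_w(w\mdpt(jk\ell)) + V^r_{w\mdpt(jk\ell)}(w_+(jk\ell\ell'))]$.

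The main obstacle, and the only place that needs genuine care rather than routine mimicry, is the bookkeeping of \emph{which randomness each iterate depends on}: $w\mdpt\x(\ell)$ depends on $\ell$ but not $j$, the dual midpoint $w\mdpt\xdssup{j}(j)$ depends on $j$ but not $k$ or $\ell$, and $w_+\x(jk\ell\ell')$ depends on all four. One must verify that after the algebraic cancellation the surviving terms factor cleanly — i.e.\ that for the $\xset$-side only $j$ appears (not $k,\ell,\ell'$) so the expectations over $k$ and over $\ell,\ell'$ pass through harmlessly — so that the per-$j$ bound can legitimately be averaged against $p$ alone. I would make this explicit by first fixing all of $j,k,\ell,\ell'$, deriving the deterministic per-sample inequality (as Lemma~\ref{lem:lamsep} does), and only then taking the iterated expectation, at which point the $p$-reweighting and the $\tfrac{\sqrt{L\x_j}}{p_j}$ bound combine to give $\lam^{\textup{cross}}$; the $\yset$-side is identical with $q$ and $L\y$.
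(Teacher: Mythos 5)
Your proposal is correct and follows essentially the same route as the paper's proof: fix $j,k,\ell,\ell'$, reduce the difference of the $\gcrosst$ operators to the two symmetric bilinear cross pairs per block, apply \Cref{item:minimax_fsmooth_equiv} of Lemma~\ref{lem:smoothness_implications} with the $\sqrt{L/\mu}$-balancing choice of $\alpha$, and then take the expectation over $j\sim p$, $k\sim q$ using $\sqrt{L\x_j}/p_j \le 2\ssin\sqrt{L\x_i}$ (and the $\yset$ analog). The extra bookkeeping you flag about which indices each iterate depends on is handled exactly as you propose, by proving the per-sample inequality first and only then averaging.
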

\begin{proof}
The proof is similar to (part of) the proof of Lemma~\ref{lem:exrl}. We claim that for any $j, k, \ell, \ell'$,
\[\inprod{\gcrosst_{jk\ell'}(w\mdpt(jk\ell)) - \gcrosst_{jk\ell}(w)}{w\mdpt(jk\ell) - w_{+}(jk\ell\ell')} \le \lam^{\textup{cross}}\Par{V^r_{w}\Par{w\mdpt(jk\ell)} + V^r_{w\mdpt(jk\ell)}\Par{w_{+}(jk\ell\ell')}}.\]
Fix $j, k, \ell, \ell'$. By applying \Cref{item:minimax_fsmooth_equiv} in Lemma~\ref{lem:smoothness_implications} with $f = f_j$,  $\alpha = (L\x_j\mu\x)^{-\half}$,
\begin{gather*}
\E_j\Brack{\frac 1 {np_j} \inprod{w\mdpt\xdssup{j} - w\xdssup{j}}{w\mdpt\xsup(\ell) - w_{+}\xsup(jk\ell\ell')} + \frac 1 {np_j} \inprod{w\xsup - w\mdpt\xsup(\ell)}{w\mdpt\xdssup{j} - w_{+}\xdssup{j}(jk\ell\ell')}} \\
\le \frac{2\ssin \sqrt{L\x_i}}{\sqrt{n\mu\x}}\Par{V^r_{w}\Par{w\mdpt(jk\ell)} + V^r_{w\mdpt(jk\ell)}\Par{w_{+}(jk\ell\ell')}}.
\end{gather*}
Similarly, by applying \Cref{item:minimax_fsmooth_equiv} in Lemma~\ref{lem:smoothness_implications} with $f = g_k$,  $\alpha = (L\y_k\mu\y)^{-\half}$,
\begin{gather*}
\E_j\Brack{\frac 1 {nq_k} \inprod{w\mdpt\ydssup{k} - w\ydssup{k}}{w\mdpt\ysup(\ell) - w_{+}\ysup(jk\ell\ell')} + \frac 1 {nq_k} \inprod{w\ysup - w\mdpt\ysup(\ell)}{w\mdpt\ydssup{k} - w_{+}\ydssup{k}(jk\ell\ell')}} \\
\le \frac{2\ssin \sqrt{L\y_i}}{\sqrt{n\mu\y}}\Par{V^r_{w}\Par{w\mdpt(jk\ell)} + V^r_{w\mdpt(jk\ell)}\Par{w_{+}(jk\ell\ell')}}.
\end{gather*}
Summing the above displays yields the desired claim.
\end{proof}

\lamonebound*

\begin{proof}
The proof is similar to (part of) the proof of Lemma~\ref{lem:rl}. Fix $j, k, \ell, \ell'$. By definition,
\begin{gather*}\Brack{\gopt^\xyfun_{jk\ell'}(w\mdpt(jk\ell)) - \gopt^\xyfun_{jk\ell}(w)}\xy \\
= \frac 1 {nr_{\ell'}}\Par{\nabla_x \xyfun_{\ell'}(w\mdpt\xsup(\ell), w\ysup\mdpt(\ell)) - \nabla_x \xyfun_{\ell'}(w_0\xsup, w_0\ysup), \nabla_y \xyfun_{\ell'}(w_0\xsup, w_0\ysup) - \nabla_y \xyfun_{\ell'}(w\mdpt\xsup(\ell), w\ysup\mdpt(\ell))} \\
- \frac 1 {nr_\ell}\Par{\nabla_x \xyfun_\ell(w\xsup, w\ysup) - \nabla_x \xyfun_\ell(w_0\xsup, w_0\ysup), \nabla_y \xyfun_\ell(w_0\xsup, w_0\ysup) - \nabla_y \xyfun_\ell(w\xsup, w\ysup) }.
\end{gather*}
We decompose the $x$ blocks of the left-hand side of \eqref{eq:ghbound} as
\begin{align*}\inprod{\Brack{\gopt^\xyfun_{jk\ell'}(w\mdpt(jk\ell)) - \gopt^\xyfun_{jk\ell}(w)}\x}{w\mdpt\xsup(\ell) - w_{+}\xsup(jk\ell\ell')} = \circled{1} +  \circled{2} + \circled{3},\\
\circled{1} \defeq \frac 1 {nr_{\ell'}}\inprod{\nabla_x \xyfun_{\ell'}(w\mdpt\xsup(\ell), w\ysup\mdpt(\ell)) - \nabla_x \xyfun_{\ell'}(w_0\xsup, w_0\ysup)}{w\mdpt\xsup(\ell) - w_{+}\xsup(jk\ell\ell')}, \\
\circled{2} \defeq \frac 1 {nr_{\ell}}\inprod{\nabla_x \xyfun_{\ell}(w_0\xsup, w_0\ysup) - \nabla_x \xyfun_{\ell}(w\mdpt\xsup(\ell), w\ysup\mdpt(\ell))}{w\mdpt\xsup(\ell) - w_{+}\xsup(jk\ell\ell')}, \\
\circled{3} \defeq \frac 1 {nr_{\ell}}\inprod{\nabla_x \xyfun_{\ell}(w\mdpt\xsup(\ell), w\ysup\mdpt(\ell)) - \nabla_x \xyfun_{\ell}(w\xsup, w\ysup) }{w\mdpt\xsup(\ell) - w_{+}\xsup(jk\ell\ell')}.
\end{align*}
By the Lipschitzness bounds in \eqref{eq:Hlipbound} and Young's inequality,
\begin{align*}
\circled{1} &\le \frac 1 {nr_{\ell'}} \norm{\nabla_x \xyfun_{\ell'}(w\mdpt\xsup(\ell), w\ysup\mdpt(\ell)) - \nabla_x \xyfun_{\ell'}(w_0\xsup, w_0\ysup)}\norm{w\mdpt\xsup(\ell) - w_{+}\xsup(jk\ell\ell')} \\
&\le \frac 1 {nr_{\ell'}} \Par{\Lam_{\ell'}\xx \norm{w\mdpt\xsup(\ell) - w_0\xsup} \norm{w\mdpt\xsup(\ell) - w_{+}\xsup(jk\ell\ell')} + \Lam\xy_{\ell'} \norm{w\ysup\mdpt(\ell) - w_0\ysup} \norm{w\mdpt\xsup(\ell) - w_{+}\xsup(jk\ell\ell')}} \\
&\le \frac{2\rho(\Lam\xx_{\ell'})^2}{\mu\x n^2 r_{\ell'}^2} \norm{w\mdpt\xsup(\ell) - w_0\xsup}^2 + \frac{2\rho(\Lam\xy_{\ell'})^2}{\mu\x n^2 r_{\ell'}^2}\norm{w\ysup\mdpt(\ell) - w_0\ysup}^2 + \frac{\mu\x}{4\rho} \norm{w\mdpt\xsup(\ell) - w_{+}\xsup(jk\ell\ell')}^2.
\end{align*}
Symmetrically, we bound
\begin{align*}
\circled{2} &\le \frac{2\rho(\Lam\xx_{\ell})^2}{\mu\x n^2 r_{\ell}^2} \norm{w\mdpt\xsup(\ell) - w_0\xsup}^2 + \frac{2\rho(\Lam\xy_{\ell})^2}{\mu\x n^2 r_{\ell}^2}\norm{w\ysup\mdpt(\ell) - w_0\ysup}^2 + \frac{\mu\x}{4\rho} \norm{w\mdpt\xsup(\ell) - w_{+}\xsup(jk\ell\ell')}^2.
\end{align*}
Finally, we have
\begin{align*}
\circled{3} &\le \frac 1 {nr_\ell} \norm{\nabla_x \xyfun_{\ell}(w\mdpt\xsup(\ell), w\ysup\mdpt(\ell)) - \nabla_x \xyfun_{\ell}(w\xsup, w\ysup) } \norm{w\mdpt\xsup(\ell) - w_{+}\xsup(jk\ell\ell')} \\
&\le \frac 1 {nr_\ell} \Par{\Lam\xx_\ell \norm{w\mdpt\xsup(\ell) - w\xsup}\norm{w\mdpt\xsup(\ell) - w_{+}\xsup(jk\ell\ell')} + \Lam\xy_\ell\norm{w\ysup\mdpt(\ell) - w\ysup}\norm{w\mdpt\xsup(\ell) - w_{+}\xsup(jk\ell\ell')}} \\
&\le \frac 1 {nr_\ell} \Par{\frac{\Lam\xx_\ell}{\mu\x} \Par{\frac{\mu\x}{2}\norm{w\mdpt\xsup(\ell) - w\xsup}^2 + \frac{\mu\x}{2}\norm{w\mdpt\xsup(\ell) - w_{+}\xsup(jk\ell\ell')}^2}} \\
&+ \frac 1 {nr_\ell}\Par{\frac{\Lam_{\ell}\xy}{\sqrt{\mu\x\mu\y}} \Par{\frac{\mu\y}{2}\norm{w\ysup\mdpt(\ell) - w\ysup}^2 + \frac{\mu\x}{2}\norm{w\mdpt\xsup(\ell) - w_{+}\xsup(jk\ell\ell')}^2}}.
\end{align*}
We may similarly decompose the $y$ blocks of the left-hand side of \eqref{eq:ghbound} as $\circled{4} + \circled{5} + \circled{6}$, where symmetrically, we have
\begin{align*}
\circled{4} &\le \frac{2\rho(\Lam\yy_{\ell'})^2}{\mu\y n^2 r_{\ell'}^2} \norm{w\ysup\mdpt(\ell) - w_0\ysup}^2 + \frac{2\rho(\Lam\xy_{\ell'})^2}{\mu\y n^2 r_{\ell'}^2}\norm{w\mdpt\xsup(\ell) - w_0\xsup}^2 + \frac{\mu\y}{4\rho} \norm{w\ysup\mdpt(\ell) - w\ysup_{+}(jk\ell\ell')}^2, \\
\circled{5} &\le \frac{2\rho(\Lam\yy_{\ell})^2}{\mu\y n^2 r_{\ell}^2} \norm{w\ysup\mdpt(\ell) - w_0\ysup}^2 + \frac{2\rho(\Lam\xy_{\ell})^2}{\mu\y n^2 r_{\ell}^2}\norm{w\mdpt\xsup(\ell) - w_0\xsup}^2 + \frac{\mu\y}{4\rho} \norm{w\ysup\mdpt(\ell) - w\ysup_{+}(jk\ell\ell')}^2, \\
\circled{6} &\le \frac 1 {nr_\ell} \Par{\frac{\Lam\yy_\ell}{\mu\y} \Par{\frac{\mu\y}{2}\norm{w\ysup\mdpt(\ell) - w\ysup}^2 + \frac{\mu\y}{2}\norm{w\ysup\mdpt(\ell) - w\ysup_{+}(jk\ell\ell')}^2}} \\
&+ \frac 1 {nr_\ell}\Par{\frac{\Lam_{\ell}\xy}{\sqrt{\mu\x\mu\y}} \Par{\frac{\mu\x}{2}\norm{w\mdpt\xsup(\ell) - w\xsup}^2 + \frac{\mu\y}{2}\norm{w\ysup\mdpt(\ell) - w\ysup_{+}(jk\ell\ell')}^2}}.
\end{align*}
We first observe that by definition of $r$ and nonnegativity of Bregman divergences,
\begin{align*}
\circled{3} + \circled{6} &\le \frac 1 {nr_\ell} \Par{\frac{\Lam\xx_\ell}{\mu\x} + \frac{\Lam_{\ell}\xy}{\sqrt{\mu\x\mu\y}} + \frac{\Lam\yy_\ell}{\mu\y}} \Par{V^r_{w}(w\mdpt(jk\ell)) + V^r_{w\mdpt(jk\ell)}(w_{+}(jk\ell\ell'))} \\
&\le 2\lam^\xyfun\Par{V^r_{w}(w\mdpt(jk\ell)) + V^r_{w\mdpt(jk\ell)}(w_{+}(jk\ell\ell'))}.
\end{align*}
Moreover, since by the triangle inequality and $(a + b)^2 \le 2a^2 + 2b^2$,
\begin{align*}
\norm{w\mdpt\xsup(\ell) - w_0\xsup}^2 &\le 2\norm{w\mdpt\xsup(\ell) - w_\star\x}^2 + 2\norm{w_0\xsup - w_\star\x}^2, \\
\norm{w\mdpt\ysup(\ell) - w_0\ysup}^2 &\le 2\norm{w\ysup\mdpt(\ell) - w_\star\y}^2 + 2\norm{w_0\ysup - w_\star\y}^2,
\end{align*}
we have by definition of $r$ and $\lam_1$,
\begin{align*}
\circled{1} + \circled{2} + \circled{4} + \circled{5} &\le \frac 1 \rho \Par{V^r_{w}(w\mdpt(jk\ell)) + V^r_{w\mdpt(jk\ell)}(w_{+}(jk\ell\ell'))} \\
&+ \rho\lam_1\Par{V^r_{w_0}(w_\star) + V^r_{\bw(\ell)}(w_\star)}.
\end{align*}
Summing the above displays and taking expectations yields the claim.
\end{proof}

\subsection{Proofs for~\Cref{ssec:mmfsouter}}\label{apdx:mmfsouter}

\propmultiphase*

\begin{proof}
Fix an iteration $t \in [T]$ of Algorithm~\ref{alg:mmfs-outer}, and let $z^\star_{t + 1}$ be the exact solution to the VI in $\goptot + \gamma \nabla r - \nabla r(z_t)$. By the guarantee of \Cref{prop:onephase}, after the stated number of $NS$ iterations in Algorithm~\ref{alg:mmfs} (for an appropriately large constant), we obtain a point $z_{t + 1}$ such that
\begin{equation}\label{eq:mmfs-kmax}
\begin{aligned}
	\E\Brack{V^r_{z_{t+1}}(z^\star_{t+1})} \le \frac{1}{1 + 3\gamma \widetilde{\kappa}} V^r_{z_t}(\zhat_{t+1}), \text{ where } \widetilde{\kappa} \defeq 10\sum_{i\in[n]}\Par{\frac{L_i\x + \Lam_i\xx}{\mu\x} + \frac{L_i\y + \Lam_i\yy}{\mu\y}+ \frac{\Lam_i\xy}{\sqrt{\mu\x\mu\y}}}^2.
\end{aligned}
\end{equation}
The optimality condition on $z^\star_{t + 1}$ yields
\[
\inprod{\goptot\Par{z^\star_{t+1}}}{z^\star_{t+1}-z_\star}\le\gamma V^r_{z_t}\Par{z_\star}-\gamma V^r_{z^\star_{t+1}}\Par{z_\star}-\gamma V_{z_t}\Par{z^\star_{t+1}}.
\]	
Rearranging terms then gives:
\begin{equation}\label{eq:mmfs-coro-total}
\begin{aligned}
 \inprod{\goptot\Par{z_{t+1}}}{z_{t+1}-z_\star}  &\le\gamma V^r_{z_t}\Par{z_\star}-\gamma V^r_{z_{t+1}}\Par{z_\star}-\gamma V^r_{z_t}\Par{z^\star_{t+1}}\\
 &+\gamma\Par{V^r_{z_{t+1}}\Par{z_\star}-V^r_{z^\star_{t+1}}\Par{z_\star}}\\
	& +\inprod{\goptot\Par{z_{t+1}}-\goptot\Par{z^\star_{t+1}}}{z^\star_{t+1}-z_\star}\\
	&+\inprod{\goptot\Par{z_{t+1}}}{z_{t+1}-z^\star_{t+1}}\\
	&= \gamma V^r_{z_t}\Par{z_\star}-\gamma V^r_{z_{t+1}}\Par{z_\star} -\gamma V^r_{z_t}\Par{z^\star_{t+1}} \\
	&+\gamma V^r_{z_{t+1}}\Par{z^\star_{t+1}} + \gamma \inprod{\nabla r\Par{z_{t+1}}-\nabla r\Par{z^\star_{t+1}}}{z^\star_{t+1}-z_\star}\\
	& +\inprod{\goptot\Par{z_{t+1}}-\goptot\Par{z^\star_{t+1}}}{z^\star_{t+1}-z_\star}\\
	&+\inprod{\goptot\Par{z_{t+1}}}{z_{t+1}-z^\star_{t+1}}
	\\
	&\le  \gamma V^r_{z_t}\Par{z_\star}-\gamma V^r_{z_{t+1}}\Par{z_\star} -\gamma V^r_{z_t}\Par{z^\star_{t+1}} +\gamma V^r_{z_{t+1}}\Par{z^\star_{t+1}} \\
	&+\gamma \inprod{\nabla r\Par{z_{t+1}}-\nabla r\Par{z^\star_{t+1}}}{z_{t+1}-z_\star}\\
	& +\inprod{\goptot\Par{z_{t+1}}-\goptot\Par{z^\star_{t+1}}}{z_{t+1}-z_\star}\\
	&+\inprod{\goptot\Par{z_{t+1}}-\goptot\Par{z_\star}}{z_{t+1}-z^\star_{t+1}}.
\end{aligned}
\end{equation}
In the only equality, we used the identity \eqref{eq:threept}. The last inequality used monotonicity of the operators $\gamma \nabla r$ and $\goptot$, as well as $\goptot(z_\star) = 0$ because it is an unconstrained minimax optimization problem. In the remainder of the proof, we will bound the last three lines of~\eqref{eq:mmfs-coro-total}. 

First, for any $\alpha > 0$, we bound:
\begin{flalign}
\inprod{\nabla r\Par{z_{t+1}}-\nabla r\Par{z^\star_{t+1}}}{z_{t+1}-z_\star} 
	 &= \mu\x\inprod{z\x_{t+1}-(z^\star_{t+1})\x}{z\x_{t+1}-z\x_\star}+\mu\y\inprod{z\y_{t+1}-(z^\star_{t+1})\y}{z\y_{t+1}-z\y_\star}\nonumber\\
	&+ \frac{1}{n}\sum_{i\in[n]}\inprod{\nabla f_i^*(z_{t+1}\xdisup)-\nabla f_i^*((z^\star_{t + 1})\xdisup)}{z_{t+1}\xdisup-z\xdisup_\star} \nonumber\\
	&+ \nsin \inprod{\nabla g_i^*(z_{t+1}\ydisup)-\nabla g_i^*((z^\star_{t + 1})\ydisup)}{z_{t+1}\ydisup-z\ydisup_\star}\nonumber \\
	&\le 2\alpha\mu\x\norm{z\x_{t+1}-(z^\star_{t + 1})\x}^2 +\frac{\mu\x}{8\alpha}\norm{z\x_{t+1}-z\x_\star}^2\nonumber\\
	&+2\alpha\mu\y\norm{z\y_{t+1}-(z^\star_{t + 1})\y}^2+\frac{\mu\y}{8\alpha}\norm{z\y_{t+1}-z\y_\star}^2\nonumber\\
	&+ \frac{1}{n}\sum_{i\in[n]}\Par{\frac{2\alpha L_i\x}{(\mu\x)^2}\norm{z_{t+1}\xdisup-(z^\star_{t + 1})\xdisup}^2+\frac{1}{8\alpha L_i\x}\norm{z_{t+1}\xdisup-z\xdisup_\star}^2}\nonumber\\
	&+\nsin\Par{ \frac{2\alpha L_i\y}{(\mu\y)^2}\norm{z_{t+1}\ydisup-(z^\star_{t + 1})\ydisup}^2+\frac{1}{8\alpha L_i\y}\norm{z_{t+1}\ydisup-z\ydisup_\star}^2}\nonumber\\
	&\le \frac{1}{4\alpha}V^r_{z_{t+1}}(z_\star)+ \widetilde{\kappa} \alpha V^r_{z_{t+1}}(z^\star_{t + 1}).\label{eq:mmfs-coro-1}
\end{flalign}
The equality used the definition of $r$ in~\eqref{eq:rmmfsdef}. The first inequality used Young's and Cauchy-Schwarz on the $\xset \times \yset$ blocks, as well as $\frac 1 {\mu\x_i}$-smoothness of the $f^*_i$ from Assumption~\ref{assume:minimax-fs} and Item 4 in Fact~\ref{fact:dualsc} (and similar bounds on each $g^*_i$). The last inequality used strong convexity of each piece of $r$.

Similarly, by definition of $\goptot$~\eqref{eq:gdefmmfs} which we denote for $\gop$ for brevity in the following:
\begin{flalign}
\inprod{\gop \Par{z_{t+1}}-\gop \Par{z^\star_{t+1}}}{z_{t+1}-z_\star}\nonumber
	 &\le \frac{1}{8}V^r_{z_{t+1}}(z_\star)+2\widetilde{\kappa} V^r_{z_{t+1}}(z^\star_{t+1})\nonumber\\
	&+ \frac{1}{n}\sum_{i\in[n]}\inprod{\nabla_x h_i(z_{t+1}\x, z_{t+1}\y)-\nabla_x h_i((z_{t+1}^\star)\x, (z_{t+1}^\star)\y)}{z_{t+1}\x-z_\star\x}\nonumber\\
	&+ \frac{1}{n}\sum_{i\in[n]}\inprod{\nabla_y h_i(z_{t+1}\x, z_{t+1}\y)-\nabla_y h_i((z_{t+1}^\star)\x, (z_{t+1}^\star)\y)}{z_{t+1}\y-z_\star\y}\nonumber\\
		&+ \frac{1}{n}\sum_{i\in[n]}\Par{\inprod{z_{t+1}\xdisup- (z_{t+1}^\star)\xdisup}{z_{t+1}\x-z_\star\x} + \inprod{z_{t+1}\ydisup- (z_{t+1}^\star)\ydisup}{z_{t+1}\y-z_\star\y}}\nonumber\\
		& -\frac{1}{n}\sum_{i\in[n]}\Par{\inprod{z_{t+1}\x- (z_{t+1}^\star)\x}{z_{t+1}\xdisup-z_\star\xdisup} + \inprod{z_{t+1}\y- (z_{t+1}^\star)\y}{z_{t+1}\ydisup-z_\star\ydisup}}\nonumber
		\end{flalign}
		where we used \eqref{eq:mmfs-coro-1} to bound the $\nabla r$ terms. 
		Consequently,
\begin{flalign}
\inprod{\gop \Par{z_{t+1}}-\gop \Par{z^\star_{t+1}}}{z_{t+1}-z_\star}\nonumber
	 &\le \frac{1}{8}V^r_{z_{t+1}}(z_\star)+2\widetilde{\kappa} V^r_{z_{t+1}}(z_{t+1}^star)\nonumber\\
	&+ \frac{1}{n}\sum_{i\in[n]}\Par{\frac{\mu\x}{16}V_{z_{t+1}\x}(z_\star\x)+\frac{\mu\y}{16}V_{z_{t+1}\y}(z_\star\y)}\nonumber\\
	&+ \frac{1}{n}\sum_{i\in[n]}\Par{16\Par{\frac{(\Lam\xx_{i})^2}{\mu\x}+\frac{(\Lam\xy_{i})^2}{\mu\y}}V_{z\x_{t+1}}((z^\star_{t+1})\x)}\nonumber\\
	&+\nsin \Par{16\Par{\frac{(\Lam\xy_{i})^2}{\mu\x}+\frac{(\Lam\yy_{i})^2}{\mu\y}}V_{z\y_{t+1}}((z^\star_{t+1})\y)}\nonumber\\
	&+ \frac{1}{n}\sum_{i\in[n]}\Par{\frac{\mu\x}{16}V_{z_{t+1}\x}(z_\star\x)+\frac{\mu\y}{16}V_{z_{t+1}\y}(z_\star\y)} \nonumber\\
	&+\nsin\Par{\frac{8}{\mu\x}\norm{z_{t+1}\xdisup- (z^\star_{t+1})\xdisup}^2+ \frac{8}{\mu\y}\norm{z_{t+1}\ydisup- (z^\star_{t+1})\ydisup}^2}\nonumber\\
	&+ \frac{1}{n}\sum_{i\in[n]}\Par{\frac 1 8 V^{f^*_i}_{z_{t + 1}\xdi}\Par{z_\star\xdi}+\frac{1}{8}V^{g^*_i}_{z_{t + 1}\ydi}\Par{z_\star\ydi}}\nonumber\\
	&+\nsin\Par{8L_i\x V_{z_{t+1}\x}((z_{t + 1}^\star)\x)+ 8L_i\y V_{z_{t+1}\y}((z_{t + 1}^\star)\y)}\nonumber\\
	&\le \frac{1}{4}V^r_{z_{t+1}}(z_\star) + \widetilde{\kappa} V^r_{z_{t + 1}}(z_{t + 1}^\star).\label{eq:mmfs-coro-2}
\end{flalign}
In the first inequality, we used Cauchy-Schwarz, Young's, and our various smoothness assumptions (as well as strong convexity of each $f^*_i$ and $g^*_i$). The last inequality used strong convexity of each piece of $r$.

For the last term, by a similar argument as in the previous bounds, we have
\begin{flalign}
 \inprod{\gop(z_{t+1})-\gop(z_\star)}{z_{t+1}-z^\star_{t+1}} \le  \frac{1}{4}V^r_{z_{t+1}}(z_\star)+ \widetilde{\kappa} V^r_{z_{t + 1}}(z^\star_{t + 1}).\label{eq:mmfs-coro-3}
\end{flalign}

Plugging the inequalities~\eqref{eq:mmfs-coro-1} with $\alpha=\gamma$,~\eqref{eq:mmfs-coro-2} and~\eqref{eq:mmfs-coro-3} back into~\eqref{eq:mmfs-coro-total}, this implies
\begin{flalign} \inprod{\goptot\Par{z_{t+1}}}{z_{t+1}-z_\star}&\le \gamma V^r_{z_t}\Par{z_\star}-\gamma V^r_{z_{t+1}}\Par{z_\star} -\gamma V^r_{z_t}\Par{z^\star_{t+1}} +\gamma V^r_{z_{t+1}}\Par{z^\star_{t+1}} \\
&+\frac{3}{4}V^r_{z_{t+1}}(z_\star) + 3\widetilde{\kappa}\gamma^2 V^r_{z_{t + 1}}(z^\star_{t + 1}).\label{eq:mmfs-coro-last-1}
\end{flalign}

By strong monotonicity of $\goptot$ with respect to $r$, we also have 
\begin{flalign}
\inprod{\goptot\Par{z_{t+1}}}{z_{t+1}-z_\star}\ge 	\inprod{\goptot\Par{z_{t+1}}-\goptot\Par{z_\star}}{z_{t+1}-z_\star}\ge V^r_{z_{t+1}}\Par{z_\star}.\label{eq:mmfs-coro-last-2}
\end{flalign}

Combining~\eqref{eq:mmfs-coro-last-1} and~\eqref{eq:mmfs-coro-last-2} with the assumption~\eqref{eq:mmfs-kmax}, and taking expectations, we obtain
\begin{flalign*}
	\Par{\frac{1}{4}+\gamma}\E V^r_{z_{t+1}}(z_\star)\le \gamma V^r_{z_t}(z_\star) \implies \E V^r_{z_{t+1}}(z_\star)\le \frac{4\gamma}{1+4\gamma}V^r_{z_t}(z_\star).
\end{flalign*}

\end{proof}

 	\end{appendix}

\end{document}